\documentclass[a4paper]{amsart}

\usepackage[utf8]{inputenc}
\usepackage[T1]{fontenc}

\usepackage[table]{xcolor}
\definecolor{blue}{RGB}{50, 120, 200}
\definecolor{red}{RGB}{175, 49, 39}
\definecolor{green}{RGB}{100, 255, 150}

\usepackage{latexsym, amsthm, amsmath, amssymb, mathrsfs, graphicx, enumitem, dsfont, thmtools, mathtools, comment, tikz-cd}

\usepackage[english]{babel}%
\usepackage[colorlinks, linktoc=page, linkcolor={blue}, citecolor={red}, urlcolor={green}]{hyperref}
\addto\extrasenglish{%
}

\calclayout

\DeclareRobustCommand{\SkipTocEntry}[5]{}

\newtheorem{theorem}{Theorem}[section]
\declaretheorem[style=plain,name=Theorem,numbered=no]{theorem*}
\declaretheorem[style=plain,name=Problem]{problem}
\declaretheorem[style=plain,name=Lemma,numberlike=theorem]{lemma}
\declaretheorem[style=plain,name=Proposition,numberlike=theorem]{proposition}
\declaretheorem[style=plain,name=Corollary,numberlike=theorem]{corollary}

\declaretheorem[style=plain,name=Conjecture]{conjecture}
\declaretheorem[style=definition,name=Definition,numberlike=theorem]{definition}
\declaretheorem[style=remark,name=Example,numberlike=theorem]{example}
\declaretheorem[style=remark,name=Remark,numberlike=theorem]{remark}
\declaretheorem[style=remark,name=Notation,numberlike=theorem]{notation}

\newcommand{\QQ}{\ensuremath{\mathbb Q}}
\newcommand{\CC}{\ensuremath{\mathbb C}}
\newcommand{\NN}{\ensuremath{\mathbb N}}
\newcommand{\ZZ}{\ensuremath{\mathbb Z}}
\newcommand{\FF}{\ensuremath{\mathbb F}}
\newcommand{\FFpk}{\ensuremath{\overline{\FF}_{p_k}}}

\newcommand{\KK}{\ensuremath{\mathbb K}}
\newcommand{\LL}{\ensuremath{\mathbb L}}

\newcommand{\im}{\ensuremath{\mathrm{Im}\,}}
\renewcommand{\char}{\ensuremath{\mathrm{char}\,}}

\newcommand{\Ad}{\mathrm{Ad}\,}

\newcommand{\gal}{\ensuremath{\mathrm{Gal}\,}}

\newcommand{\rev}{\ensuremath{\mathrm{rev}\,}}

\newcommand{\id}{\mathds 1}
\newcommand{\Id}{\ensuremath{\mathrm{Id}}}
\newcommand{\Tr}{\ensuremath{\mathrm{Tr}}}
\newcommand{\Hom}{\ensuremath{\mathrm{Hom}}}
\newcommand{\End}{\ensuremath{\mathrm{End}}}

\newcommand{\Aut}{\ensuremath{\mathrm{Aut}}}
\newcommand{\tAut}{\ensuremath{\mathrm{Aut}^{\otimes}}}
\newcommand{\Sym}{\ensuremath{\mathrm{Sym}}}

\newcommand{\Los}{\L o\'s's Theorem}

\newcommand{\CUP}{\ensuremath{\mathrm{cup}}}
\newcommand{\CAP}{\ensuremath{\mathrm{cap}}}

\newcommand{\uqsl}{{U}_{q}(\mathfrak{sl}_{2})} 
\newcommand{\uprod}{\prod_\mathscr{U}}  
\newcommand{\ulimit}[1]{\operatorname*{ulim}\limits_{{#1} \to \infty}}  
\newcommand{\ulim}[1]{\operatorname*{ulim}_{{#1} \to \infty}}  
\newcommand{\uqg}{{U}_{q}(\mathfrak{g})}
\newcommand{\Dr}[1]{\ensuremath{\mathcal{Z}\left(#1\right)}}
\newcommand{\Rep}[1]{\ensuremath{{\mathrm{\bf{Rep}}}\left(#1\right)}}
\newcommand{\Tilt}[2]{\ensuremath{{\mathrm{\bf{Tilt}}}_{#2}\left(#1\right)}}
\newcommand{\Vect}{\ensuremath{{\mathrm{\bf{Vec}}}}}
\newcommand{\sVect}{\ensuremath{{\mathrm{\bf{sVec}}}}}
\newcommand{\cat}{\ensuremath{\mathcal C}}

\newcommand{\obj}[1]{\ensuremath{\underline{\bf{#1}}}}

\newcommand{\TLf}[2]{\ensuremath{{\mathrm{\bf{TL}}}\left(#1;\, #2\right)}}
\newcommand{\preTLf}[2]{\ensuremath{{\mathrm{\widetilde{\bf{TL}}}}\left(#1;\, #2\right)}}
\newcommand{\Mug}[1]{\ensuremath{\mathcal{Z}_{\text{M\"ug}}\left({#1}\right)}}
\newcommand{\Cent}[2]{\ensuremath{\mathrm{Cent}_{#1}\left({#2}\right)}}

\newcommand{\VerF}[1]{\ensuremath{\mathcal{A}_{\varkappa-1}\left({#1}\right)}}
\newcommand{\Ver}{\ensuremath{\mathcal{A}_{\varkappa-1}}}
\newcommand{\Verp}{\mathbf{Ver}_p}

\newcommand{\YD}[1]{\ensuremath{\mathcal{YD}_{\mathcal O(#1)}^{\mathcal O(#1)}}}

\newcommand{\parag}[1]{\vspace{2mm}\noindent{\bf #1}\hspace{1mm}}

\usepackage{tikz}
\usetikzlibrary{matrix}
\usetikzlibrary{fit}
\usetikzlibrary{calc}
\usetikzlibrary{backgrounds}
\usetikzlibrary{arrows}
\usetikzlibrary{shapes,shapes.geometric,shapes.misc}
\tikzstyle{tikzfig}=[baseline=-0.25em,scale=0.5]
\pgfkeys{/tikz/tikzit fill/.initial=0}
\pgfkeys{/tikz/tikzit draw/.initial=0}
\pgfkeys{/tikz/tikzit shape/.initial=0}
\pgfkeys{/tikz/tikzit category/.initial=0}
\pgfdeclarelayer{edgelayer}
\pgfdeclarelayer{nodelayer}
\pgfsetlayers{background,edgelayer,nodelayer,main}
\tikzstyle{none}=[inner sep=0mm]

\tikzstyle{every loop}=[]

\tikzstyle{blob}=[fill=white, draw=black, shape=circle, inner sep=1.5pt]
\tikzstyle{fullblob}=[fill=black, draw=black, shape=circle, tikzit fill=black, tikzit draw=black, inner sep=1.5pt]
\tikzstyle{jonesrectangle}=[fill=white, draw=black, shape=rectangle, minimum width=1cm, minimum height=0.6cm]
\tikzstyle{thickstrand}=[-, thick]
\tikzstyle{arrow}=[->, thick]	
\tikzstyle{bluearrow}=[draw={rgb,255: red,0; green,0; blue,255}, thick, <-]
\tikzstyle{redarrow}=[draw={rgb,255: red,255; green,0; blue,0}, thick, <-]
\tikzstyle{blueline}=[draw={rgb,255: red,0; green,0; blue,255}, thick, -]
\tikzstyle{redline}=[draw={rgb,255: red,255; green,0; blue,0}, thick, -]
\tikzstyle{braid-over}=[-, draw=white, , double=black, double distance=0.8pt, tikzit draw={rgb,255: red,128; green,0; blue,128}]
\tikzstyle{background}=[-, dashed, draw={rgb,255: red,191; green,191; blue,191}, thin]

\begin{document}
	\title{The Drinfeld Center of the Generic Temperley--Lieb Category}
	\author{Moaaz Alqady}
	\address{M.A., Department of Mathematics, University of Oregon, Fenton Hall, Eugene, OR 97403, USA}
	\email{malqady@uoregon.edu}

	\begin{abstract}
		We show that the Temperley--Lieb category $\TLf{q}{\CC}$ embeds in an ultraproduct of modular tensor categories when $q$ is not a root of unity.
		As a result, we show that its Drinfeld center is semisimple and describe its simple objects.
		The canonical functor $$\TLf{q}{\CC}\boxtimes \TLf{q}{\CC}^\rev \boxtimes \Rep{\ZZ/2\ZZ} \to \Dr{\TLf{q}{\CC}},$$ induced by the braiding and the $\ZZ/2\ZZ$--grading on the Temperley--Lieb category, is thus shown to be a monoidal equivalence, which becomes a braided equivalence upon twisting the braiding by a certain bicharacter.
		Along the way, we formalize some general results about ultraproducts of tensor categories and tensor functors, building on earlier works of Crumley, Harman, and Flake--Harman--Laugwitz.
		We also discuss the center at some exceptional values of $q$.
	\end{abstract}

	\maketitle
	
	\setcounter{tocdepth}{1}

	\tableofcontents


\section{Introduction}\label{sec:intro}

Given any tensor category $\mathcal C$, Joyal and Street \cite{JS}, based on the work of Drinfeld \cite{Dr}, gave a general construction of a braided tensor category $\Dr{\cat}$, called its \emph{Drinfeld center}.
This allowed for the discovery of many interesting braided tensor categories, playing a central role in modern mathematics ranging from statistical mechanics to invariants of knots and manifolds to topological quantum field theories and topological quantum computation (for example, see \cite{T}, \cite{BK}, \cite{Wang}).
Despite the simplicity of its definition (see \autoref{DrCenter}), the Drinfeld center of a given tensor category is typically difficult to describe explicitly.
In the special case when $\cat$ is a modular tensor category, it was shown in \cite{DGNO} that the center has a particularly nice description. 
Interestingly, the Drinfeld centers of the representation categories of quantum groups $U_q(\mathfrak g)$ for a Lie algebra $\mathfrak g$ have not been described explicitly until now. 
It is the main goal of this paper to describe this center completely in the case when $\mathfrak g = \mathfrak{sl}_2$ and $q$ is generic, as well as to suggest an approach for exploring the center for other Lie types.
The category of type I representations of $U_q(\mathfrak{sl}_2)$ over a field $\KK$ is known to be equivalent to the Temperley--Lieb category $\TLf{q}{\KK}$, so we work with the latter diagrammatic version. 

In \cite{A} and \cite{AP}, Andersen and Paradowski described certain fusion categories associated with any quantum group at a given roots of unity $q$.
Namely, they considered a semisimple quotient of the category of tilting modules of the given quantum group at $q$.
These categories were also considered by Reshetikhin and Turaev in \cite{RT} in the context of 3-manifold invariants.
The fusion rules in the $\mathfrak g= \mathfrak{sl_2}$ case had already appeared in the work of Verlinde on conformal field theories \cite{V}.
In the case of the quantum group $\uqsl$, one can equivalently describe these categories as a semisimple quotient of $\TLf{q}{\KK}$ for a root of unity $q\in \KK$. 
It turns out the categories constructed in this way are modular tensor categories whenever $q$ is a primitive root of unity of order $2\varkappa$ for an integer $\varkappa\geq 3$. 
We denote these categories by $\VerF{\KK}$.

Our approach for understanding the center of the Temperley--Lieb category is to ``approximate'' $\TLf{q}{\CC}$ as a ``limit'' of the modular tensor categories $\Ver$. 
More precisely, we express $\TLf{q}{\CC}$ (for generic $q\in\CC$) as a subcategory of the ultraproduct of a family of categories $\mathcal A_{\varkappa_i-1}(\KK_i)$ where $\varkappa_i\to\infty$ and $\char \KK_i \to 0$. 
The use of ultraproducts as a way of taking limits of tensor categories was first introduced by Deligne in his seminal paper \cite{D2}; therein, he described the category $\Rep{S_t}$ for transcendental $t\in\CC$ as a subcategory of the ultraproduct of $\Rep{S_n}$ for integers $n\to \infty$. 
Later, Harman \cite{Har} extended this result for algebraic $t$ by varying both $n$ as well as the characteristic of the underlying field. 
This ultraproduct description was used in \cite{FHL} to describe the indecomposable objects of the center of Deligne's interpolation category $\Rep{S_t}$.
It was a rare instance in the subject of representation theory where knowledge about the finite characteristic case (which is typically more difficult) informs our knowledge about the characteristic $0$ case.
Our results show that the usefulness of this approach is not unique to Deligne's interpolation categories, but is a more widely applicable phenomenon.

First, we observe that the braiding and the $\ZZ/2\ZZ$--grading on the Temperley--Lieb category induce a canonical functor
$$Z:\TLf{q}{\CC}\boxtimes \TLf{q}{\CC}^\rev\boxtimes \Rep{\ZZ/2\ZZ}\to \Dr{\TLf{q}{\CC}}.$$

Then, we show that for every $q\in\CC^\times$ that is not a root of unity, we can identify $q$ with a sequence $q_i$ of roots of unity of even orders in the ultraproduct of some fields $\KK_i$.
In the case of transcendental $q$, it is enough to take $\KK_i=\CC$ and $q_i$ any sequence of roots of unity of unbounded degree.
When $q$ is algebraic, we use a generalization of Zsigmondy's Theorem to algebraic integers, due to Postnikova and Schinzel \cite{PS}, to find a sequence of prime numbers $p_i\to\infty$ and a sequence of roots of unity $q_i$ of even orders $2\varkappa_i\to\infty$ in $\KK_i=\overline\FF_{p_i}$ whose ultraproduct limit can be identified with $q$.
As a result, we are able to express $\TLf{q}{\CC}$ as a subcategory in the ultraproduct of $\mathcal A_{\varkappa_i-1}(\KK_i)$.

By relating the center of $\TLf{q}{\CC}$ to the center of the ultraproduct factors, we are able to extract information about $\Dr{\TLf{q}{\CC}}$ through our knowledge of $\Dr{\mathcal A_{\varkappa_i-1}(\KK_i)}$; the latter ones are easy to describe since $\mathcal A_{\varkappa_i-1}(\KK_i)$ are modular tensor categories.
Using this ultraproduct approach, we show that, for generic $q\in\CC$, the center $\Dr{\TLf{q}{\CC}}$ is semisimple and we describe its simple objects and fusion rules completely. 
As a result, our main theorem (\autoref{mainThm}) is that the canonical functor $Z$ is an equivalence of monoidal categories; upon twisting the braiding by a certain $(\ZZ/2\ZZ)^{\oplus 3}$ bicharacter, this functor becomes a braided equivalence.

We remark that many of the techniques we use should extend in a somewhat straightforward way to other Lie types; 
one can approximate the categories $\Rep{U_q(\mathfrak g)}$ as an ``ultralimit'' of semisimplifications of the categories of tilting modules over $U_q(\mathfrak g)$ at certain roots of unity. 
However, we leave this investigation for a later work.
One can also apply this ultraproduct technology to express the classical case of $q=1$ by using the Verlinde categories $\mathbf{Ver}_p$ as an approximating sequence. 
But since the centers of $\mathbf{Ver}_p$ are more difficult, this approach is not the most fruitful in the classical limit.
A geometric approach is more useful in this case; the Drinfeld center of the representation category of an algebraic group $G$ can be described as the category of $G$-equivariant coherent sheaves supported at finitely many points.
Setting $G=SL_2$, this gives a sheaf-theoretic description of the Temperley--Lieb center at $q=1$. 

For the case of $q=-1$, we show that the Temperley--Lieb category is related to the $q=1$ case by a cocycle twist.
We prove in \autoref{TwistedCenter} that the Drinfeld center of a category that is twisted by a cocycle is equivalent to a cocycle twist of the Drinfeld center of the original category.
As a result, we get an explicit description of the center in terms of finitely-supported equivariant coherent sheaves on $SL_2$ in this case as well.

Finally, we also discuss the center in the crystal Temperley--Lieb category in \autoref{sec:exceptionalQ}.
 In a previous joint work with Stroi\'nski \cite{AS}, we studied the crystal limit of $\TLf{q}{\KK}$ as $q\to 0$. 
We showed that the resulting diagrammatic category $\mathbf{CrysTL}$ is equivalent to the linearized category of $\mathfrak{sl}_2$--crystals. 
In a private communication, Etingof asked whether the Drinfeld center of $\mathbf{CrysTL}$ contains any interesting objects (other than the ones induced by the $\ZZ/2\ZZ$--grading).
We conjecture that the answer is no and provide some partial results towards proving this conjecture.

The article is organized as follows: In \autoref{sec:prelim}, we collect the needed constructions and background from the general theory of tensor categories.
In \autoref{sec:TL}, we give all the necessary background on the Temperley--Lieb category and its semisimplifications when $q$ is root of unity. 
The goal of \autoref{sec:TLultra} is to prove that the Temperley--Lieb category can be identified with a subcategory of the ultraproduct of a family of modular tensor categories $\mathcal A_{\varkappa_i-1}(\KK_i)$.
\autoref{sec:TLcenter} uses this identification to describe the Drinfeld center of the Temperley--Lieb category for generic $q$. 
We make some remarks and formulate some conjectures in \autoref{sec:exceptionalQ} on the centers of $\Rep{U_q(\mathfrak g)}$ for other lie algebras $\mathfrak g$ or for exceptional values of $q$.
Finally, \autoref{sec:ultraprod} is a quick self-contained exposition of ultraproducts and their applications to the theories of tensor categories and tensor functors. 
Despite their apparent usefulness, ultraproducts of tensor categories have only been formally documented in the literature (in the case of Tannakian categories) in \cite{C}. 
Building on the theory developed in \textit{loc. cit.}, we hope that \autoref{sec:ultraprod} will help address this gap in the literature.


\parag{Acknowledgements.}
This work was done as part of a PhD thesis, supervised by Victor Ostrik to whom the author is very grateful for many valuable discussions.
The author learned the model theoretic background in the appendix to this paper from many people in the American University in Cairo's mathematics community; in particular, conversations with Ibrahim Abdelaziz, as well as the lecture notes of Daoud Siniora were very helpful. 
Thanks are also due to Johannes Flake, Nate Harman, Robert Laugwitz, Mateusz Stroi\'nski, and Philip Thomas for interesting conversations.


\section{Preliminaries on Tensor Categories}\label{sec:prelim}

This section aims to review some standard constructions for monoidal categories. We follow the terminology of \cite{EGNO} for all notions that are not explicitly defined here.	Throughout this paper, all our fields are assumed to be algebraically closed.

	\subsection*{Drinfeld Center}
	Throughout this section, we let $\cat$ be a monoidal category with tensor product $\otimes$, associator $\alpha:(-\otimes-)\otimes- \overset{\sim}{\longrightarrow} -\otimes(-\otimes-)$, monoidal identity $\id$, and unitor isomorphisms $l:\id\otimes - \overset{\sim}{\longrightarrow} -$ and $r:-\otimes \id  \overset{\sim}{\longrightarrow} -$.
	
	\begin{definition}
		A \emph{half-braiding} for an object $ X $ in $\cat$ is a natural isomorphism $ \varphi:X\otimes- \implies -\otimes X $ such that for any two objects $Y$ and $Z$ in $\cat$, the following diagram commutes.
\[\begin{tikzcd}
	& {(Y\otimes Z)\otimes X} & {Y\otimes (Z\otimes X)} \\
	{X\otimes (Y\otimes Z)} &&& {Y\otimes(X\otimes Z)} \\
	& {(X\otimes Y)\otimes Z} & {(Y\otimes X)\otimes Z}
	\arrow["\alpha_{Y, Z, X}", from=1-2, to=1-3]
	\arrow["{\varphi_{Y\otimes Z}}", from=2-1, to=1-2]
	\arrow["{1_Y\otimes \varphi_Z}"', from=2-4, to=1-3]
	\arrow["{\alpha_{X,Y,Z}}", from=3-2, to=2-1]
	\arrow["{\varphi_Y \otimes 1_Z}"', from=3-2, to=3-3]
	\arrow["{\alpha_{Y, X,Z}}"', from=3-3, to=2-4]
\end{tikzcd}\]
	\end{definition}

	\begin{definition}\label{DrCenter}
		The \emph{Drinfeld Center} of $\cat$ is the category $\Dr{\cat}$ whose objects are pairs $(X,\varphi)$, where $\varphi$ is a half-braiding for $X$ and whose morphisms $f:( X, \varphi) \to ( X', \varphi')$ are maps $f:X\to X'$ in $\cat$ such that the following diagram commutes for every object $Y$ in $\cat$.
\[\begin{tikzcd}
	{X\otimes Y} & {Y\otimes X} \\
	{X'\otimes Y} & {Y\otimes X'}
	\arrow["{\varphi_Y}", from=1-1, to=1-2]
	\arrow["{f\otimes1_Y}"', from=1-1, to=2-1]
	\arrow["{1_Y\otimes X}", from=1-2, to=2-2]
	\arrow["{\varphi'_Y}"', from=2-1, to=2-2]
\end{tikzcd}\]

		It is monoidal via $$(X,\varphi)\otimes (X',\varphi') := \left(X\otimes X', \alpha_{-,X,X'}\circ \left(\varphi\otimes 1_{X'}\right)\circ \alpha^{-1}_{X,-,X'}\circ \left(1_X \otimes \varphi'\right)\circ \alpha_{X,X',-}\right),$$ with the associator inherited from $\cat$, and monoidal identity $(\id, r^{-1}\circ l)$, \cite[Section 7.13]{EGNO}. 
		It is braided with braiding $(X,\varphi)\otimes (X',\varphi')\to (X',\varphi')\otimes (X,\varphi)$ given by $\varphi_{X'}$, \cite[Proposition 8.5.1]{EGNO}.
		
	\end{definition}
	
	There is an obvious forgetful functor $U:\Dr{\cat}\to\cat$ which just remembers the underlying objects. 
	A braiding on $\cat$ is the same data as a section to $U$.
	Indeed, for a braided category $\cat$ with braiding $\sigma_{X,Y}:X\otimes Y\to Y\otimes X$, we have two canonical monoidal sections of $U$ given by $X\mapsto (X, \sigma_{X,-})$ and $X\mapsto (X,\sigma^{-1}_{-,X})$. 
	Together, they induce a braided tensor functor $D:\cat\boxtimes\cat^{\rev} \to \Dr{\cat}$, whenever $\cat\boxtimes \cat^\rev$ exists (see the subsection on Deligne's tensor product below); namely, $D$ is determined by $A\boxtimes B \mapsto (A\otimes B, (\sigma_{A,-}\otimes \Id_B)\circ (\Id_A\otimes \sigma_{-,B}^{-1}))$, where the implicit associators in the half-braiding have been suppressed for brevity \cite[Proposition 8.6.1]{EGNO}.
	
	\begin{definition}\label{def:factorizable}
		A braided tensor category $\cat$ is called \emph{factorizable} if the canonical functor $D$ is an equivalence.
	\end{definition}

One easy way to find half-braidings on the monoidal unit is to find a grading on the category.

\begin{definition}
	A \emph{$G$--grading} on a tensor category $\cat$ for a group $G$ is a decomposition $\cat = \bigoplus_{g\in G} \cat_g$ of $\cat$ into a direct sum of full $\KK$-linear subcategories $\cat_g$ such that $\id_\cat\in\cat_e$, and, whenever $X\in\cat_g$ and $Y\in \cat_h$, we have $X\otimes Y \in\cat_{gh}$.
	A $G$--grading on $\cat$ is called \emph{faithful} if $\cat_g\neq 0$ for every $g\in G$.
	\end{definition}
	
	\begin{proposition}\cite[Proposition 3.9]{GN}\label{grading-duality}
		Let $\cat$ be a tensor category with a faithful $G$ grading. Then, we have an injective group homomorphism from $G^\vee:=\Hom(G,\KK^\times)$ to the group of half-braidings (under $\otimes$) on $\id_\cat$ in $\cat$. When $G$ is abelian, this induces a functor $\Rep{G}\to \Dr{\cat}$.
	\end{proposition}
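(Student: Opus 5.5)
Given $\chi\in G^\vee$, we define a half-braiding $\varphi^\chi$ on $\id$ by specifying it on homogeneous objects and extending additively. For $X\in\cat_g$ set
\[
\varphi^\chi_X := \chi(g)\, r_X^{-1}\circ l_X : \id\otimes X \to X\otimes \id .
\]
Every object $X$ decomposes canonically as $X=\bigoplus_g X_g$ with $X_g\in\cat_g$, since the grading is a direct sum decomposition of categories. So $\varphi^\chi_X:=\bigoplus_g \chi(g)\, r^{-1}_{X_g}l_{X_g}$ is well defined. It is natural because there are no nonzero morphisms between different graded components: any $f:X\to Y$ is a sum of $f_g:X_g\to Y_g$, and each $f_g$ commutes with the scalar $\chi(g)$ and with the natural unitors.

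Next we verify the hexagon of the definition of half-braiding. Tensor products distribute over the decomposition, so we may take $Y\in\cat_g$ and $Z\in\cat_h$. Then $Y\otimes Z\in\cat_{gh}$, and the hexagon reduces to the triangle/coherence identity for unitors, multiplied by the scalars $\chi(gh)$ on one side and $\chi(g)\chi(h)$ on the other. These agree because $\chi$ is a homomorphism. By coherence (Mac Lane), all the unitor/associator composites coincide, so nothing more is needed.

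We then check that $\chi\mapsto(\id,\varphi^\chi)$ is a group homomorphism for the tensor product of half-braidings in Definition~\ref{DrCenter}. The product $(\id,\varphi^\chi)\otimes(\id,\varphi^\psi)$ has underlying object $\id\otimes\id\cong\id$. Its half-braiding on $X\in\cat_g$ is, up to coherence isomorphisms, $\chi(g)\psi(g)$ times the canonical one, which is $\varphi^{\chi\psi}$. The trivial character gives $r^{-1}\circ l$, the unit. For injectivity, suppose $\varphi^\chi=\varphi^{\psi}$. Faithfulness gives a nonzero $X\in\cat_g$ for each $g$, and comparing scalars on $X$ (nonzero, so $\mathrm{id}_X\neq 0$) yields $\chi(g)=\psi(g)$. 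This part relies essentially on faithfulness.

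For abelian $G$, $\Rep{G}$ is semisimple with simple objects the characters $\chi\in G^\vee$ (all one-dimensional over the algebraically closed $\KK$), with $\chi\otimes\psi=\chi\psi$. We define the functor on simples by $\chi\mapsto(\id,\varphi^\chi)$ and extend additively. The object $(\id,\varphi^\chi)$ is invertible and $\Hom_{\Dr{\cat}}((\id,\varphi^\chi),(\id,\varphi^\psi))$ is zero for $\chi\neq\psi$: a scalar $c$ intertwines only if $c\chi(g)=c\psi(g)$ for all $g$. Hence this extends to a $\KK$-linear functor on the semisimple category. The monoidal structure comes from the identifications $\id\otimes\id\cong\id$; its coherence again follows from Mac Lane coherence.

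The main subtlety is the monoidal-structure bookkeeping, since a $3$-cocycle could in principle appear. However, the half-braidings are scalar multiples of canonical maps, and the scalars multiply strictly, so the associativity constraint is trivial. I would handle this by noting that it all happens inside $\mathrm{End}(\id)=\KK$.
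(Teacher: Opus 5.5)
Your proof is correct and is essentially the paper's argument: the paper identifies half-braidings on $\id$ with $\Aut_\otimes(\Id_\cat)$ and sets $\Phi_\chi(X)=\chi(g)\,\Id_X$ for $X\in\cat_g$, which matches your direct check of naturality and of the hexagon via $\chi(gh)=\chi(g)\chi(h)$, with faithfulness giving injectivity (you make this explicit; the paper leaves it implicit). One caveat, shared with the paper's remark that ``$\Rep{G}$ is just the category of characters'': your claim that $\Rep{G}$ is semisimple with the characters as simples needs $G$ finite with $|G|$ invertible in $\KK$ (true for $G=\ZZ/2\ZZ$ in characteristic $0$, the case actually used, but false for e.g.\ $G=\ZZ$), and in general you can bypass it by sending a representation $V$ to $\id^{\oplus\dim V}$ with half-braiding on $X\in\cat_g$ given by the action of $g$ on $V$.
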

	
	\begin{proof}
		The data of a half braiding on $\id_\cat$ is equivalent to a natural tensor automorphism $\Phi\in\Aut_\otimes(\Id_\cat)$ of the identity functor on $\cat$; indeed, a half braiding is a natural isomorphism $\Id_\cat \cong \id_\cat \otimes - \to -\otimes \id_\cat \cong \Id_\cat$ and the hexagon axiom precisely encodes compatibility with the (trivial) monoidal structure of $\Id_\cat$.
		So the group of half-braidings on $\id_\cat$ under $\otimes$ is isomorphic to $\Aut_\otimes(\Id_\cat)$. 
	
	Given a character $\chi\in G^\vee$, we define $\Phi_\chi\in\Aut_\otimes(\Id_\cat)$ by $\Phi_\chi(X)=\chi(g) \;\Id_X$ for every $X\in\cat_g$. Thus, we get an injective group homomorphism $G^\vee \to  \Aut_\otimes(\Id_\cat)$. 
	When $G$ is abelian, $\Rep{G}$ is just the category of characters of $G$, so the last claim follows.
	\end{proof}
	
	In fact, in the case of fusion categories, the following stronger result is proven in \emph{loc. cit.}
	
	\begin{proposition}\cite[Proposition 3.9]{GN}\label{FusionGradingVsHalfBraidings}
		Let $\cat$ be a fusion category and $G$ its universal grading group. Let $G_{ab}$ be the maximal abelian quotient of $G$.
		There is an isomorphism $G_{ab}^\vee \cong \Aut_\otimes(\Id_\cat)$. Hence, the group of half-braidings on $\id_\cat$ (under $\otimes$) is isomorphic to $G^\vee_{ab}$. 
	\end{proposition}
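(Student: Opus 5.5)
As in the proof of \autoref{grading-duality}, half-braidings on $\id_\cat$ correspond to elements of $\Aut_\otimes(\Id_\cat)$, so it suffices to construct an isomorphism $G_{ab}^\vee \cong \Aut_\otimes(\Id_\cat)$. The map $G^\vee=\Hom(G,\KK^\times)\to\Aut_\otimes(\Id_\cat)$, $\chi\mapsto\Phi_\chi$, is already available from \autoref{grading-duality}. It is injective because the universal grading is faithful. Since $\KK^\times$ is abelian, every character of $G$ factors through $G_{ab}$, so $G^\vee=G_{ab}^\vee$. The remaining task is surjectivity.

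For surjectivity I take an arbitrary $\Phi\in\Aut_\otimes(\Id_\cat)$. Because $\cat$ is fusion, $\End(X)=\KK$ for each simple $X$, so $\Phi_X=\phi(X)\,\Id_X$ for a scalar $\phi(X)\in\KK^\times$. Naturality then determines $\Phi$ on all objects by semisimplicity. The monoidality condition $\Phi_{X\otimes Y}=\Phi_X\otimes\Phi_Y$, restricted to a simple summand $Z\subset X\otimes Y$, gives $\phi(Z)=\phi(X)\phi(Y)$ whenever $\Hom(Z,X\otimes Y)\neq0$. Applying this to $\id\subset X\otimes X^*$ gives $\phi(\id)=1$ and $\phi(X^*)=\phi(X)^{-1}$.

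Next I recall the structure of the universal grading. Let $\cat_{ad}$ be the full fusion subcategory generated by the simple summands of $X\otimes X^*$ for simple $X$. Its defining property is that $G$ is the set of classes of simples under the relation $X\sim Y$ iff $X\otimes Y^*$ contains an object of $\cat_{ad}$, equivalently iff $X$ and $Y$ both occur in some $W\otimes X'$ with $W\in\cat_{ad}$. The group law is induced by $\otimes$, and $\cat_{ad}=\cat_e$ (\cite{GN}, \cite[Sec.~4.14]{EGNO}). On summands $Z$ of $X\otimes X^*$ we get $\phi(Z)=\phi(X)\phi(X)^{-1}=1$. Closure of the multiplicativity relation under tensor products and summands then gives $\phi\equiv1$ on all simples of $\cat_{ad}$. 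Consequently, if $X$ and $Y$ are simple summands of $W\otimes X'$ with $W\in\cat_{ad}$, both scalars equal $\phi(W)\phi(X')=\phi(X')$. Hence $\phi$ is constant on each graded component $\cat_g$, and it defines a function $\chi:G\to\KK^\times$.

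Finally, $\chi$ is a homomorphism: for $X\in\cat_g$ and $Y\in\cat_h$ simple, $X\otimes Y\neq0$ has a simple summand $Z\in\cat_{gh}$ with $\phi(Z)=\phi(X)\phi(Y)$. Thus $\chi\in G^\vee=G_{ab}^\vee$ and $\Phi=\Phi_\chi$, which proves surjectivity. The main obstacle I expect is the step showing that $\phi$ is constant on components. It relies on the precise characterization of the universal grading group via $\cat_{ad}$, which I would cite from \cite{GN} rather than reprove. The rest is bookkeeping with scalars on simple objects.
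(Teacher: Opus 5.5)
Your proof is correct. Note that the paper gives no proof of this statement; it only cites \cite{GN}. The one part it proves itself is the injective homomorphism $\chi\mapsto\Phi_\chi$ of \autoref{grading-duality}. You reuse that part exactly: injectivity comes from faithfulness of the universal grading, and $G^\vee=G_{ab}^\vee$ because $\KK^\times$ is abelian.

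Your surjectivity argument fills in the part the paper delegates to \cite{GN}, and it holds up. The scalars satisfy $\phi(Z)=\phi(X)\phi(Y)$ for simple $Z\subset X\otimes Y$. This makes $\phi$ trivial on $\cat_{ad}$, since every simple of $\cat_{ad}$ is a summand of a product of objects $X_i\otimes X_i^*$. Then $\cat_e=\cat_{ad}$ gives constancy on components.

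Two small points. First, $\phi(\id)=1$ does not follow from $\id\subset X\otimes X^*$ alone, which only yields $\phi(X)\phi(X^*)=\phi(\id)$. It follows from the unit axiom for monoidal transformations, or from $\id\subset\id\otimes\id$. Second, constancy on components has a shorter route than your $W\otimes X'$ detour, which also requires reading $\phi(W)$ for a non-simple $W$. For simple $X,Y\in\cat_g$, every simple summand of $X\otimes Y^*$ lies in $\cat_e=\cat_{ad}$, so $1=\phi(X)\phi(Y)^{-1}$.

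You can also avoid the explicit description of $\cat_e$ altogether. By multiplicativity, the values of $\phi$ on simples form a finite subgroup $H\subset\KK^\times$. Let $\cat_h$ be spanned by the simples with $\phi=h$; then $\cat=\bigoplus_{h\in H}\cat_h$ is a faithful $H$-grading. The universal property of $G$ then gives a surjection $G\to H\subset\KK^\times$, and this is the required character $\chi$ with $\Phi=\Phi_\chi$. That route needs only the defining universal property of $G$. Yours instead shows concretely why every tensor automorphism of $\Id_\cat$ must be trivial on $\cat_{ad}$.
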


\begin{remark}
	More generally, $n$-dimensional representations of the universal grading group classify half-braidings on $\id^{\oplus n}$. For more details, \cite[Theorem 3.5]{GN} and the subsequent remarks.
\end{remark}


\subsection*{M\"uger Center}

	A different notion of the center of a braided monoidal category $\cat$ is the \emph{M\"uger center} or the \emph{symmetric center}, which was first considered in \cite{Mug}.
	
	\begin{definition}
		Given objects $X$ and $Y$ of a braided monoidal category $C$ with braiding $\sigma$, we say $X$ and $Y$ \emph{centralize one another} if $\sigma_{Y,X}\circ \sigma_{X,Y} = \Id_{X\otimes Y}$.
		For a given full subcategory $\mathcal D \subset \cat$, its \emph{centralizer} in $\cat$ is the full subcategory $\Cent{\cat}{\mathcal D} \subset \cat$ whose objects centralize those of $\mathcal D$, i.e. $\Cent{\cat}{\mathcal D}$ is the full subcategory of $\cat$ of all objects $X$ such that for each $Y\in \mathcal D$ we have $\sigma_{Y,X}\circ \sigma_{X,Y} = \Id_{X\otimes Y}$. 
		Two subcategories $\mathcal D$ and $\mathcal D'$ of $\cat$ are said to centralize one another if $\mathcal D\subset\Cent{\cat}{\mathcal D'}$ and $\mathcal D'\subset\Cent{\cat}{\mathcal D}$
	\end{definition}
	
	\begin{definition}
		\emph{The M\"uger Center} of $\cat$ is the centralizer $\Cent{\cat}{\cat}$ of $\cat$ itself, which we also denote by $\Mug{\cat}$.
	\end{definition}

	It is easy to verify that $\Mug{\cat}$ is a symmetric monoidal subcategory of $\cat$. 
	The importance of this construction for us is that it allows us to have some control over the Drinfeld Center in the case where $\cat$ is a fusion category. 
	
	Recall that for a braided fusion category equipped with a spherical structure, one may compute its modular invariant defined as follows: for simple objects $X$ and $Y$, set $s_{X,Y} = \Tr(\sigma_{Y,X} \circ \sigma_{X,Y})$) and $t_{X,Y} = \delta_{X,Y} \theta_X$, where $\delta_{X,Y}$ is the Kronecker delta and $\theta_X$ is the value of the twist on $X$; pictorially, 
	$$\begin{tikzpicture}[scale = 0.8]
	\begin{pgfonlayer}{nodelayer}
		\node [style=none] (0) at (-2, 0) {};
		\node [style=none] (1) at (-1, 0) {};
		\node [style=none] (2) at (0, 0) {};
		\node [style=none] (3) at (1, 0) {};
		\node [style=none] (4) at (-4, 0) {$s_{X,Y} := $};
		\node [style=none] (6) at (-2, 1) {$X$};
		\node [style=none] (7) at (1, 1) {$Y$};
		\node [style=none] (4) at (2, 0) {,};
		\node [style=none] (8) at (6, 0) {$t_{X}$};
		\node [style=none] (9) at (7, -1.4) {};
		\node [style=none] (10) at (7, 1.4) {};
		\node [style=none] (12) at (7.5, 1) {$X$};
		\node [style=none] (13) at (8.5, 0) {$:=$};
		\node [style=none] (14) at (10, -1.4) {};
		\node [style=none] (17) at (10, -0.25) {};
		\node [style=none] (19) at (10, 1.4) {};
		\node [style=none] (20) at (9.25, 0) {};
		\node [style=none] (22) at (10, 0.25) {};
		\node [style=none] (23) at (10.5, 1) {$X$};
	\end{pgfonlayer}
	\begin{pgfonlayer}{edgelayer}
		\draw [style=thickstrand, bend left=90, looseness=1.75] (0.center) to (2.center);
		\draw [style=thickstrand, bend right=90, looseness=1.75] (1.center) to (3.center);
		\draw [style=braid-over, bend right=90, looseness=1.75] (0.center) to (2.center);
		\draw [style=braid-over, bend left=90, looseness=1.75] (1.center) to (3.center);
		\draw [style=thickstrand] (10.center) to (9.center);
		\draw [style=thickstrand, in=270, out=90] (14.center) to (17.center);
		\draw [style=thickstrand] (22.center) to (19.center);
		\draw [style=thickstrand, in=90, out=90, looseness=1.75] (20.center) to (17.center);
		\draw [style=braid-over, in=-90, out=-90, looseness=1.50] (22.center) to (20.center);
	\end{pgfonlayer}
\end{tikzpicture}
$$
	Thus, we have $|\mathcal O(\cat)|\times |\mathcal O(\cat)|$ matrices $S= (s_{X,Y})_{X,Y\in\mathcal O(\cat)}$ and $T=(\delta_{X,Y} t_{X})_{X,Y\in\mathcal O(\cat)}$, where $\mathcal O(\cat)$ is the set of isomorphism classes of simple objects in $\cat$ (each identified with an arbitrary representative therein), and $\delta_{X,Y}$ is the Kronecker delta.
	These are called the $S$-matrix and the $T$-matrix of $\cat$, respectively.
	\begin{definition}
		A braided fusion category equipped with a spherical structure is called \emph{modular} if its $S$-matrix is non-degenerate.
	\end{definition}
	
	\begin{proposition}\cite[Proposition 3.7]{DGNO}\label{modularityCriteria}
		Let $\cat$ be a braided fusion category equipped with a spherical structure. Then, the following conditions are equivalent:
		\begin{itemize}
			\item $\cat$ is modular;
			\item $\Mug{\cat} \simeq \Vect$;
			\item $\cat$ is factorizable.
		\end{itemize}
	\end{proposition}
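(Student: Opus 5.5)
The plan is to prove the cycle of implications
\[
\text{modular} \Rightarrow \Mug{\cat}\simeq\Vect \Rightarrow \text{factorizable} \Rightarrow \text{modular}.
\]
The key tool throughout is the family of characters of the Grothendieck ring $K_0(\cat)\otimes\CC$ furnished by the braiding. For each simple $X$, the assignment $h_X(Y)=s_{X,Y}/d_X$ is a ring homomorphism $K_0(\cat)\to\KK$ (the Verlinde-type identity $s_{X,Y}s_{X,Z}=d_X\sum_W N_{YZ}^W s_{X,W}$), obtained from naturality of the double braiding together with the fact that $\End(X)=\KK$. Here $d_X$ is the categorical dimension coming from the spherical structure, and it is nonzero for simple objects in a fusion category over a field where dimensions behave well; in positive characteristic I would add a global-dimension-nonzero assumption, as is implicit in \cite{DGNO}.

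\textbf{Modular $\Rightarrow \Mug{\cat}\simeq\Vect$.} I would first show that a simple $X$ lies in $\Mug{\cat}$ if and only if $s_{X,Y}=d_Xd_Y$ for all simple $Y$. One direction is immediate, since the trace of the identity is $d_Xd_Y$. For the converse I would use the standard argument that the double braiding $c^2_{X,Y}$, restricted to each simple summand $W\subset X\otimes Y$, acts by a scalar determined by twists, namely $\theta_W/(\theta_X\theta_Y)$. Then equality of traces with $d_Xd_Y$, together with positivity or Frobenius--Perron arguments (or an argument using the balancing), forces each scalar to be $1$. Granting this, if $X\in\Mug{\cat}$ then the $X$-row of $S$ equals $d_X$ times the $\id$-row. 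Non-degeneracy of $S$ then forces $X\cong\id$. Since $\Mug{\cat}$ is a fusion subcategory whose only simple object is $\id$, we get $\Mug{\cat}\simeq\Vect$.

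\textbf{$\Mug{\cat}\simeq\Vect \Rightarrow$ modular.} I would argue contrapositively. If $S$ is degenerate, the characters $h_X$ cannot all be distinct, because the semisimple commutative algebra $K_0(\cat)\otimes\KK$ has exactly $|\mathcal O(\cat)|$ characters. The standard computation $\sum_Y s_{X,Y}s_{Y^*,Z}=\dim(\cat)\sum_W N_{XZ}^{W}\ldots$ (the Müger lemma) then produces a nontrivial simple object whose row is proportional to the unit row. By the criterion of the previous step, such an object is transparent, so $\Mug{\cat}\neq\Vect$. Concretely, I would show that $\sum_X d_X\,[X]$-type projectors built from $S$ detect the kernel of $S$ and land in the transparent objects.

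\textbf{$\Mug{\cat}\simeq\Vect \Leftrightarrow$ factorizable.} I would use the fact, which I take from \cite{EGNO}, that $\dim\Dr{\cat}=\dim(\cat)^2$ and that $D$ is a braided tensor functor from a fusion category of the same dimension. A tensor functor between fusion categories of equal dimension is an equivalence as soon as it is fully faithful, or equivalently injective on simples. One computes that the kernel of $D$, in the sense of the subcategory of $\cat\boxtimes\cat^\rev$ mapped to multiples of the unit, is controlled by $\Mug{\cat}$. Precisely, $D$ is injective exactly when $\cat$ and $\cat^\rev$ intersect trivially inside $\Dr{\cat}$. An object in both images has two half-braidings, $\sigma_{X,-}$ and $\sigma^{-1}_{-,X}$, that coincide, which is exactly transparency of $X$. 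Hence $D$ is an equivalence if and only if $\Mug{\cat}\simeq\Vect$.

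\textbf{Main obstacle.} I expect the hardest step to be the second one, showing that degeneracy of $S$ produces a transparent simple object. This requires the careful $S^2$ computation, namely that $S^2$ equals $\dim(\cat)$ times a charge-conjugation matrix restricted to the centralizer. I would carry this out via the Verlinde-type identity above and the fact that $\dim\cat\ne0$.
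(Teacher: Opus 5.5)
The paper gives no proof of this statement. It only quotes \cite[Proposition 3.7]{DGNO}, which is a characteristic-zero result, so your outline has to stand on its own. It has a genuine gap exactly where a degenerate $S$-matrix has to be turned into a nontrivial transparent object.

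Your first implication is fine, because it only uses the trivial direction, $X\in\Mug{\cat}\Rightarrow s_{X,Y}=d_Xd_Y$. The converse needs the hard direction: if $h_W=d$, i.e.\ $s_{W,Y}=d_Wd_Y$ for every simple $Y$, then $W$ is transparent. Your justification has two parts. First, $c_{Y,W}c_{W,Y}$ acts on the $V$-isotypic part of $W\otimes Y$ by $\zeta_V=\theta_V\theta_W^{-1}\theta_Y^{-1}$. Second, the single relation $\sum_V N_{WY}^V d_V\zeta_V=\sum_V N_{WY}^V d_V$ forces every $\zeta_V=1$ ``by positivity or Frobenius--Perron arguments''. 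The second part works only in the pseudo-unitary situation over $\CC$, where all $d_V>0$ and $|\zeta_V|=1$, so the equality case of the triangle inequality applies.

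In general the $d_V$ have mixed signs. For example, for either spherical structure on $\mathcal A_4$ at $q=e^{3\pi i/5}$, some simple objects have negative dimension, e.g.\ $1,\,-0.618\ldots,\,-0.618\ldots,\,1$. With three or more summands, one linear relation cannot force all the $\zeta_V$ to be $1$. Over $\overline{\FF}_p$, which is where the paper actually applies the proposition, there is no positivity at all, and Frobenius--Perron dimensions do not occur in that relation. So your M\"uger-type computation
\[
\sum_Y s_{X,Y}s_{Y,Z}=\dim(\cat)\sum_{W:\,h_W=d}N_{XZ}^{W}\,d_W
\]
only shows that a degenerate $S$ (with $\dim\cat\neq0$) yields a simple $W\not\cong\id$ with $h_W=d$, not a transparent one. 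This criterion, not the $S^2$ computation, is the real obstacle.

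The missing idea is to use the hypothesis for all $Y$ at once together with $\dim\cat\neq0$, rather than a per-pair eigenvalue argument. The standard route (M\"uger, Brugui\`eres) uses the Kirby colour $\omega=\sum_Y d_Y\,Y$:
\begin{itemize}
\item The $\omega$-loop around $W$ is $\Omega_W=\sum_Y d_Y h_W(Y)\,\Id_W=\dim(\cat)\,\Id_W$.
\item The handle-slide property of $\omega$, valid in any premodular category, lets you slide a $U$-strand over this loop. This gives $c_{U,W}c_{W,U}\circ(\Omega_W\otimes\Id_U)=\Omega_W\otimes\Id_U$.
\item Cancelling $\dim\cat$ gives $c_{U,W}c_{W,U}=\Id$.
\end{itemize}
With this lemma your second step is complete.

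There are two smaller points in the factorizability step.
\begin{itemize}
\item The claim that trivial intersection of the images of $\cat$ and $\cat^{\rev}$ in $\Dr{\cat}$ makes $D$ fully faithful needs the usual Hom computation for mutually centralizing subcategories.
\item The claim ``fully faithful plus $\dim\Dr{\cat}=(\dim\cat)^2$ implies equivalence'' uses total positivity of squared norms, so it is a characteristic-zero argument. In characteristic $p$ you can argue differently; there $\dim\cat\neq0$ does hold for $\mathcal A_{\varkappa-1}(\overline{\FF}_p)$, since $p\nmid 2\varkappa$. Check that $\bigoplus_{A,B}D(A\boxtimes B)^{\oplus N_{AB}^X}$, which is a summand of the induced object $I(X)$, has the same underlying class in $K_0(\cat)$ as $I(X)$.
\end{itemize}
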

	There are stronger variants of this theorem for non-spherical fusion categories and even for non-semisimple braided finite monoidal category \cite{Shim}; however, for our purposes, this weaker version shall suffice.
	As an immediate corollary, we have the following result:
	
	\begin{corollary}\label{modularCenter}
		For a modular tensor category $\cat$, the canonical functor $D:\cat\boxtimes \cat^{\rev}\overset{\sim}{\longrightarrow}\Dr{\cat}$ is an equivalence of braided tensor categories.
	\end{corollary}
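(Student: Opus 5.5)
This follows directly from \autoref{modularityCriteria}. A modular tensor category $\cat$ is by definition a braided fusion category with a spherical structure whose $S$-matrix is non-degenerate. The first bullet of \autoref{modularityCriteria} therefore holds, and the equivalence of the bullets gives that $\cat$ is factorizable.

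By \autoref{def:factorizable}, factorizability means exactly that the canonical functor $D:\cat\boxtimes\cat^{\rev}\to\Dr{\cat}$ is an equivalence of categories. Here $\cat\boxtimes\cat^{\rev}$ exists because $\cat$ is fusion, so Deligne's tensor product of finite semisimple categories is defined.

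The remaining point is that $D$ is an equivalence of \emph{braided tensor} categories, not merely of underlying categories. As recalled before \autoref{def:factorizable}, $D$ is constructed as a braided tensor functor \cite[Proposition 8.6.1]{EGNO}. It is assembled from the two monoidal sections $X\mapsto (X,\sigma_{X,-})$ and $X\mapsto (X,\sigma^{-1}_{-,X})$, whose images centralize each other in $\Dr{\cat}$. A braided tensor functor that is an equivalence of underlying categories is a braided tensor equivalence: a quasi-inverse inherits a monoidal structure, and that structure is automatically compatible with the braidings. This gives the claim.

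There is essentially no obstacle. The only care needed is to check that the hypotheses of \autoref{modularityCriteria} (braided, fusion, spherical) are part of the definition of modular used here, which they are.
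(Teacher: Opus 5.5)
Your proof is correct and follows the same route as the paper, which states this as an immediate corollary of \autoref{modularityCriteria}. Modularity gives factorizability, and since $D$ is already a braided tensor functor, being an equivalence of underlying categories makes it a braided tensor equivalence.
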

	
	\subsection*{Cauchy Completion}
	
		A general construction of completion exists for any pre-additive category $\mathcal A$ to an additive Karoubian category $A^c$ (i.e. an additive category where every idempotent splits), called its Cauchy completion, which we now describe.
	First, we form the additive closure of $\mathcal A$, whose objects are formal direct sums of objects in $\mathcal A$ and whose morphisms are formal matrices whose columns and rows are indexed by the summands of the domain and codomain and whose entries are given by a morphism in $\mathcal A$ between the corresponding summands; let us denote this additive envelope of $\mathcal A$ by $\mathcal A^{\oplus}$. 
	Note that if $\mathcal A$ is monoidal, then so is $\mathcal A^{\oplus}$ via the obvious formulae for distributivity of $\otimes$ over $\oplus$.
	Next, if $\mathcal B$ is any additive category, we may consider its Karoubian envelope $\mathcal B^{\mathrm{kar}}$ which formally adjoins idempotents; it is defined by 
	$$\mathrm{Ob}(\mathcal B^{\mathrm{kar}}) = \{X_e:=(X,e) \,|\, X\in \mathrm{Ob}(X), \, e\in \End_{\mathcal B}(X) \text{ with } e^2=e\},  \text{ and}$$
	$$\Hom_{\mathcal B^{\mathrm{kar}}}(X_e, Y_f) = f \,\Hom_{\mathcal B}(X,Y)\, e \subset \Hom_{\mathcal B}(X,Y).$$
	Again, if $\mathcal B$ is monoidal, then so is $\mathcal B^{\mathrm{kar}}$ via $X_e\otimes Y_f = (X\otimes Y)_{e\otimes f}$.
	Then, the \emph{Cauchy completion} of a pre-additive category $\mathcal A$ is defined as $A^c := \left(\mathcal A^{\oplus}\right)^{\mathrm{kar}}$, which is again monoidal if $\mathcal A$ is.
	It is also easy to check that if $\mathcal A$ admits braided or spherical structures, then they can be extended to $\mathcal A^c$ in the obvious way. 
	
		\begin{proposition}\cite[Corollary 4.4]{K}\label{Krull-Schmidt}
		A Karoubian $\KK$-linear category with finite dimensional morphism spaces is \emph{Krull-Schmidt}, that is every object decomposes into a unique (up to order and isomorphism) direct sum of finitely many indecomposable objects, each of which having a local endomorphism ring. 
	\end{proposition}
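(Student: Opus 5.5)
The plan is to prove the Krull--Schmidt property for a Karoubian $\KK$-linear category $\mathcal A$ with finite dimensional morphism spaces in two stages. First I establish \emph{existence} of a decomposition into finitely many indecomposables. Then I show each indecomposable has a \emph{local} endomorphism ring. Uniqueness then follows from the standard Krull--Schmidt--Azumaya argument.

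\textbf{Existence.} Fix an object $X$ and induct on $\dim_\KK \End(X)$, which is finite by hypothesis.
\begin{itemize}
\item If $X$ is indecomposable, or $X=0$, there is nothing to prove.
\item Otherwise $X\cong X_1\oplus X_2$ with both summands nonzero. Then $\End(X)$ contains $\End(X_1)\oplus\End(X_2)$ together with the nonzero identity idempotents, so $\dim\End(X_i)<\dim\End(X)$ for each $i$.
\item Induction applies to each $X_i$, yielding a finite decomposition of $X$.
\end{itemize}
More cleanly, a direct sum decomposition of $X$ into $n$ nonzero summands gives $n$ orthogonal nonzero idempotents in $\End(X)$. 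These are linearly independent, so $n\le\dim\End(X)$, and a maximal decomposition exists.

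\textbf{Locality.} Let $X$ be indecomposable and $E=\End(X)$, a finite dimensional $\KK$-algebra.
\begin{itemize}
\item Since $\mathcal A$ is Karoubian, every idempotent $e\in E$ splits, giving $X\cong \im e\oplus \im(1-e)$. Indecomposability therefore forces $E$ to have no idempotents other than $0$ and $1$.
\item For $f\in E$, Fitting's lemma applies to left multiplication by $f$ on the finite dimensional space $E$: some power $f^n$ generates an idempotent decomposition. Concretely, the chain $f^nE$ stabilizes, and one finds an idempotent $e$ in $\KK[f]$ with $fe$ invertible in $eEe$ and $f(1-e)$ nilpotent. Equivalently, work in the commutative finite dimensional algebra $\KK[f]$, which splits as a product of local algebras via primitive idempotents.
\item Since $e\in\{0,1\}$, every $f$ is either nilpotent or invertible.
\item A finite dimensional algebra in which each element is nilpotent or a unit is local. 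The nonunits are the nilpotents, and they are closed under addition: if $a,b$ are nonunits and $a+b$ were a unit $u$, then $u^{-1}a=1-u^{-1}b$. But $u^{-1}b$ is a nonunit, hence nilpotent, so $1-u^{-1}b$ is a unit, contradicting that $u^{-1}a$ is a nonunit (a nonunit by finite dimensionality, since one-sided invertibility implies invertibility).
\end{itemize}

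\textbf{Uniqueness.} Suppose $\bigoplus_i X_i\cong\bigoplus_j Y_j$ are two decompositions with local endomorphism rings. Write the identity of $X_1$ as $\sum_j p_j\iota_j$, where $p_j:X_1\to Y_j$ and $\iota_j:Y_j\to X_1$ are the components of the isomorphism and its inverse. Since $\End(X_1)$ is local, some term $\iota_jp_j$ is a unit. Then $p_j$ is a split monomorphism from $X_1$ into the indecomposable $Y_j$, hence an isomorphism. We cancel this pair: the complements are isomorphic via the usual triangular-matrix argument. Induction on the number of summands finishes the proof.

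The main obstacle is the locality step. It needs Fitting's lemma in its categorical form, which is where both hypotheses are used: idempotent splitting (Karoubian) and finite dimensionality.
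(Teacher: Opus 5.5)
Your proof is correct. The paper does not prove this proposition itself; it only cites Krause [K, Corollary 4.4].

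\textbf{Comparison with the cited route.} Krause obtains the result from the characterization ``an additive category is Krull--Schmidt if and only if idempotents split and every endomorphism ring is semiperfect.'' This is combined with the fact that finite-dimensional algebras are semiperfect: idempotents lift modulo the radical, so $1_X$ is a sum of orthogonal idempotents $e_i$ with each $e_i\End(X)e_i$ local. Splitting the $e_i$ then produces the decomposition into objects with local endomorphism rings in one step. Uniqueness comes from the Krull--Remak--Schmidt--Azumaya exchange argument.

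\textbf{Your route.} You give the elementary, self-contained version. Existence is by induction on $\dim_\KK\End(X)$. Locality of $\End(X)$ for indecomposable $X$ comes from Fitting's lemma: the absence of nontrivial idempotents forces each $\KK[f]$ to be local, so each $f$ is nilpotent or invertible. Your uniqueness step is the same Azumaya-style cancellation.

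\textbf{What each approach buys.} Yours needs nothing beyond linear algebra. The semiperfect approach gives an ``if and only if'' statement. It also applies beyond Hom-finite categories over a field, for instance Hom-finite categories over a complete local noetherian ring. There endomorphism rings need not be artinian, so your dimension induction and Fitting's lemma are unavailable.

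\textbf{Two small slips.}
\begin{itemize}
\item With $p_j\colon X_1\to Y_j$ and $\iota_j\colon Y_j\to X_1$, the identity of $X_1$ is $\sum_j\iota_j p_j$, not $\sum_j p_j\iota_j$. You use the correct order in the next sentence.
\item In the locality step, $u^{-1}a$ is a nonunit simply because $a=u\,(u^{-1}a)$. No appeal to one-sided invertibility is needed there.
\end{itemize}
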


	\begin{definition}\label{tensorCat}
	Let us call $\cat$ a \emph{tensor category} if it is a rigid monoidal idempotent-complete $\KK$-linear category  such that morphism spaces are finite dimensional and $\End(\id)$ is $1$-dimensional as vector spaces over $\KK$.
\end{definition}

\begin{remark}
The notion of a tensor category we use is weaker than the more standard definition of \cite{EGNO} as we have weakened the condition of being abelian to being merely Karoubian, which suits our purposes better.	
\end{remark}
	
	The operations of Cauchy completion and Drinfeld center intertwine in the following sense:
	
	\begin{proposition}\cite[Proposition A.1]{FL}\label{Kar-Dr}
		The embedding of $\Dr{\cat}$ as a full subcategory in $\Dr{\cat^c}$ given by $(X,\varphi)\mapsto(X,\varphi')$, where $\varphi':X\otimes - \implies - \otimes X$ extends $\varphi$ via
		$$\varphi'_{Y_e} = (e\otimes \Id_X) \circ  \varphi_Y \circ (\Id_X \otimes e) = (e\otimes \Id_X) \circ  \varphi_Y  = \varphi_Y \circ (\Id_X \otimes e)$$
		induces a braided monoidal fully faithful functor $\Dr{\cat}^c\to \Dr{\cat^c}$.
	\end{proposition}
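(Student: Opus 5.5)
We must show that the assignment $(X,\varphi)\mapsto(X,\varphi')$ is well defined, and that it gives a fully faithful braided monoidal functor $\Dr{\cat}\to\Dr{\cat^c}$. We then extend it to $\Dr{\cat}^c$ using the fact that $\Dr{\cat^c}$ is Karoubian.

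\textbf{Step 1: extending the half-braiding.} First extend $\varphi$ additively to $\cat^{\oplus}$. On a formal sum $Y=\bigoplus_i Y_i$, take $\varphi_Y$ to be the diagonal matrix with entries $\varphi_{Y_i}$. Naturality of $\varphi$ with respect to the morphisms of $\cat$ (the matrix entries) gives naturality on $\cat^{\oplus}$. The hexagon holds entrywise because $\otimes$ distributes over $\oplus$ componentwise.

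Next pass to idempotents. For $Y_e$ with $e^2=e$, naturality of $\varphi$ in $Y$ applied to $e$ gives
$$(e\otimes\Id_X)\circ\varphi_Y=\varphi_Y\circ(\Id_X\otimes e).$$
This yields the stated equalities, and shows that $\varphi'_{Y_e}$ is a morphism $X\otimes Y_e=(X\otimes Y)_{\Id\otimes e}\to(Y\otimes X)_{e\otimes\Id}$ in $\cat^c$. Its inverse is $(\Id_X\otimes e)\circ\varphi_Y^{-1}\circ(e\otimes\Id_X)$, and the composites are the identities $e\otimes\Id$ and $\Id\otimes e$ of the corresponding objects. Naturality for $g\in f\Hom(Y,Z)e$ follows from naturality of $\varphi$ together with $g=fge$. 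The hexagon for $Y_e,Z_f$ is the hexagon for $Y,Z$ pre- and post-composed with the appropriate idempotents, because $\varphi'_{Y_e\otimes Z_f}=\varphi'_{(Y\otimes Z)_{e\otimes f}}$.

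\textbf{Step 2: functor properties.} A morphism $f:(X,\varphi)\to(X',\psi)$ in $\Dr{\cat}$ remains a morphism after extension, since the compatibility square for $Y_e$ is the square for $Y$ composed with idempotents. Conversely, restricting to objects $Y\in\cat$ (where $e=\Id$) recovers the original condition. Hence the functor is fully faithful; this is the key point, since $\cat\subset\cat^c$ with $\varphi'$ restricting to $\varphi$. Monoidality is strict. The tensor-product half-braiding formula in Definition \ref{DrCenter} is built from components of $\varphi,\varphi'$ and associators, and the extension of the tensor product agrees with the tensor product of the extensions: both equal $\varphi\otimes\psi$ at $Y$ sandwiched by $e$. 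The braidings match because both are given by $\varphi_{X'}$ with $X'\in\cat$.

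\textbf{Step 3: passing to $\Dr{\cat}^c$.} $\Dr{\cat^c}$ is additive, since direct sums of half-braided objects carry the diagonal half-braiding. It is also Karoubian: given an idempotent $p$ on $(X,\varphi)$ in $\Dr{\cat^c}$, its image $X_p$ exists in $\cat^c$, and because $p$ commutes with the half-braiding, the restriction $(p\otimes\Id)\varphi(\Id\otimes p)$ is a half-braiding on $X_p$. The universal property of the Cauchy completion then extends the fully faithful functor of Step 2 to a fully faithful additive functor $\Dr{\cat}^c\to\Dr{\cat^c}$. This extension is braided monoidal because the monoidal and braided structures on both completions are defined by the obvious extension.

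The main obstacle is bookkeeping rather than substance. The hexagon verification with suppressed associators in $\cat^c$ must check that every idempotent-sandwiching is consistent, and this rests on the single identity $(e\otimes\Id_X)\varphi_Y=\varphi_Y(\Id_X\otimes e)$.
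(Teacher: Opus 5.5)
Your argument is correct. The paper gives no proof of its own and simply cites \cite[Proposition A.1]{FL}; your verification is the standard one: extend $\varphi$ additively and then by idempotent-sandwiching via naturality, get full faithfulness by restricting to $Y\in\cat$, and pass to $\Dr{\cat}^c$ using that $\Dr{\cat^c}$ is additive and Karoubian. One small slip: in Step 3 the half-braiding on $X_p$ should be $(\Id_Y\otimes p)\circ\varphi_Y\circ(p\otimes\Id_Y)$, since $\varphi_Y\colon X\otimes Y\to Y\otimes X$.
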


\subsection*{Semisimplicity and Semisimplification} 
Recall that a tensor category is \emph{semisimple} if it is abelian and every object decomposes as a direct sum of simple objects. 
For sufficiently nice non-semisimple tensor categories, a well-known semisimplification procedure exists (which was first considered in \cite{RT} and later studied in abstractly in \cite{BW}).
Recall that if $\cat$ is a spherical tensor category with spherical structure $\alpha_X:X\to X^{**}$, the quantum trace of $f:X\to X$ is defined as 
	$$\Tr_{\cat}(f): \id \xrightarrow{\textrm{coev}_X} X\otimes X^* \xrightarrow{f\otimes \Id_X} X\otimes X^*\xrightarrow{\alpha_{X}\otimes X} X^{**}\otimes X^* \xrightarrow{\mathrm{ev}_X} \id.$$
	The \emph{quantum dimension} of an object $X$ is $\dim_q(X):= \Tr(\Id_X)$.
	
	\begin{definition}
		A morphism $f:X\to Y$ in a spherical category $\cat$ is called \emph{negligible} if for every morphism $g:Y\to X$, we have $\Tr(g\circ f) = 0$. An object $X$ is said to be \emph{negligible} if $\Id_X$ is a negligible morphism.
	\end{definition}

\begin{definition}
	A spherical tensor category $\cat$ is \emph{tractable} if $\Tr(f) = 0$ for any nilpotent endomorphism in $\cat$. 
\end{definition}

The following result is well-known, with a full proof available in \cite[Proposition 2.4]{EO2}.

\begin{proposition}\label{abelianTractable}
	An abelian spherical tensor category is tractable.
\end{proposition}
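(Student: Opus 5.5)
The plan is to reduce tractability to the vanishing of the trace of a nilpotent endomorphism in an abelian rigid category. I would do this by filtering the object so that the nilpotent map becomes "strictly upper triangular", and then using additivity of the trace on short exact sequences. Let $f\colon X\to X$ be nilpotent with $f^n=0$, and consider the filtration
\[
0=\ker f^0\subset \ker f\subset \ker f^2\subset\cdots\subset \ker f^n = X .
\]
Each subobject $\ker f^k$ is preserved by $f$. Moreover, $f$ maps $\ker f^{k}$ into $\ker f^{k-1}$, so the map induced by $f$ on each subquotient $\ker f^k/\ker f^{k-1}$ is zero.

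The key lemma is additivity of the quantum trace. Suppose
\[
0\to A\xrightarrow{i} B\xrightarrow{p} C\to 0
\]
is a short exact sequence in $\cat$, and $g\colon B\to B$ preserves $A$, inducing $g_A\colon A\to A$ and $g_C\colon C\to C$. Then
\[
\Tr(g)=\Tr(g_A)+\Tr(g_C).
\]
Granting the lemma, induction on $k$ gives $\Tr(f|_{\ker f^k})=\sum_{j\le k}\Tr(\bar f_j)$, where $\bar f_j$ is the map induced on the $j$-th subquotient. Since every $\bar f_j=0$, we get $\Tr(f)=0$, which is tractability.

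To prove the lemma I would proceed in three steps.
\begin{enumerate}
\item In a rigid abelian category $\otimes$ is biexact, since tensoring with an object has both adjoints. So exactness is preserved after tensoring with $B^*$, $A^*$, $C^*$.
\item Write $\Tr(g)=\mathrm{ev}\circ(\alpha\otimes 1)\circ (g\otimes 1)\circ \mathrm{coev}_B$. The aim is to decompose $\mathrm{coev}_B$ modulo the filtration $A\otimes B^*$, and $\mathrm{ev}$ through its dual. The cleanest route is the standard argument of \cite{EO2}: consider the endomorphism $g\otimes 1$ of $B\otimes B^*$, filtered by the subobject generated by $A\otimes B^*$ and the image of $B\otimes C^*$. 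Then use naturality of coevaluation and evaluation with respect to $i$ and $p$ to identify the two graded pieces of the trace with $\Tr(g_A)$ and $\Tr(g_C)$.
\item Concretely, use $\mathrm{coev}_B$ composed with $p\otimes 1$ equals $(1\otimes p^*)\circ \mathrm{coev}_C$, and the dual identity for $i$.
\end{enumerate}
Equivalently, one could reduce the lemma to the case $g=\mathrm{id}$ with $g$ replaced by a general endomorphism, using $\Tr(g)=\Tr$ of the composite $\mathbf 1\to B\otimes B^*\to \mathbf 1$. One would then note that this composite is a morphism $\mathbf 1\to \mathbf 1$, computed via the induced filtration on $B\otimes B^*$ with graded pieces $A\otimes A^*$, $C\otimes C^*$, and cross terms $A\otimes C^*$, $C\otimes A^*$ that contribute nothing.

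The main obstacle is this additivity step. The cross terms must genuinely vanish, and the spherical structure $\alpha$ must be compatible with the filtration. The latter holds by naturality of $\alpha$ and $\alpha_A^{**}$-compatibility: $i^{**}\circ\alpha_A=\alpha_B\circ i$. If that bookkeeping becomes messy, I would instead cite the proof in \cite[Proposition 2.4]{EO2} for additivity and keep only the filtration argument above as the proof of the proposition.
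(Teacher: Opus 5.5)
Your argument is correct and is essentially the standard proof behind the citation the paper relies on. The paper gives no proof of its own and defers to \cite[Proposition 2.4]{EO2}, whose argument is the same as yours: filter $X$ by $\ker f^k$ so that $f$ induces zero on every subquotient, then conclude by additivity of traces along short exact sequences in a rigid abelian pivotal category.

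Your sketch of the additivity lemma is loose, and the ``reduce to $g=\mathrm{id}$'' sentence is garbled. Still, the ingredients you name are enough. The key object is the subobject $A\otimes B^*+B\otimes C^*=\ker(p\otimes i^*)$ of $B\otimes B^*$. The map $(g\otimes 1)\circ\mathrm{coev}_B$ factors through it because $pgi=0$. On this subobject the evaluation factors through $A\otimes A^*\oplus C\otimes C^*$ by naturality of coevaluation, evaluation and $\alpha$. The lemma is in any case standard.
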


Note that a spherical tensor category that contains a negligible morphism cannot possibly be semisimple, since a negligible morphism between indecomposables must lie in the Jacobson radical.
indeed, if $f:X\to Y$ is such a morphism, then, for any $g:Y\to X$, we get $g\circ f$ being not invertible, for otherwise $X$ would be a summand of $Y$. 
It follows then that $g\circ f$ must lie in the Jacobson radical of $\End(X)$ since it is outside its \emph{unique} maximal ideal ($X$ being indecomposable implies that $\End(X)$ is a local ring).

Recall that a tensor ideal $\mathcal I$ in a tensor category $\cat$ is a collection of subspaces $\mathcal I(X,Y)\subset \Hom(X,Y)$ for all objects $X$ and $Y$ in $\cat$ such that if $f$ lies in one of these subspaces, then so does $f\circ g$, $g\circ g$, $f\otimes g$, and $g\otimes f$ for all morphisms $g$ in $\cat$.

\begin{lemma}\label{functorTractable}
		Suppose $\cat$ is a spherical category admitting a functor $F:\cat\to\mathcal A$ to a tractable category $\mathcal A$; and suppose further that $F$ intertwines the spherical structures of $\cat$ and $\mathcal A$.
		Then, $\cat$ is also tractable.
\end{lemma}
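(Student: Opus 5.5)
The proof is a direct transport of traces along $F$. I assume $F$ is a $\KK$-linear monoidal functor (as is implicit in the phrase ``intertwines the spherical structures''). Being monoidal, $F$ preserves duals, evaluation and coevaluation maps up to the canonical coherence isomorphisms. Intertwining the spherical structures means that $F(\alpha_X)$ agrees with $\alpha_{F(X)}$ under the identification $F(X^{**})\cong F(X)^{**}$.

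The first step is to show that $F$ preserves quantum traces. That is, for every endomorphism $f:X\to X$ in $\cat$,
$$F(\Tr_{\cat}(f)) = \Tr_{\mathcal A}(F(f)),$$
where $F(\Tr_{\cat}(f))\in\End_{\mathcal A}(F(\id))\cong\End_{\mathcal A}(\id)=\KK$. Both traces are scalars, and $F$ acts on $\End(\id)=\KK$ as the identity by linearity, since $F(\Id_{\id})=\Id_{\id}$. So the identity reads $\Tr_\cat(f)=\Tr_{\mathcal A}(F(f))$ as elements of $\KK$. To prove it, apply $F$ to the composite defining $\Tr_\cat(f)$, namely $\mathrm{ev}_X\circ(\alpha_X\otimes \Id)\circ(f\otimes\Id)\circ\mathrm{coev}_X$. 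Using the monoidal structure maps of $F$, each factor becomes the corresponding factor for $F(X)$. The coherence isomorphisms cancel in pairs, and the spherical compatibility handles the $\alpha$ term. This step is the only place where any bookkeeping is needed, and it is the main (mild) obstacle: one must check that the coherence isomorphisms $F(X^*)\cong F(X)^*$ and $F(X^{**})\cong F(X)^{**}$ are compatible with ev, coev and $\alpha$. This compatibility is standard for monoidal functors between rigid categories, together with the assumed compatibility of pivotal structures.

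Given trace preservation, tractability of $\cat$ is immediate. Let $f\in\End_\cat(X)$ be nilpotent, say $f^n=0$. Then $F(f)^n=F(f^n)=0$, so $F(f)$ is a nilpotent endomorphism in $\mathcal A$. Since $\mathcal A$ is tractable, $\Tr_{\mathcal A}(F(f))=0$, and hence $\Tr_\cat(f)=\Tr_{\mathcal A}(F(f))=0$. Therefore $\cat$ is tractable.

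Note that faithfulness of $F$ is not needed: the trace is a scalar, and scalars are detected by $F$ automatically, because $F$ fixes $\End(\id)=\KK$.
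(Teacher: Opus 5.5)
Your proposal is correct and follows the same approach as the paper. The paper's proof is the one-line observation $\Tr_\cat(f)=\Tr_{\mathcal A}(Ff)=0$ because $Ff$ is nilpotent; you additionally spell out the trace-preservation step (monoidal $F$ intertwining duality data and the spherical structures, and acting as the identity on $\End(\id)=\KK$), which the paper takes for granted.
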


\begin{proof}
	Given a nilpotent endomorphism $f$ in $\cat$, $\Tr_\cat(f) = \Tr_{\mathcal A}(Ff) = 0$ since $Ff$ is nilpotent. 
\end{proof}

\begin{example}
The collection of all negligible morphisms in a spherical tensor category forms a tensor ideal, $\mathcal N$, called the negligible ideal.	
\end{example}

We can form a quotient $\cat/\mathcal I$ of a tensor category $\cat$ by a tensor ideal $\mathcal I$ defined to be the category with the same objects as $\cat$ and 
$$\Hom_{\cat/\mathcal I}(X,Y) := \Hom_\cat(X,Y)/\mathcal I(X,Y)$$

It turns out, for tractable tensor categories, the existence of negligible morphisms is the only obstruction to semisimplicity.

\begin{theorem}\cite[Theorem 2.6]{EO2}\label{tractable-semisimplification}
Let $\cat$ be a tractable tensor category over an algebraically closed field, and let $\mathcal N$ be its negligible ideal. The category $\cat/\mathcal N$ is semisimple and its simple objects are the indecomposables of $\cat$ with non-zero dimension.
\end{theorem}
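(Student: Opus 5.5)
We take $\mathcal N$ to be the negligible ideal and show directly that $\cat/\mathcal N$ is semisimple. By Proposition~\ref{Krull-Schmidt}, every object of $\cat$ is a finite direct sum of indecomposables with local endomorphism rings. The quotient functor is additive and identity on objects, so it suffices to understand the images of indecomposables and the Hom spaces between them.

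\textbf{Step 1: endomorphisms of an indecomposable.} Let $X$ be indecomposable. Since $\End(X)$ is finite dimensional and local and $\KK$ is algebraically closed, $\End(X)=\KK\Id_X\oplus J$, where the Jacobson radical $J$ consists of nilpotent elements. For $j\in J$ and any $g\in\End(X)$, the element $gj$ lies in $J$ and is nilpotent, so tractability gives $\Tr(gj)=0$. Hence $J\subset\mathcal N(X,X)$. It follows that $\End_{\cat/\mathcal N}(X)$ is $\KK$ if $\Tr(\Id_X)=\dim_q X\neq 0$, and $0$ otherwise, since then $\Tr(\lambda+j)=\lambda\dim_q X=0$ for every endomorphism. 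Thus negligible indecomposables become zero, and indecomposables of nonzero dimension have one-dimensional endomorphism algebra.

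\textbf{Step 2: Hom between non-isomorphic indecomposables.} Let $X\not\cong Y$ be indecomposables and $f:X\to Y$. For any $g:Y\to X$, the composite $gf$ is not invertible; otherwise $X$ would be a summand of $Y$, so $X\cong Y$. Hence $gf\in J(\End X)$ is nilpotent and $\Tr(gf)=0$, so $f$ is negligible. Therefore $\Hom_{\cat/\mathcal N}(X,Y)=0$.

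\textbf{Step 3: conclusion.} In $\cat/\mathcal N$, every object is a finite direct sum of the images of the non-negligible indecomposables. These have $\End=\KK$ and pairwise zero Homs, so Hom spaces become matrix algebras over $\KK$ in the obvious way. An additive category of this form is equivalent to a category of finitely many copies of $\Vect$ indexed by isomorphism classes, and is therefore abelian and semisimple with the stated simples.

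The one subtle point, and the main obstacle, is idempotent completeness of the quotient: one needs $\cat/\mathcal N$ to be Karoubian so that every object really is such a direct sum, with no new idempotents. Here this follows from Step 3 itself, since endomorphism algebras are products of matrix algebras $M_n(\KK)$, whose idempotents are conjugate to diagonal ones and therefore already split via the Krull--Schmidt decomposition. Alternatively, one can lift idempotents modulo the nil ideal $\mathcal N(X,X)$: on each indecomposable block it lies in the radical, and idempotents lift modulo the radical of a finite dimensional algebra.
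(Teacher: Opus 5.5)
Your argument is correct and is essentially the standard proof of \cite[Theorem 2.6]{EO2}, which the paper only cites and does not reprove: tractability forces the nilpotent radical of each local $\End(X)$ to be negligible, and maps between non-isomorphic indecomposables are negligible because every composite back to $X$ is non-invertible, hence in the radical. Two harmless slips: first, the quotient is a direct sum of possibly infinitely many copies of $\Vect$, not finitely many. Second, your alternative remark is inaccurate, since $\mathcal N(X,X)$ contains identities of zero-dimensional summands and so is not nil; no lifting argument is needed anyway, because Karoubianity of $\cat/\mathcal N$ follows directly from the equivalence in Step 3.
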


\begin{definition}
	The \emph{semisimplification} of a tractable tensor category $\cat$ to be $\overline\cat:=\cat/\mathcal N$, i.e. its quotient by its negligible ideal.
\end{definition}

Let $m_\cat(X,Y)$ denote the summand multiplicity of in indecomposable object $X$ in some object $Y$ of $\cat$.

	\begin{lemma}\label{multiplicitiesLemma}
	If $X, Y, $ and $Z$ are indecomposable objects of nonzero dimension in a tractable $\KK$-linear tensor category with $\KK$ an algebraically closed field, then $m_\cat(X^*, Y\otimes Z) = m_\cat(Z^*, X\otimes Y)$. 
	\end{lemma}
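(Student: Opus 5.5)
The plan is to pass to the semisimplification $\overline\cat=\cat/\mathcal N$ and use the usual rotation of Hom spaces there. By \autoref{tractable-semisimplification}, $\overline\cat$ is semisimple. Its simple objects are the images of the indecomposables of $\cat$ with nonzero dimension.

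\textbf{Step 1: multiplicities of non-negligible summands survive.} Let $X$ be indecomposable with $\dim X\neq 0$ and $W$ arbitrary. I claim $m_\cat(X,W)=\dim_\KK \Hom_{\overline\cat}(X,W)$. Decompose $W=\bigoplus_i W_i$ into indecomposables using \autoref{Krull-Schmidt}. Then $\Hom_{\overline\cat}(X,W)=\bigoplus_i \Hom_{\overline\cat}(X,W_i)$, so it suffices to treat a single indecomposable $W_i$.
\begin{itemize}
\item If $W_i\cong X$, then $\End_{\overline\cat}(X)$ is $\End(X)$ modulo negligible endomorphisms. Since $\End(X)$ is local and $\KK$ is algebraically closed, every element has the form $\lambda\Id_X+n$ with $n$ nilpotent. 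Tractability gives $\Tr(n)=0$, and nilpotents in a local ring compose to nilpotents, so $n$ is negligible. As $\Id_X$ is not negligible, $\End_{\overline\cat}(X)\cong\KK$.
\item If $W_i\not\cong X$, take $f:X\to W_i$ and any $g:W_i\to X$. Then $g\circ f$ is not invertible, otherwise $X$ would be a summand of $W_i$, contradicting indecomposability of $W_i$. Hence $g\circ f$ is nilpotent, so $\Tr(g\circ f)=0$ and $f$ is negligible.
\end{itemize}
So each copy of $X$ contributes exactly one dimension and every other summand contributes zero, which gives the count. The same argument applied to $\Hom_{\overline\cat}(W,X)$ gives the analogous count.

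\textbf{Step 2: rotate.} Duality is preserved by the quotient, so $X^*$ and $Z^*$ are indecomposable of nonzero dimension; the dimension of a dual equals the dimension of the object by sphericality. Step 1 then gives
\[
m(X^*,Y\otimes Z)=\dim\Hom_{\overline\cat}(X^*,Y\otimes Z), \qquad m(Z^*,X\otimes Y)=\dim\Hom_{\overline\cat}(Z^*,X\otimes Y).
\]
Rigidity gives a chain of natural isomorphisms
\[
\Hom(X^*,Y\otimes Z)\cong \Hom(X^*\otimes Z^*,Y)\cong \Hom(Z^*,X\otimes Y).
\]
This chain holds in $\cat$. It descends to $\overline\cat$ because the adjunction isomorphisms are built from tensoring with identities and composing with evaluation and coevaluation, and $\mathcal N$ is a tensor ideal. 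So these maps and their inverses preserve negligible morphisms, and the isomorphisms induce isomorphisms on the quotients.

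\textbf{Expected obstacle.} The main care point is Step 1, specifically showing that non-invertible endomorphisms of an indecomposable are negligible. The key input is that in a finite-dimensional local algebra over an algebraically closed field, the radical consists of nilpotents. Combined with tractability and the radical being an ideal, every $h\circ r$ with $r$ radical is again nilpotent and hence traceless. The rest is formal.
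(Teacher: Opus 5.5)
Your proof is correct and follows the paper's route: pass to $\overline\cat$, rotate the Hom spaces using rigidity (the rotation isomorphisms descend because $\mathcal N$ is a tensor ideal), and check that multiplicities of nonzero-dimension indecomposables do not change. The one difference is in that last step. You count $\dim\Hom_{\overline\cat}(X,W)$ summand by summand using the Krull--Schmidt decomposition, whereas the paper lifts split pairs $ba=\Id$ back and forth. Your version has the advantage of giving the full multiplicity count, not just detecting whether a summand exists.
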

	
	\begin{proof}
			First, we observe that the claim holds when $\cat$ is semisimple, by Schur's Lemma. Indeed, passing to the semisimplification, we see that 
				\[
				m_{\overline\cat}(\overline{X}^*, \overline{Y}\otimes\overline{Z}) = \dim\Hom_{\overline\cat}(\overline{X}^*, \overline{Y}\otimes\overline{Z}) =\dim\Hom_{\overline\cat}(\overline{Z}^*, \overline{X}\otimes\overline{Y}) = m_{\overline\cat}(\overline{Z}^*, \overline{X}\otimes\overline{Y}),
				\]
It remains to argue that multiplicities did not change upon passing to the semisimplification. Indeed, a summand of $X^*$ in $Y\otimes Z$ corresponds to morphisms $a:X^*\to Y\otimes Z$ and $b:Y\otimes Z\to X^*$ in $\cat$ satisfying $ba = \Id_{X^*}$. 
Passing to the semisimplification, we still have $\overline{b}\overline{a} = \Id_{\overline{X}^*}$ since none of these morphisms are negligible (otherwise $\dim(X)=0$).  
Conversely, if $a:\overline{X}^*\to \overline{Y}\otimes \overline{Z}$ and $b:\overline{Y}\otimes \overline{Z}\to \overline{X}^*$ in $\overline{\cat}$ satisfy $ba = \Id_{\overline{X}^*}$, we can lift them to some $\tilde{a}$ and $\tilde{b}$ in $\cat$ satisfying $\tilde b\tilde a = \Id_{X^*} + g$, where $g$ is negligible. But if $\tilde b\tilde a$ non-invertible, then it lies in the Jacobson radical of the local ring $\End_\cat(X^*)$, and hence negligible. That would imply that $Id_{X^*}$ is negligible, so that $\dim X = 0$. So $\tilde b \tilde a$ is invertible and $X^*$ is a summand of $Y\otimes Z$.
	\end{proof}

	\subsection*{Deligne's Tensor Product of Tensor Categories}
	Given two finite abelian $\KK$-linear categories ${\mathcal A}$ and $\mathcal B$ over a perfect field $\KK$, Deligne \cite[Section 5]{D1} showed the existence of an abelian $\KK$-linear category ${\mathcal A}\boxtimes \mathcal B$ and a bifunctor $\boxtimes: \mathcal A\times \mathcal B \to {\mathcal A}\boxtimes \mathcal B$ which is right-exact in both ${\mathcal A}$ and $\mathcal B$ and is universal with this property; i.e. for any bifunctor $F:\mathcal A\times\mathcal B\to \mathcal C$ that is right-exact in both ${\mathcal A}$ and $\mathcal B$, there exists a unique right-exact functor $\tilde F: \mathcal A\boxtimes \mathcal B \to \mathcal C$ satisfying $F = \tilde F \circ \boxtimes$. 
	Later, López Franco \cite{ILF} extended Deligne's construction to all finitely cocomplete (for example, semisimple) abelian $\KK$-linear categories by showing that Deligne's construction coincides with Kelly's tensor product \cite{Kel}.
	
	When the categories ${\mathcal A}$ and $\mathcal B$ are both semisimple, their Deligne tensor product ${\mathcal A}\boxtimes \mathcal B$ is again semisimple. 
	The simple objects of ${\mathcal A}\boxtimes \mathcal B$ are exactly those of the form $X\boxtimes Y$ where $X$ and $Y$ are simple objects in ${\mathcal A}$ and $\mathcal B$, respectively. 
	When the categories ${\mathcal A}$ and $\mathcal B$ are monoidal with associator $\alpha^{\mathcal A}$ and $\alpha^{\mathcal B}$, their product ${\mathcal A}\boxtimes \mathcal B$ has a natural monoidal structure given by
	$$(X\boxtimes Y)\otimes_{{\mathcal A}\boxtimes \mathcal B}(X'\boxtimes Y') = (X\otimes_{\mathcal A} Y) \boxtimes (X'\otimes_{\mathcal B} Y')$$
	and similarly on morphisms, with associators $$\alpha^{{\mathcal A}\boxtimes \mathcal B}_{X\boxtimes Y, X'\boxtimes Y', X''\boxtimes Y''} = \alpha^{{\mathcal A}}_{X, X', X''}  \boxtimes \alpha^{\mathcal B}_{Y, Y', Y''}.$$ 
	Moreover, when $F: \mathcal A\times \mathcal B\to \cat$ has a monoidal structure, the canonical functor $\tilde F:\mathcal A\boxtimes \mathcal B\to \cat$ admits an obvious monoidal structure.
	Similarly, if $\sigma^{\mathcal A}$ and $\sigma^{\mathcal B}$ are braidings for ${\mathcal A}$ and $\mathcal B$, then ${\mathcal A}\boxtimes \mathcal B$ is braided via $\sigma^{{\mathcal A}\boxtimes \mathcal B} = \sigma^{\mathcal A}\boxtimes \sigma^{\mathcal B}$. 
	Duals and spherical structures exist for ${\mathcal A}\boxtimes \mathcal B$ whenever they do for ${\mathcal A}$ and $\mathcal B$ and are again defined component-wise.

	\begin{remark}
		In the non-semisimple case, the construction is more subtle, but we will only use the semisimple case in this paper. See \cite[Section 5]{D1} and \cite{ILF} for more general cases.
	\end{remark}

	\subsection*{Grading Twists} Suppose $\cat=\bigoplus_{g\in G} \cat_g$ is a $G$-graded monoidal category with a braiding $\sigma$, for some group $G$ (which is necessarily abelian since $\cat$ is braided). 
	Let $\gamma:G\times G \to \KK^\times$ be a bicharacter of $G$, i.e. a function satisfying 
	$$\gamma(g_1g_2, g_3) = \gamma(g_1,g_3)\gamma(g_2,g_3) \quad \text{and}\quad \gamma(g_1,g_2g_3) = \gamma(g_1,g_2)\gamma(g_1,g_3),$$
	for each $g_1,g_2,g_3\in G$.
	One can introduce new braided monoidal category $\cat^\gamma$, which is the same as $\cat$ as a monoidal category, but where the braiding $\sigma$ is twisted by $\gamma$ as follows: $\sigma^\gamma_{X_1,X_2} = \gamma(g_1, g_2) \sigma_{X_1,X_2}$ whenever $X_i\in \cat_{g_i}$ for $i=1,2$. 
	It is easy to check that this will still satisfy the hexagon axioms for a braiding.
	\begin{example}
		The category $\Vect_{\ZZ/2\ZZ}^\gamma$ of $\ZZ/2\ZZ$-graded vector spaces where the braiding is twisted by the nontrivial bicharacter of $\ZZ/2\ZZ$ given by $\gamma(g,h) = (-1)^{gh}$ is braided equivalent to the category of super vector spaces. 
	\end{example}
	
	The associator $\alpha$ of a $G$-graded monoidal category $\cat$ may be twisted by a $3$-cocycle $\omega\in Z^3(G,\KK^\times)$, i.e. a function $\omega:G\times G\times G\to\KK^\times$ satisfying 
	$$\omega(g_1,g_2,g_3)\omega(g_1,g_2g_3,g_4)\omega(g_2,g_3,g_4) = \omega(g_1g_2,g_3,g_4)\omega(g_1,g_2,g_3g_4),$$
	for all $g_1,g_2,g_3, g_4\in G$. This gives a new monoidal category, denoted $\cat_\omega$ whose associator is given by $\alpha^\omega_{X_1,X_2,X_3} = \omega(g_1,g_2,g_3)\alpha_{X_1,X_2,X_3}$ where $X_i\in \cat_{g_i}$ for $i=1,2,3$.
	
	More generally, one may twist both the associator and the braiding of $\cat$ simultaneously by an abelian 3-cocyle $(\omega, \gamma)\in Z_{ab}^3(G, \KK^{\times})$, i.e. functions
	$$\omega: G\times G\times G\to \KK^\times \quad \text{and}\quad \gamma:G\times G\to \KK^\times$$
	satisfying 
	$$\omega(g_1,g_2,g_3)\omega(g_1,g_2g_3,g_4)\omega(g_2,g_3,g_4) = \omega(g_1g_2,g_3,g_4)\omega(g_1,g_2,g_3g_4),$$
	$$\omega(g_2,g_3,g_1)\gamma(g_1,g_2g_3)\omega(g_1,g_2,g_3)=\gamma(g_1,g_3)\omega(g_2,g_1,g_3)\gamma(g_1,g_2), \text{and}$$
	$$\omega(g_3,g_1,g_2)^{-1}\gamma(g_1g_2,g_3)\omega(g_1,g_2,g_3)^{-1} = \gamma(g_1,g_3)\omega(g_1,g_3,g_2)^{-1}\gamma(g_2,g_3)$$
	for all $g_1,g_2,g_3,g_4\in G$.
	Given such a cocycle $(\omega,\gamma)$ and a braided monoidal category $\cat$ with associator $\alpha$ and braiding $\sigma$, one defines the twisted associator and braiding via $$\alpha^\omega_{X_1,X_2, X_3}:= \omega(g_1,g_2,g_3)\alpha_{X_1,X_2,X_3} \quad \text{and}\quad \sigma_{X_1, X_2}^\gamma := \gamma(g_1,g_2)\sigma_{X_1,X_2}$$
	when $X_i\in \cat_{g_i}$ for $i=1,2,3$. We denote the resulting category $\cat^{(\omega,\gamma)}$. Note that $\cat^{(1,\gamma)}=\cat^\gamma$. 
	
	This construction was given by Joyal and Street \cite{JS} in the case of pointed categories.
	A generalization of this twisting procedure is the so-called zesting construction, \cite{Zest}.
	
	Observe that if $\cat$ is $G$-graded, its Drinfeld center $\Dr{\cat}$ inherits a $G$ grading via its forgetful functor to $\cat$, i.e. the underlying object in $\cat$ determines the graded component of an object in $\Dr{\cat}$.
	Grading twists are compatible with taking Drinfeld centers in the following sense.
	
	\begin{proposition}\label{TwistedCenter}
		Suppose $\cat$ a monoidal category with a $G$-grading and $(\omega,\gamma)\in Z_{\mathrm{ab}}^3(G,\KK^\times)$ is a $3$-cocycle on $G$.
		There is a braided tensor equivalence $F:\Dr{\cat_\omega}\overset{\sim}{\longrightarrow}\Dr{\cat}^{(\omega,\gamma)}$ given on objects by $(X,\varphi)\mapsto (X,\varphi^\gamma)$, where $\varphi^\gamma_Y = \gamma(g,h)\cdot\varphi_X$ for $X\in\cat_g$ and $Y\in\cat_h$.
	\end{proposition}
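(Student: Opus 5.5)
The plan is to verify directly that the assignment $(X,\varphi)\mapsto (X,\varphi^\gamma)$, with $\varphi^\gamma_Y=\gamma(g,h)\varphi_Y$ for $X\in\cat_g$, $Y\in\cat_h$ (extended additively over homogeneous components), is well defined, functorial, fully faithful, essentially surjective, monoidal and braided. Throughout I will work with homogeneous objects and extend by additivity; an arbitrary object of $\cat$ is a direct sum of its graded components, and a half-braiding restricts accordingly because $\varphi$ is natural and respects the direct sum decomposition in the second variable.

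\textbf{Step 1: well-definedness.} Take a half-braiding $\varphi$ for $X$ relative to the associator $\alpha^\omega$ of $\cat_\omega$. The hexagon in $\cat_\omega$ for $Y\in\cat_h$, $Z\in\cat_k$ involves three associators, with scalars $\omega(g,h,k)$, $\omega(h,k,g)$ and $\omega(h,g,k)$. Rescaling $\varphi$ by $\gamma$ changes the two sides by $\gamma(g,hk)$ and $\gamma(g,h)\gamma(g,k)$, respectively. These should be precisely the scalars needed to satisfy the hexagon with respect to the associator of $\Dr{\cat}^{(\omega,\gamma)}$, which comes from the center of $\cat$ twisted by $\omega$, so the comparison reduces to one of the two abelian cocycle identities listed above. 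The crucial and error-prone point here is bookkeeping: deciding which associator (that of $\cat$ or of $\cat_\omega$) the target half-braiding must satisfy the hexagon for, and which identity (with $\gamma(g_1,g_2g_3)$ or with $\gamma(g_1g_2,g_3)$) matches. I expect this to be the main obstacle, and the statement may need reading as "$\varphi^\gamma$ satisfies the hexagon of the twisted structure", possibly with $\gamma$ versus $\gamma^{-1}$ adjusted. I would first write out both hexagons with all scalars explicit for homogeneous $X,Y,Z$ and match them against the first abelian cocycle identity.

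\textbf{Step 2: functoriality and equivalence.} Naturality of $\varphi^\gamma$ in $Y$ is clear, since the scalar depends only on the degrees and morphisms preserve degree. A morphism $f:X\to X'$ commutes with $\varphi,\varphi'$ if and only if it commutes with $\varphi^\gamma,\varphi'^\gamma$: it suffices to check this on homogeneous components, where both sides get multiplied by the same scalar $\gamma(g,h)$. So $F$ is a fully faithful functor that is the identity on morphisms. Essential surjectivity (indeed bijectivity on objects) follows because the inverse rescaling by $\gamma^{-1}$, using the inverse cocycle in the reverse direction, is an inverse.

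\textbf{Step 3: monoidal and braided structure.} Take $F$ with the identity tensor structure. I need the tensor product half-braiding of $(X,\varphi)\otimes(X',\varphi')$ computed in $\Dr{\cat_\omega}$, namely $\alpha^\omega\circ(\varphi\otimes1)\circ(\alpha^{\omega})^{-1}\circ(1\otimes\varphi')\circ\alpha^\omega$, then rescaled by $\gamma(gg',h)$, to agree with the product computed in the target. Bimultiplicativity-type relations are exactly the second abelian cocycle identity (the one with $\gamma(g_1g_2,g_3)$), which accounts for the three $\omega$ factors and gives $\gamma(gg',h)=\gamma(g,h)\gamma(g',h)$ up to those factors. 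The associators agree by definition, since both are $\omega$-twisted versions of $\alpha$. Finally, the braiding of $\Dr{\cat_\omega}$ on $(X,\varphi),(X',\varphi')$ is $\varphi_{X'}$, and $F$ sends it to $\varphi^\gamma_{X'}=\gamma(g,g')\varphi_{X'}$, which is exactly $\sigma^\gamma$ in $\Dr{\cat}^{(\omega,\gamma)}$. So $F$ is braided, and this step is immediate once Step 1's conventions are fixed.
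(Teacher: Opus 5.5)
Your outline is the same as the paper's: identity on morphisms, identity tensor structure, the $\gamma(g_1,g_2g_3)$ identity for half-braidings, the $\gamma(g_1g_2,g_3)$ identity for monoidality, and the braiding by inspection. Your Step 1 even adds the hexagon check that the paper replaces by ``the twist is invertible''. The gap is that the bookkeeping you postpone is where all the content lies, and it does not come out the way the statement, or your Step 3, says.

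Objects of $\Dr{\cat}^{(\omega,\gamma)}$ are just objects of $\Dr{\cat}$. The twist changes only the associator and the braiding of the center. So a target half-braiding must satisfy the hexagon for $\alpha$, not $\alpha^\omega$, and tensor products of objects in the target are formed with $\alpha$.

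Take $X\in\cat_g$, a half-braiding $\varphi$ for $\alpha^\omega$, and set $\psi:=c(g,-)\varphi$. The $\alpha$-hexagon for $\psi$ on $Y\in\cat_h$, $Z\in\cat_k$ reads $c(g,hk)\,\omega(h,g,k)=c(g,h)c(g,k)\,\omega(h,k,g)\omega(g,h,k)$. Compare this with the cocycle identity $\omega(h,k,g)\gamma(g,hk)\omega(g,h,k)=\gamma(g,h)\gamma(g,k)\omega(h,g,k)$. The hexagon holds for $c=\gamma^{-1}$, whereas $c=\gamma$ would require $\gamma^2(g,-)$ to be multiplicative, which fails in general.

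Likewise, the identity tensor structure works iff $c(gg',h)\,\omega(h,g,g')\omega(g,g',h)\omega(g,h,g')^{-1}=c(g,h)c(g',h)$. This is exactly the $\gamma(g_1g_2,g_3)$ identity when $c=\gamma^{-1}$.

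Your braided check contains an actual error. $F$ is the identity on morphisms, so it sends the braiding $\varphi_{X'}$ of $\Dr{\cat_\omega}$ to $\varphi_{X'}$, not to $\varphi^\gamma_{X'}$. Meanwhile, the braiding of $F(X,\varphi)$ and $F(X',\varphi')$ in $\Dr{\cat}^{(\omega,\gamma)}$ is $\gamma(g,g')$ times the \emph{new} half-braiding, namely $\gamma(g,g')c(g,g')\varphi_{X'}$. These agree only for $c=\gamma^{-1}$. With $c=\gamma$ they differ by $\gamma(g,g')^2$. For $G=\ZZ/2\ZZ$, $\omega(g_1,g_2,g_3)=(-1)^{g_1g_2g_3}$, $\gamma(g_1,g_2)=(\sqrt{-1})^{g_1g_2}$, this factor is already $-1$ at $g=g'=1$.

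So the equivalence $\Dr{\cat_\omega}\to\Dr{\cat}^{(\omega,\gamma)}$ is $(X,\varphi)\mapsto(X,\gamma(g,-)^{-1}\varphi)$. Equivalently, in the opposite direction it is $(X,\psi)\mapsto(X,\gamma(g,-)\psi)$ from $\Dr{\cat}^{(\omega,\gamma)}$ to $\Dr{\cat_\omega}$. The paper's displayed computation is in fact the check for this second form: it forms the source tensor product with $\alpha$ and the target one with $\alpha^\omega$ before invoking $\lambda_{g,h,-}=\gamma(gh,-)$. Once you commit to this normalization (your suspected $\gamma$ versus $\gamma^{-1}$), Steps 1--3 go through as planned.
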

	
	\begin{proof}
		Define $F$ on objects as above and on morphisms as the identity.
		Since there are no nonzero morphisms between objects in different graded components, a nonzero morphism $f:X\to Y$ in $\cat$ that is compatible with the half-braidings of $X$ and $Y$ remains so upon twisting the half-braidings (half-braidings on $X$ and on $Y$ are rescaled by the same scalars).
		So, $F$ is well-defined on morphisms.
		It is clearly an equivalence since the twisting operation is invertible.
		
		Observe that for $X\in\cat_g$ and $Y\in\cat_h$, we have
		
		\begin{align*}
			F((X,\varphi)\otimes (Y,\psi)) &= F\left(X\otimes Y, \alpha_{-,X,Y}\circ \left(\varphi\otimes 1_{Y}\right)\circ \alpha^{-1}_{X,-,Y}\circ \left(1_X \otimes \psi\right)\circ \alpha_{X,Y,-}\right) \\
			&= \left(X\otimes Y, \gamma(gh, -)\cdot \left(\alpha_{-,X,Y}\circ \left(\varphi\otimes 1_{Y}\right)\circ \alpha^{-1}_{X,-,Y}\circ \left(1_X \otimes \psi\right)\circ \alpha_{X,Y,-}\right)\right).
		\end{align*}
		On the other hand,
		\begin{align*}
			F(X,\varphi)\otimes F(Y,\psi) &= (X, \gamma(g,-)\cdot \varphi) \otimes (Y, \gamma(h,-)\cdot \psi) \\
			&= \left(X\otimes Y, \alpha^\omega_{-,X,Y}\circ \left(\varphi^\gamma\otimes 1_{Y}\right)\circ (\alpha^\omega)^{-1}_{X,-,Y}\circ \left(1_X \otimes \psi^\gamma\right)\circ \alpha^\omega_{X,Y,-}\right)\\
			&= \left(X\otimes Y, \lambda_{g,h,-} \cdot \alpha_{-,X,Y}\circ \left(\varphi\otimes 1_{Y}\right)\circ \alpha^{-1}_{X,-,Y}\circ \left(1_X \otimes \psi\right)\circ \alpha_{X,Y,-}\right),
		\end{align*}
		where $\lambda_{g,h,-} = \omega(-,g,h)\gamma(g,-)\omega^{-1}(g,-,y)\gamma(h,-)\omega(g,h,-)$ by definition of $\otimes$ in $\Dr{\cat}^{(\omega,\gamma)}$.
		But,  by the definition of an abelian $3$-cocycle, we have $\lambda_{g,h,-}  = \gamma(gh, -)$ so that the two half-braidings on $X\otimes Y$ above coincide.
		Therefore, $\Id_{X\otimes Y}:F(X,\varphi)\otimes F(Y,\psi) \to F((X,\varphi)\otimes (Y,\psi))$ is a well-defined morphism in $\Dr{\cat}^{(\omega,\gamma)}$.
		It is easy to see that it equips $F$ with the structure of a monoidal functor since the associators on both its domain and codomain are given by $\alpha^\omega$. 
		The fact that $F$ is braided is also clear: the braiding $(X,\varphi)\otimes (Y,\psi)\to (Y,\psi)\otimes (X,\varphi)$ in $\Dr{\cat}$ is given by $\varphi_Y$, so it becomes $\varphi_Y^\gamma$ in $\Dr{\cat}^{(\omega, \gamma)}$; this is the same as  
		the braiding $(X,\varphi^\gamma)\otimes (Y,\psi^\gamma)\to (Y,\psi^\gamma)\otimes (X,\varphi^\gamma)$ in $\Dr{\cat^{(\omega, \gamma)}}$.
	\end{proof}
	
	\begin{remark}
		If $G$ is of odd order and $(\omega,\gamma)$ is an abelian $3$-cocycles on $G$, then $[\omega]=0\in H^3(G,\KK^\times)$, so that we have a monoidal equivalence $\cat_\omega\simeq \cat$. This follows from the isomorphism $H_{\textrm{ab}}^3(G,\KK^\times)\cong \textrm{Quad}(G)$ between abelian cohomology of $G$ and quadratic forms on $G$ which was proved by Eilenberg and MacLane, \cite[Theorem 26.1]{EM} or see \cite[Section 8.4]{EGNO} for a modern exposition. Thus, \autoref{TwistedCenter} is only really useful when $G$ is a $2$-group. 
	\end{remark}


\section{The Temperley--Lieb Category}\label{sec:TL}

The results in this section are not new; a subset of them exists in the literature (though hard to find in some cases), a subset is only written in special cases (e.g. $\KK=\CC$) or with different terminology (e.g. subfactors, recoupling, etc.), and others are folklore, but do not exist in written form (at least to the author's knowledge). 
We include here a full and unified treatment of all the results we need with complete proofs for the reader's convenience, but the expert reader is free to skip/skim through this section. 
We try to include primary (or at least early) references whenever possible, and apologize for any omissions or oversights. 
One early exposition written in the language of tensor categories is \cite[Chapter XII]{T}, where the Temperley--Lieb category is called the skein category. 

		\subsection*{Definition and Universal Property}
	Recall that the $q$-integers are defined, for $n\in\ZZ$, as $$\displaystyle [n]_q = \frac{q^n-q^{-n}}{q-q^{-1}} = q^{n-1}+q^{n-3}+\dots + q^{1-n}.$$
	\begin{definition}\label{TL-def}
		Let $\preTLf{q}{\KK}$ be the (small) strict $\KK[q,q^{-1}]$-linear monoidal category defined as follows:
		\begin{itemize}
			\item $\mathrm{Ob}(\preTLf{q}{\KK}) = \NN = \{\obj{0}, \obj{1}, \obj{2}, \dots\}$, with $\id_{\preTLf{q}{\KK}} = \obj{0}$ and $\obj{m}\otimes\obj{n} = \obj{m+n}$;
			\item The morphisms of $\preTLf{q}{\KK}$ are generated by $\CUP\in \Hom_{\preTLf{q}{\KK}}(\obj{0}, \obj{2})$ and $\CAP \in \Hom_{\preTLf{q}{\KK}} (\obj{2}, \obj{0})$ depicted by the string diagrams
			\[
			\raisebox{4pt}{$\CUP=\; $}
			\begin{tikzpicture}
	\begin{pgfonlayer}{nodelayer}
		\node [style=none] (0) at (-0.5, 0.5) {};
		\node [style=none] (1) at (0, 0) {};
		\node [style=none] (2) at (0.5, 0.5) {};
	\end{pgfonlayer}
	\begin{pgfonlayer}{edgelayer}
		\draw [style=thickstrand, bend right=45] (0.center) to (1.center);
		\draw [style=thickstrand, bend left=45] (2.center) to (1.center);
	\end{pgfonlayer}
\end{tikzpicture}\quad
			\raisebox{4pt}{ and \quad 
				$\CAP=\;$ }
			\begin{tikzpicture}
	\begin{pgfonlayer}{nodelayer}
		\node [style=none] (0) at (-0.5, 0) {};
		\node [style=none] (1) at (0, 0.5) {};
		\node [style=none] (2) at (0.5, 0) {};
	\end{pgfonlayer}
	\begin{pgfonlayer}{edgelayer}
		\draw [style=thickstrand, bend left=45] (0.center) to (1.center);
		\draw [style=thickstrand, bend right=45] (2.center) to (1.center);
	\end{pgfonlayer}
\end{tikzpicture}\;;
			\]
			
			\item the generators $\CUP$ and $\CAP$ satisfy the following relations:
			\[
			\begin{tikzpicture}
	\begin{pgfonlayer}{nodelayer}
		\node [style=none] (0) at (-1.5, 0.75) {};
		\node [style=none] (1) at (-1.25, 1) {};
		\node [style=none] (2) at (-1, 0.75) {};
		\node [style=none] (3) at (-2, 0.5) {};
		\node [style=none] (4) at (-1.75, 0.25) {};
		\node [style=none] (5) at (-1.5, 0.5) {};
		\node [style=none] (6) at (-1, 0) {};
		\node [style=none] (7) at (-2, 1.25) {};
		\node [style=none] (8) at (-0.5, 0.75) {$=$};
		\node [style=none] (9) at (0, 0) {};
		\node [style=none] (10) at (0, 1.25) {};
		\node [style=none] (11) at (1.5, 0.75) {};
		\node [style=none] (12) at (1.25, 1) {};
		\node [style=none] (13) at (1, 0.75) {};
		\node [style=none] (14) at (2, 0.5) {};
		\node [style=none] (15) at (1.75, 0.25) {};
		\node [style=none] (16) at (1.5, 0.5) {};
		\node [style=none] (17) at (1, 0) {};
		\node [style=none] (18) at (2, 1.25) {};
		\node [style=none] (19) at (0.5, 0.75) {$=$};
	\end{pgfonlayer}
	\begin{pgfonlayer}{edgelayer}
		\draw [style=thickstrand, bend left=45] (0.center) to (1.center);
		\draw [style=thickstrand, bend right=45] (2.center) to (1.center);
		\draw [style=thickstrand, bend right=45] (3.center) to (4.center);
		\draw [style=thickstrand, bend left=45] (5.center) to (4.center);
		\draw [style=thickstrand] (5.center) to (0.center);
		\draw [style=thickstrand] (6.center) to (2.center);
		\draw [style=thickstrand] (3.center) to (7.center);
		\draw [style=thickstrand] (9.center) to (10.center);
		\draw [style=thickstrand, bend right=45] (11.center) to (12.center);
		\draw [style=thickstrand, bend left=45] (13.center) to (12.center);
		\draw [style=thickstrand, bend left=45] (14.center) to (15.center);
		\draw [style=thickstrand, bend right=45] (16.center) to (15.center);
		\draw [style=thickstrand] (16.center) to (11.center);
		\draw [style=thickstrand] (17.center) to (13.center);
		\draw [style=thickstrand] (14.center) to (18.center);
	\end{pgfonlayer}
\end{tikzpicture}
			\]
			and
			\[
\begin{tikzpicture}
	\begin{pgfonlayer}{nodelayer}
		\node [style=none] (1) at (0, 1) {};
		\node [style=none] (3) at (-0.5, 0.5) {};
		\node [style=none] (4) at (0, 0) {};
		\node [style=none] (5) at (0.5, 0.5) {};
		\node [style=none] (6) at (1.65, 0.5) {$=-[2]_q\;\Id_{{\obj{0}}}.$};
	\end{pgfonlayer}
	\begin{pgfonlayer}{edgelayer}
		\draw [style=thickstrand, bend right=45] (3.center) to (4.center);
		\draw [style=thickstrand, bend left=45] (5.center) to (4.center);
		\draw [style=thickstrand, bend right=45] (5.center) to (1.center);
		\draw [style=thickstrand, bend left=45] (3.center) to (1.center);
	\end{pgfonlayer}
\end{tikzpicture}
			\]
		\end{itemize}
	\end{definition}
	
	The category $\preTLf{q}{\KK}$ is rigid, with $\obj{1}$ self-dual, where the morphisms $\CUP$ and $\CAP$ are the coevaluation and evaluation morphisms. 
	The duality functors $X\mapsto X^*$ and $X\mapsto \prescript{*}{}X$ are given on morphisms by counterclockwise and clockwise rotations by $180^\circ$ respectively, and hence are canonically isomorphic. 
	Thus, $\preTLf{q}{\KK}$ is spherical; indeed, it has exactly two pivotal structures (both spherical), one given by the canonical isomorphism $X\overset{\sim}{\to}X^{**}$, and the other given by composing the first with with the isomorphism $X\overset{\sim}{\to}X$ defined by $(-1)^n \Id_X$ for $X=\obj{n}$. 
	The fact that no additional pivotal structures exist is a consequence of the set of pivotal structures being a torsor over $\Aut_\otimes(\Id_{\TLf{q}{\KK}})\cong \ZZ/2\ZZ$, \cite[Exercise 4.7.16]{EGNO}.
	
	It will be important for us to think of $q$ as either a formal parameter as defined above or as a specific element of $\KK$;
	more precisely, the evaluation homomorphism $\mathrm{ev}_a:\KK[q, q^{-1}] \to \KK$ given by $q\mapsto a$ for some $a\in \KK^\times$ defines an induction functor $\KK[q,q^{-1}]$--$\mathbf{Mod} \to \Vect$, which then defines a monoidal $2$-functor $\mathrm{Ev}_a:\mathbf{Cat}_{\KK[q,q^{-1}]} \to \mathbf{Cat}_\KK$. 
	Then we can speak of $\preTLf{a}{\KK} := \mathrm{Ev}_a(\preTLf{q}{\KK})$ for any $a\in\KK^\times$. 
	
	\begin{definition}
		The \emph{Temperley--Lieb Category} $\TLf{q}{\KK}$ is defined to be the Cauchy completion $\preTLf{a}{\KK}^c$.
	\end{definition}
	
	Let $\mathbf{Fund}(\uqsl)$ be the full monoidal subcategory of $\uqsl$--$\mathbf{Mod}$ generated by the fundamental $2$-dimensional representation.
	The following result is folklore and, at least in the non-quantum setting, dates back to \cite{RTW}. 
	A much more general version of it, in particular, treating the quantum case, is proven in detail in \cite[Theorem~2.58]{E}, for example. 
	\begin{theorem}\label{thm:sl2fund}
		For $q\in\KK^\times$, there's a monoidal equivalence $\preTLf{q}{\KK} \simeq \mathbf{Fund}(\uqsl)$.
	\end{theorem}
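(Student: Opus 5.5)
The plan is to construct a monoidal functor $F:\preTLf{q}{\KK}\to \mathbf{Fund}(\uqsl)$ from the presentation, and then show it is full and faithful. Essential surjectivity is automatic, since the objects of $\mathbf{Fund}(\uqsl)$ are the tensor powers $V^{\otimes n}$ of the fundamental representation $V=\KK v_+\oplus\KK v_-$, and these are hit by the objects $\obj{n}$.

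\textbf{Constructing the functor.} Send $\obj{1}\mapsto V$. Choose a nonzero invariant vector $\CUP\mapsto c\in V\otimes V$ and a nonzero invariant form $\CAP\mapsto e:V\otimes V\to\KK$; these are forced up to scalar by the $q$-antisymmetric invariants, e.g. $c=v_+\otimes v_- - q^{-1}v_-\otimes v_+$. Normalize the scalars so that the zigzag relations hold; this is a single rescaling, because $(e\otimes 1)(1\otimes c)$ is automatically a scalar multiple of $\Id_V$. Then compute the loop value $e\circ c$ and check that it equals $-[2]_q=-(q+q^{-1})$. This is a direct two-term computation, and the sign conventions are arranged precisely for this. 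By the universal property of the presented strict monoidal category, $F$ is then well defined.

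\textbf{Reducing to dimensions.} Using rigidity on both sides, with $\obj{1}$ self-dual and $V$ self-dual via $(c,e)$, we have natural isomorphisms $\Hom(\obj{m},\obj{n})\cong\Hom(\obj{0},\obj{m+n})$ compatible with $F$. So it suffices to show that $F:\Hom(\obj{0},\obj{2k})\to \Hom_{\uqsl}(\KK,V^{\otimes 2k})$ is an isomorphism; odd total degree is zero on both sides by the weight/grading argument. The source is spanned by crossingless matchings, of which there are $C_k$ (Catalan), using the relations to remove loops. So we need two things: the invariant space of $V^{\otimes 2k}$ has dimension at least $C_k$, and the images of the crossingless matchings are linearly independent. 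Surjectivity follows from independence combined with the upper bound $\dim\le C_k$ on invariants, or alternatively from direct spanning.

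\textbf{The main obstacle.} The hard step is handling roots of unity and arbitrary characteristic, where $V^{\otimes n}$ is not semisimple and the dimension of invariants cannot be read off from classical Clebsch--Gordan. My first attempt is to argue independence directly: expand each matching's image in the basis $v_{\pm}^{\otimes}$ and order monomials so that each matching has a distinguished leading term with coefficient a unit power of $q$. For instance, for each cup at positions $(i<j)$ take $v_+$ at $i$ and $v_-$ at $j$; distinct matchings give distinct leading terms, and there is a triangularity with respect to a suitable order. This works uniformly over any $\KK$ and any $q\in\KK^\times$. For the upper bound on invariants, I would use the weight-zero subspace together with the kernel of $E$: invariants are killed by $E$ and lie in weight zero, and a $q$-version of the ballot/lattice-path count bounds this by $\binom{2k}{k}-\binom{2k}{k+1}=C_k$. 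This requires care at roots of unity, where one should use the divided-power (Lusztig) form to keep the bound. If that becomes delicate, I would instead cite \cite[Theorem~2.58]{E} for this dimension count.
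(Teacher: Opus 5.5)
\textbf{Assessment.} The construction of the functor is fine: with the normalization forced by the zigzags, the loop does evaluate to $-[2]_q$. The reduction to $\Hom(\obj{0},\obj{2k})$ is also fine. So is the Dyck-word triangularity argument, which proves faithfulness for every $q\in\KK^\times$. The paper itself gives no argument here; it only cites [RTW] and \cite[Theorem~2.58]{E}.

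\textbf{The gap is fullness.} Fullness needs the bound $\dim\Hom_{\uqsl}(\KK,V^{\otimes 2k})\le C_k$, which is the whole content of the theorem at roots of unity, and you do not prove it. The mechanism you propose fails there. At $q=\sqrt{-1}$ the map $E\colon (V^{\otimes 4})_0\to (V^{\otimes 4})_2$ has rank $3$, not $4$. So $\ker E$ on the weight-zero space is $3$-dimensional while $C_2=2$, and no ballot count can bound $\ker E$ alone. You also cannot transport the bound from generic $q$ by specialization, since kernels can only grow when you specialize.

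\textbf{The choice of form is a hypothesis, not a convenience.} Without Lusztig's divided-power form, the statement is false. Take the Drinfeld--Jimbo algebra with generators $E,F,K^{\pm1}$ at $q=\sqrt{-1}$ (with $\mathrm{char}\,\KK\neq 2$), and coproduct $\Delta(E)=E\otimes K+1\otimes E$, $\Delta(F)=F\otimes 1+K^{-1}\otimes F$.
\begin{itemize}
\item Let $g_j\in V^{\otimes 6}$ have $v_-$ in slot $j$ and $v_+$ elsewhere, and set $x=\sum_{j=1}^{6}q^{-j}g_j$.
\item Then $Ex=q^{6}\sum_{j}(-1)^j\,v_+^{\otimes 6}=0$.
\item Every coefficient of $Fx$ has the form $q^{-a-b}(q^3+q)=0$, so $Fx=0$.
\item Finally $Kx=q^4x=x$.
\end{itemize}
Hence $\dim\Hom(\KK,V^{\otimes 6})\ge 6>5=C_3$.

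Restricting to weight-preserving maps does not rescue it. In this setting $E^2=F^2=0$ on every $V^{\otimes n}$, and $[E,F]$ acts invertibly on odd weights. So $V^{\otimes 3}$ is a direct sum of $2$-dimensional simples with highest weights $3,1,1,-1$, giving $\dim\End(V^{\otimes 3})=6>\dim\End(\obj{3})=5$.

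\textbf{What the fix requires.} You must use Lusztig's form, and even there the upper bound needs a genuine input. One route: $V^{\otimes 2k}$ is tilting, since tensor products of tilting modules are tilting, and $\Delta(0)=\nabla(0)=\KK$. Hence $\dim\Hom(\KK,V^{\otimes 2k})=(V^{\otimes 2k}:\nabla(0))=\binom{2k}{k}-\binom{2k}{k+1}$, read off from characters. Quantum Schur--Weyl duality over $\ZZ[q^{\pm1}]$ would also work. Without such an ingredient, your argument, like the paper's, rests on the citation of \cite[Theorem~2.58]{E}.
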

	
	\begin{corollary}
		For $q\in\KK^\times$ not a root of unity, every representation of $\uqsl$ of type $\mathbf I$ is a direct summand of some tensor power of the fundamental representation. Therefore, we have a monoidal equivalence $\mathbf{Rep}_{\mathbf I}(\uqsl)\simeq\TLf{q}{\KK}$, where $\mathbf{Rep}_{\mathbf I}(\uqsl)$ is the category of type $\mathbf I$ representations of $\uqsl$.
	\end{corollary}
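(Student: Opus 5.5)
The corollary has two parts. First, every type $\mathbf I$ module is a summand of some $V^{\otimes n}$, where $V$ is the fundamental representation. Second, this summand property upgrades \autoref{thm:sl2fund} to an equivalence of Cauchy completions. The plan is to prove the first part from the representation theory of $\uqsl$ at generic $q$, and then deduce the second formally.

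\textbf{Step 1: semisimplicity and the simple modules.} Since $q$ is not a root of unity, every finite-dimensional type $\mathbf I$ module is semisimple. The simple ones are the Weyl modules $V_n$ of highest weight $q^n$ and dimension $n+1$, one for each $n\geq 0$. This is standard (Rosso, Lusztig), for instance via a quantum Casimir element together with the fact that $[n]_q\neq 0$ for all $n\neq 0$. One point to check is that no hypothesis on $\char \KK$ is needed beyond $q$ not being a root of unity. This holds because all the denominators that appear are $q$-integers $[m]_q$, and $[m]_q\neq 0$ exactly when $q^{2m}\neq 1$.

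\textbf{Step 2: Clebsch--Gordan.} Next prove $V_n\otimes V_1\cong V_{n+1}\oplus V_{n-1}$ for $n\geq 1$. Compare weights: the weights of $V_n\otimes V_1$ are $q^{n+1},q^{n-1}$ (twice), $\dots$, and a highest weight vector of weight $q^{n+1}$ generates a copy of $V_{n+1}$. Semisimplicity and a character count then leave $V_{n-1}$ as the complement. By induction on $n$, $V_n$ is a direct summand of $V_1^{\otimes n}=V^{\otimes n}$. Together with Step 1, every type $\mathbf I$ module is a finite direct sum of such summands, which proves the first claim.

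\textbf{Step 3: passing to Cauchy completions.} The category $\mathbf{Rep}_{\mathbf I}(\uqsl)$ is additive and idempotent complete, being abelian. The Cauchy completion has a universal property: a fully faithful functor from $\mathcal A$ into a Karoubian additive category $\mathcal B$ extends to a fully faithful functor $\mathcal A^c\to\mathcal B$. Apply this to the equivalence $\preTLf{q}{\KK}\simeq\mathbf{Fund}(\uqsl)\subset\mathbf{Rep}_{\mathbf I}(\uqsl)$ of \autoref{thm:sl2fund}. This gives a monoidal, fully faithful functor $\TLf{q}{\KK}=\preTLf{q}{\KK}^c\to\mathbf{Rep}_{\mathbf I}(\uqsl)$; the monoidal structure extends because both $\oplus$ and images of idempotents are compatible with $\otimes$. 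Step 2 shows this functor is essentially surjective, so it is a monoidal equivalence.

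The main obstacle is Step 1, namely semisimplicity in arbitrary characteristic with $q$ generic. I would handle it through the Casimir element: its distinct eigenvalues separate the $V_n$, which splits the category into blocks. Ext-vanishing within a block then follows from a weight argument on extensions of $V_n$ by itself, using $[n+1]_q\neq 0$.
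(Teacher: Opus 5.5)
Your argument is correct and is essentially the argument the paper intends: the corollary is stated without proof as an immediate consequence of \autoref{thm:sl2fund} together with complete reducibility and the Clebsch--Gordan rule at generic $q$, and your Steps 1--3 supply exactly these ingredients. You also rightly prove the weaker form that every type $\mathbf I$ module is a direct sum of summands of tensor powers of $V$, rather than a summand of a single $V^{\otimes n}$; the literal single-power claim fails for mixed-parity modules such as $V_0\oplus V_1$, and the weaker form is precisely what essential surjectivity from the Cauchy completion requires.
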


Since the Temperley--Lieb Category is defined by generators and relations, it is characterized by the following universal property, for example, see \cite[Theorem 2.1]{EO1}.

\begin{proposition}\label{univ-prop}
	Let $\cat$ be a tensor category with associator $\alpha$, and $X$ is an object of $\cat$ equipped with two morphisms $f: \id\to X\otimes X$ and $g:X\otimes X\to \id$ satisfying
	$$(\Id_X\otimes g)\circ \alpha_{X,X,X} \circ  (f\otimes \Id_X) = \Id_X = (g\otimes \Id_X)\circ \alpha_{X,X,X} \circ (\Id_X\otimes f), \text{  and}$$
	$$g\circ f = -[2]_q \Id_\id.$$
	Then, there is a unique tensor functor $F:\TLf{q}{\KK}\to \cat$ with $F(\obj{1}) = X$, $F(\CUP)=f$, $F(\CAP) = g$. \hfill $\square$
	\end{proposition}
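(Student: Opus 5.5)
The plan is to build $F$ in three stages, matching the three stages in the construction of $\TLf{q}{\KK}$: first on the strict diagrammatic category $\preTLf{q}{\KK}$, then on its additive envelope, and finally on its Karoubian envelope. By Mac Lane coherence we may replace $\cat$ with an equivalent strict monoidal category. This lets us drop the associators $\alpha$; the zigzag identities then become $(\Id_X\otimes g)(f\otimes\Id_X)=\Id_X=(g\otimes\Id_X)(\Id_X\otimes f)$.

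\textbf{Stage 1.} On $\preTLf{q}{\KK}$ the definition is forced. Send $\obj{n}$ to $X^{\otimes n}$, send $\CUP$ to $f$ and $\CAP$ to $g$, and extend to all morphisms monoidally: tensor products of identities, cups and caps, and composites of these, go to the corresponding expressions in $\cat$. Since $\preTLf{q}{\KK}$ is the free strict $\KK$-linear monoidal category on one object with generators $\CUP,\CAP$ modulo the zigzag and circle relations, this assignment is well defined exactly when the images satisfy those relations. The hypotheses on $f$ and $g$ are precisely these relations, with the circle relation specialized via $\mathrm{Ev}_q$ to $g\circ f=-[2]_q\Id_\id$. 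The result is a strict monoidal functor from the presented category, and any functor with the prescribed values on $\obj{1}$, $\CUP$, $\CAP$ must agree with it, because these generate.

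The main obstacle is justifying the claim that the diagrammatic description of morphisms really is the monoidal presentation by those generators and relations. This amounts to the standard fact that planar isotopy of string diagrams is generated by the zigzag moves, i.e. the coherence theorem for the free rigid (pivotal) category on a self-dual object. I would cite \cite[Theorem 2.1]{EO1} or Turaev \cite[Chapter XII]{T} for this rather than reprove it. One point to verify is that rotation-invariance of the diagrams is automatic: it follows from the zigzags alone, because $\obj{1}$ is self-dual with $\CUP,\CAP$ serving as its coevaluation and evaluation.

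\textbf{Stages 2 and 3.} The functor extends along Cauchy completion by the universal property of $\mathcal A^{\oplus}$ and $(-)^{\mathrm{kar}}$. Because $\cat$ is $\KK$-linear and Karoubian, it has finite direct sums and splits idempotents. Formal direct sums and matrices of morphisms therefore go to actual direct sums in $\cat$, and an object $(Y,e)$ goes to the image of the idempotent $F(e)$. Both extensions are unique up to unique isomorphism, and the monoidal structure extends using distributivity of $\otimes$ over $\oplus$ and the formula $(X\otimes Y)_{e\otimes f}$. Rigidity is automatic, since the duals are preserved. Transporting back along the strictification gives the tensor functor $F$ on $\TLf{q}{\KK}$, unique (up to this standard notion of uniqueness) with the prescribed values.
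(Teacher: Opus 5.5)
Your proof is correct and takes the same approach as the paper, which gives no argument beyond noting that $\preTLf{q}{\KK}$ is defined by generators and relations (citing \cite[Theorem 2.1]{EO1}); you simply fill in the extension along the Cauchy completion into the Karoubian category $\cat$. The step you flag as the main obstacle is not needed: the paper defines $\preTLf{q}{\KK}$ by exactly this presentation, so well-definedness in Stage 1 is immediate, and uniqueness is, as you say, only up to unique monoidal isomorphism.
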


	\begin{remark}
	Note that $\Hom_{\TLf{q}{\KK}}(\obj{n},\obj{m}) = \Hom_{\preTLf{q}{\KK}}(\obj{n},\obj{m})$, so whenever we use the notation $\obj{n}, \obj{m}, \dots$ for objects, we may unambiguously drop the subscripts of $\Hom$ or  $\End$ to refer to morphism spaces in either category.
		
	\end{remark}


	\subsection*{Braidings} \label{TL_braidings}
	
	We now examine all the possible braidings on $\TLf{q}{\KK}$.
	
	Suppose $\sigma$ is a braiding on $\TLf{q}{\LL}$, and let us denote $\sigma_{\obj 1,\obj 1}: \obj{1}\otimes \obj{1}\to \obj{1}\otimes \obj{1}$ by a positive crossing and its inverse by a negative crossing:
	\begin{center}
\begin{tikzpicture}[scale=0.5]
	\begin{pgfonlayer}{nodelayer}
		\node [style=none] (0) at (-1, 1) {};
		\node [style=none] (1) at (1, 1) {};
		\node [style=none] (2) at (-1, -1) {};
		\node [style=none] (3) at (1, -1) {};
		\node [style=none] (4) at (-3, 0) {$\sigma_{\obj1,\obj1} = $};
		\node [style=none] (5) at (2, 0) {,};
		\node [style=none] (6) at (4, 0) {$\sigma_{\obj1,\obj1}^{-1} = $};
		\node [style=none] (7) at (6, 1) {};
		\node [style=none] (8) at (8, 1) {};
		\node [style=none] (9) at (6, -1) {};
		\node [style=none] (10) at (8, -1) {};
	\end{pgfonlayer}
	\begin{pgfonlayer}{edgelayer}
		\draw [style=thickstrand] (0.center) to (3.center);
		\draw [style=braid-over] (2.center) to (1.center);
		\draw [style=thickstrand] (9.center) to (8.center);
		\draw [style=braid-over] (7.center) to (10.center);
	\end{pgfonlayer}
\end{tikzpicture}

	\end{center}
	
	Now, by the two hexagon axioms, one easily sees that $\sigma_{\obj1,\obj1}$ determines all of $\sigma$; indeed,
	
	\begin{center}
\begin{tikzpicture}[scale=0.25]
	\begin{pgfonlayer}{nodelayer}
		\node [style=none] (0) at (-10, 6) {};
		\node [style=none] (1) at (-8, 6) {};
		\node [style=none] (2) at (-4, 6) {};
		\node [style=none] (3) at (-10, -4) {};
		\node [style=none] (4) at (-8, -4) {};
		\node [style=none] (5) at (-4, -4) {};
		\node [style=none] (6) at (4, 6) {};
		\node [style=none] (7) at (6, 6) {};
		\node [style=none] (8) at (10, 6) {};
		\node [style=none] (9) at (10, -4) {};
		\node [style=none] (10) at (6, -4) {};
		\node [style=none] (11) at (4, -4) {};
		\node [style=none] (12) at (-6, 6) {\dots};
		\node [style=none] (13) at (1, 1) {\dots};
		\node [style=none] (14) at (8, -4) {\dots};
		\node [style=none] (16) at (-7, -5.5) {$m$};
		\node [style=none] (17) at (7, -5.5) {$n$};
		\node [style=none] (18) at (-7, 7.5) {$n$};
		\node [style=none] (19) at (7, 7.5) {$m$};
		\node [style=none] (20) at (-6, -4) {\dots};
		\node [style=none] (21) at (8, 6) {\dots};
		\node [style=none] (22) at (-13, 1) {$\sigma_{\obj{m},\obj{n}} = $};
	\end{pgfonlayer}
	\begin{pgfonlayer}{edgelayer}
		\draw [style=thickstrand] (0.center) to (11.center);
		\draw [style=thickstrand] (10.center) to (1.center);
		\draw [style=thickstrand] (9.center) to (2.center);
		\draw [style=braid-over] (3.center) to (6.center);
		\draw [style=braid-over] (4.center) to (7.center);
		\draw [style=braid-over] (5.center) to (8.center);
	\end{pgfonlayer}
\end{tikzpicture}

	\end{center}
where each crossing is a $\sigma_{\obj1,\obj1}$.
Since $\{\Id_{\obj2}, \CUP\circ\CAP\}$ is a basis for $\End(\obj2)$, we may write 
\begin{equation}\tag{$\star$}\label{braiding}
\begin{tikzpicture}[scale=0.5]
	\begin{pgfonlayer}{nodelayer}
		\node [style=none] (0) at (-2.25, 1) {};
		\node [style=none] (1) at (-0.75, 1) {};
		\node [style=none] (2) at (-2.25, -1) {};
		\node [style=none] (3) at (-0.75, -1) {};
		\node [style=none] (4) at (0, 0) {$=$};
		\node [style=none] (6) at (2, 1) {};
		\node [style=none] (7) at (3, 1) {};
		\node [style=none] (8) at (2, -1) {};
		\node [style=none] (9) at (3, -1) {};
		\node [style=none] (10) at (4, 0) {$+$};
		\node [style=none] (11) at (5, 0) {$b$};
		\node [style=none] (12) at (6, 1) {};
		\node [style=none] (13) at (7, 1) {};
		\node [style=none] (14) at (6, -1) {};
		\node [style=none] (15) at (7, -1) {};
		\node [style=none] (16) at (1, 0) {$a$};
		\node [style=none] (17) at (7.5, 0) {,};
	\end{pgfonlayer}
	\begin{pgfonlayer}{edgelayer}
		\draw [style=thickstrand] (6.center) to (8.center);
		\draw [style=thickstrand] (9.center) to (7.center);
		\draw [style=thickstrand, bend right=90, looseness=2.25] (12.center) to (13.center);
		\draw [style=thickstrand, bend left=90, looseness=2.25] (14.center) to (15.center);
		\draw [style=thickstrand] (0.center) to (3.center);
		\draw [style=braid-over] (2.center) to (1.center);
	\end{pgfonlayer}
\end{tikzpicture}
\end{equation}
for some $a,b\in\KK$. But if $\sigma$ is to be natural, we must also have 
\begin{center}
	\begin{tikzpicture}[scale=0.5]
	\begin{pgfonlayer}{nodelayer}
		\node [style=none] (0) at (-6, -2) {};
		\node [style=none] (1) at (-5, -2) {};
		\node [style=none] (2) at (-4, -2) {};
		\node [style=none] (3) at (-4, 0) {};
		\node [style=none] (4) at (-3, -1.1) {$=$};
		\node [style=none] (5) at (-1, -2) {};
		\node [style=none] (6) at (0, -2) {};
		\node [style=none] (7) at (-1.75, -1.25) {};
		\node [style=none] (8) at (-1.25, -0.75) {};
		\node [style=none] (9) at (-2, -2) {};
		\node [style=none] (10) at (0, 0) {};
		\node [style=none] (11) at (1, -1.1) {$=$};
		\node [style=none] (12) at (2.25, -1) {$a^2$};
		\node [style=none] (13) at (6.25, -1) {$+$};
		\node [style=none] (14) at (7.25, -1) {$ab$};
		\node [style=none] (15) at (11.25, -1) {$+$};
		\node [style=none] (16) at (12.25, -1) {$ba$};
		\node [style=none] (17) at (16.25, -1) {$+$};
		\node [style=none] (18) at (17.25, -1) {$b^2$};
		\node [style=none] (19) at (3.25, -2) {};
		\node [style=none] (20) at (4.25, -2) {};
		\node [style=none] (21) at (5.25, -2) {};
		\node [style=none] (22) at (5.25, 0) {};
		\node [style=none] (23) at (8.25, -2) {};
		\node [style=none] (24) at (10.25, 0) {};
		\node [style=none] (25) at (9.25, -2) {};
		\node [style=none] (26) at (10.25, -2) {};
		\node [style=none] (27) at (13.25, -2) {};
		\node [style=none] (28) at (14.25, -2) {};
		\node [style=none] (29) at (15.25, -2) {};
		\node [style=none] (30) at (15.25, 0) {};
		\node [style=none] (31) at (13.25, -0.5) {};
		\node [style=none] (32) at (14.25, -0.5) {};
		\node [style=none] (33) at (18.25, -2) {};
		\node [style=none] (34) at (19.25, -2) {};
		\node [style=none] (35) at (20.25, -2) {};
		\node [style=none] (36) at (20.25, 0) {};
		\node [style=none] (37) at (1, -4) {$=$};
		\node [style=none] (38) at (4.5, -4) {$(a^2-[2]_qab+b^2)$};
		\node [style=none] (39) at (8, -5) {};
		\node [style=none] (40) at (9, -5) {};
		\node [style=none] (41) at (10, -5) {};
		\node [style=none] (42) at (10, -3) {};
		\node [style=none] (43) at (11, -4) {$+$};
		\node [style=none] (44) at (13, -5) {};
		\node [style=none] (45) at (14, -5) {};
		\node [style=none] (46) at (15, -5) {};
		\node [style=none] (47) at (15, -3) {};
		\node [style=none] (48) at (12, -4) {$ab$};
		\node [style=none] (49) at (16, -4) {,};
	\end{pgfonlayer}
	\begin{pgfonlayer}{edgelayer}
		\draw [style=thickstrand] (0.center) to (3.center);
		\draw [style=thickstrand, bend left=90, looseness=2.00] (1.center) to (2.center);
		\draw [style=thickstrand] (7.center) to (5.center);
		\draw [style=thickstrand] (6.center) to (8.center);
		\draw [style=braid-over] (9.center) to (10.center);
		\draw [style=thickstrand, in=135, out=135, looseness=2.00] (7.center) to (8.center);
		\draw [style=thickstrand, bend left=90, looseness=2.00] (19.center) to (20.center);
		\draw [style=thickstrand] (22.center) to (21.center);
		\draw [style=thickstrand, bend left=90, looseness=2.00] (25.center) to (26.center);
		\draw [style=thickstrand] (23.center) to (24.center);
		\draw [style=thickstrand, bend right=90, looseness=1.50] (31.center) to (32.center);
		\draw [style=thickstrand] (30.center) to (29.center);
		\draw [style=thickstrand, bend left=90, looseness=2.00] (27.center) to (28.center);
		\draw [style=thickstrand, bend left=90, looseness=2.00] (33.center) to (34.center);
		\draw [style=thickstrand] (35.center) to (36.center);
		\draw [style=thickstrand, bend left=90, looseness=1.50] (31.center) to (32.center);
		\draw [style=thickstrand, bend left=90, looseness=2.00] (39.center) to (40.center);
		\draw [style=thickstrand] (41.center) to (42.center);
		\draw [style=thickstrand, bend left=90, looseness=2.00] (45.center) to (46.center);
		\draw [style=thickstrand] (44.center) to (47.center);
	\end{pgfonlayer}
\end{tikzpicture}
\end{center}
whence $ab=1$ and $[2]_q = a^2+a^{-2}$, which has exactly four solutions for $q\neq \pm 1$, namely $a = \pm q^{\pm 1/2}$, where the signs are chosen independently.
A simple check of naturality verifies that each of these indeed yields a braiding.
The inverse braiding is given by replacing $a$ with $a^{-1}$, which is another one of these four solutions.
These two solutions coincide if $a=a^{-1}\iff a = \pm 1\iff q=1$, giving two symmetric braidings, i.e. satisfying $\sigma^2 = 1$.
When $q=-1$, we get two non-symmetric braidings corresponding to $ a= \pm \sqrt{-1}$, and these braidings satisfy $\sigma^4=1$.

	\subsection*{Indecomposables}

	We would like to identify the indecomposable objects of $\TLf{q}{\KK}$, as well as establish its semisimplicity for generic $q$. 

\begin{proposition}\label{TL_tractable}
$\TLf{q}{\KK}$ is a tractable tensor category.	
\end{proposition}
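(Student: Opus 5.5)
The approach is to deduce tractability from \autoref{functorTractable}, using a spherical tensor functor to an abelian spherical tensor category. Such a target is tractable by \autoref{abelianTractable}. Rigidity, sphericality, finite-dimensional morphism spaces and $\End(\obj0)=\KK$ are already clear from the diagrammatic description: morphism spaces are spanned by the finitely many crossingless matchings. The only real content is therefore the vanishing of traces of nilpotent endomorphisms.

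The natural target is $\uqsl$-$\mathbf{Mod}$, or more precisely its finite-dimensional part. By \autoref{thm:sl2fund} we have a monoidal equivalence $\preTLf{q}{\KK}\simeq\mathbf{Fund}(\uqsl)$. Finite-dimensional $\uqsl$-modules (for example, type $\mathbf I$ modules over the Lusztig form if $q$ is a root of unity, or simply finite-dimensional modules over the Hopf algebra) form an abelian rigid category. It carries a pivotal structure given by the action of $K^{\pm1}$, and this structure is spherical. Since this target is idempotent complete and additive, the functor extends to the Cauchy completion $\TLf{q}{\KK}$.

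I would then verify that the functor intertwines the spherical structures. Because both sides are generated by $\obj1$, it suffices to check that the quantum dimension of $\obj1$, namely $\CAP\circ\CUP=-[2]_q$, matches the categorical dimension of the fundamental representation $V$. This requires a compatible choice of pivotal structure on the representation side. The choice $K^{-1}$ twisted by the parity automorphism $(-1)^n$ (available via the $\ZZ/2\ZZ$-grading) gives $-[2]_q$. Alternatively, one can choose the sign normalization of the cup and cap under the equivalence appropriately. Since $\TLf{q}{\KK}$ has exactly two pivotal structures, one only needs to match one of them, and the other differs by the grading automorphism.

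The main obstacle is this matching of spherical structures, together with the precise form of \autoref{thm:sl2fund}, which holds at every $q\in\KK^\times$, including roots of unity. If this becomes messy, I would fall back on a direct argument. Since traces of products and compositions are computed diagrammatically, and a nilpotent endomorphism $f$ of $\obj n$ has nilpotent image under any faithful representation, it suffices to know that $\Tr(f)$ equals the ordinary trace of the image of $f\circ\rho$ on $V^{\otimes n}$, where $\rho$ is the pivotal element $K^{\pm1}$ acting on $V^{\otimes n}$. This element commutes with $f$, so $f\circ\rho$ is nilpotent and its ordinary trace vanishes.
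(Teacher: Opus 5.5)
Your argument is correct, and its structure matches the paper's: construct a trace-preserving tensor functor out of $\TLf{q}{\KK}$ and apply \autoref{functorTractable}. The difference lies in the target.

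The paper avoids $\uqsl$ entirely. It takes $\Vect_{\ZZ}$ with the pivotal structure given by the character $n\mapsto(-q)^n$, and sends $\obj{1}$ to $X=\KK u\oplus\KK v$ with $\deg u=1$ and $\deg v=-1$. It then writes down explicit images:
\begin{itemize}
\item $\CUP\mapsto(1\mapsto -q\,u\otimes v+v\otimes u)$;
\item $\CAP\mapsto(u\otimes v\mapsto 1,\ v\otimes u\mapsto -q^{-1})$.
\end{itemize}
The functor then exists by \autoref{univ-prop}, and the loop value is $-[2]_q$ on the nose.

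This is exactly the weight-space shadow of your functor. The sign issue you worried about is absorbed into the choice of character $-q$, i.e.\ the twisted pivotal element $-K$ on $V$. Traces become $\sum_n(-q)^n\,\mathrm{tr}(f_n)$ over graded pieces. Nilpotent endomorphisms have zero trace because morphisms in $\Vect_{\ZZ}$ preserve the grading. This is the same mechanism as your fallback: the twisting operator $\rho$ commutes with everything in the image.

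The paper's route buys uniformity and economy. It works verbatim for every $q\in\KK^\times$ in every characteristic, including $q=\pm1$ and $q^2=-1$. It needs no quantum Schur--Weyl duality, no Lusztig divided-power form at roots of unity, and no check that $K^{\pm1}$ gives a spherical structure on the target. Your route gives a more conceptual explanation (these are quantum traces of $U_q$-modules), but at higher cost.

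You also do not actually need \autoref{thm:sl2fund} or faithfulness:
\begin{itemize}
\item \autoref{univ-prop} already produces the functor to $\uqsl$-modules.
\item \autoref{functorTractable} does not require faithfulness.
\item Compatibility of traces is a direct check on $\obj{1}$, since closing $n$ strands is a nesting of cups and caps. Hence $\Tr(f)=\mathrm{tr}(F(f)\rho^{\otimes n})$ follows from the case $n=1$.
\end{itemize}

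Your remark that matching one of the two pivotal structures suffices is right. A nilpotent endomorphism is block-diagonal with respect to the $\ZZ/2\ZZ$-grading, and the two traces differ only by a sign on each block.

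One small repair: state your fallback for endomorphisms of arbitrary objects $(Y,e)$ of the Cauchy completion, not only for $\obj{n}$. Nilpotent endomorphisms of direct sums do not reduce to nilpotent endomorphisms of a single $\obj{n}$. This is automatic once the functor is extended, since $\rho$ also commutes with $F(e)$.
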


\begin{proof}
	Let $\Vect_\ZZ$ be the category of $\ZZ$-graded $\KK$-vector spaces.
	Pivotal structures on $\Vect_\ZZ$ are given by tensor automorphisms of the identity functor, $\tAut\left(\Id_{\Vect_\ZZ}\right)\cong \Hom_{\mathrm{Group}}(\ZZ, \KK^\times)\cong \KK^\times$ (see \cite{EGNO}, Exercise 4.7.16), and in particular $-q\in \KK^\times$ endows $\Vect_\ZZ$ with a spherical structure making the one-dimensional vector space in degree $1$ (resp. degree $-1$) have quantum dimension $-q$ (resp. $-q^{-1}$).
	
	Now, let $X$ be the two-dimensional vector space in $\Vect_\ZZ$, with basis $\{u,v\}$ where $\deg(u) = 1$ and $\deg(v) = -1$.
	Then, $\dim_q(X) = -[2]_q$.
	Let $f:\id \to X\otimes X$ be given by $f(1)=-q u\otimes v + v\otimes u$, and $g:X\otimes X\to \id$ be given by $g(u\otimes u)=g(v\otimes v) = 0$, $g(u\otimes v) = 1$, and $g(v\otimes u) = -q^{-1}$.
	It is easy to check to $X, f$, and $g$ satisfy the conditions in the universal property of \autoref{univ-prop}, so we have a tensor functor $F:\TLf{q}{\KK}\to \Vect_\ZZ$ sending $\obj1$ to $X$ and $\CUP$ and $\CAP$ to $f$ and $g$, respectively.
	Moreover, $F$ intertwines the pivotal structures, and so the claim follows by \autoref{functorTractable}. 
\end{proof}

	Let $A=\End(\obj n)$.
	The idempotents of $A$ are in one-to-one correspondence with summands of $A$ as (projective) left $A$-module. 
	Moreover, this correspondence is compatible with the notion of an isomorphism (where two idempotents are said to be isomorphic if they are so as objects of $\TLf{q}{\KK}$.
	Thus, we find that primitive idempotents in $A$, i.e. indecomposable summands of $\obj n$ in $\TLf{q}{\KK}$, correspond exactly to indecomposable projective modules over $A$.
	Since $A$ is finite-dimensional, these exactly correspond to simple $A$ modules. 
	To a simple module $L$ over $A$, let us denote by $e_L$ the corresponding idempotent in $A$. 
	Now, for any idempotent $e\in A$, we have $\Hom_A(Ae, L)\neq 0$ if and only if $e_L$ is a summand of $e$. 
	
Now, we can apply this analysis inductively to compute the decompositions of $\obj{n}$ for each $n$. 
First, $\obj{0}$ and $\obj{1}$ are indecomposable since their endomorphism algebras are $1$-dimensional and hence simple.

	\begin{lemma}
		For any $n\geq 1$, we have $\obj n$ as a direct summand of $\obj{n+2}$. Moreover, $\obj 0 $ is a summand of $\obj 2$ if and only if $q^2 \neq -1$.
	\end{lemma}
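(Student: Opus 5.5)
To show that $\obj n$ is a direct summand of $\obj{n+2}$ for $n\geq 1$, I will write down an explicit split pair of maps built from a cup and a cap, so that no idempotent theory is needed. Let $\iota:\obj n\to\obj{n+2}$ be $\Id_{\obj{n-1}}\otimes \CUP \otimes \Id_{\obj 1}$, which inserts a cup between the last two strands after the first $n-1$. Let $\pi:\obj{n+2}\to\obj n$ be $\Id_{\obj{n-1}}\otimes \Id_{\obj 1}\otimes \CAP$, which caps off the last two strands. The composite $\pi\circ\iota$ is $\Id_{\obj{n-1}}\otimes\big((\Id_{\obj1}\otimes\CAP)\circ(\CUP\otimes\Id_{\obj1})\big)$. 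By the zigzag relation in \autoref{TL-def}, this equals $\Id_{\obj n}$. This uses $n\geq 1$, since we need a through-strand to straighten. So $\iota\pi$ is an idempotent on $\obj{n+2}$ whose image is $\obj n$. Since $\TLf{q}{\KK}$ is Karoubian, $\obj n$ is a direct summand of $\obj{n+2}$, and this holds for every $q$.

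For $\obj 0$ and $\obj 2$, the only maps available are $\Hom(\obj0,\obj2)=\KK\cdot\CUP$ and $\Hom(\obj2,\obj0)=\KK\cdot\CAP$. Both spaces are one-dimensional, since crossingless matchings of two points give a single diagram. Hence $\obj0$ is a summand of $\obj2$ if and only if there are scalars $\lambda,\mu$ with $\mu\lambda\,\CAP\circ\CUP=\Id_{\obj0}$. By the circle relation this reads $-\mu\lambda[2]_q=1$, which is solvable exactly when $[2]_q\neq0$.

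The last step is to translate $[2]_q\neq 0$ into a condition on $q$. I will use $[2]_q=q+q^{-1}$ rather than the quotient formula, since the latter is undefined at $q=\pm1$. Then $[2]_q=0$ if and only if $q^2=-1$, which gives the stated criterion.

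One point needs care in the "only if" direction: I must justify that $\Hom(\obj0,\obj2)$ is spanned by $\CUP$ inside the Cauchy completion, not just in $\preTLf{q}{\KK}$. This is covered by the remark that Hom spaces between the objects $\obj n$ agree in both categories. Apart from that, the argument is immediate.
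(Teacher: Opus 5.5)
Your proposal is correct and follows essentially the paper's argument. For the first claim, the zigzag relation makes the cup/cap pair a split inclusion and projection, tensored with identity strands; the paper phrases this as tensoring the $\obj{1}\subset\obj{3}$ case with $\obj{n}$. For the second claim, the circle relation $\CAP\circ\CUP=-[2]_q\Id_{\obj{0}}$ decides whether $\obj{0}$ splits off $\obj{2}$. The only minor difference is in the ``only if'' direction. You argue directly from $\Hom(\obj{0},\obj{2})=\KK\cdot\CUP$ and $\Hom(\obj{2},\obj{0})=\KK\cdot\CAP$, while the paper uses that $\End(\obj{2})\cong\KK[x]/(x^2+[2]_q x)$ is not semisimple when $[2]_q=0$, so $\obj{2}$ is indecomposable; your route is slightly more direct.
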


\begin{proof}
	We proceed by induction on $n$.
	Observe that $p=(\Id_{\obj 1} \otimes \CUP)$ and $\iota=(\CAP\otimes \Id_{\obj 1})$ act, respectively, as inclusion and projection of $\obj 1$ to and from $\obj 3$; indeed, $\iota\circ p$ is an idempotent and $p\circ \iota= \Id_{\obj 1}$. 
	So, $\obj{1}$ is a summand of $\obj{3}$.
	Therefore, $\obj{n+1} = \obj{1}\otimes \obj{n}$ as a summand of $\obj{3}\otimes\obj{n} = \obj{n+3}$.
	
	For the second claim, observe that $\End(\obj 2) = \langle 1, x\;|\; x^2 = -[2]_q x\rangle$, which is semisimple if and only if $[2]_q\neq 0$ (or equivalently $q^2\neq -1$); in the semisimple case, $-[2]_q^{-1}\CAP\circ \CUP$ is the idempotent corresponding to $\obj 0$ as a summand of $\obj 2$.
	\end{proof}
	
	\begin{lemma}\label{mod_e_end}
		Let  \begin{center}
\begin{tikzpicture}[scale=0.7]
	\begin{pgfonlayer}{nodelayer}
		\node [style=none] (0) at (-1, 1) {};
		\node [style=none] (1) at (-0.5, 1) {};
		\node [style=none] (2) at (0, 1) {};
		\node [style=none] (3) at (0.5, 1) {};
		\node [style=none] (4) at (1, 1) {};
		\node [style=none] (5) at (2, 1) {};
		\node [style=none] (6) at (-1, -1) {};
		\node [style=none] (7) at (-0.5, -1) {};
		\node [style=none] (8) at (0, -1) {};
		\node [style=none] (9) at (0.5, -1) {};
		\node [style=none] (10) at (1, -1) {};
		\node [style=none] (11) at (2, -1) {};
		\node [style=none] (12) at (1.5, 0) {$\dots$};
		\node [style=none] (13) at (-2, 0) {$e=$};
		\node [style=none] (14) at (3.5, 0) {$\in\End(\obj{n}).$};
	\end{pgfonlayer}
	\begin{pgfonlayer}{edgelayer}
		\draw [style=thickstrand] (0.center) to (8.center);
		\draw [style=thickstrand, bend left=90, looseness=2.25] (6.center) to (7.center);
		\draw [style=thickstrand, bend right=90, looseness=2.25] (1.center) to (2.center);
		\draw [style=thickstrand] (3.center) to (9.center);
		\draw [style=thickstrand] (10.center) to (4.center);
		\draw [style=thickstrand] (5.center) to (11.center);
	\end{pgfonlayer}
\end{tikzpicture}
\end{center}	
Then, the two-sided principal ideal $I = \langle e \rangle$ is of codimension 1 in $\End(\obj n)$, so that $\End(\obj n)/\langle e \rangle\cong \KK\cdot \Id_{\obj n}$. 
		\end{lemma}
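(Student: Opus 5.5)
Since $\End(\obj n)$ is spanned by Temperley--Lieb diagrams (crossingless matchings between $n$ bottom and $n$ top points), I will show two things: every diagram other than $\Id_{\obj n}$ lies in $I=\langle e\rangle$, and $\Id_{\obj n}\notin I$. Together these give $\End(\obj n)=\KK\cdot\Id_{\obj n}\oplus I$, which is the claim. I read $e$ as a diagram with a cap on the first two bottom points, a cup on the top points $2,3$, and the remaining strands through. This morphism is not idempotent, but it is only used to generate a two-sided ideal. (We need $n\geq 2$; for $n\le 1$ no such $e$ exists.)

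First, every non-identity diagram $D$ is a composite of the standard generators $E_i=\Id^{\otimes (i-1)}\otimes(\CUP\circ\CAP)\otimes \Id^{\otimes(n-i-1)}$. This is the usual normal-form statement for Temperley--Lieb diagrams: a crossingless matching with a cup or cap is a word in the $E_i$, up to a nonzero scalar coming from closed loops. One can argue by isotopy, pulling the diagram into a composite of elementary cups and caps. A non-identity diagram on $n$ strands must contain at least one cap at the bottom (some two adjacent bottom points joined), so it is a nonempty word.

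It therefore suffices to show each $E_i\in I$. I will use the diagrammatic relations directly. Precomposing and postcomposing $e$ with suitable diagrams of the form $\Id\otimes\cdots$ slides the cap and cup. For instance, $e\circ(\Id_{\obj 1}\otimes \CUP\circ\CAP\otimes\Id)$ and compositions with shifted versions of $e$ obtained by tensoring with identity will not work, because tensoring changes $n$. So instead I will build $E_i$ as $a\circ e\circ b$ with $a,b\in\End(\obj n)$. Concretely, $e$ joins bottom $1$ to bottom $2$, top $2$ to top $3$, bottom $3$ to top $1$, and bottom $k$ to top $k$ for $k\ge4$. Then:
\begin{itemize}
\item Composing on top with $E_1$ caps top strands $1,2$. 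Using the zigzag relation, this straightens the strand and yields $E_1$ itself, i.e.\ $E_1 e$ has bottom cap $(1,2)$ and top cup $(1,2)$, which is $E_1$. Hence $E_1\in I$. This step needs $n\ge3$; for $n=2$, $e$ is just $\CUP\circ\CAP=E_1$ already.
\item $E_{i+1}=w\,E_i\,w'$ for suitable words, using the relation $E_{i+1}E_iE_{i+1}=E_{i+1}$, which follows from the zigzag relation. So all $E_i$ lie in the ideal generated by $E_1$.
\end{itemize}
This shows that $I$ contains the span of all non-identity diagrams.

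For the converse, namely $\Id_{\obj n}\notin I$, I will use the grading functor $F$ or, more simply, the one-dimensional representation $\chi:\End(\obj n)\to\KK$ sending $\Id_{\obj n}\mapsto1$ and every non-identity diagram to $0$. The span $J$ of non-identity diagrams is a two-sided ideal: any composite involving a non-identity diagram is a scalar multiple of a diagram that still contains a cap, since caps cannot be removed by composition (the number of through-strands is nonincreasing). So $\chi$ is an algebra map, $e\in J$, and hence $I\subseteq J\neq \End(\obj n)$. Combined with the first part this gives $I=J$ of codimension one. The main care point is the normal-form claim about through-strands; the rest is routine diagrammatics.
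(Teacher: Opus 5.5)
Your proof is correct, and it reaches the lemma by a different mechanism from the paper's sketch; it also proves a bit more. Both arguments show that every non-identity diagram lies in $\langle e\rangle$. The paper does this geometrically: it adds cups and caps to $e$ and slides them into place by pre- and post-composing with the braiding, i.e.\ by isotopy of tangles. You never need a braiding and stay inside the Temperley--Lieb algebra. You reduce to the generators $E_i$, check $E_1\circ e=E_1$, and propagate with $E_{i+1}=E_{i+1}E_iE_{i+1}$. The check $E_1\circ e=E_1$ is right: the cap of $E_1$ and the cup of $e$ at positions $2,3$ form a zigzag that straightens the strand from bottom $3$ to top $3$.

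You also supply the reverse inclusion $\Id_{\obj n}\notin\langle e\rangle$. This is needed for codimension exactly one, and the paper's proof leaves it implicit. Your through-strand ideal $J$ handles it, but $\chi$ is well defined only because crossingless matchings are linearly independent in $\End(\obj n)$. So you should cite the diagram basis theorem; the paper also uses it tacitly, e.g.\ in the Catalan count. Your first idea, the functor $F$ to $\Vect_\ZZ$, gives a basis-free alternative. If $\Id_{\obj n}\in\langle e\rangle$, then, since $e$ factors through $\obj{n-2}$, $\Id_{\obj n}$ would factor through $\obj{n-2}^{\oplus m}$. After applying $F$, the degree-$n$ vector $u^{\otimes n}$ would then factor through a graded space with nothing in degree $n$, which is impossible.

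A few inaccuracies to fix, none of which break the argument.

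First, $e$ \emph{is} idempotent. Write $e=\iota\circ p$ with $p=\CAP\otimes\Id_{\obj{n-2}}$ and $\iota=\Id_{\obj 1}\otimes\CUP\otimes\Id_{\obj{n-3}}$; then $p\circ\iota=\Id_{\obj{n-2}}$ by the zigzag relation. This is irrelevant to the lemma, but it is exactly why the next corollary can write $(\obj n,e)\cong\obj{n-2}$.

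Second, in the normal-form step the scalar is $1$, not merely nonzero. A crossingless matching is realized by a reduced word that creates no closed loops, and this matters because the loop value $-[2]_q$ vanishes when $q^2=-1$.

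Third, your isotopy sketch does not by itself produce words in the $E_i$, since composites of elementary cups and caps pass through other objects $\obj m$. You can bypass the normal form entirely. Write $E_j=\iota_j\circ p_j$, with $p_j$ the cap on strands $j,j+1$ and $\iota_j$ the corresponding cup. A diagram $D$ with a bottom cap at $(j,j+1)$ factors as $D=D_0\circ p_j$. Choose $p'$ with $p'\circ\iota_j=\Id_{\obj{n-2}}$, via a zigzag with a neighbouring strand. Then $D=(D_0\circ p')\circ E_j\in\langle e\rangle$.
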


\begin{proof}
	We can show that all diagrams except for $\Id_{\obj n}$ are in $I$.
	Note that any Temperley--Lieb diagram $\End(\obj{n})$ other than the identity must have at least one cup and one cap. 
	One can add more cups and caps to $e$ by composing with the appropriate morphisms. 
	Then, one can move the cups and caps by pre-composing and post-composing with an appropriate braiding to get any other diagram with at least one cup and cap. 
\end{proof}

\begin{corollary}\label{newSummands}
	For all $n\geq 2$ , there is a exactly one indecomposable summand $T_n$ of $\obj n$ which is not a summand of $\obj {n-2}$.
\end{corollary}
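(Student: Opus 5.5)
The plan is to use the correspondence set up just before \autoref{mod_e_end}: indecomposable summands of $\obj n$, up to isomorphism, correspond to simple modules over $A=\End(\obj n)$. An idempotent $e'$ has $e_L$ as a summand exactly when $\Hom_A(Ae',L)\cong e'L\neq 0$. The whole argument will be phrased in these terms.

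\textbf{Step 1 (summands of $\obj{n-2}$).} Since $n\geq 2$ (and $n-2\geq 1$, or $n=2$ handled separately below), $\obj{n-2}$ is a summand of $\obj n$ by the preceding lemma. Up to a scalar, the idempotent realizing it factors through $\obj{n-2}$ via the cap/cup placed on the first two strands. Using the maps $\CAP\otimes\Id$ and $\Id\otimes\CUP$ exactly as in that lemma's proof, the composite $\obj n\to\obj{n-2}\to\obj n$ lies in the two-sided ideal $I=\langle e\rangle$.

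More intrinsically, I will show that an indecomposable summand $Y$ of $\obj n$ is a summand of $\obj{n-2}$ if and only if its primitive idempotent $e_Y$ factors through $\obj{n-2}$. Any morphism $\obj n\to\obj n$ that factors through $\obj{n-2}$ is a linear combination of diagrams with a cup and a cap. Conversely, every diagram other than the identity factors through $\obj{n-2}$ after isotopy. Hence the span of endomorphisms factoring through $\obj{n-2}$ is exactly $I$, by \autoref{mod_e_end}. So $Y$ is a summand of $\obj{n-2}$ if and only if $e_Y\in I$.

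\textbf{Step 2 (counting).} If $e_Y\in I$, then $Ae_YA\subseteq I$, so the simple module $L_Y$ satisfies $I L_Y\supseteq e_Y L_Y\neq 0$. Thus $L_Y$ is not annihilated by $I$. Conversely, suppose $L$ is a simple module with $IL\neq0$. Then some idempotent of $I$ acts nonzero on $L$: since $I$ is a two-sided ideal of a finite-dimensional algebra, either $I$ is nilpotent on $L$ or it contains an idempotent acting nontrivially. This is the delicate point, and I will handle it by taking the image of $I$ in $A/J(A)$, a product of matrix algebras. By Step 1 such an idempotent is a sum of idempotents factoring through $\obj{n-2}$. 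Krull--Schmidt (\autoref{Krull-Schmidt}) then forces $e_L$ to be a summand of $\obj{n-2}$.

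Hence the indecomposable summands of $\obj n$ that are not summands of $\obj{n-2}$ correspond to simple $A$-modules killed by $I$, i.e.\ to simple modules of $A/I\cong\KK$. There is exactly one such module, giving the unique $T_n$.

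\textbf{Main obstacle.} The step I expect to be hardest is the converse direction in Step 2 when $I$ acts on $L$ nonzero yet possibly nilpotently. I would pass to $\bar A=A/J(A)$: the image $\bar I$ is an ideal of a semisimple algebra, hence generated by a central idempotent. Either $\bar I$ contains the block of $L$, so an idempotent of $I$ lifts (idempotent lifting modulo $J\cap I$) and acts nonzero on $L$; or $IL\subseteq JL=0$. For $n=2$ with $q^2=-1$, Step 1 must be checked directly: there $\obj0$ is not a summand of $\obj2$, $I=\KK x$ is nilpotent, and $\obj2$ itself is the unique indecomposable $T_2$.
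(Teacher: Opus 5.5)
Your argument is correct. Its skeleton is the same as the paper's: indecomposable summands of $\obj n$ correspond to simple $A$-modules, and one then counts simple $A/I$-modules using \autoref{mod_e_end}. You handle the key step differently, though.

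\textbf{Your route.} You characterize ``$Y$ is a summand of $\obj{n-2}$'' as ``$e_Y$ lies in the ideal $I$ spanned by endomorphisms factoring through $\obj{n-2}$''. To get from $IL\neq 0$ back to an idempotent, you then need the passage to $A/J(A)$ together with idempotent lifting inside $I$.

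\textbf{The paper's route.} The element $e$ of \autoref{mod_e_end} is itself an idempotent. It is a zig-zag composite $\obj n\to\obj{n-2}\to\obj n$ whose other composite is $\Id_{\obj{n-2}}$, so no scalar and no assumption $[2]_q\neq 0$ is needed. It satisfies $(\obj n,e)\cong\obj{n-2}$ and generates $I$. Hence $IL=AeAL=AeL$, so $IL\neq 0\iff eL\neq 0\iff \Hom_A(Ae,L)\neq 0$. That last condition says exactly that the summand attached to $L$ is a summand of $\obj{n-2}$. Your ``delicate point'' never arises.

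\textbf{What your route buys.} It does not need the ideal to be generated by an idempotent with image $\obj{n-2}$. So it covers $n=2$ uniformly: there the paper's $e$ does not exist, and for $q^2=-1$ the ideal $\KK\cdot(\CUP\circ\CAP)$ is nilpotent. The paper implicitly gets this case from the computation of $\End(\obj 2)$ in the preceding lemma. The separate check for $n=2$ that you flag is in fact already covered by your general argument.

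\textbf{Minor imprecisions.} Neither affects correctness.
\begin{itemize}
\item $e_Y\in I$ only gives a factorization through $\obj{n-2}^{\oplus m}$, so Krull--Schmidt is already needed in Step 1.
\item An idempotent of $I$ is a sum of morphisms, not of idempotents, factoring through $\obj{n-2}$.
\end{itemize}
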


\begin{proof}
	A summand of $\obj n$ corresponds to an idempotent in $A=\End(\obj n)$, which in turn corresponds to a summand of $A$ as a projective $A$-module.
	This correspondence respects the notions of sub-objects and isomorphisms, so indecomposable summands of $\obj n$ are in bijection with projective indecomposable summands of the regular $A$-module. 
	Since $A$ is finite dimensional, there is a canonical bijection between projective indecomposable $A$-modules and simple $A$-modules.
	Under this bijection, an indecomposable summand of $\obj{n}$ which is not also summands of $(\obj n, e)\cong \obj{n-2}$ corresponds to a simple $\End(\obj{n})$-module $L$ such that $eL= 0$, or equivalently, a simple $\End(\obj{n})/\langle e\rangle$-module.
	But by \autoref{mod_e_end}, there's only one such simple module.
\end{proof}
	
\begin{lemma}\label{nextIndecomposbleLemma}
	$T_{n+1}$ is a summand of $T_n\otimes \obj{1}$.
\end{lemma}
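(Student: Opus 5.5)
Since $T_n$ is a summand of $\obj n$, the object $T_n\otimes\obj 1$ is a summand of $\obj{n+1}$. By Krull--Schmidt (\autoref{Krull-Schmidt}) we can write
\[
\obj n\cong T_n\oplus R,
\]
where every indecomposable summand of $R$ is a summand of $\obj{n-2}$. This holds because, by \autoref{newSummands} and the idempotent/simple-module correspondence described above, the only indecomposable summand of $\obj n$ not occurring in $\obj{n-2}$ is $T_n$. We also need that $T_n$ occurs with multiplicity one, which we will check below. Tensoring with $\obj 1$ gives
\[
\obj{n+1}\cong (T_n\otimes\obj 1)\oplus (R\otimes \obj 1),
\]
and every indecomposable summand of $R\otimes\obj 1$ is a summand of $\obj{n-2}\otimes\obj 1=\obj{n-1}$.

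Now $T_{n+1}$ is an indecomposable summand of $\obj{n+1}$, so by uniqueness of Krull--Schmidt decompositions it is a summand either of $T_n\otimes\obj 1$ or of $R\otimes\obj 1$. In the second case it would be a summand of $\obj{n-1}$. By the lemma that $\obj m$ is a summand of $\obj{m+2}$ for $m\ge1$, it would then also be a summand of $\obj{n-1}$ when $n-1\ge 0$. This contradicts the defining property in \autoref{newSummands}, namely that $T_{n+1}$ is not a summand of $\obj{n-1}$. Hence $T_{n+1}$ is a summand of $T_n\otimes\obj 1$.

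The delicate point is the multiplicity of $T_n$ in $\obj n$. If $T_n$ appeared with multiplicity greater than one, then after removing one copy the remainder $R$ could still contain a copy of $T_n$, and the argument above would break. I would resolve this as follows. In $A=\End(\obj n)$, the simple quotient $A/\langle e\rangle\cong\KK$ of \autoref{mod_e_end} is one-dimensional. The multiplicity of the projective cover $Ae_L$ in the regular module $A$ equals $\dim L$, which here is $1$. So $T_n$ appears exactly once in $\obj n$.

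Alternatively, I would avoid multiplicities altogether. We have $\obj{n+1}\cong\obj n\otimes\obj 1$, and $\obj n$ decomposes into $T_n$'s together with summands of $\obj{n-2}$. If no copy of $T_n\otimes\obj1$ contained $T_{n+1}$, then $T_{n+1}$ would be a summand of $\obj{n-1}$, which is again a contradiction.

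For the base cases we need $n\ge1$, with $T_1=\obj 1$ and $T_0=\obj 0$; these are immediate. In the small case where $n-2<0$, we have $\obj n=T_n$ and the argument is trivial.
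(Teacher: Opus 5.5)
Your proof is correct and is the same argument as the paper's: write $\obj n\cong T_n\oplus X$ with $X$ built from summands of lower $\obj k$, tensor with $\obj 1$, and note that $T_{n+1}$ cannot occur in $X\otimes\obj 1$ because it is not a summand of $\obj{n-1}$. The paper's one-line proof leaves implicit that the number of copies of $T_n$ in $\obj n$ does not matter, and your second version of the argument settles this: by Krull--Schmidt, a summand of $(T_n\otimes\obj 1)^{\oplus m}$ is already a summand of $T_n\otimes\obj 1$.
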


\begin{proof}
	By \autoref{newSummands}, $\obj{n} = T_{n} \oplus X$, where $X$ consists of summands appearing in $\obj{k}$ for $k\leq n-2$. So, upon tensoring with $\obj{1}$, all the summands of $X\otimes \obj{1}$ must be summands of $\obj{n-1}$, which $T_{n+1}$ is not, by definition.
\end{proof}

\begin{corollary}\label{fusionRule}
	Assume $\dim_q T_i\neq 0$ for $i=0,1,\dots, n$. Then, $T_n\otimes \obj{1} \cong T_{n+1}\oplus T_{n-1}$.
\end{corollary}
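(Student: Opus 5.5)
We argue by induction on $n$, using \autoref{nextIndecomposbleLemma} for the summand $T_{n+1}$ and the multiplicity symmetry of \autoref{multiplicitiesLemma} to produce $T_{n-1}$. For the base case we set $T_0=\obj 0$ and $T_1=\obj 1$. Then $T_0\otimes\obj 1=\obj 1=T_1$, so the formula holds for $n=0$ with the convention $T_{-1}=0$. For $n=1$, the hypothesis $\dim_q T_0\neq0$ and $\dim_q T_1 = -[2]_q\neq 0$ means $q^2\neq -1$. By the earlier lemma, $\obj 2\cong \obj 0\oplus T_2$, where $T_2$ is the complementary summand; it is indecomposable because $\End(\obj 2)$ is two-dimensional semisimple.

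For the inductive step, assume $n\ge 2$ and $\dim_q T_i\neq 0$ for $i\le n$.

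\emph{Step 1: $T_{n-1}$ is a summand of $T_n\otimes\obj 1$.} Every $T_i$ is self-dual: $\obj i$ is self-dual, the duality functor permutes the indecomposable summands of $\obj i$, and it preserves the set of summands of $\obj{i-2}$, so it must fix the unique new summand $T_i$ of \autoref{newSummands}. By \autoref{nextIndecomposbleLemma}, $T_n$ is a summand of $T_{n-1}\otimes\obj 1$, so $m(T_n, T_{n-1}\otimes \obj 1)\ge 1$. All of $T_{n-1}$, $\obj 1$, $T_n$ have nonzero dimension, and \autoref{TL_tractable} makes the category tractable, so \autoref{multiplicitiesLemma} applies. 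Using self-duality, it gives
\[
m(T_{n-1}, T_n\otimes\obj 1)=m(T_n, T_{n-1}\otimes\obj 1)\ge 1 .
\]
If the bookkeeping of which factor is dualized does not match directly, we first rotate the tensor factors using that $\obj 1$ is central up to the pivotal and duality isomorphisms. Alternatively, we use $\Hom$-adjunction in the semisimplification, where $\overline{T_i}$ is simple by \autoref{tractable-semisimplification}.

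\emph{Step 2: $T_{n+1}$ is a summand.} This is exactly \autoref{nextIndecomposbleLemma}.

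\emph{Step 3: nothing else occurs.} Since $T_n\otimes\obj 1$ is a summand of $\obj{n+1}$, the category is Krull--Schmidt (\autoref{Krull-Schmidt}), and $T_{n+1}\neq T_{n-1}$, we get $T_n\otimes\obj 1\cong T_{n+1}\oplus T_{n-1}\oplus R$ for some $R$. We show $R=0$ by counting multiplicities in $\obj{n+1}$. By induction, the multiplicities of $T_0,\dots,T_n$ in $\obj k$ for $k\le n$ are the classical Clebsch--Gordan numbers. Decompose
\[
\obj{n+1}=\obj n\otimes\obj 1=\bigoplus_i T_i^{\oplus c_i}\otimes \obj 1 .
\]
For $i<n$, the induction hypothesis gives $T_i\otimes\obj 1= T_{i+1}\oplus T_{i-1}$. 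Compare this with the decomposition of $\obj{n+1}$ read off from its algebra $A=\End(\obj{n+1})$.

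Step 3 is the main obstacle. I would instead use a dimension count in $\End(\obj{n+1})$. By Krull--Schmidt, $\dim\End(\obj{n+1})$ is at least $\sum_{\text{summands}}(\text{multiplicity})^2$, with extra contributions from Hom spaces between distinct summands. We know
\[
\dim \End(\obj{n+1})=\mathrm{Cat}_{n+1}=\sum_i d_i^2 ,
\]
where the $d_i$ are the multiplicities predicted by the fusion rule. So any extra summand $R$ would raise the count strictly above $\mathrm{Cat}_{n+1}$, a contradiction. This also forces the Hom spaces between distinct $T_i$ to vanish, which is consistent with the argument.
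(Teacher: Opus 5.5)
Your proof is correct. Steps 1 and 2 match the paper's, but your Step 3 is a genuinely different argument.

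The paper rules out extra summands locally, with no induction. A summand $T_m$ of $T_n\otimes\obj 1\subset\obj{n+1}$ with $m\equiv n \pmod 2$ is impossible by parity. One with $m\le n-3$ would, again by \autoref{multiplicitiesLemma}, make $T_n$ a summand of $T_m\otimes\obj 1\subset\obj{m+1}$, contradicting the definition of $T_n$ as the new summand of $\obj n$.

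You instead use strong induction to write $\obj{n+1}\cong\bigoplus_j T_j^{\oplus d_j}\oplus R$, with the $d_j$ the classical Clebsch--Gordan numbers. You then compare $\mathrm{Cat}_{n+1}=\dim\End(\obj{n+1})\ge\sum_j (d_j+r_j)^2$ with the identity $\sum_j d_j^2=\mathrm{Cat}_{n+1}$.

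What each route buys:
\begin{itemize}
\item Your route imports two external facts: the diagram basis of $\End(\obj{n+1})$ and the classical $\mathfrak{sl}_2$ (ballot-number) identity. The paper only invokes these later, in \autoref{thm:TLindecomposables}(4).
\item In return, equality in your count also forces $\End(T_j)\cong\KK$ and $\Hom(T_j,T_k)=0$ for the distinct $T_j$ occurring in $\obj{n+1}$. So you obtain semisimplicity of $\End(\obj{n+1})$ under just the corollary's hypothesis.
\item Your count also gives multiplicity one for $T_{n\pm1}$ for free. The paper leaves this implicit; it follows from $T_n$ and $T_{n+1}$ each occurring once in $\obj n$ and $\obj{n+1}$.
\item The paper's route is shorter and stays entirely inside the Krull--Schmidt/new-summand framework.
\end{itemize}

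A small point on Step 1. Two applications of \autoref{multiplicitiesLemma} literally give $m(T_{n-1},T_n\otimes\obj 1)=m(T_n,\obj 1\otimes T_{n-1})$. So you need $\obj 1\otimes T_{n-1}\cong T_{n-1}\otimes\obj 1$, which comes from the braiding or from the mirror version of \autoref{nextIndecomposbleLemma}. Your hedge (or your Hom-adjunction alternative in the semisimplification) covers this, and the paper glosses over the same point.
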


\begin{proof}
	Applying \autoref{multiplicitiesLemma} to \autoref{nextIndecomposbleLemma}, we see that $T_{n-1}$ is a summand (of multiplicity $1$) in $T_n\otimes \obj{1}$. 
	If $T_{n-2}\subset \obj{n-2}$ were a summand of $T_n\otimes \obj 1 \subset \obj{n+1}$, we would have a nonzero morphism $\obj{n-2}\to\obj{n+1}$, which is impossible due to mismatched parities.
	Further, if $T_{m}$ is a summand of $T_n\otimes\obj{1}$ for $m\leq n-3$, we also get a contradiction since that implies, again by \autoref{multiplicitiesLemma}, that $T_n$ is a summand of $T_{m}\otimes\obj{1}$ and hence of $\obj{m+1}$, contradicting the definition of $T_n$.  
\end{proof}

Let us denote $T_0:= \obj{0} = \id_{\TLf{q}{\KK}}$ and $T_1 = \obj{1}$. 
We have the following celebrated theorem of Jones \cite{J} (or see \cite{KL}, \cite{T}, or \cite{Wang} for an exposition):

\begin{theorem}\label{thm:TLindecomposables} For any algebraically closed field $\KK$ and $q\in\KK^\times$ satisfying $q^4\neq 1\neq q^6$, we have 

	\begin{enumerate}
		\item $\{T_n\}_{n\in\NN}$ is a complete list of indecomposable objects in $\TLf{q}{\KK}$;
		\item $\dim_q(T_n) = [n+1]_{\pm q}$, where the sign depends on the choice of spherical structure.
\vspace{0.1in}

\noindent Moreover, when $q$ is not a root of unity in $\KK$, we have		
		
		\item $T_m\otimes T_n \cong T_{|m-n|} \oplus T_{|m-n|+2} \oplus \dots \oplus T_{m+n}$;
		\item The category $\TLf{q}{\KK}$ is semisimple. 
	\end{enumerate} 
\end{theorem}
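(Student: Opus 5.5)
We prove the four parts in order, using the inductive machinery developed above: \autoref{newSummands}, \autoref{nextIndecomposbleLemma} and \autoref{fusionRule}.

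\emph{Step 1: quantum dimensions.} Write $d_n=\dim_q T_n$, with $d_0=1$ and $d_1=-[2]_q$ (or $[2]_q$ for the other spherical structure). Whenever $d_0,\dots,d_n\neq 0$, \autoref{fusionRule} gives $T_n\otimes\obj1\cong T_{n+1}\oplus T_{n-1}$. Since quantum dimension is multiplicative and additive, this yields the recursion
\[ d_{n+1}=d_1d_n-d_{n-1}. \]
This is the Chebyshev recursion satisfied by $[n+1]_{-q}$ (or $[n+1]_q$), using $[2]_{x}[n+1]_{x}=[n+2]_{x}+[n]_{x}$ with $x=\mp q$. Hence $d_n=[n+1]_{\pm q}$ as long as the recursion applies. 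If $q$ is not a root of unity, then $[k]_q\ne0$ for all $k\ge1$, so induction gives $d_n=[n+1]_{\pm q}\neq0$ for all $n$, proving (2).

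At a root of unity, (2) should be read as holding for all $n$ where the recursion applies. For $n$ beyond the first vanishing $d_n=0$, the formula would need the separate analysis of $T_n$ as tilting modules. I expect this to be the main obstacle. I would handle it by comparing with the functor $F:\TLf{q}{\KK}\to\Vect_\ZZ$ from the proof of \autoref{TL_tractable}: the graded character of $F(T_n)$ is a sum of $\ZZ$-characters, and one shows it is the character of a Weyl-type module by dimension counting of $\End(\obj n)$. Alternatively, one could restrict the claim to $n$ below the first zero and leave the remaining cases to the literature (\cite{J}, \cite{KL}). The hypothesis $q^4\ne1\ne q^6$ guarantees $[2]_q,[3]_q\neq0$, which is what is needed for $T_2$ and $T_3$ to be defined and nondegenerate at the start.

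\emph{Step 2: completeness (1).} Every object of $\TLf{q}{\KK}$ is a summand of a direct sum of the objects $\obj n$, so by Krull--Schmidt (\autoref{Krull-Schmidt}) every indecomposable is a summand of some $\obj n$. By induction on $n$ using \autoref{newSummands}, every indecomposable summand of $\obj n$ is isomorphic to some $T_k$ with $k\le n$ and $k\equiv n \pmod 2$. The $T_k$ are pairwise non-isomorphic: $T_k$ is not a summand of $\obj{k-2}$, so it cannot be isomorphic to any $T_j$ with $j<k$ of the same parity, and parity separates the rest.

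\emph{Step 3: fusion rules (3) and semisimplicity (4), for $q$ not a root of unity.} By Step 1 all $d_n\ne0$, so \autoref{fusionRule} holds for every $n$. The Clebsch--Gordan rule then follows by induction on $\min(m,n)$: write $T_m\otimes T_n$ as $T_m\otimes T_{n-1}\otimes\obj1$ minus $T_m\otimes T_{n-2}$, and use Krull--Schmidt cancellation. For (4), all indecomposables have nonzero dimension, and every negligible morphism between indecomposables lies in the radical, so we need to show $\End(T_n)=\KK$ and that $\Hom(T_m,T_n)=0$ for $m\ne n$.

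Both follow from a dimension count. Using (3), $\dim\End(\obj n)$ equals the sum of squared multiplicities, which is the Catalan number counting Temperley--Lieb diagrams. Hence the algebra $\End(\obj n)$ is isomorphic to $\prod \mathrm{Mat}_{m_k}(\KK)$ and is therefore semisimple. It follows that every idempotent summand has a division endomorphism ring, which is $\KK$ since $\KK$ is algebraically closed. Finally, the Cauchy completion of a category with semisimple endomorphism algebras is abelian semisimple.
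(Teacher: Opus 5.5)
Your proofs of (1), (3) and (4), and of (2) for $q$ not a root of unity, are the paper's argument. You use induction from \autoref{newSummands}, the recursion $d_{n+1}=d_1d_n-d_{n-1}$ from \autoref{fusionRule}, tensoring with $\obj{1}$ plus Krull--Schmidt cancellation, and the Catalan count.

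In (4), spell out the step behind your ``hence'' as the paper does. Write $\obj{n}\cong\bigoplus_i T_i^{\oplus m_i}$ and let $C_n$ be the $n$th Catalan number. Then $C_n=\dim\End(\obj{n})=\sum_{i,j}m_im_j\dim\Hom(T_i,T_j)\geq\sum_i m_i^2$. By (3) the $m_i$ are the $SL_2$ multiplicities, so $\sum_i m_i^2=C_n$. Equality then forces $\End(T_i)=\KK$ and $\Hom(T_i,T_j)=0$ for $i\neq j$.

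The genuine problem is your plan for (2) at a root of unity. You are right that the recursion only runs while $d_0,\dots,d_n\neq 0$; the paper's ``simple induction'' passes over this. But no repair is possible, because (2) is false past the first zero. The $\Vect_\ZZ$ character argument must fail, and the literature will not supply those cases either.

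Let $\ell$ be the order of $q^2$, so $\ell\geq 4$ by hypothesis. Your Step 1 gives $d_{\ell-1}=\pm[\ell]_q=0$.
\begin{itemize}
\item $\End(T_{\ell-1})$ is local with residue field $\KK$, and $\TLf{q}{\KK}$ is tractable (\autoref{TL_tractable}). Hence every endomorphism of $T_{\ell-1}$ has trace $0$, i.e.\ $T_{\ell-1}$ is negligible.
\item Negligible morphisms form a tensor ideal, and $T_\ell$ is a summand of $T_{\ell-1}\otimes\obj{1}$ (\autoref{nextIndecomposbleLemma}). So $T_\ell$ is negligible and $d_\ell=0$. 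Inductively, $d_n=0$ for all $n\geq\ell-1$.
\item But $[\ell+1]_q=q^{\ell}=\pm1$ and $[\ell+1]_{-q}=(-1)^{\ell}[\ell+1]_q$, so $[\ell+1]_{\pm q}\neq 0$. For example, for $q=e^{\pi i/4}$ one gets $\dim_q T_4=0$ while $[5]_{\pm q}=-1$.
\end{itemize}
This vanishing is exactly what \autoref{fusionRules} needs, since only $T_0,\dots,T_{\varkappa-2}$ survive semisimplification. In terms of your functor $F$: for $n\geq\ell$, $F(T_n)$ carries the character of a tilting module, which is a sum of at least two Weyl characters, not a single one.

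So (2) should read: $d_n=[n+1]_{\pm q}$ for $n\leq\ell-1$ and $d_n=0$ for $n\geq\ell-1$ (only the first part when $q$ is not a root of unity). Your Step 1 together with the negligibility argument above then proves it completely.

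A minor point: $T_n$ exists for every $q$ by \autoref{newSummands}. What your irredundancy argument in Step 2 actually uses is $[2]_q\neq 0$, which makes $\obj{0}$ a summand of $\obj{2}$.
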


\begin{proof}
	\begin{enumerate}
		\item This follows inductively from \autoref{newSummands}.
		\item \autoref{fusionRule} implies that $\dim_q(T_{n+1}) = [2]_{\pm q}\dim_q(T_n)-\dim_q(T_{n-1})$, from which a simple induction proves the claim. 
		\item When $q$ is not a root of unity, we have $\dim_q(T_n) = [n+1]_{\pm q} \neq 0$ for all $n\in\NN$.
		When $n=1$, the claim is a restatement of \autoref{fusionRule}, and for other $m$ proceed inductively by tensoring both sides with $T_1=\obj{1}$. 
		\item Now, we can make use of the fact that $\Rep{SL_2}$ satisfies the same Clebsch-Gordan fusion rule (3), i.e. both categories have isomorphic Grothendieck rings.  Suppose $\obj{n}= \bigoplus_i T_i^{\oplus m_i}$. Then, 
		$$\dim(\End(\obj{n})) = \sum_{i,j} m_i m_j \dim(\Hom(T_i, T_j)) \geq \sum_i m_i^2 \dim(\End(T_i))\geq \sum_i m_i^2,$$
		with the equality occurring if and only if $ \End(T_i) \cong \KK$ and $\Hom(T_i,T_j) = 0$ when $i\neq j$. 
		But the equality does occur when replacing $\obj{n}$ with $V^{\otimes n}$ where $V\in\Rep{SL_2}$ is the fundamental $2$-dimensional representation, which we know has the same direct sum decomposition as $\obj{n}$ in $\TLf{q}{\KK}$. 
		Hence both sides of the inequality must coincide with the $n^{\text{th}}$ Catalan number. It follows that $T_n$ are simple and $\TLf{q}{\KK}$ is semisimple.
		\end{enumerate}	
	\end{proof}

\begin{theorem}[\cite{W}]
When $q$ is not a root of unity, the object $T_n$ is the image of the Jones-Wenzl idempotent $JW_n\in\End(\obj{n})$, defined recursively by $JW_1 = \Id_{\obj 1}$ and 
\begin{center}
\begin{tikzpicture}[scale=0.5]
	\begin{pgfonlayer}{nodelayer}
		\node [style=none] (0) at (1, 0) {};
		\node [style=none] (1) at (-0.75, 0) {};
		\node [style=jonesrectangle] (2) at (0, 2.5) {$JW_{n+1}$};
		\node [style=none] (3) at (-0.75, 2) {};
		\node [style=none] (4) at (1, 2) {};
		\node [style=none] (5) at (0.15, 1) {$\cdots$};
		\node [style=none] (6) at (-0.75, 3) {};
		\node [style=none] (7) at (1, 3) {};
		\node [style=none] (8) at (1, 5) {};
		\node [style=none] (9) at (-0.75, 5) {};
		\node [style=none] (10) at (2, 2.5) {$=$};
		\node [style=jonesrectangle] (11) at (3.5, 2.5) {$JW_n$};
		\node [style=jonesrectangle] (12) at (10.75, 3.75) {$JW_n$};
		\node [style=none] (13) at (0.15, 4) {$\cdots$};
		\node [style=none] (14) at (2.75, 3) {};
		\node [style=none] (15) at (4.25, 3) {};
		\node [style=none] (16) at (4.25, 2) {};
		\node [style=none] (17) at (2.75, 2) {};
		\node [style=none] (18) at (2.75, 0) {};
		\node [style=none] (19) at (4.25, 0) {};
		\node [style=none] (20) at (4.25, 5) {};
		\node [style=none] (21) at (2.75, 5) {};
		\node [style=none] (22) at (11, 4.25) {};
		\node [style=none] (23) at (10, 4.25) {};
		\node [style=none] (24) at (10, 3.25) {};
		\node [style=none] (26) at (11.5, 0) {};
		\node [style=none] (27) at (10, 0) {};
		\node [style=none] (28) at (10, 5) {};
		\node [style=none] (29) at (11, 5) {};
		\node [style=none] (30) at (11.5, 4.25) {};
		\node [style=none] (31) at (11.5, 5) {};
		\node [style=none] (32) at (5, 5) {};
		\node [style=none] (33) at (5, 0) {};
		\node [style=jonesrectangle] (34) at (10.75, 1.25) {$JW_n$};
		\node [style=none] (35) at (11, 1.75) {};
		\node [style=none] (36) at (10, 1.75) {};
		\node [style=none] (37) at (10, 0.75) {};
		\node [style=none] (38) at (11.5, 0.75) {};
		\node [style=none] (39) at (11.5, 1.75) {};
		\node [style=none] (43) at (12, 1.75) {};
		\node [style=none] (44) at (11.75, 2.25) {};
		\node [style=none] (45) at (12, 0) {};
		\node [style=none] (46) at (11.5, 3.25) {};
		\node [style=none] (47) at (12, 3.25) {};
		\node [style=none] (48) at (11.75, 2.75) {};
		\node [style=none] (49) at (12, 5) {};
		\node [style=none] (50) at (11, 3.25) {};
		\node [style=none] (51) at (10.6, 2.5) {$\cdots$};
		\node [style=none] (52) at (10.6, 4.7) {$\cdots$};
		\node [style=none] (53) at (11, 0.75) {};
		\node [style=none] (54) at (11, 0) {};
		\node [style=none] (55) at (10.6, 0.275) {$\cdots$};
		\node [style=none] (56) at (3.5, 4) {$\cdots$};
		\node [style=none] (57) at (3.5, 1) {$\cdots$};
		\node [style=none] (58) at (6, 2.5) {$-$};
		\node [style=none] (59) at (8, 2.5) {$\displaystyle\frac{[n]_q}{[n+1]_q}$};
	\end{pgfonlayer}
	\begin{pgfonlayer}{edgelayer}
		\draw [style=thickstrand] (4.center) to (0.center);
		\draw [style=thickstrand] (3.center) to (1.center);
		\draw [style=thickstrand] (9.center) to (6.center);
		\draw [style=thickstrand] (8.center) to (7.center);
		\draw [style=thickstrand] (29.center) to (22.center);
		\draw [style=thickstrand] (31.center) to (30.center);
		\draw [style=thickstrand] (17.center) to (18.center);
		\draw [style=thickstrand] (16.center) to (19.center);
		\draw [style=thickstrand] (21.center) to (14.center);
		\draw [style=thickstrand] (20.center) to (15.center);
		\draw [style=thickstrand] (32.center) to (33.center);
		\draw [style=thickstrand, in=0, out=90] (43.center) to (44.center);
		\draw [style=thickstrand, in=180, out=90] (39.center) to (44.center);
		\draw [style=thickstrand, in=0, out=-90] (47.center) to (48.center);
		\draw [style=thickstrand, in=-180, out=-90] (46.center) to (48.center);
		\draw [style=thickstrand] (43.center) to (45.center);
		\draw [style=thickstrand] (49.center) to (47.center);
		\draw [style=thickstrand] (23.center) to (28.center);
		\draw [style=thickstrand] (37.center) to (27.center);
		\draw [style=thickstrand] (38.center) to (26.center);
		\draw [style=thickstrand] (50.center) to (35.center);
		\draw [style=thickstrand] (24.center) to (36.center);
		\draw [style=thickstrand] (53.center) to (54.center);
	\end{pgfonlayer}
\end{tikzpicture}
\end{center}

\end{theorem}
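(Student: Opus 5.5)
We argue by induction on $n$ that the recursively defined $JW_n\in\End(\obj n)$ is a well-defined idempotent characterized by three properties: (i) $JW_n^2=JW_n$; (ii) $JW_n$ is killed by every cap, i.e. $(\Id_{\obj{i-1}}\otimes\CAP\otimes\Id_{\obj{n-i-1}})\circ JW_n=0$ and symmetrically for cups on the other side; (iii) $JW_n\equiv \Id_{\obj n}$ modulo the two-sided ideal $\langle e\rangle$ of \autoref{mod_e_end}. Well-definedness uses only $[n+1]_q\neq 0$, which holds because $q$ is not a root of unity. Property (iii) is immediate from the recursion: the correction term contains a cap-cup pair, hence lies in $\langle e\rangle$, and $JW_n\otimes\Id_{\obj 1}\equiv\Id$ by induction.

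The main computation is (ii) for the rightmost cap, i.e. the cap on strands $n,n+1$. Capping the first term gives $JW_n$ with its last strand bent back. Capping the second term, we slide the cap through and use the partial trace of $JW_n$ on its last strand. By induction, the partial trace is $-\frac{[n+1]_q}{[n]_q}\,JW_{n-1}\otimes$(cap structure); here the sign convention must match the loop value $-[2]_q$. Multiplying by the coefficient $\frac{[n]_q}{[n+1]_q}$ makes the two contributions cancel. We prove the partial-trace formula simultaneously in the induction, by taking the partial trace of the recursion and using the identity $[2]_q[n]_q-[n-1]_q=[n+1]_q$. This sign and coefficient bookkeeping is where we expect the main obstacle; we would first verify it against the case $n=1$ (the formula for $JW_2$) to fix the conventions. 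Caps on strands $i<n$ are killed because $JW_n$ absorbs $JW_{n-1}$-type factors: by induction, both terms of the recursion begin and end with $JW_n\otimes\Id$, and $JW_n$ kills those caps. Cups then follow by the $180^\circ$ rotation symmetry. Idempotence follows from (ii) and (iii): $JW_n^2=JW_n\cdot(\Id+x)$ with $x\in\langle e\rangle$, and $JW_n x=0$ because every basis diagram in $\langle e\rangle$ has a cap at its top.

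Finally we identify the image. Let $P$ be the image of $JW_n$, which is a summand of $\obj n$ since $\TLf{q}{\KK}$ is Karoubian. By (ii), $\Hom(\obj{n-2},P)=0$: any map $\obj{n-2}\to\obj n$ factors through a cup, and $JW_n$ kills cups. So $P$ shares no summand with $\obj{n-2}$, and since $\obj{n-2}\cong(\obj n,e)$ contains every other summand of $\obj n$, $P$ is a sum of copies of $T_n$. Moreover $\End(P)=JW_n\End(\obj n)JW_n$, and by (ii) and (iii) this equals $\KK\cdot JW_n$, which is one-dimensional. Hence $P$ is indecomposable and nonzero, because its quantum dimension is $\dim_q(T_n)=[n+1]_q\neq 0$ from the partial-trace formula. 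By \autoref{newSummands}, $P\cong T_n$.
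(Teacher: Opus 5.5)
The gap is at the step you flagged as the main obstacle, and it cannot be closed for the recursion as printed.

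Write $\tau_n$ for the right partial trace, $(\Id_{\obj{n-1}}\otimes\CAP)\circ(JW_n\otimes\Id_{\obj 1})\circ(\Id_{\obj{n-1}}\otimes\CUP)=\tau_n JW_{n-1}$. Its value is not a convention you can adjust. $\tau_1$ is the loop, $-[2]_q$, and your formula $\tau_n=-[n+1]_q/[n]_q$ is correct.

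Write the recursion as $JW_{n+1}=JW_n\otimes\Id_{\obj 1}-c_n(JW_n\otimes\Id_{\obj 1})e_n(JW_n\otimes\Id_{\obj 1})$ with $e_n=\Id_{\obj{n-1}}\otimes(\CUP\circ\CAP)$. Your own sliding computation then gives
\[(\Id_{\obj{n-1}}\otimes\CAP)\circ JW_{n+1}=(1-c_n\tau_n)\,(\Id_{\obj{n-1}}\otimes\CAP)\circ(JW_n\otimes\Id_{\obj 1}),\]
so cancellation requires $c_n=1/\tau_n=-[n]_q/[n+1]_q$. With the printed $c_n=+[n]_q/[n+1]_q$ the factor is $2$, so the two contributions add rather than cancel.

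The failure is visible already at $n=1$. Since $(\CUP\circ\CAP)^2=-[2]_q\,\CUP\circ\CAP$, the printed $JW_2=\Id_{\obj 2}-[2]_q^{-1}\,\CUP\circ\CAP$ satisfies $\CAP\circ JW_2=2\,\CAP$ and $JW_2^2=\Id_{\obj 2}-3[2]_q^{-1}\,\CUP\circ\CAP\neq JW_2$.

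Your partial-trace induction breaks the same way. Taking the partial trace of the recursion gives $\tau_{n+1}=-[2]_q-c_n$. The identity $[2]_q[n+1]_q-[n]_q=[n+2]_q$ yields $\tau_{n+1}=-[n+2]_q/[n+1]_q$ only when $c_n=-[n]_q/[n+1]_q$.

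So the statement as printed is false under the paper's loop value $-[2]_q$; the printed recursion is the one appropriate for loop value $+[2]_q$. The paper gives no proof, only the citation, so it offers no check here. The correct recursion is $JW_{n+1}=JW_n\otimes\Id_{\obj 1}+\frac{[n]_q}{[n+1]_q}(JW_n\otimes\Id_{\obj 1})e_n(JW_n\otimes\Id_{\obj 1})$. Equivalently, it is the printed formula with $[k]_{-q}=(-1)^{k-1}[k]_q$ in place of $[k]_q$. Your plan to ``fix the conventions at $n=1$'' cannot be carried out by changing the sign of $\tau_n$; the recursion itself must change. You should state explicitly that this corrected version is what you prove.

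With that correction, the rest of your argument goes through. The identification of the image is a clean route that needs no semisimplicity: $\Hom(\obj{n-2},P)=0$, $\End(P)=\KK\cdot JW_n$, then \autoref{newSummands}. Two smaller repairs are needed.

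First, the cup half of (ii) does not follow from $180^\circ$ rotation without a uniqueness argument, because rotation turns $JW_n\otimes\Id_{\obj 1}$ into $\Id_{\obj 1}\otimes JW_n$. Use instead the top--bottom reflection: a contravariant, object-fixing, tensor-order-preserving involution that swaps $\CUP$ and $\CAP$. It respects the defining relations and visibly fixes the recursion.

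Second, $JW_n\neq 0$ already follows from (iii), since $\langle e\rangle$ is a proper ideal. Also, the trace of $JW_n$ for the canonical pivotal structure is $\tau_1\cdots\tau_n=[n+1]_{-q}$, not $[n+1]_q$; this does not affect the argument.
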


The following example illustrates that, in general, $\TLf{q}{\KK}$ is not semisimple.
	
	\begin{example}
	The idempotent 
\[\begin{tikzpicture}[scale=0.7]
	\begin{pgfonlayer}{nodelayer}
		\node [style=none] (0) at (-3, 0) {$J=$};
		\node [style=none] (2) at (-1, 0.75) {};
		\node [style=none] (3) at (-0.75, 0.75) {};
		\node [style=none] (4) at (-0.5, 0.75) {};
		\node [style=none] (5) at (-1, -0.75) {};
		\node [style=none] (6) at (-0.75, -0.75) {};
		\node [style=none] (7) at (-0.5, -0.75) {};
		\node [style=none] (8) at (0.25, 0) {$+$};
		\node [style=none] (9) at (1, 0.75) {};
		\node [style=none] (10) at (1.25, 0.75) {};
		\node [style=none] (11) at (1.5, 0.75) {};
		\node [style=none] (12) at (1, -0.75) {};
		\node [style=none] (13) at (1.25, -0.75) {};
		\node [style=none] (14) at (1.5, -0.75) {};
		\node [style=none] (15) at (2.25, 0) {$+$};
		\node [style=none] (16) at (3, 0.75) {};
		\node [style=none] (17) at (3.25, 0.75) {};
		\node [style=none] (18) at (3.5, 0.75) {};
		\node [style=none] (19) at (3, -0.75) {};
		\node [style=none] (20) at (3.25, -0.75) {};
		\node [style=none] (21) at (3.5, -0.75) {};
		\node [style=none] (22) at (4.75, 0) {$-\sqrt{2}$};
		\node [style=none] (23) at (6, 0.75) {};
		\node [style=none] (24) at (6.25, 0.75) {};
		\node [style=none] (25) at (6.5, 0.75) {};
		\node [style=none] (26) at (6, -0.75) {};
		\node [style=none] (27) at (6.25, -0.75) {};
		\node [style=none] (28) at (6.5, -0.75) {};
		\node [style=none] (31) at (7.75, 0) {$-\sqrt{2}$};
		\node [style=none] (32) at (9, 0.75) {};
		\node [style=none] (33) at (9.25, 0.75) {};
		\node [style=none] (34) at (9.5, 0.75) {};
		\node [style=none] (35) at (9, -0.75) {};
		\node [style=none] (36) at (9.25, -0.75) {};
		\node [style=none] (37) at (9.5, -0.75) {};
	\end{pgfonlayer}
	\begin{pgfonlayer}{edgelayer}
		\draw [style=thickstrand] (2.center) to (5.center);
		\draw [style=thickstrand] (3.center) to (6.center);
		\draw [style=thickstrand] (7.center) to (4.center);
		\draw [style=thickstrand] (9.center) to (14.center);
		\draw [style=thickstrand, bend left=90, looseness=2.50] (12.center) to (13.center);
		\draw [style=thickstrand, bend right=90, looseness=2.50] (10.center) to (11.center);
		\draw [style=thickstrand, bend right=90, looseness=2.50] (16.center) to (17.center);
		\draw [style=thickstrand] (18.center) to (19.center);
		\draw [style=thickstrand, bend left=90, looseness=2.50] (20.center) to (21.center);
		\draw [style=thickstrand] (25.center) to (28.center);
		\draw [style=thickstrand, bend right=90, looseness=2.50] (23.center) to (24.center);
		\draw [style=thickstrand, bend left=90, looseness=2.50] (26.center) to (27.center);
		\draw [style=thickstrand, bend right=90, looseness=2.50] (33.center) to (34.center);
		\draw [style=thickstrand, bend left=90, looseness=2.50] (36.center) to (37.center);
		\draw [style=thickstrand] (32.center) to (35.center);
	\end{pgfonlayer}
\end{tikzpicture}
\]	is negligible in $\TLf{e^{\pi i/4}}{\CC}$, so $\dim_q (\obj{3},J) = 0$.
\end{example}
		
	\subsection*{Fusion Quotients}

	We shall now review the semisimplification of $\TLf{q}{\KK}$ when $q$ is a root of unity. 
	For the remainder of this section, let us fix an integer $\varkappa\geq 3$ and a primitive root of unity $q$ of order $2\varkappa$ in an algebraically closed field $\KK$ with $\char \KK\neq 2$.

	\begin{definition}
		Let $\VerF{\KK}$ be the the semisimplification $\overline{\TLf{q}{\KK}}$ of $\TLf{q}{\KK}$. 
	\end{definition}

\begin{proposition}\label{fusionRules}
	$\VerF{\KK}$ is a fusion category (i.e. semisimple tensor category with finitely many simples) with simple objects $L_n:=\overline{T_n}$ with $n=0, 1, 2, \dots, \varkappa-2$ and $\dim L_n = [n+1]_{\pm q}$. 
	They satisfy the truncated Clebsch-Gordon fusion rule given by 
	$$L_m\otimes L_n \cong \begin{cases}
		L_{|m-n|}\oplus L_{|m-n|+2} \oplus \dots \oplus L_{m+n} \quad \quad &m+n \leq \varkappa-2\\
		 L_{|m-n|}\oplus L_{|m-n|+2} \oplus \dots \oplus L_{(2\varkappa -4) - (m+n)} \quad \quad & m+n > \varkappa-2
	\end{cases}.$$
\end{proposition}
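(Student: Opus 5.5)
We plan to combine the tractability of $\TLf{q}{\KK}$ (\autoref{TL_tractable}) with \autoref{tractable-semisimplification}, and then compute dimensions and fusion rules from the inductive description of the indecomposables $T_n$.

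\textbf{Step 1: identifying the simple objects.} By \autoref{tractable-semisimplification}, $\VerF{\KK}$ is semisimple, and its simple objects are the images $\overline{T}$ of the indecomposables $T$ of nonzero quantum dimension. Since $q$ has order $2\varkappa\ge 6$, we have $q^4\neq 1$, and $q^6\neq1$ whenever $\varkappa\ne3$. The case $\varkappa=3$ needs a separate check, but there the category is expected to contain only $L_0$ and $L_1$, so a direct small computation should suffice. In general, \autoref{thm:TLindecomposables}(1) says the $T_n$ exhaust the indecomposables, and part (2) gives $\dim_q T_n=[n+1]_{\pm q}$.

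This dimension is nonzero exactly when $q^{2(n+1)}\neq 1$, i.e.\ when $\varkappa\nmid n+1$. In particular $\dim T_{\varkappa-1}=0$, so $L_0,\dots,L_{\varkappa-2}$ are nonzero and pairwise non-isomorphic. (They are non-isomorphic because the $T_n$ are, and the semisimplification preserves isomorphism classes of nonzero-dimensional indecomposables, as in the proof of \autoref{multiplicitiesLemma}.)

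\textbf{Step 2: the fusion rule $L_n\otimes L_1$.} For $n\le\varkappa-3$, \autoref{fusionRule} applies in $\TLf{q}{\KK}$ because $\dim T_i\ne0$ for $i\le n$. It gives $L_n\otimes L_1\cong L_{n+1}\oplus L_{n-1}$.

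For $n=\varkappa-2$, we have $T_{\varkappa-2}\otimes\obj1\cong T_{\varkappa-1}\oplus T_{\varkappa-3}$. To justify this, apply \autoref{fusionRule} with the hypothesis holding up to index $\varkappa-2$, which is all that argument uses. Since $T_{\varkappa-1}$ is negligible, we get $L_{\varkappa-2}\otimes L_1\cong L_{\varkappa-3}$.

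\textbf{Step 3: no higher $T_n$ contributes.} For $n\ge\varkappa$, every $T_n$ becomes zero or isomorphic to some $L_m$ with $m\le\varkappa-2$ in the quotient. The cleanest argument is that $\overline{\obj n}=L_1^{\otimes n}$ decomposes, by iterating Step 2, only into $L_0,\dots,L_{\varkappa-2}$. Hence any simple summand of the semisimplification is among these. In particular there are finitely many simples, and rigidity is inherited, so $\VerF{\KK}$ is fusion.

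\textbf{Step 4: general $L_m\otimes L_n$.} We induct on $m$ using
\[
L_{m+1}\cong L_m\otimes L_1\ominus L_{m-1}
\]
in the Grothendieck ring, together with Step 2. Equivalently, we verify that the stated truncated formula satisfies the same recursion $(L_m\otimes L_1)\otimes L_n=L_m\otimes(L_1\otimes L_n)$. This is a routine check of the truncation boundary at $m+n=\varkappa-2$, split into the two cases of the statement. Because the category is semisimple, equality in the Grothendieck ring gives isomorphism.

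\textbf{Main obstacle.} The main difficulty is the boundary behaviour in Step 2: legitimately applying \autoref{fusionRule} when $T_{\varkappa-1}$ has zero dimension, since \autoref{multiplicitiesLemma} requires nonzero dimensions. We would instead apply it to the triple $(T_{\varkappa-3},T_{\varkappa-2},\obj1)$, all of nonzero dimension, to get $T_{\varkappa-3}$ inside $T_{\varkappa-2}\otimes\obj1$. The parity and definition arguments then exclude any other summand besides $T_{\varkappa-1}$.
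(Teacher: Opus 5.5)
Your proposal is correct and follows the paper's proof. Semisimplicity and the simples $L_0,\dots,L_{\varkappa-2}$ come from \autoref{TL_tractable}, \autoref{tractable-semisimplification} and \autoref{thm:TLindecomposables}; the key boundary step is $L_{\varkappa-2}\otimes L_1\cong L_{\varkappa-3}$ because $T_{\varkappa-1}$ is negligible; and the general rule follows by induction. One caution, which your Step 3 already makes harmless: the formula $\dim_q T_n=[n+1]_{\pm q}$ of \autoref{thm:TLindecomposables}(2) is only justified (and only true) for $n\le\varkappa-1$, since for $n\ge\varkappa$ each $T_n$ is a summand of $T_{n-1}\otimes\obj{1}$ and hence negligible by induction from $T_{\varkappa-1}$ (e.g.\ $\dim_q T_\varkappa=0\neq[\varkappa+1]_{\pm q}$), so your Step 3 and your separate check of $\varkappa=3$ (where the hypothesis $q^6\neq 1$ fails) make explicit what the paper's one-line proof leaves implicit.
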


\begin{proof}
	The first statement is a direct consequence of \autoref{tractable-semisimplification}, \autoref{TL_tractable}, and \autoref{thm:TLindecomposables}, and so is the fusion rule in the case $m+n\leq \varkappa-2$. To see the fusion rule when $m+n>\varkappa-2$, first observe that 
	$$L_1\otimes L_{\varkappa-2}\cong \overline{T_{\varkappa-3}}\oplus \overline{T_{\varkappa-1}} \cong L_{\varkappa-3},$$
	since $T_{\varkappa-1}$ is negligible and so $\overline{T_{\varkappa-1}} = 0$. A simple induction argument then finishes the proof.
\end{proof}

Note that the fusion rules can be summarized via the $A_{\varkappa-1}$ Dynkin diagram, where vertices correspond to simples and neighbors are summands upon tensoring with $L_1$, hence the notation.
$\VerF{\KK}$ is known to be equivalent (as monoidal categories) to $\overline{\Tilt{\uqsl}{\KK}}$, the semisimplification of the category of tilting modules of $\uqsl$ (for example see \cite[Section 3.3]{BK} or \cite{Schop}). 
The categories $\overline{\Tilt{\uqg}{\KK}}$ were studied by Andersen and Paradowski in \cite{A} and \cite{AP}.
	
The modular data of $\VerF{\KK}$ is given by the following formulae (for example, see \cite[Section 5.2]{KL}, \cite[Section XII.5]{T}, or \cite[Theorem 3.3.20]{BK}).
	
\begin{proposition} For each simple objects $L_m$ and $L_n$ of $\VerF{\KK}$, we have
$$s_{m,n}= s_{L_m,L_n} =(-1)^{m+n} [(m+1)(n+1)]_{q} \quad\text{and}\quad \theta_n= t_{L_n,L_n} =  (-a)^{n(n+2)} = \pm q^{\pm n(n+2)/2},$$
where $a = \pm q ^{\pm 1/2}$ is the braiding constant from \autoref{braiding}.
\end{proposition}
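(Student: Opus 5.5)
The plan is to compute both quantities diagrammatically in $\TLf{q}{\KK}$, where $L_n$ is the image of $T_n$. The key tool is the skein relation \eqref{braiding} with $b=a^{-1}$, together with the fact that $\Tr$ is unchanged by passing to the semisimplification. Since $T_n$ is a summand of $\obj n$ and the negligible ideal does not change traces of non-negligible idempotents, I will compute $\theta_n$ and $s_{m,n}$ via the corresponding idempotents $e_n\in\End(\obj n)$. These idempotents are characterized by being killed by all cups and caps; they are the images of Jones--Wenzl idempotents, defined for $n\le\varkappa-2$ because $[k]_q\neq0$ for $k<\varkappa$.

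For the twist, first compute the twist on $\obj 1$. Close off one strand of the positive crossing using \eqref{braiding}:
\[
\theta_1\,\Id_{\obj 1}=a\,\Id_{\obj 1}+a^{-1}(-[2]_q)\Id_{\obj 1}=(a-a^{-1}(a^2+a^{-2}))\Id_{\obj 1}=-a^{-3}\Id_{\obj 1}.
\]
Adjusting for the orientation convention of the picture, this gives $(-a)^{\pm3}$. Then use the ribbon identity $\theta_{X\otimes Y}=\sigma_{Y,X}\sigma_{X,Y}(\theta_X\otimes\theta_Y)$. This shows that the full twist on $\obj n$ is $\theta_1^{\,n}$ times the pure braid full twist $\Delta_n^2$. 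On the image of $e_n$, every crossing $\sigma_{\obj1,\obj1}$ acts as the scalar $a$, because the cup term is killed by $e_n$. Since $\Delta_n^2$ has $n(n-1)$ crossings, we get $\theta_n=(-a)^{3n}a^{n(n-1)}$ up to the sign convention, which equals $(-a)^{n(n+2)}$ after fixing signs (note $(-1)^{n(n-1)}=1$). Finally, $a^2=q^{\pm1}$ gives $\pm q^{\pm n(n+2)/2}$.

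For the $S$-matrix, I would avoid a direct diagrammatic computation. Instead, use the balancing formula $s_{m,n}=\theta_m^{-1}\theta_n^{-1}\sum_k N_{mn}^k\,\theta_k\dim L_k$, which holds in any ribbon category and is proved by taking the trace of $\sigma^2=\theta_{L_m\otimes L_n}(\theta_m^{-1}\otimes\theta_n^{-1})$. Here the $N_{mn}^k$ are given by \autoref{fusionRules}. Then substitute $\dim L_k=[k+1]_{\pm q}$ and the twists just computed. The exponent $k(k+2)-m(m+2)-n(n+2)$ for $k=|m-n|+2j$ produces a geometric series in $q$. Summing it telescopes to $(-1)^{m+n}[(m+1)(n+1)]_q$.

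The main obstacle is bookkeeping of signs and conventions: the choice of spherical structure (which flips $q\to-q$ in dimensions), the choice of $a$ among $\pm q^{\pm1/2}$, and the truncated range when $m+n>\varkappa-2$. For the truncation, I would argue that the extra terms $k>\varkappa-2$, which would appear in the untruncated sum, cancel in pairs. Using $q^{2\varkappa}=1$, each term $k=\varkappa-1+r$ pairs with $k=\varkappa-3-r$ (which has dimension $-[\varkappa-1-r]$), and the twists agree. Alternatively, I would compute in $\TLf{q}{\KK}$ before quotienting, where the untruncated formula holds because the $T_k$ with $k\ge\varkappa-1$ contribute trace zero or cancel. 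This matches the standard treatments cited.
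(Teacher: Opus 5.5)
The paper gives no argument for this proposition; it quotes the modular data from \cite[Section 5.2]{KL}, \cite[Section XII.5]{T} and \cite[Theorem 3.3.20]{BK}. Your route is a correct, self-contained derivation along standard lines. For the twist, you use the full-twist braid, which acts by $a$ per crossing on the image of $JW_n$. For the $S$-matrix, you use the balancing identity $\Tr(\sigma_{L_n,L_m}\sigma_{L_m,L_n})=\theta_m^{-1}\theta_n^{-1}\sum_k N_{mn}^k\theta_k\dim L_k$. What it buys over the citation is that it makes explicit that the formulas hold over any $\KK$ with $\mathrm{char}\,\KK\neq 2$, including $\overline{\FF}_p$, as the modularity argument needs. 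Your telescoping claim is right. With $\theta_k\dim L_k=a^{k(k+2)}[k+1]_q$, $a^2=q$, $K=k+1$, $M=m+1$, $N=n+1$, each term after pulling out $(-1)^{m+n}$ equals $\bigl(f(K+1)-f(K-1)\bigr)/(q-q^{-1})$ with $f(x)=q^{(x^2-M^2-N^2)/2}$. The sum therefore collapses to $\bigl(f(M+N)-f(|M-N|)\bigr)/(q-q^{-1})=[MN]_q$. The case $a^2=q^{-1}$ gives the same value, since $[MN]_q$ is invariant under $q\mapsto q^{-1}$.

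Three points need fixing.

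\emph{The twist on $\obj 1$.} When you close one strand of $\sigma_{\obj1,\obj1}=a\Id_{\obj2}+a^{-1}\CUP\circ\CAP$, the identity term closes to a loop and the cup--cap term closes to a zig-zag. So $\theta_1=-a[2]_q+a^{-1}=-a^3=(-a)^3$; you swapped the two terms. This is not a harmless orientation convention. The kink and the crossings of $\Delta_n^2$ come from the same $\sigma$ via $\theta_{X\otimes Y}=\sigma_{Y,X}\sigma_{X,Y}(\theta_X\otimes\theta_Y)$, and your $-a^{-3}$ combined with $a^{n(n-1)}$ would not give $(-a)^{n(n+2)}$.

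\emph{The truncated case.} Your pairing $\varkappa-1+r\leftrightarrow\varkappa-3-r$ fails: neither the quantum integers match nor the twist factors, which differ by $q^{\pm2(r+1)}\neq1$. The correct involution is $k\leftrightarrow 2\varkappa-2-k$, i.e.\ $K\leftrightarrow 2\varkappa-K$. Then $[2\varkappa-K]_q=-[K]_q$ and $q^{((2\varkappa-K)^2-K^2)/2}=q^{2\varkappa(\varkappa-K)}=1$, while the fixed point $k=\varkappa-1$ contributes $[\varkappa]_q=0$. Also, the terms of the untruncated sum that are absent from the truncated rule are all $k\equiv m+n \pmod 2$ with $2\varkappa-2-(m+n)\le k\le m+n$, not only those with $k>\varkappa-2$. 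This set is stable under the involution, so it contributes zero.

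\emph{The ``alternatively'' remark.} It does not work as stated. At a root of unity, $T_m\otimes T_n$ in $\TLf{q}{\KK}$ does not obey the untruncated Clebsch--Gordan rule, and its negligible tilting summands contribute zero trace, which just returns the truncated sum. A genuine alternative is specialization. The identity $\Tr\bigl(\sigma_{\obj n,\obj m}\sigma_{\obj m,\obj n}(JW_m\otimes JW_n)\bigr)=(-1)^{m+n}[(m+1)(n+1)]_q$ holds for generic $q$. It therefore holds in $\ZZ[a^{\pm1}]$ with $[k]_q$ inverted for $k\le\max(m,n)$, and evaluating at your root of unity bypasses the truncation bookkeeping entirely.
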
 

The following proposition is easily seen by examining the fusion rules.

\begin{proposition}\label{fusionSubCats}
	$\VerF{\KK}$ contains exactly four monoidal subcategories: the trivial category ($\simeq \Vect$) containing the single simple object $L_0 = \id$; the category $\langle L_{\varkappa -2}\rangle$ containing exactly two simples $L_0$ and $L_{\varkappa-2}$; the adjoint category $\Ad(\VerF{\KK}):= \langle L_2\rangle$, containing the simples $L_{2n}$; and $\VerF{\KK}$ itself.
\end{proposition}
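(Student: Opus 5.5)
A monoidal subcategory here means a full, replete monoidal subcategory closed under direct sums and summands. Since $\VerF{\KK}$ is semisimple, such a subcategory is determined by the set $S\subseteq\{0,1,\dots,\varkappa-2\}$ of indices $n$ with $L_n$ in it. By \autoref{fusionRules}, the only constraints on $S$ are that $0\in S$ and that $S$ is closed under the truncated Clebsch--Gordan rule: if $m,n\in S$, then every index appearing in $L_m\otimes L_n$ lies in $S$. Duals cause no extra condition, since each $L_n$ is self-dual, being a summand of $\obj{n}$ with $\obj{1}$ self-dual. So the plan is purely combinatorial: classify such subsets $S$.

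First I check that the four listed sets are closed. $\{0\}$ is trivially closed. For $\{0,\varkappa-2\}$, the product $L_{\varkappa-2}\otimes L_{\varkappa-2}$ falls in the truncated case, with summands running from $L_0$ to $L_{(2\varkappa-4)-(2\varkappa-4)}=L_0$, so it equals $L_0$. For the even indices, every summand of $L_m\otimes L_n$ has index $\equiv m+n \pmod 2$, because both $|m-n|$ and $(2\varkappa-4)-(m+n)$ have the parity of $m+n$. The full set is obviously closed.

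Conversely, let $S$ be closed with $S\neq\{0\}$, and pick $n\in S$ with $n\neq 0$.

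If $n\neq\varkappa-2$, then $n+n=2n$ and $(2\varkappa-4)-2n$ are both at least $2$, so $L_n\otimes L_n$ contains $L_0\oplus L_2$. Hence $2\in S$. From $2\in S$, induction gives all even indices: $L_2\otimes L_{2k}$ contains $L_{2k+2}$ whenever $2k+2\le\varkappa-2$. Now there are two cases. If $S$ also contains an odd index $m$, then $L_2\otimes L_m$ contains $L_{m-2}$ when $m\ge3$. Stepping down this way reaches $1\in S$, and since $L_1$ generates everything, $S$ is the whole set. Otherwise $S$ is exactly the set of even indices.

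If instead the only nonzero element of $S$ is $\varkappa-2$, then $S=\{0,\varkappa-2\}$.

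The remaining subtlety, and the only real obstacle, is checking that these four cases are distinct and exhaustive at the boundaries. The parity of $\varkappa-2$ matters here: when $\varkappa$ is even, $\varkappa-2$ is even, so $\langle L_{\varkappa-2}\rangle$ sits inside $\Ad(\VerF{\KK})$. The argument above still yields exactly these four, since any even $n\neq 0,\varkappa-2$ forces all evens. The small cases $\varkappa=3,4$ also need a separate check, because then "all evens" may coincide with $\{0,\varkappa-2\}$; there the list degenerates but stays correct up to coincidences. I would close by verifying the key inequalities used, namely $2n\ge2$ and $(2\varkappa-4)-2n\ge2$ for $0<n<\varkappa-2$, which is where the truncation bound enters.
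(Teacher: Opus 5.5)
Your proof is correct and takes the paper's approach: the paper simply says the result is ``easily seen by examining the fusion rules'', and your case analysis carries out exactly that check, including the useful observation that for $\varkappa=3,4$ some of the four listed subcategories coincide. One small tightening: self-duality of $L_n$ does not follow merely from $L_n$ being a summand of the self-dual object $\obj{n}$, but it is immediate from $L_0\subset L_n\otimes L_n$ in the fusion rules.
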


\begin{proposition}\label{VerFmodularity}
	The M\"uger center of $\VerF{\KK}$ is trivial, hence $\VerF{\KK}$ is modular.
\end{proposition}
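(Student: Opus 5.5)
The M\"uger center $\Mug{\VerF{\KK}}$ is a fusion subcategory of $\VerF{\KK}$: it is closed under tensor products, direct summands and duals. So the plan is to run through the four monoidal subcategories listed in \autoref{fusionSubCats} and show that only the trivial one $\Vect=\langle L_0\rangle$ centralizes everything. Modularity then follows from \autoref{modularityCriteria}, since $\VerF{\KK}$ is a spherical braided fusion category by \autoref{fusionRules} and the braiding discussion. It therefore suffices to show that neither $L_2$ (for the adjoint subcategory and for the whole category) nor $L_{\varkappa-2}$ lies in $\Mug{\VerF{\KK}}$.

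For a simple object $X$, the usual criterion via the $S$-matrix is that $X$ centralizes a simple $Y$ if and only if $s_{X,Y}=\dim X\dim Y$. This holds because the double braiding $\sigma_{Y,X}\sigma_{X,Y}$ acts on each simple summand $Z$ of $X\otimes Y$ by the scalar $\theta_Z/(\theta_X\theta_Y)$. Hence it is trivial exactly when all these scalars equal $1$, and taking traces compares $s_{X,Y}=\sum_Z N_{XY}^Z\frac{\theta_Z}{\theta_X\theta_Y}\dim Z$ with $\dim X\dim Y$. I would avoid the subtle direction of this criterion (equality of traces implying triviality) and argue directly with twists, which is cleaner: if $X$ centralizes $L_1$, then every summand $Z$ of $X\otimes L_1$ must satisfy $\theta_Z=\theta_X\theta_1$.

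Now apply this with the formula $\theta_n=(-a)^{n(n+2)}$ and $a^4=q^{\pm2}$, which gives $\theta_n=\pm q^{\pm n(n+2)/2}$ with consistent signs.

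\textbf{Case $X=L_2$.} We have $L_2\otimes L_1\cong L_1\oplus L_3$ if $\varkappa\ge 5$, and $L_2\otimes L_1 = L_1$ if $\varkappa=4$; the case $\varkappa=3$ does not arise, since then $L_2$ does not exist. The summand $L_1$ forces $\theta_1=\theta_2\theta_1$, i.e.\ $\theta_2=1$. But $\theta_2=(-a)^8=a^8=q^{\pm4}$, which differs from $1$ because $q$ has order $2\varkappa>4$.

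\textbf{Case $X=L_{\varkappa-2}$.} Here $L_{\varkappa-2}\otimes L_1\cong L_{\varkappa-3}$, so we need $\theta_{\varkappa-3}=\theta_{\varkappa-2}\theta_1$. Using $\theta_n=(-a)^{n(n+2)}$, the ratio is
\[
\theta_{\varkappa-2}\theta_1/\theta_{\varkappa-3}=(-a)^{(\varkappa-2)\varkappa+3-(\varkappa-3)(\varkappa-1)}=(-a)^{2\varkappa}=a^{2\varkappa}.
\]
Since $a^2=\pm q^{\pm1}$ and $q^{\varkappa}=-1$, we get $a^{2\varkappa}=(\pm1)^\varkappa\cdot(-1)$. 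This can equal $1$ when $\varkappa$ is odd and the sign is $-$. That is the main obstacle: the sign conventions must be tracked carefully. I expect that the twist formula with the spherical structure fixed consistently gives $a^{2\varkappa}=-1$, i.e.\ $\dim L_{\varkappa-2}$ and the twist are correlated so that the result is never trivial.

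As a fallback I would compute directly
\[
s_{\varkappa-2,1}=(-1)^{\varkappa-1}[2(\varkappa-1)]_q=(-1)^{\varkappa}[2]_q,
\]
while $\dim L_{\varkappa-2}\dim L_1=[\varkappa-1]_q[2]_q=[2]_q$ (using $[\varkappa-1]_q=1$). Since $[2]_q\ne0$ and $\char\KK\neq2$, these differ when $\varkappa$ is odd. For even $\varkappa$, I would instead use the twist argument, where the sign is unambiguous.

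Having excluded $L_2$ and $L_{\varkappa-2}$, \autoref{fusionSubCats} leaves only $\Vect$ as the M\"uger center, and \autoref{modularityCriteria} gives modularity.
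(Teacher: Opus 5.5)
Your argument is correct and is essentially the paper's. You reduce via \autoref{fusionSubCats} to showing that neither $L_2$ nor $L_{\varkappa-2}$ centralizes $L_1$, and you detect the nontrivial double braiding through the balancing identity. Your choice of the summand $L_1\subset L_2\otimes L_1$ is slightly more robust than the paper's $L_3$: $L_3$ is zero when $\varkappa=4$, and the paper only covers that case because there $L_2=L_{\varkappa-2}$.

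The ``main obstacle'' is only a sign slip. From $a^2+a^{-2}=[2]_q=q+q^{-1}$ you get $a^2=q^{\pm1}$ exactly; the sign in $a=\pm q^{\pm1/2}$ squares away. Hence $\theta_{\varkappa-2}\theta_1/\theta_{\varkappa-3}=a^{2\varkappa}=q^{\pm\varkappa}=-1$ for every $\varkappa$. This matches the paper's ratio $(-1)^n$ at $n=1$, so no case split is needed.

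Your $S$-matrix fallback also mixes spherical structures. The stated formula gives $s_{n,0}=(-1)^n[n+1]_q$, so the matching dimensions are $\dim L_n=(-1)^n[n+1]_q$. With those, $s_{\varkappa-2,1}=-\dim L_{\varkappa-2}\dim L_1$ for all $\varkappa$. Your value $\dim L_{\varkappa-2}\dim L_1=[2]_q$ is right precisely when $\varkappa$ is odd. That is the only case where you use it, so your proof as written still closes.
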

	
\begin{proof}
	The M\"uger center is a fusion subcategory of $\VerF{\KK}$, so by \autoref{fusionSubCats}, it suffices to rule out the three nontrivial possibilities. 
	If the M\"uger center is $\Ad(\VerF{\KK})$ or $\VerF{\KK}$, we would have the $L_2$ centralizes all objects of $\VerF{\KK}$, in particular, $\sigma_{L_2, L_1}\circ \sigma_{L_1\circ L_2} = \Id_{L_1\otimes L_2}$.
	Recall that the twists satisfy the relation
	$$\theta_{X\otimes Y} = \sigma_{Y,X}\sigma_{X,Y}(\theta_X\otimes \theta_Y),$$
	for example, see \cite[Section 8.10]{EGNO} or \cite[Section 2.2]{BK}.
	Therefore, we would have $\theta_{L_1\otimes L_2} = \theta_{L_1}\otimes \theta_{L_2}$. Since the left-hand side and the right-hand sides act on $L_3$ by the scalar $\theta_{3}$ and $\theta_{1}\theta_2$, we must have 
	$$1 = \frac{\theta_3}{\theta_1\theta_2} = \frac{\pm q^{\pm 15/2}}{q^{\pm 3/2}q^{\pm 4}} = \pm q^{\pm 2} \implies q^4 =1,$$
	contradicting that the order of $q$ is $2\varkappa \geq 6$.	
	
	To rule out the case that the M\"uger center is $\langle L_{\varkappa-2}\rangle$, we again use the $T$-matrix: if $L_{\varkappa-2}$ centralizes all objects of $\VerF{\KK}$, we would have $\sigma_{L_n, L_{\varkappa-2}}\circ \sigma_{L_{\varkappa-2}\circ L_n} = \Id_{L_{\varkappa-2}\otimes L_n}$ for every $n=0,1,\dots, \varkappa-2$, implying that, for some fixed choice of signs $e_1, e_2\in\{\pm1\}$ depending on the braiding,
	$$1 = \frac{\theta_{\varkappa-2-n}}{\theta_{\varkappa-2}\theta_n} = e_1 \frac{q^{e_2(\varkappa-n)(\varkappa-n-2)/2}}{q^{e_2\varkappa(\varkappa-2)/2}q^{e_2n(n+2)/2}} = e_1 q^{e_2\varkappa n} = e_1(-1)^n,$$
	(since $q^\varkappa = -1$ whenever $q$ is a \emph{primitive} $2\varkappa^{\text{th}}$ root of unity) but this is impossible to satisfy for all $n$ when $\varkappa\geq 3$ and $\char \KK \neq 2$. 
	
	Thus, $\Mug{\VerF{\KK}} \simeq \Vect$. Modularity follows immediately by \autoref{modularityCriteria}.
	\end{proof}
	
	\begin{corollary}\label{factorizableFusions}
		The canonical functor $D_\varkappa:\VerF{\KK} \boxtimes \VerF{\KK}^\rev\to \Dr{\VerF{\KK}}$ is an equivalence of braided tensor categories.
	\end{corollary}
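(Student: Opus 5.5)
This follows directly from the preceding results. By \autoref{VerFmodularity}, the M\"uger center of $\VerF{\KK}$ is trivial. Since $\VerF{\KK}$ is a braided fusion category (\autoref{fusionRules}) carrying a spherical structure inherited from $\TLf{q}{\KK}$, \autoref{modularityCriteria} shows it is modular, hence factorizable. By \autoref{def:factorizable}, factorizability means exactly that $D_\varkappa$ is an equivalence, as recorded in \autoref{modularCenter}.

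The remaining checks concern the inputs to \autoref{modularityCriteria}. First, the spherical structure on $\TLf{q}{\KK}$ descends to the semisimplification, because the negligible ideal is a tensor ideal compatible with duality. Second, the braiding descends as well. The braiding $\sigma$ on $\TLf{q}{\KK}$ from \eqref{braiding} is natural, so composing with it preserves negligible morphisms, and $\overline{\sigma}$ is therefore a braiding on the quotient. Third, the Deligne product $\VerF{\KK}\boxtimes\VerF{\KK}^\rev$ exists, since both factors are semisimple with finitely many simples. Finally, $D_\varkappa$ is braided by construction, via \cite[Proposition 8.6.1]{EGNO}.

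I do not expect any real obstacle, since the substantive work, computing the M\"uger center, was done in \autoref{VerFmodularity}. The one point needing care is that \cite{DGNO} is stated over $\CC$, whereas here $\KK$ may have positive characteristic, subject to $\char\KK\neq 2$ and $q$ primitive of order $2\varkappa$.

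To justify the equivalence in that setting, I would argue as follows.
\begin{enumerate}
\item The global dimension of $\VerF{\KK}$ is nonzero in $\KK$. It is a nonzero multiple of $\varkappa/(q-q^{-1})^2$. This is nonzero in $\KK$ because $q$ has order $2\varkappa$ and $\char\KK\neq 2$: order $2\varkappa$ forces $\char\KK\nmid 2\varkappa$, so $\varkappa$ is invertible, and $q^2\neq 1$.
\item Nonzero global dimension makes the category nondegenerate in the sense of \cite[Section 8.20]{EGNO}.
\item For nondegenerate braided fusion categories, the equivalence between trivial M\"uger center and factorizability holds in any characteristic (cf.\ \cite{Shim}).
\end{enumerate}
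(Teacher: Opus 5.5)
Your argument is the paper's: \autoref{VerFmodularity} gives a trivial M\"uger center, and \autoref{modularityCriteria} together with \autoref{modularCenter} then makes $D_\varkappa$ a braided equivalence. Your extra care about positive characteristic goes beyond the paper, which applies \cite{DGNO} without comment; citing \cite{Shim} (trivial M\"uger center $\Leftrightarrow$ factorizable, in any characteristic) already suffices, so steps (1)--(2) are unnecessary, and step (2) is misworded, since nonzero global dimension alone does not force a trivial M\"uger center.
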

	
	\begin{proof}
		This is a direct consequence of \autoref{modularCenter} and \autoref{VerFmodularity}.
	\end{proof}
	
\begin{definition}
	Define $\mathbf{Ver}_p$ as the semisimplification of $\TLf{1}{\overline\FF_p}\simeq \mathbf{Tilt}_{\overline\FF_p}(SL_2)$. 
\end{definition}

The  categories $\Verp$ are also equivalent to the semisimplification of the category $\mathbf{Rep}_{\overline\FF_p}(\ZZ/p\ZZ)$, and so its simple objects can be described via the theory of Jordan normal forms, \cite{O}. 
These categories are of particular importance as they act as the simplest targets of fiber functors in positive characteristic, \cite{O}, \cite{CEO}.
We record some basic facts about these categories in the following proposition.

\begin{proposition}\cite[Section 3.2]{O} The category $\Verp$ has the same Grothendieck ring as $\mathcal A_{p-1}(\overline\FF_p)$; in particular, it has $p-1$ simple objects $L_0, L_1, \dots, L_{p-2}$ obeying the truncated Clebsch-Gordon fusion rule of \autoref{fusionRules}.
\end{proposition}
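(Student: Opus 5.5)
We argue that $\Verp$ is precisely the semisimplification of $\TLf{1}{\overline\FF_p}$, so the machinery of the previous subsections applies almost verbatim. The one thing that changes is the order of $q$: the parameter is $q=1$, not a primitive $2\varkappa$-th root of unity. The role of the quantum integers $[n]_q$ is therefore played by the ordinary integers $n$ reduced mod $p$. First, \autoref{TL_tractable} applies for any $q$, including $q=1$ over $\overline\FF_p$. Hence $\TLf{1}{\overline\FF_p}$ is tractable, and by \autoref{tractable-semisimplification} its semisimplification $\Verp$ is semisimple. Its simple objects are the indecomposables $T_n$ with $\dim T_n\neq 0$ in $\overline\FF_p$.

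The key step is to identify these indecomposables and their dimensions. The hypothesis $q^4\neq1\neq q^6$ of \autoref{thm:TLindecomposables} fails at $q=1$, so we cannot quote that theorem directly. Instead we rerun its proof with the hypotheses of \autoref{fusionRule}, which only require $\dim T_i\neq 0$ for $i\le n$. By \autoref{newSummands}, the $T_n$ exhaust the indecomposables. Inductively, as long as $\dim T_0,\dots,\dim T_n$ are nonzero, \autoref{fusionRule} gives $T_n\otimes\obj1\cong T_{n+1}\oplus T_{n-1}$, and hence $\dim T_{n+1}=\pm2\dim T_n-\dim T_{n-1}$, so that $\dim T_n=\pm(n+1)$ (with a sign depending on the spherical structure). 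This is nonzero in $\overline\FF_p$ for $n\le p-2$ and vanishes at $n=p-1$. So $T_0,\dots,T_{p-2}$ have nonzero dimension and $T_{p-1}$ is negligible.

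The main obstacle is to show that \emph{no} $T_n$ with $n\ge p$ survives, since \autoref{fusionRule} is no longer available beyond $n=p-1$. We argue as in the proof of \autoref{fusionRules}. In the semisimplification, $\overline{T_{p-2}}\otimes L_1$ is a quotient of $T_{p-2}\otimes\obj1\cong T_{p-1}\oplus T_{p-3}$, which becomes $L_{p-3}$. Hence the full subcategory generated by $L_0,\dots,L_{p-2}$ is closed under $\otimes L_1$. Every $\overline{\obj n}$ is then a sum of these $L_i$, so every $T_m$ with nonzero dimension is isomorphic to some $T_i$ with $i\le p-2$. Consequently the simples of $\Verp$ are exactly $L_0,\dots,L_{p-2}$. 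Alternatively, one can cite the known description of tilting modules of $SL_2$ and their dimensions, or use the equivalence with the semisimplification of $\mathbf{Rep}(\ZZ/p\ZZ)$ via Jordan blocks of sizes $1,\dots,p$, where the block of size $p$ is negligible. Finally, the same induction on $\otimes L_1$ used in \autoref{fusionRules}, now with $\varkappa$ replaced by $p$ (note $L_1\otimes L_{p-2}\cong L_{p-3}$), yields the truncated Clebsch--Gordan rule. This matches the fusion rules, and hence the Grothendieck ring, of $\mathcal A_{p-1}(\overline\FF_p)$.
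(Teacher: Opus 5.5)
Your proof is correct, but it takes a different route from the paper. The paper gives no argument and simply defers to \cite[Section 3.2]{O}. There $\Verp$ is realized as the semisimplification of $\mathbf{Rep}_{\overline\FF_p}(\ZZ/p\ZZ)$, whose indecomposables are the Jordan blocks of sizes $1,\dots,p$. Only the size-$p$ block is negligible, and the fusion rules come from the classical decomposition of tensor products of Jordan blocks (equivalently, from tilting modules for $SL_2$).

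You instead stay with the paper's own definition $\Verp=\overline{\TLf{1}{\overline\FF_p}}$. You observe that the ingredients of \autoref{fusionRules} never use the hypothesis $q^4\neq1\neq q^6$; these are \autoref{TL_tractable}, \autoref{mod_e_end}, \autoref{newSummands}, \autoref{nextIndecomposbleLemma}, \autoref{multiplicitiesLemma} and \autoref{fusionRule}. In particular, the element $e$ of \autoref{mod_e_end} satisfies $e^2=e$ and $(\obj n,e)\cong\obj{n-2}$ with no loop factors. So the recursion for $\dim T_n$ runs until it first produces $\pm p=0$.

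Your closure argument is what guarantees that no further simples appear. It shows that $\overline{\obj n}=L_1^{\otimes n}$ stays inside the span of $L_0,\dots,L_{p-2}$, because $L_1\otimes L_{p-2}\cong L_{p-3}$. This step is worth having: the paper's proof of \autoref{fusionRules} leaves it implicit, and the dimension formula in \autoref{thm:TLindecomposables} is only justified while all earlier dimensions are nonzero.

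Your route is self-contained. It avoids two nontrivial identifications: $\TLf{1}{\overline\FF_p}\simeq\mathbf{Tilt}_{\overline\FF_p}(SL_2)$, and the identification of its semisimplification with that of $\mathbf{Rep}_{\overline\FF_p}(\ZZ/p\ZZ)$. The cited route, in exchange, gives a concrete model of the simples (Jordan blocks of sizes $1,\dots,p-1$) in which dimensions and negligibility can be read off immediately.

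One small point of reading concerns the statement itself. Since $x^{2p}-1=(x^2-1)^p$ in characteristic $p$, the field $\overline\FF_p$ has no primitive $2p$-th root of unity. So $\mathcal A_{p-1}(\overline\FF_p)$ is not literally defined by the construction preceding \autoref{fusionRules}. ``Same Grothendieck ring'' should be read as the based ring of the truncated Clebsch--Gordan rule with $\varkappa=p$ (for example, that of $\mathcal A_{p-1}(\CC)$), which is exactly what your argument establishes.
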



\section{$\TLf{q}{\CC}$ in an Ultraproduct of Fusion Categories}\label{sec:TLultra}

		\begin{proposition}
		Given $q\in\CC$ which is transcendental over $\QQ$, there exists a sequence $q_k$ of primitive $(2\varkappa_k)^\text{th}$ roots of unity, with $\varkappa_k\to \infty$, and a field isomorphism $\uprod \CC \cong \CC$ mapping $\ulimit{k} q_k\mapsto q$.
	\end{proposition}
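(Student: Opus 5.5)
Fix a non-principal ultrafilter $\mathscr U$ on $\NN$. Choose any sequence $\varkappa_k\to\infty$ (say $\varkappa_k = k+3$) and set $q_k = e^{\pi i/\varkappa_k}$, a primitive $(2\varkappa_k)^{\text{th}}$ root of unity in $\CC$. Write $\KK := \uprod \CC$ and $\tilde q := \ulimit{k} q_k \in \KK$. The plan has three steps: show that $\tilde q$ is transcendental over $\QQ$; show that $\KK$ is algebraically closed of characteristic $0$ with cardinality $2^{\aleph_0}$; then invoke Steinitz's classification of algebraically closed fields to get an isomorphism $\KK\cong\CC$ sending $\tilde q\mapsto q$.

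For transcendence, take a nonzero $f\in\QQ[x]$. Then $f(q_k)=0$ only if $q_k$ is among the finitely many roots of $f$. These roots have bounded order as roots of unity, while the $q_k$ are pairwise distinct. Hence $\{k : f(q_k)=0\}$ is finite, so it is not in $\mathscr U$. By \Los, $f(\tilde q)\neq 0$.

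Algebraic closedness and characteristic $0$ follow from \Los, since both are first-order axiom schemes satisfied by every factor. For the cardinality, the upper bound is $|\CC^\NN| = 2^{\aleph_0}$. The lower bound is also immediate, because the diagonal embedding $\CC\hookrightarrow \KK$ is injective.

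For the Steinitz step, an algebraically closed field of characteristic $0$ and uncountable cardinality $\lambda$ has transcendence degree $\lambda$ over $\QQ$. Both $\KK$ and $\CC$ therefore have transcendence degree $2^{\aleph_0}$. Extend $\{\tilde q\}$ to a transcendence basis $B$ of $\KK/\QQ$ and $\{q\}$ to a transcendence basis $B'$ of $\CC/\QQ$. Since $|B\setminus\{\tilde q\}| = |B'\setminus\{q\}| = 2^{\aleph_0}$, there is a bijection $B\to B'$ with $\tilde q\mapsto q$. It induces an isomorphism $\QQ(B)\cong\QQ(B')$, which extends to the algebraic closures, giving $\KK\cong\CC$ with $\ulimit{k}q_k\mapsto q$.

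The main point needing care is the transcendence of $\tilde q$. It hinges on the $q_k$ being pairwise distinct, or more precisely on the fact that no algebraic number equals $q_k$ for $\mathscr U$-many $k$. This is exactly what $\varkappa_k\to\infty$ guarantees. Everything else is standard model theory and field theory.
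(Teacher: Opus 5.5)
Your proposal is correct and follows the paper's route. You get transcendence of $\ulimit{k} q_k$ from \Los, since a nonzero rational polynomial vanishes at only finitely many $q_k$. You then get the isomorphism from Steinitz's classification, which the paper invokes through \autoref{uprodFields} and by extending $\overline{\QQ}(\ulimit{k} q_k)\cong\overline{\QQ}(q)$; the only difference is that you spell out the cardinality and transcendence-basis details that the paper compresses into one line.
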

	
	\begin{proof}
		By \autoref{uprodFields}, we know that a field isomorphism $\uprod \CC \cong \CC$ exists.
		Let $q_k$ be any such sequence (e.g. $q_k = e^{\frac{\pi i}{\varkappa_k}}$). 
		Since $\varkappa_k\to \infty$, any polynomial of degree less than $n\in\NN$ is satisfied by at most finitely many $q_k$, and hence not satisfied by $\ulim{k}  q_k$ by \Los. 
		It follows that $\ulim{k}  q_k$ is transcendental and hence we can arrange the desired field isomorphism, for example, by extending the obvious isomorphism $\overline{\QQ}\left(\ulimit{k}  q_j\right)\cong \overline{\QQ}(q)$ to all of $\CC$.
	\end{proof}
			
	We wish to also express algebraic numbers $q$ as ultralimits of roots of unity $q_k$ of increasing even orders. To achieve this, we need to vary the characteristic $p$ as well as the order, First, let us illustrate how this happens when $q$ is an integer, as the general case will be a direct generalization. 
	
	\begin{proposition}
		Let $q\in\ZZ\setminus\{-1,0,1\}$. There exists infinitely many primes $p_k$ such that $q \mod p_k$ has a multiplicative order $2^k$.
	\end{proposition}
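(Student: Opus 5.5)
The plan is to produce, for each $k\ge 2$, a prime $p_k$ with $\operatorname{ord}_{p_k}(q)=2^k$. The primes obtained this way are automatically pairwise distinct, since their orders differ, so we get infinitely many. The statement cannot mean infinitely many primes for one fixed $k$: only finitely many primes divide $q^{2^k}-1$. I will avoid the full strength of Zsigmondy's theorem and argue directly with the cyclotomic value $\Phi_{2^k}(q)=q^{2^{k-1}}+1$.

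\emph{Step 1 (orders of odd prime divisors).} Fix $k\ge 2$ and let $p$ be an odd prime dividing $N_k:=q^{2^{k-1}}+1$. Then $q^{2^{k-1}}\equiv -1 \pmod p$, and squaring gives $q^{2^k}\equiv 1\pmod p$. Hence the order of $q$ modulo $p$ divides $2^k$, so it is a power of $2$. If it divided $2^{k-1}$, we would get $-1\equiv 1\pmod p$, which is impossible because $p$ is odd. Therefore the order is exactly $2^k$.

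\emph{Step 2 (existence of an odd prime divisor).} It remains to show that $N_k$ has an odd prime factor for every $k\ge 2$. Since $k\ge 2$, $q^{2^{k-1}}=m^2$ with $m=q^{2^{k-2}}$ and $|m|\ge 2$. A square is $0$ or $1$ mod $4$, so $N_k=m^2+1\equiv 1$ or $2\pmod 4$. Thus $4\nmid N_k$, so $N_k$ is either odd or twice an odd number. Moreover $N_k\ge 5$ because $|q|\ge 2$, so $N_k\neq 1,2$. In both cases $N_k$ has an odd prime divisor $p_k$, and by Step 1 it satisfies $\operatorname{ord}_{p_k}(q)=2^k$.

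\emph{Main obstacle.} The only delicate point is the restriction to $k\ge 2$, which is exactly where the mod-$4$ argument needs a perfect square. For $k=1$ the claim can genuinely fail: for $q=3$ we have $q+1=4$, and $q=-1$ modulo any odd prime $p$ requires $p\mid 4$. This is the integer shadow of the exceptional case $n=2$ in Zsigmondy's theorem. The failure is harmless for the intended application, since only $\varkappa_k=2^{k-1}\to\infty$ is needed. Alternatively, one can invoke Zsigmondy's theorem for $q^{2^k}-1$ directly, excluding its finitely many exceptional exponents. This second formulation is the one that generalizes, through Postnikova--Schinzel, to algebraic $q$.
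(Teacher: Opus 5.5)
Your proof is correct and is essentially the paper's argument: any odd prime divisor of $q^{2^{k-1}}+1$ has order exactly $2^k$, and a congruence check shows such an odd prime exists (you use squares modulo $4$, the paper a parity and mod-$8$ check). The paper additionally shows, via a Fermat-number identity, that the numbers $q^{2^j}+1$ share no odd prime factors; your observation that different orders force different primes makes this unnecessary, and your restriction to orders at least $4$ correctly excludes the small case (e.g.\ $q+1=4$ for $q=3$), where the paper's claim that ``each $A_k$'' has an odd prime factor actually fails.
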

	\begin{proof}
		Consider the prime divisors of $A_k:= q^{2^k}+1$ for $k\in\NN$. 
		For any such prime $p$, we have $q^{2^k} \equiv -1 \mod p$ so that $q$ has order $2^{k+1}$ in $\FF_{p}$ as its order divides $2^{k+1}$ but not $2^k$.
		
		Observe that, for each $k\in\NN$, we have a prime divisors of $A_k$ that does not divide $A_j$ for all $j<k$. 
		Indeed, a simple induction shows that
		$$A_k-2 = A_{k-1} A_{k-2} \dots A_1 A_0 (n-1),$$
		hence, a prime dividing both $A_j$ for some $j<k$ and $A_k$ must also divide $2$;
		but each $A_k$ has a prime factor different from $2$ (if $q$ is even, this is obvious as $A_k$ is odd; and if $q$ is odd, then $q^2 \equiv 1 \mod 8$ so $A_k \equiv 2 \mod 8$ so it's not a power of $2$ as long as $q^2\neq 1$). 
		It follows that the set of prime divisors of $A_k$ as $k\to\infty$ is infinite.\footnote{As a corollary, this furnishes a proof of the infinitude of primes, which is a variation on a similar proof due to Goldbach and Hurwitz, and popularized by P\'olya and Szeg\H o. See \cite{H} for a detailed history.}
	\end{proof}

To generalize this to algebraic integers, we will need to use the following generalization of Zsigmondy's Theorem to algebraic number fields, due to Postnikova and Schinzel which we now state.

\begin{theorem}\cite[Theorem 1]{PS}\label{AlgPrimitiveDiv}
	Let $\alpha$ and $\beta$ be in the ring of integers $\mathcal O_{\KK}$ of an algebraic number field $\KK$. 
	There exists an integer $N\in\NN$ such that for all $n>N$, there is a prime ideal $\mathfrak{p}_n$ of $\mathcal O_\KK$ with $\mathfrak{p}_n\mid (\alpha^n-\beta^n)$ and $\mathfrak{p}_n\nmid (\alpha^\ell -\beta^\ell)$ for all $\ell<n$. 
\end{theorem}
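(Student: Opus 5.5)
The plan is the classical Zsigmondy/Bang strategy, transplanted to $\mathcal O_\KK$. The statement needs the implicit hypotheses that $\alpha\beta\neq 0$ and that $\alpha/\beta$ is not a root of unity; otherwise it is false, since for $\alpha=\beta$ every $\alpha^n-\beta^n$ vanishes. I will also reduce to the case where the ideals $(\alpha)$ and $(\beta)$ are coprime. To do so, let $\mathfrak g=(\alpha,\beta)$. A prime dividing $\mathfrak g$ divides $\alpha^\ell-\beta^\ell$ for every $\ell$, so it is never primitive. I therefore work with the ideals $(\alpha^n-\beta^n)\mathfrak g^{-n}$ throughout. (If one insists on elements, one may pass to a finite extension in which $\mathfrak g$ becomes principal.)

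\textbf{Step 1 (cyclotomic factorization).} Set
$$\Phi_n(\alpha,\beta)=\prod_{\zeta \text{ primitive } n\text{th root of } 1}(\alpha-\zeta\beta)\in\mathcal O_\KK.$$
The identities $\alpha^n-\beta^n=\prod_{d\mid n}\Phi_d(\alpha,\beta)$ and $\Phi_n=\prod_{d\mid n}(\alpha^d-\beta^d)^{\mu(n/d)}$ hold. A prime $\mathfrak p$ dividing $\alpha^n-\beta^n$ that is not primitive divides some $\alpha^\ell-\beta^\ell$ with $\ell<n$. Let $r$ be the order of $\alpha/\beta$ modulo $\mathfrak p$.

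\textbf{Step 2 (non-primitive primes).} Using the multiplicative order of $\alpha/\beta$ in $(\mathcal O_\KK/\mathfrak p)^\times$ and a lifting-the-exponent argument, I will show the following. If a non-primitive $\mathfrak p$ divides $\Phi_n(\alpha,\beta)$, then $n/r$ is a power of the rational prime $p$ below $\mathfrak p$, so $\mathfrak p$ lies over a prime divisor of $n$. Moreover $v_{\mathfrak p}(\Phi_n)\le v_{\mathfrak p}(p)\,(1+\text{const})$, where the constant depends only on $e(\mathfrak p/p)\le[\KK:\QQ]$. Hence the non-primitive part of the norm $N(\Phi_n(\alpha,\beta))$ is at most $n^{C}$, with $C$ depending only on $\KK$.

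\textbf{Step 3 (lower bound, the main obstacle).} I need $|N_{\KK/\QQ}(\Phi_n(\alpha,\beta))|\ge \exp(c\,\varphi(n))$ for some $c>0$ and all large $n$. At each embedding $\sigma$ with $|\sigma\alpha|\neq|\sigma\beta|$, the factors $\sigma\alpha-\zeta\sigma\beta$ are bounded below trivially. Note that $\max(|\sigma\alpha|,|\sigma\beta|)>1$ at some place, or else $\alpha/\beta$ would be a root of unity by Kronecker. The delicate embeddings are those where $|\sigma(\alpha/\beta)|=1$. There $\alpha^d-\beta^d$ may be abnormally small, namely when $\sigma(\alpha/\beta)^d$ is near $1$. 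For these I will bound $|\gamma^d-1|$ from below, where $\gamma=\sigma(\alpha/\beta)$, by $\exp(-C\log d)$. This follows from Gelfond--Baker lower bounds for the linear form $d\log\gamma - 2\pi i k$ in two logarithms.

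\textbf{Step 4 (conclusion).} I plug this bound into the Möbius product. The total error from small factors is then $\exp(O(\tau(n)\log n))=\exp(o(\varphi(n)))$. Combining with the trivial growth at the other places gives the bound $|N(\Phi_n)|\ge \exp(c\varphi(n))$. Since $\varphi(n)\gg n/\log\log n$, this exceeds $n^{C}$ for $n>N$. So $\Phi_n(\alpha,\beta)$, and hence $\alpha^n-\beta^n$, has a prime factor $\mathfrak p_n$ outside the non-primitive ones of Step 2, which is the required primitive divisor.
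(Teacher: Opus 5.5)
The paper does not prove this statement; it quotes it from Postnikova--Schinzel. Your outline is essentially their argument: cyclotomic factorization, confining non-primitive prime divisors of $\Phi_n(\alpha,\beta)$ to primes above divisors of $n$ with bounded exponent, and a two-logarithm lower bound for $|\gamma^d-1|$ at archimedean places where $|\sigma(\alpha/\beta)|=1$.

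You are right that the quoted statement omits hypotheses: $\alpha\beta\neq0$ and $\alpha/\beta$ not a root of unity, and Postnikova--Schinzel also assume $(\alpha,\beta)=1$. Your reduction to the coprime case is genuinely needed for the paper's application, where $q=\alpha/\beta$ with arbitrary algebraic integers. A bound of the weaker shape $|\gamma^d-1|\ge\exp(-(\log d)^{\kappa})$ (Gelfond) already suffices in Step 4.

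The growth estimate of Steps 3--4, however, does not go through as written, for two reasons.

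\textbf{Places with $|\sigma\alpha|\neq|\sigma\beta|$.} A factor-wise bound is too weak here. The inequality $|\sigma\alpha-\zeta\sigma\beta|\ge\bigl||\sigma\alpha|-|\sigma\beta|\bigr|$ only gives $\bigl||\sigma\alpha|-|\sigma\beta|\bigr|^{\varphi(n)}$. For $\KK=\QQ$, $\alpha=3$, $\beta=2$ this yields no growth at all, although $|\Phi_n(3,2)|\asymp 3^{\varphi(n)}$. You must use the M\"obius product at these places too. Put $M_\sigma=\max(|\sigma\alpha|,|\sigma\beta|)$ and $\rho_\sigma=\min(|\sigma\alpha|,|\sigma\beta|)/M_\sigma<1$. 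Then $\log|\sigma(\alpha^d-\beta^d)|=d\log M_\sigma+O(\rho_\sigma^d)$, hence $\log|\sigma\Phi_n(\alpha,\beta)|=\varphi(n)\log M_\sigma+O(1)$.

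\textbf{Sign of the main term.} The sign of $\varphi(n)\sum_\sigma\log M_\sigma$ is not settled by ``$M_\sigma>1$ at some place''. Places with $M_\sigma<1$ contribute negatively at the same rate, e.g.\ $\alpha=\sqrt2-1$, $\beta=(\sqrt2-1)^2$ in $\QQ(\sqrt2)$.
\begin{itemize}
\item In the coprime case the correct argument is $\prod_\sigma M_\sigma\ge|N(\alpha)|\ge1$. Equality holds only if $\alpha,\beta$ are units with $|\sigma\alpha|=|\sigma\beta|$ for every $\sigma$. Then $\alpha/\beta$ is a unit with all conjugates on the unit circle, hence a root of unity by Kronecker.
\item If you keep the normalization by $\mathfrak g^{-n}$ instead of passing to an extension, the quantity that must be positive is $\sum_\sigma\log M_\sigma-\log N(\mathfrak g)=[\KK:\QQ]\,h(\alpha/\beta)$. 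Here $h$ is the absolute logarithmic Weil height, and positivity is Kronecker's theorem for $h$. In that route, primes $\mathfrak p\mid\mathfrak g$ with $v_{\mathfrak p}(\alpha)=v_{\mathfrak p}(\beta)$ must also be run through Step 2.
\end{itemize}

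\textbf{A smaller correction to Step 2.} The exponent bound does not depend on $e=e(\mathfrak p/p)$ alone: already for $\KK=\QQ$, $v_2(\Phi_2(a,b))=v_2(a+b)$ can be arbitrarily large. For $t=v_{\mathfrak p}(x-1)>0$ one has $v_{\mathfrak p}(x^p-1)=\min(pt,\,t+e)$ unless $(p-1)t=e$. That exception forces $p\le[\KK:\QQ]+1$. Since $r=r(\mathfrak p)$ is fixed and $t$ strictly increases along $n=rp^k$, the exception occurs for at most one $n$ per such $\mathfrak p$. Hence for $n$ large every non-primitive $\mathfrak p$ has $v_{\mathfrak p}(\Phi_n)\le e(\mathfrak p/p)$. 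The non-primitive part of the norm is then at most $\prod_{p\mid n}p^{[\KK:\QQ]}\le n^{[\KK:\QQ]}$, which is all Step 4 needs.
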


	\begin{proposition} \label{algebraicNumFromRootsOfUnity}
		Let $q$ be any nonzero algebraic complex number that is not a root of unity, and let $f(x)\in\ZZ[x]$ be its minimal polynomial. There exists an infinite set $\mathcal P_q$ of primes $\{p_k\}_{k\in\NN}$ such that $f(x) \mod p_k$ has a root $q_k\in\overline{\FF}_{p_k}$ of multiplicative order $2^k$.
	\end{proposition}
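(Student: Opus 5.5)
The plan is to reduce to \autoref{AlgPrimitiveDiv} in the number field $\KK=\QQ(q)$, imitating the integer case treated just above. Since $q$ need not be an algebraic integer, first clear denominators. Let $c\in\ZZ$ be the leading coefficient of $f$. Put $\alpha:=cq$ and $\beta:=c$. Then $\alpha,\beta\in\mathcal O_\KK$, and $q=\alpha/\beta$. Because $q$ is not a root of unity, $\alpha^n-\beta^n=\beta^n(q^n-1)\neq 0$ for every $n\geq 1$. \autoref{AlgPrimitiveDiv} then gives an $N$ such that for every $n>N$ there is a prime ideal $\mathfrak p_n\subset\mathcal O_\KK$ dividing $\alpha^n-\beta^n$ but no $\alpha^\ell-\beta^\ell$ with $\ell<n$. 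We apply this with $n=2^k$ for all $k$ with $2^k>N$.

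The key step, and the main obstacle, is controlling the denominator. Any $\mathfrak p_n$ produced this way does not contain $\beta$. Indeed, suppose $\beta\in\mathfrak p_n$. Since $\alpha^n-\beta^n\in\mathfrak p_n$ as well, we would get $\alpha^n\in\mathfrak p_n$, hence $\alpha\in\mathfrak p_n$, since $\mathfrak p_n$ is prime. Then $\alpha-\beta\in\mathfrak p_n$, contradicting primitivity for $\ell=1<n$. Similarly $\alpha\notin\mathfrak p_n$.

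So $q$ lies in the localization $(\mathcal O_\KK)_{\mathfrak p_n}$, and its image $\bar q$ in the residue field $\mathcal O_\KK/\mathfrak p_n$ equals $\bar\alpha\bar\beta^{-1}\neq 0$. In this field $\bar q^{\,n}=1$, while $\bar q^{\,\ell}\neq 1$ for $\ell<n$, because $\alpha^\ell-\beta^\ell=\beta^\ell(q^\ell-1)$ and $\beta$ is a unit modulo $\mathfrak p_n$. Hence $\bar q$ has multiplicative order exactly $2^k$.

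Let $p_k$ be the rational prime lying under $\mathfrak p_{2^k}$. The finite field $\mathcal O_\KK/\mathfrak p_{2^k}$ embeds into $\overline\FF_{p_k}$; call the image of $\bar q$ there $q_k$. Reducing the identity $f(q)=0$, which holds in $(\mathcal O_\KK)_{\mathfrak p}$, shows that $q_k$ is a root of $f \bmod p_k$. This reduction is a nonzero polynomial because $f$ is primitive.

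Note that $p_k\neq 2$ automatically, since an element of even order $2^k$ with $k\geq 1$ cannot exist in characteristic $2$.

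It remains to show the set $\mathcal P_q=\{p_k\}$ is infinite. The prime ideals $\mathfrak p_{2^k}$ are pairwise distinct, since the orders of $\bar q$ differ. Each rational prime has at most $[\KK:\QQ]$ prime ideals above it. Therefore infinitely many distinct $p_k$ occur. After discarding the finitely many small $k$ and reindexing, we obtain the statement as phrased.
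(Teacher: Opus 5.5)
Your proof is correct and follows the same strategy as the paper: primitive prime divisors from \autoref{AlgPrimitiveDiv} at exponents that are powers of $2$, followed by reducing $q$ modulo such a prime. Your execution is in fact tighter than the paper's. Reducing directly modulo $\mathfrak p_{2^k}$, after checking $\beta\notin\mathfrak p_{2^k}$, guarantees that $q_k$ really is a root of $f \bmod p_k$ of exact order $2^k$; the paper's detour through the norm $N_k$ pairs roots $\alpha_j,\beta_j$ of $g,h$ modulo $p_k$ without justification and never excludes $\beta_j=0$. Likewise, your bound of at most $[\KK:\QQ]$ primes above each rational prime is the right way to get infinitely many distinct $p_k$, whereas the paper's distinctness argument overlooks that different $k$ may use different roots of $f$ modulo the same prime.
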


\begin{proof}
		Let $q = \alpha/\beta$ for algebraic integers $\alpha, \beta$ with monic minimal polynomials $g(x), h(x)\in\ZZ[x]$, respectively.
		We work in a splitting field $\KK$ of $gh$ over $\QQ$. 
		By \autoref{AlgPrimitiveDiv}, for sufficiently large $k$, we have a prime ideal of  $\mathfrak p_{k}$ containing $\alpha^{2^{k+1}}-\beta^{2^{k+1}} = (\alpha^{2^{k}}-\beta^{2^{k}})(\alpha^{2^{k}}+\beta^{2^{k}})$ but not containing $\alpha^{2^{\ell+1}}-\beta^{2^{\ell+1}}$ for all $\ell<k$, and hence it contains $\alpha^{2^{k}}+\beta^{2^{k}}$ but not $\alpha^{2^{\ell}}+\beta^{2^{\ell}}$ for all $\ell<k$. 
		Now, $\mathfrak p_k \cap \ZZ$ is a prime ideal in $\ZZ$, principally generated by some rational prime $p_k\in\ZZ$. 
		Since $\mathfrak p_k$ contains $\alpha^{2^{k}}+\beta^{2^{k}}$, it also contains its norm $$N_k := \prod_{\sigma\in\gal(\KK/\QQ)} \left(\sigma(\alpha)^{2^{k}}+\sigma(\beta)^{2^{k}}\right).$$
		$N_k$ is an integer, and hence $N_k \in \mathfrak p_k\cap \ZZ = (p_k)$.
		
		It follows that $N_k \equiv 0 \mod p_k$, and hence $\overline{N_k} = \prod_{i=1}^d \left ({\alpha_i}^{2^k} + \beta_i^{2^{k}}\right) = 0 \in \overline{\FF}_{p_k}$, where $\alpha_i$ and $\beta_i$ are the roots of $g(x)$ and $h(x) \mod p_k$ in $\overline \FF_{p_k}$, respectively.
		Thus, for some $j\in\{1,\dots, d\}$, we have $\alpha_j^{2^k} = -\beta_j^{2^k}$, in $\overline \FF_{p_k}$.
		Setting $q_k:=\alpha_j/\beta_j$, we have $q^{2^k} = -1$, whence $q_k$ is of order $2^{k+1}$ in $\overline\FF_{p_k}$. 
		Note that if $\mathfrak p_k$ and $\mathfrak p_{k'}$ lie over the same prime ideal $(p_k)\in\ZZ$ for $k\neq k'$, we would have that some $q_k$ has orders $2^{k+1}$ and $2^{k'+1}$ in \emph{the same} field $\overline{\FF}_{p_k}$, which is impossible. 
		Thus, we get an infinite set $\mathcal P_q$ of rational primes $\{p_k\}_{k\in\NN}$ in this manner. 
	\end{proof}
	
	\begin{remark}
		We will only use a weaker form of \autoref{algebraicNumFromRootsOfUnity}, namely that $q_k$ can be chosen to have even multiplicative orders $2\varkappa_k$ with $\varkappa_k\to \infty$, rather than having power-of-two orders. 
	\end{remark}
	
	\begin{corollary}
		Let $q$ be any nonzero algebraic complex number that is not a root of unity and $\mathcal P_q = \{p_k\}_{k\in\NN}$ be the set of primes guaranteed by \autoref{algebraicNumFromRootsOfUnity}, and let $q_k\in\overline{\FF}_{p_k}$ be the corresponding even-order roots of unity. 
		There exists an isomorphism of fields $\uprod \overline{\FF}_{p_k} \cong \CC$ sending $\ulimit{k}q_k$ to $q$.
	\end{corollary}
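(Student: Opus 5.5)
The plan is to build the isomorphism in two stages: first show that the element $Q:=\ulimit{k} q_k$ of the field $F:=\uprod \overline{\FF}_{p_k}$ is a root of the minimal polynomial $f$ of $q$, and then extend the resulting embedding $\QQ(q)\hookrightarrow F$ to an abstract isomorphism $\CC\cong F$ by a transcendence-degree argument. Throughout, $\mathscr U$ is a non-principal ultrafilter on $\NN$.

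For the first stage, note that by \Los, $F$ is an algebraically closed field: algebraic closedness is expressed by a first-order axiom scheme, and each factor $\overline{\FF}_{p_k}$ satisfies it. Its characteristic is $0$. Indeed, for each prime $\ell$ the statement $\ell\cdot 1\neq 0$ holds in all factors with $p_k\neq \ell$. By the construction in \autoref{algebraicNumFromRootsOfUnity}, the primes $p_k$ are pairwise distinct, so $p_k\to\infty$ and this set of indices is cofinite, hence lies in $\mathscr U$. Consequently $\QQ$, and hence $\ZZ[x]$, embeds canonically into $F$.

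Since $f(q_k)=0$ in $\overline{\FF}_{p_k}$ for every $k$ (we reduce $f$ coefficientwise), \Los{} gives $f(Q)=0$ in $F$. Because $f$ is irreducible over $\QQ$, the assignment $q\mapsto Q$ defines a field embedding $\QQ(q)\cong \QQ(Q)\subset F$. One small point: if $f$ is not monic, its leading coefficient is nonzero modulo all but finitely many $p_k$, so the roots $q_k$ of $f \bmod p_k$ make sense on a set in $\mathscr U$. This also shows that $Q$ is not a root of unity, consistent with the orders $2^{k+1}\to\infty$, although we do not need this.

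For the second stage, I would use the fact that an algebraically closed field of characteristic $0$ is determined up to isomorphism by its transcendence degree over $\QQ$. Alternatively, I would reuse \autoref{uprodFields}, presumably in the form that such ultraproducts are isomorphic to $\CC$, which would give $|F|=2^{\aleph_0}$. Then $F$ has transcendence degree $2^{\aleph_0}$ over $\QQ$, as does $\CC$. The embedding above extends to an isomorphism $\overline{\QQ(q)}=\overline\QQ\cong \overline{\QQ(Q)}\subset F$. Choosing transcendence bases of $F$ and $\CC$ over $\overline\QQ$, which have the same cardinality, and matching them, we extend to an isomorphism of purely transcendental extensions. Taking algebraic closures then gives an isomorphism $\CC\cong F$ sending $q\mapsto Q$.

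The main obstacle is the cardinality claim $|F|=2^{\aleph_0}$, if \autoref{uprodFields} does not already cover positive-characteristic factors. The upper bound is clear, since $F$ is a quotient of a countable product of countable sets. For the lower bound, I would use the standard fact that a non-principal ultraproduct of countably many sets of unbounded finite size, or of countably infinite sets, has size continuum. Concretely, I would embed $2^{\NN}$ into $F$ via the almost-disjoint-branches trick: index elements of $\overline{\FF}_{p_k}$ by binary strings of length $k$ and send $s\in 2^\NN$ to the sequence of its truncations. Two distinct $s\neq s'$ then differ in all but finitely many coordinates, so their images are distinct in $F$.
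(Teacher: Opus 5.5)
Your proposal is correct and follows essentially the paper's argument. \Los{} shows that $\ulim{k} q_k$ is a root of the minimal polynomial of $q$. The isomorphism with $\CC$ is then adjusted so that this root goes to $q$: the paper composes an arbitrary isomorphism from \autoref{uprodFields} with an automorphism of $\CC$ carrying the resulting Galois conjugate to $q$, and that automorphism exists by exactly the algebraic-closure and transcendence-basis extension you carry out explicitly.

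Your cardinality worry is unnecessary. \autoref{uprodFields} already covers $\uprod \overline{\FF}_{p_k}$ once each prime occurs only finitely often among the $p_k$, a hypothesis you correctly verify.
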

	
	\begin{proof}
		By \autoref{uprodFields}, we know that an isomorphism $\uprod \overline{\FF}_{p_k} \cong \CC$ exists.
		Since, by construction, each $q_k$ satisfies the minimal polynomial for $q$, by \Los, so does $\uprod q_k$.
		Therefore, any such isomorphism maps $\uprod q_k$ to some Galois conjugate of $q$; upon composition with an appropriate Galois automorphism, we get the desired map.
	\end{proof}
	
	Fix a non-principal ultrafilter $\mathscr U$ on $\NN$. 
		Given transcendental (resp. algebraic non-root-of-unity) $q\in\CC$, fix an isomorphism $\uprod \CC \cong \CC$ (resp. $\uprod \overline{\FF}_{p_k} \cong \CC$) identifying $\ulim{k} q_k$ with $q$ for some sequence $\{q_k\}_{k\in\NN}$ of roots of unity of order $2\varkappa_k\to\infty$ in $\CC$ (resp. $\overline{\FF}_{p_k}$). 
		By \autoref{tensorCatsUltraprod}, $\uprod \VerF{\CC}$ (resp. $\uprod \VerF{\FFpk}$) is a $\CC$-linear tensor category via that isomorphism $\uprod \CC \to \CC$ (resp. $\uprod  \FFpk \cong \CC$).
		Let $\mathcal A_\infty(q)$ be its full subcategory generated by the object $X_1:=\ulim{k} L_1$ under tensor products, direct sums, summands.
		Even though $\uprod \VerF{\KK}$ depends on the ultrafilter, the choice of field isomorphisms, and the sequence $\varkappa_k$, the subcategory $\mathcal A_\infty(q)$ only depends on $q :=\ulim{k} q_k$, justifying the notation. The next proposition shows why.
		
	\begin{corollary}\label{ultraTL}
		There exists a monoidal equivalence $E:\TLf{q}{\CC}\overset{\sim}{\to} \mathcal A_\infty(q)$.
	\end{corollary}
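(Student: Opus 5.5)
We use the universal property of \autoref{univ-prop} to get a functor $E:\TLf{q}{\CC}\to\uprod\VerF{\KK_k}$, then show $E$ is fully faithful. Its essential image is then the Karoubian closure of $\langle X_1\rangle$, which is $\mathcal A_\infty(q)$.

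\emph{Construction of $E$.} In each $\VerF{\KK_k}$ the object $L_1$ is the image of $\obj 1$. It carries the images $f_k,g_k$ of $\CUP$ and $\CAP$. These satisfy the zigzag relations and $g_k\circ f_k=-[2]_{q_k}\Id_{L_0}$. Since equalities of morphisms in the ultraproduct are checked componentwise, \Los\ gives that $f:=\ulim{k}f_k$ and $g:=\ulim{k}g_k$ satisfy the zigzag relations and $g\circ f=-[2]_{q}\Id_{\id}$, where $[2]_q=\ulim{k}[2]_{q_k}$ under the fixed field isomorphism. \autoref{univ-prop} then yields a tensor functor $E$ with $E(\obj 1)=X_1$, and its image lies in $\mathcal A_\infty(q)$. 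The same componentwise argument shows that on the diagrammatic morphism spaces, $E$ is the ultralimit of the semisimplification functors $\preTLf{q_k}{\KK_k}\to\VerF{\KK_k}$:
\[\Hom(\obj n,\obj m)\to\uprod\Hom_{\VerF{\KK_k}}(L_1^{\otimes n},L_1^{\otimes m}).\]

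\emph{Full faithfulness.} Since $\TLf{q}{\CC}$ is the Cauchy completion of $\preTLf{q}{\CC}$ and $\mathcal A_\infty(q)$ is Karoubian, it suffices to check full faithfulness on the objects $\obj n$. Fix $n,m$. The space $\Hom(\obj n,\obj m)$ has the basis of crossingless matchings. There are $C=\mathrm{Cat}((n+m)/2)$ of them, or none if the parity differs. This basis is defined over $\ZZ[q^{\pm1}]$ uniformly in $k$.

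For $k$ with $\varkappa_k-2>n+m$, all of $T_0,\dots,T_{n+m}$ have nonzero dimension $[j+1]_{q_k}$. Hence no morphism $\obj n\to\obj m$ in $\TLf{q_k}{\KK_k}$ factors through a negligible summand: by \autoref{fusionRule} and \autoref{thm:TLindecomposables}, $\obj n$ and $\obj m$ decompose into $T_j$ with $j\le\max(n,m)$. So the quotient map to $\VerF{\KK_k}$ is injective on $\Hom(\obj n,\obj m)$. To see this concretely, note that the trace pairing (Gram matrix of the matchings) on $\Hom(\obj n,\obj m)$ has determinant a fixed polynomial $D_{n,m}(q)$. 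This determinant is a product of quantum integers $[j]_q$ with $j\le n+m+1$ (the meander/Gram determinant). It is nonzero at $q_k$ for $k\gg0$, so the matchings stay linearly independent in $\VerF{\KK_k}$, and the Hom spaces there have dimension exactly $C$.

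Thus for $\mathscr U$-almost all $k$, $E_k$ is an isomorphism of $C$-dimensional spaces sending the standard basis to the standard basis. By \Los, the ultraproduct Hom space $\uprod\Hom(L_1^{\otimes n},L_1^{\otimes m})$ is $C$-dimensional over $\uprod\KK_k\cong\CC$ with basis $\ulim{k}$ of the matchings. Moreover, $E$ maps the matching basis of $\Hom(\obj n,\obj m)$ (over $\CC$, with parameter $q$) onto this basis, so $E$ is bijective.

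\emph{Main obstacle.} The main obstacle is this uniformity step. We must ensure that the dimension of Hom spaces in the ultraproduct is the ultralimit of the dimensions. This uses \autoref{tensorCatsUltraprod}: the Hom spaces are ultraproducts of spaces of bounded dimension. We must also ensure that non-degeneracy holds for almost all $k$. For the latter, I would first try the direct argument: $\dim_q T_j=[j+1]_{q_k}\neq0$ for $j<\varkappa_k-1$, combined with $\varkappa_k\to\infty$. Should the Gram-determinant route be needed, I would rely on its known factorization into quantum integers.

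Finally, essential surjectivity onto $\mathcal A_\infty(q)$ follows. Every object of $\mathcal A_\infty(q)$ is a summand of a sum of the $X_1^{\otimes n}=E(\obj n)$. By fullness, the corresponding idempotents lift to idempotents in $\End(\obj n)$, which split in the Karoubian category $\TLf{q}{\CC}$.
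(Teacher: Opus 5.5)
Your proof is correct, and it takes a different route from the paper.

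\textbf{The paper's route.} It argues at the level of simple objects. It invokes the semisimplicity of $\TLf{q}{\CC}$ from \autoref{thm:TLindecomposables} and matches $T_n$ with $X_n=\ulim{k}L_n$. It then uses \Los\ to transport the fusion rules and $\Hom(X_n,X_m)\cong\CC\delta_{mn}$ from $\Ver$, and concludes that $E$ is a bijection on simples and hence on morphism spaces.

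\textbf{Your route.} You work with the matching bases of $\Hom(\obj n,\obj m)$. Your key input is that the trace form is nondegenerate at $q_k$ once $\varkappa_k\gg n+m$. Hence semisimplification is injective in bounded degree, and the ultraproduct Hom space is $C$-dimensional with the ultralimits of the matchings as a basis.

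\textbf{What each approach buys.} Your route never uses generic semisimplicity of $\TLf{q}{\CC}$. It makes explicit why the relevant Hom spaces of $\uprod\Ver$ are finite-dimensional: their dimensions are uniformly bounded. It also justifies two points the paper passes over quickly:
\begin{itemize}
\item why $E(T_n)\cong X_n$ for the functor already fixed by \autoref{univ-prop};
\item why essential surjectivity holds, namely that idempotents lift by fullness.
\end{itemize}
The cost is an external input, the Gram (meander) determinant factorization. Its factors are in fact $[j+1]_q$ with $j\le (n+m)/2$, so your bound $j\le n+m+1$ is weaker but harmless. The paper, by contrast, only uses results of \autoref{sec:TL}.

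\textbf{A caution about your first justification.} The ``direct'' argument for injectivity is not sufficient on its own. Knowing that $\obj n$ and $\obj m$ only involve summands $T_j$ of nonzero dimension does not exclude negligible morphisms between them. In a tractable category, a radical morphism between indecomposables is negligible even when both have nonzero dimension. What you actually need is
\[
\Hom(T_i,T_j)=\KK_k\,\delta_{ij}\quad\text{for } i,j\le\max(n,m).
\]
Your Gram-determinant argument supplies this. So would rerunning the Catalan count from part (4) of \autoref{thm:TLindecomposables} for $\obj N$ with $N\le\varkappa_k-2$. That count goes through because \autoref{fusionRule} only needs the dimensions of $T_0,\dots,T_{N-1}$ to be nonzero.
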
	
	
	\begin{proof}
		Let $f = \ulim{k} \overline{\CUP}$ and $g = \ulim{k}\overline{\CAP}$.
		By \Los, $g\circ f = -[2]_q \Id_{\id}$. 
		So, by the universal property in \autoref{univ-prop}, we get a tensor functor $E:\TLf{q}{\CC}\to \mathcal A_\infty$, satisfying $E(\obj{1}) = X_1$, $E(\CUP) = f$ and $E(\CAP) = g$. 
		It is clearly essentially surjective since $\mathcal A_\infty(q)$ is, by definition, generated by $X$.
		
		For any simple object $T_n$ of $\TLf{q}{\CC}$, and for any $k\in\NN$ large enough so that $\varkappa_k -2 \geq n$, we have simples $L_n$ in $\VerF{\KK}$ giving rise to an object $X_n := \ulim{k} L_n \subset X_1^{\otimes n}$ in $\mathcal A_\infty(q)$.
		Define $E(T_n)= X_n$.
		Now, by \Los, any two such objects $X_m$ and $X_n$ satisfy the same fusion rules as $T_m$ and $T_n$ since it is the same rule for $L_m$ and $L_n$ in $\Ver$ whenever $\varkappa - 2 \geq m+n$.
		Similarly, by \Los, $\Hom_{\mathcal A_\infty(q)}(X_n, X_m) \cong \CC \delta_{mn} $.
		It follows that $\{X_n\}_{n\in\NN}$ is a complete list of all simple objects of $\mathcal A_\infty(q)$, and that $E$ induces a bijection between the morphism spaces of $\mathcal A_{\infty}(q)$ and those of $\TLf{q}{\CC}$; thus, $E$ is an equivalence.	
	\end{proof}	
	
	\begin{notation}
		In what follows, we will often abuse notation and write $\uprod \Ver$, omitting the underlying fields when they are understandable from context or when the argument does not depend on the field. 
		For example, when we write $\uprod \Ver$ we will mean $\uprod \VerF{\CC}$ when we wish to study $\TLf{q}{\CC}$ when $q$ is transcendental and $\uprod \VerF{\FFpk}$ when we wish to study $\TLf{q}{\CC}$ when $q$ is algebraic non-root-of-unity.
		We will also omit from our notation the dependence of $\varkappa_k$ on $k$ in the context of ultraproducts, simply writing $\varkappa$.
	\end{notation}


\section{The Center of the Generic Temperley--Lieb Category}\label{sec:TLcenter}

\subsection*{Half-Braidings}	
	
	First, let us describe some examples of objects in the Drinfeld center of $\TLf{q}{\KK}$.
	As discussed in \autoref{sec:prelim}, due to the fact that $\TLf{q}{\CC}$ is braided, each object $X$ in $\TLf{q}{\KK}$ gives us two obvious objects in the Drinfeld center: $(X, \sigma_{X,-})$ and $(X, \sigma_{-,X}^{-1})$, where $\sigma$ is a braiding on $\TLf{q}{\KK}$. These give us the fully faithful functor 
	\begin{align*}
		D:\TLf{q}{\KK}\boxtimes\TLf{q}{\KK}^\rev &\to \Dr{\TLf{q}{\KK}}\\
		A\boxtimes B\hspace{.66in} &\mapsto (A\otimes B,  (\sigma_{A,-}\otimes \Id_B)\circ (\Id_A\otimes \sigma_{-,B}^{-1})).
	\end{align*}
	However, this functor is not essentially surjective; indeed, it is easy to exploit the $\ZZ/2\ZZ$--grading on $\TLf{q}{\KK}$ to construct an object of the center, that is not in the image of $D$.
	\begin{lemma}\label{TLgrading}
		$\TLf{q}{\KK}$ admits a $\ZZ/2\ZZ$--grading, which induces a functor $G:\Rep{\ZZ/2\ZZ} \to \Dr{\TLf{q}{\KK}}$.
	\end{lemma}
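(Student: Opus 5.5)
The plan is to construct the grading directly from the parity of the number of strands, and then obtain $G$ from \autoref{grading-duality}. Since every generating morphism of $\preTLf{q}{\KK}$, namely $\CUP:\obj{0}\to\obj{2}$ and $\CAP:\obj{2}\to\obj{0}$, changes the number of strands by an even amount, any composite or tensor product of them does as well. Hence $\Hom(\obj{m},\obj{n})=0$ whenever $m\not\equiv n \pmod 2$, a fact already used in the proof of \autoref{fusionRule}. For $i\in\ZZ/2\ZZ$, I will define $\cat_i$ to be the full subcategory of the Cauchy completion $\TLf{q}{\KK}$ consisting of direct summands of finite direct sums of objects $\obj{n}$ with $n\equiv i \pmod 2$.

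The verification then proceeds in three steps. First, every object $(\bigoplus_j \obj{n_j}, e)$ splits canonically: the idempotent $e$ is a block matrix whose off-parity blocks vanish by the parity observation, so $e=e_0\oplus e_1$, and the object is the direct sum of its even-strand and odd-strand parts. Second, there are no nonzero morphisms between $\cat_0$ and $\cat_1$, again because of the parity observation, so $\TLf{q}{\KK}=\cat_0\oplus\cat_1$ as $\KK$-linear categories. Third, $\obj{0}=\id\in\cat_0$, and since $\obj{m}\otimes\obj{n}=\obj{m+n}$ and the tensor product of idempotents respects this decomposition, we get $\cat_i\otimes\cat_j\subset\cat_{i+j}$. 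The grading is faithful because $\obj{0}\in\cat_0$ and $\obj{1}\in\cat_1$ are both nonzero.

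With the grading in hand, \autoref{grading-duality} gives an injective homomorphism $(\ZZ/2\ZZ)^\vee\to\Aut_\otimes(\Id)$, sending the sign character $\chi$ to the automorphism acting by $(-1)^{n}$ on $\cat_{n \bmod 2}$. Because $\ZZ/2\ZZ$ is abelian, the same proposition yields the functor $G:\Rep{\ZZ/2\ZZ}\to\Dr{\TLf{q}{\KK}}$. Explicitly, $G$ sends the sign representation to $(\id,\varphi)$ with $\varphi_X=(-1)^{i}\Id_X$ for $X\in\cat_i$, and extends additively.

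The only point needing care is that \autoref{grading-duality} is stated for tensor categories, while here the target is only Karoubian. This is harmless: the proof given there uses only the direct-sum decomposition and checks the hexagon condition on homogeneous objects, which works equally well here. If $\Rep{\ZZ/2\ZZ}$ is read as semisimple with two simples, $G$ is determined by the images of those two simples, and monoidality follows from $\chi^2=1$.
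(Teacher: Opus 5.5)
Your proposal is correct and is essentially the paper's proof. The paper also grades $\TLf{q}{\KK}$ by parity: degree $0$ is the Karoubian subcategory generated by the $\obj{2n}$, and degree $1$ is the odd part. It then invokes \autoref{grading-duality}; you simply spell out the parity-of-Hom and idempotent-splitting details that the paper leaves implicit.

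Your closing caveat is unnecessary. \autoref{tensorCat} defines tensor categories as idempotent-complete (Karoubian) rather than abelian, so \autoref{grading-duality} applies to $\TLf{q}{\KK}$ directly.
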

	
	\begin{proof}
		The grading is given by taking the full subcategory $\TLf{q}{\KK}_0$ with simples $T_{2n}$ (or equivalently the Karoubian subcategory generated by $\obj{2n}$) to be in degree $0$ and taking the full subcategory $\TLf{q}{\KK}_1$ with simples $T_{2n+1}$ to be in degree $1$. The rest follows by \autoref{grading-duality}.
	\end{proof}
	
	More explicitly, define $\xi_{T_n}: \obj{0}\otimes {T_n} \to {T_n} \otimes \obj{0}$ by $(-1)^n\cdot r^{-1}\circ l$, and extend linearly to define $\xi_X$ for all objects $X$.
	It is then easy to check that $(\obj{0}, \xi)$ generates a copy of $\Rep{\ZZ/2\ZZ}$ inside the center.
	
Thus, we can combine the functors $D$ and $G$ via the universal property of the Deligne tensor product.	
	
\begin{proposition}\label{functorZ}
	There exists a tensor functor
		$$Z:\TLf{q}{\KK}\boxtimes\TLf{q}{\KK}^\rev\boxtimes \Rep{\ZZ/2\ZZ} \to \Dr{\TLf{q}{\KK}}$$
		extending the functors $D:\TLf{q}{\KK}\boxtimes\TLf{q}{\KK}^\rev\to \Dr{\TLf{q}{\KK}}$ and $G:\Rep{\ZZ/2\ZZ}\to \Dr{\TLf{q}{\KK}}$. \hfill $\square$
\end{proposition}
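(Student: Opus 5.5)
The plan is to build $Z$ out of two braided tensor functors into the center whose images centralize one another. The product of these two functors is then itself a tensor functor, and the universal property of Deligne's tensor product turns it into $Z$. Throughout I take $\KK$-linearity and semisimplicity (for generic $q$, by \autoref{thm:TLindecomposables}) for granted, so that the Deligne products exist. When $\TLf{q}{\KK}$ is not semisimple I would instead work with the naive tensor product of the underlying Karoubian categories and Cauchy-complete it.

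The first step is to recall that $D:\TLf{q}{\KK}\boxtimes\TLf{q}{\KK}^\rev\to\Dr{\TLf{q}{\KK}}$ is a braided tensor functor (\cite[Proposition 8.6.1]{EGNO}). By \autoref{TLgrading} and \autoref{grading-duality}, $G:\Rep{\ZZ/2\ZZ}\to\Dr{\TLf{q}{\KK}}$ is a tensor functor sending the sign character to $(\obj 0,\xi)$. Its image is closed under tensor products because characters compose multiplicatively to give $\xi\otimes\xi=\mathrm{triv}$. The tensor structure maps of $G$ are identities, so $G$ is strict up to unitors.

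The second step is to define $Z$ on generators by
\[
Z(A\boxtimes B\boxtimes V):=D(A\boxtimes B)\otimes G(V),
\]
using the tensor product of $\Dr{\TLf{q}{\KK}}$. This is a bifunctor, $\KK$-linear and exact in each variable, on the product of the two semisimple categories. The universal property of $\boxtimes$ therefore extends it uniquely to a linear functor on $\TLf{q}{\KK}\boxtimes\TLf{q}{\KK}^\rev\boxtimes \Rep{\ZZ/2\ZZ}$.

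The main obstacle is the third step, equipping $Z$ with a monoidal structure. The structure isomorphism
\[
Z(x)\otimes Z(y)=D(x)\otimes G(V)\otimes D(y)\otimes G(W)\;\longrightarrow\; D(x)\otimes D(y)\otimes G(V)\otimes G(W)
\]
is obtained from the monoidal structures of $D$ and $G$ together with the center braiding $c_{G(V),D(y)}$, which is a morphism in the center. For the hexagon/associativity coherence of this structure map I would invoke the standard fact that if $F_1,F_2$ are tensor functors into a braided category $\mathcal B$ whose images centralize one another, then $F_1\otimes F_2$ is monoidal. In fact the braiding alone gives a monoidal structure even without centralization, by the standard argument that the tensor product functor $\mathcal B\boxtimes\mathcal B\to\mathcal B$ is monoidal when $\mathcal B$ is braided. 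Concretely, $Z$ is the composite of $D\boxtimes G$ with $\otimes:\Dr{\TLf{q}{\KK}}\boxtimes\Dr{\TLf{q}{\KK}}\to\Dr{\TLf{q}{\KK}}$, which is monoidal via the braiding of the center. Since the center may not be semisimple a priori, I would phrase this on the Karoubian level, or simply check the coherence diagram directly; the check reduces to the hexagon axiom of the center braiding.

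Finally, $Z$ restricts to $D$ on $\TLf{q}{\KK}\boxtimes\TLf{q}{\KK}^\rev\boxtimes\id$ and to $G$ on $\id\boxtimes\id\boxtimes\Rep{\ZZ/2\ZZ}$, up to the unit isomorphisms. This is the sense in which $Z$ extends $D$ and $G$.
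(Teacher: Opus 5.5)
Your construction is the paper's own: the paper gives no argument beyond ``combine $D$ and $G$ via the universal property of the Deligne tensor product,'' and you carry this out as $Z=\otimes\circ(D\boxtimes G)$, with the monoidal structure supplied by the braiding of the center (your Cauchy-completion remark for the non-semisimple case is a reasonable supplement). One correction: the images of $D$ and $G$ do \emph{not} centralize each other, since the double braiding of $(\obj 0,\xi)$ with $M_{i,j}$ is $(-1)^{i+j}$ --- this is exactly why $Z$ is only monoidal and Theorem~\ref{mainThm} must twist by $\gamma$ --- so drop that opening framing and rely, as you ultimately do, on the braiding alone, which gives a tensor functor that is not braided.
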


	Our goal in this section is to prove that $Z$ is an equivalence of monoidal (but not braided!) categories when $\KK = \CC$ and $q$ is not a root of unity. 
	
	\begin{definition}\label{MWdef}
		In $\Dr{\TLf{q}{\CC}}$, we define $$M_{i,j} := Z(T_i\boxtimes T_j\boxtimes\id) = (T_i\otimes T_j,  (\sigma_{T_i,-}\otimes \Id_{T_j})\circ (\Id_{T_i}\otimes \sigma_{-,T_j}^{-1})),$$ $$W_{i,j} := Z(T_i\boxtimes T_j\boxtimes S) \cong M_{i,j}\otimes (\obj 0, \xi),$$
		where $S$ is the nontrivial simple objects of $\Rep{\ZZ/2\ZZ}$.
	\end{definition}
		
	\begin{remark}
		Considering other braidings of $\TLf{q}{\KK}$ does not give us any new half braidings on objects. Indeed, we showed that there are four braidings of $\TLf{q}{\KK}$ for $q\neq \pm 1$, one of which and its inverse give us the the objects $(X, \sigma_{X,-})$ and $(X, \sigma_{-,X}^{-1})$ in the center; replacing $\sigma$ with its inverse merely interchanges these two objects in the center, where as replacing $\sigma$ and $\sigma^{-1}$ with the other two braidings corresponds to the involutive autoequivalence on the center given by $-\otimes (\obj{0}, \xi)$. 
	\end{remark} 

We begin by a simple but very useful criterion for characterizing half-braidings in $\TLf{q}{\KK}$:
	
	\begin{lemma}\label{half-braid-criterion-TL}
		An isomorphism $\varphi_{\obj 1}: X\otimes \obj 1 \to \obj 1\otimes X$ in $\TLf{q}{\KK}$	uniquely determines a half-braiding for $X$ if and only if the following two diagrams commutes.
\[\begin{tikzcd}
	{X\otimes \obj 1\otimes \obj 1} & {\obj 1\otimes X\otimes \obj 1} & {\obj 1\otimes \obj 1\otimes X} \\
	{X\otimes \obj{0}} && {\obj{0}\otimes X}
	\arrow["{\varphi_{\obj 1}\otimes \Id_{\obj 1}}", from=1-1, to=1-2]
	\arrow["{\Id_X\otimes \CAP}"', from=1-1, to=2-1]
	\arrow["{ \Id_{\obj 1}\otimes \varphi_{\obj 1}}", from=1-2, to=1-3]
	\arrow["{\CAP\otimes \Id_X}", from=1-3, to=2-3]
	\arrow["{\varphi_{\obj{0}}}"', from=2-1, to=2-3]
\end{tikzcd}\quad
\begin{tikzcd}
	{X\otimes \obj 1\otimes \obj 1} & {\obj 1\otimes X\otimes \obj 1} & {\obj 1\otimes \obj 1\otimes X} \\
	{X\otimes \obj{0}} && {\obj{0}\otimes X}
	\arrow["{\varphi_{\obj 1}\otimes \Id_{\obj 1}}", from=1-1, to=1-2]
	\arrow["{ \Id_{\obj 1}\otimes \varphi_{\obj 1}}", from=1-2, to=1-3]
	\arrow["{\Id_X\otimes \CUP}", from=2-1, to=1-1]
	\arrow["{\varphi_{\obj{0}}}"', from=2-1, to=2-3]
	\arrow["{\CUP\otimes \Id_X}"', from=2-3, to=1-3]
\end{tikzcd}\]
	\end{lemma}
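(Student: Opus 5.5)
The plan is to build the half-braiding on all objects by induction on tensor powers, using the fact that $\TLf{q}{\KK}$ is the Cauchy completion of $\preTLf{q}{\KK}$. By \autoref{Kar-Dr}, it suffices to define $\varphi$ on the objects $\obj n$ and to check naturality against the generators $\CUP$ and $\CAP$. The extension to summands $Y_e$ and direct sums is then forced by the formula $\varphi_{Y_e} = (e\otimes \Id_X)\circ\varphi_Y$.

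For the ``only if'' direction, note that any half-braiding is natural. Naturality with respect to $\CAP:\obj 2\to\obj 0$ and $\CUP:\obj0\to\obj2$, combined with the hexagon identity $\varphi_{\obj 2} = (\Id_{\obj1}\otimes\varphi_{\obj1})\circ(\varphi_{\obj1}\otimes\Id_{\obj1})$ (associators suppressed, since everything is strict), gives exactly the two diagrams. Here $\varphi_{\obj 0}$ is forced by the unit axiom to be $r^{-1}\circ l$, essentially the identity. Uniqueness also follows from the hexagon: it forces
\[
\varphi_{\obj n} = (\Id_{\obj{n-1}}\otimes \varphi_{\obj 1})\circ\cdots\circ(\varphi_{\obj 1}\otimes \Id_{\obj{n-1}}),
\]
and the values on all other objects are then determined by naturality with respect to the inclusions and projections of summands.

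For the ``if'' direction, I will define $\varphi_{\obj n}$ by the displayed formula, i.e.\ by sliding $X$ past the $n$ strands one at a time. The hexagon axiom $\varphi_{\obj{m+n}} = (\Id_{\obj m}\otimes\varphi_{\obj n})\circ(\varphi_{\obj m}\otimes\Id_{\obj n})$ then holds by construction. Naturality needs to be checked only on morphisms of $\preTLf{q}{\KK}$. Every such morphism is a composite of morphisms of the form $\Id_{\obj a}\otimes h\otimes \Id_{\obj b}$ with $h\in\{\CUP,\CAP\}$, and naturality is closed under composition. For a morphism $\Id_{\obj a}\otimes h\otimes\Id_{\obj b}$, I will use the factorization of $\varphi_{\obj{a+2+b}}$ (resp.\ $\varphi_{\obj{a+b}}$) into the three blocks $a$, $h$, $b$. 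The outer blocks commute past $h$ by the interchange law, and the middle block is precisely one of the two given diagrams.

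Finally, I will extend $\varphi$ to $\preTLf{q}{\KK}^{\oplus}$ additively and then to idempotent images via \autoref{Kar-Dr}. Naturality and the hexagon are preserved by both operations.

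I expect the main obstacle to be the bookkeeping in the naturality check for $\Id_{\obj a}\otimes h\otimes\Id_{\obj b}$. One must verify that the inductive definition commutes with whiskering; this is a standard string-diagram slide, but it needs care to confirm that the normalization $\varphi_{\obj0}=r^{-1}\circ l$ makes the two diagrams equivalent to naturality for $\CUP$ and $\CAP$. A secondary point is that $X$ itself may be a summand rather than some $\obj m$. This causes no trouble, because the argument only ever manipulates the second tensor factor, and $X$ enters solely through $\varphi_{\obj1}$.
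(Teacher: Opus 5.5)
Your proposal is correct and follows essentially the same route as the paper. You define $\varphi_{\obj n}$ by sliding $X$ past one strand at a time, observe that the two diagrams are exactly naturality for the generators $\CUP$ and $\CAP$ (so naturality on all of $\preTLf{q}{\KK}$ follows by whiskering, composing and taking linear combinations), and extend to the Cauchy completion via \autoref{Kar-Dr}. The only point the paper makes explicit that you leave implicit is invertibility; this is immediate, since each $\varphi_{\obj n}$ is a composite of whiskered copies of the isomorphism $\varphi_{\obj 1}$ (the paper instead builds the inverse from $\varphi_{\obj 1}^{-1}$ in the same fashion).
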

	
	\begin{proof}
		One direction is obvious: if $\varphi$ is a half-braiding, then, by the hexagon axiom, $\varphi_{\obj1\otimes\obj1} = (\Id_{\obj1}\otimes \varphi_{\obj 1})\circ(\varphi_{\obj1}\otimes \Id_{\obj1})$, so the diagrams commute by naturality of $\varphi$.
		For the converse, we first need to construct the braiding $\varphi$ from a single morphism $\varphi_{\obj 1}$.
		By \autoref{Kar-Dr}, it suffices to build a half-braiding on $\preTLf{q}{\KK}$ and extend it to the Cauchy completion in the canonical way prescribed therein. 
		This is possible because $\obj 1$ generates $\preTLf{q}{\KK}$ under tensor product and taking direct sums.
		More precisely, we define $\varphi_{\obj n}$ inductively in the only way possible to guarantee that the hexagon axiom holds, namely 
		$$\sigma_{\obj n} := (\Id_{\obj{n-1}}\otimes \varphi_{\obj 1})\circ (\varphi_{\obj{n-1}}\otimes \Id_{\obj 1}) = (\Id_{\obj1}\otimes \varphi_{\obj{n-1}})\circ (\varphi_{\obj 1}\otimes \Id_{\obj{n-1}}).$$
		Then, one defines $\varphi$ on direct sums in the obvious way: $\varphi_{\obj n \oplus \obj m}:= \varphi_{\obj n}\oplus \varphi_{\obj m}$.
		
		Now, it remains to check that $\varphi$ is a natural isomorphism.
		But observe that the diagrams above are precisely the naturality diagrams for the generating morphisms $\CUP$ and $\CAP$, and naturality in all other morphisms in $\preTLf{q}{\KK}$ follows by tensoring and composing these two diagrams.
		To see that $\varphi$ is an isomorphism, we build an a natural transformation from $\varphi_{\obj 1}^{-1}$ in the same fashion, which is easily checked to yield an inverse to $\varphi$.
		\end{proof}
		
		The same criterion (with the same proof) also hold for the fusion quotients $\Ver(\KK)$.
		
	\begin{lemma}\label{half-braid-criterion-fusion}
		An isomorphism $\varphi_{L_1}: X\otimes L_1  \to L_1\otimes X$ in $\VerF{\KK}$	uniquely determines a half-braiding for $X$ if and only if the following two diagrams commutes.
\[\begin{tikzcd}
	{X\otimes L_1\otimes L_1} & {L_1\otimes X\otimes L_1} & {L_1\otimes L_1\otimes X} \\
	{X\otimes {\id}} && {{\id}\otimes X}
	\arrow["{\varphi_{L_1}\otimes \Id_{L_1}}", from=1-1, to=1-2]
	\arrow["{\Id_X\otimes \overline{\CAP}}"', from=1-1, to=2-1]
	\arrow["{ \Id_{L_1}\otimes \varphi_{L_1}}", from=1-2, to=1-3]
	\arrow["{\overline{\CAP}\otimes \Id_X}", from=1-3, to=2-3]
	\arrow["{\varphi_{{\id}}}"', from=2-1, to=2-3]
\end{tikzcd}
\begin{tikzcd}
	{X\otimes L_1\otimes L_1} & {L_1\otimes X\otimes L_1} & {L_1\otimes L_1\otimes X} \\
	{X\otimes {\id}} && {{\id}\otimes X}
	\arrow["{\varphi_{L_1}\otimes \Id_{L_1}}", from=1-1, to=1-2]
	\arrow["{ \Id_{L_1}\otimes \varphi_{L_1}}", from=1-2, to=1-3]
	\arrow["{\Id_X\otimes \overline{\CUP}}", from=2-1, to=1-1]
	\arrow["{\varphi_{{\id}}}"', from=2-1, to=2-3]
	\arrow["{L_1\otimes \Id_X}"', from=2-3, to=1-3]
\end{tikzcd}\]
	\end{lemma}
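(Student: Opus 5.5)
The plan is to transport the proof of \autoref{half-braid-criterion-TL} to the semisimple quotient. The key input is that $\VerF{\KK}$ is the quotient of $\TLf{q}{\KK}$ by the negligible ideal $\mathcal N$. It therefore receives a full, essentially surjective tensor functor $\pi:\TLf{q}{\KK}\to\VerF{\KK}$, which is bijective on objects and surjective on morphism spaces. Consequently the objects $L_1^{\otimes n}=\pi(\obj n)$ generate $\VerF{\KK}$ under direct sums and summands. Moreover, every morphism between tensor powers of $L_1$ is a linear combination of composites and tensor products of $\overline{\CUP}$, $\overline{\CAP}$ and identities.

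The ``only if'' direction is identical to before. If $\varphi$ is a half-braiding, then the hexagon axiom gives $\varphi_{L_1\otimes L_1}=(\Id_{L_1}\otimes\varphi_{L_1})\circ(\varphi_{L_1}\otimes\Id_{L_1})$. The two diagrams are then exactly the naturality squares of $\varphi$ for $\overline{\CAP}$ and $\overline{\CUP}$.

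For the converse, I will build $\varphi$ on the full monoidal subcategory $\mathcal F\subset\VerF{\KK}$ whose objects are the $L_1^{\otimes n}$. First set $\varphi_{\id}=r^{-1}\circ l$, and then define inductively
$$\varphi_{L_1^{\otimes n}}:=(\Id_{L_1^{\otimes (n-1)}}\otimes\varphi_{L_1})\circ(\varphi_{L_1^{\otimes(n-1)}}\otimes\Id_{L_1}).$$
This is the only choice compatible with the hexagon axiom, which gives the uniqueness claim. Associativity of the iterated composite shows that the hexagon holds for all pairs $L_1^{\otimes m},L_1^{\otimes n}$.

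Naturality with respect to an arbitrary morphism of $\mathcal F$ reduces to naturality for the generators. The reason is that naturality squares are stable under composition, and the hexagon shows they are stable under tensoring with identities. Since every morphism of $\mathcal F$ is a linear combination of such composites of $\overline{\CUP}$, $\overline{\CAP}$ and identities, and naturality is a linear condition, the two given diagrams suffice.

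This is the one point where the quotient setting differs from $\preTLf{q}{\KK}$, but it causes no trouble. Relations in the quotient are only additional linear relations, so naturality is checked on representatives and holds regardless of which representative is chosen. Invertibility follows by running the same construction with $\varphi_{L_1}^{-1}$ to obtain a natural inverse.

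Finally, $\VerF{\KK}$ is the Cauchy completion of $\mathcal F$, since every simple $L_n$ is a summand of $L_1^{\otimes n}$. So \autoref{Kar-Dr} extends $\varphi$ uniquely from $\mathcal F$ to all of $\VerF{\KK}$, via $\varphi_{Y_e}=(e\otimes\Id_X)\circ\varphi_Y$ on summands and componentwise on direct sums. Uniqueness of the extension follows from naturality with respect to the inclusion and projection maps of summands.

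I expect the only mildly delicate step to be the fullness statement: that every morphism in $\mathcal F$ lifts to a diagram in $\TLf{q}{\KK}$. This holds by construction of the quotient category, whose Hom spaces are quotients of the Hom spaces of $\TLf{q}{\KK}$.
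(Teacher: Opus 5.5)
Your proposal is correct and follows the paper's route. The paper states only that the proof of \autoref{half-braid-criterion-TL} carries over verbatim: define $\varphi$ inductively on tensor powers of the generator, reduce naturality to the two generator squares, obtain invertibility from $\varphi_{L_1}^{-1}$, and extend to the Cauchy completion via \autoref{Kar-Dr}. Your added observation that the quotient functor $\TLf{q}{\KK}\to\VerF{\KK}$ is full is precisely the point the paper leaves implicit: it guarantees that $\overline{\CUP}$ and $\overline{\CAP}$ still generate all morphisms between tensor powers of $L_1$, which is what makes the transfer valid.
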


	\subsection*{Filtration and Ultraproducts}
	The category $\TLf{q}{\KK}$ has a natural filtration due to the fact that it is generated by a single object $\obj{1}$. 
	Namely, for each $r, s\in\NN$, the filtered components $\TLf{q}{\KK}^{\leq r, s}$ is the full subcategory whose objects of the form $Z_1 \oplus Z_2 \oplus \dots \oplus Z_s$, where each $Z_i$ is a summand of some object $\obj{n}$ with $n\leq r$.
	When $q^2$ is of order $\varkappa$ in $\KK$, upon semisimplification, we get an induced filtration on the categories $\VerF{\KK}$.
	Said differently, the filtered components $\VerF{\KK}^{\leq r}$ are the full subcategories whose objects are of the form $Z_1 \oplus Z_2 \oplus \dots \oplus Z_s$, where each $Z_i$ is among $L_0, L_1, \dots, L_r$.
	These bifiltrations give rise to coarser filtrations $\cat^{\leq r} := \bigcup_{s\in\NN} \cat^{\leq r,s}$ where $\cat$ is either $\VerF{\KK}$ or $\TLf{q}{\CC}$. 
	
	Note that each of the filtered pieces $\Ver^{\leq r,s}$ or $\TLf{q}{\CC}^{\leq r,s}$ is not closed under taking direct sums or tensor products; they only have a partially defined additive and monoidal structures. With the coarser filtrations, the filtered components $\Ver^{\leq r}$ or $\TLf{q}{\CC}^{\leq r}$ are closed under direct sums, but still not under tensor products.
	We say that a functor $F:\cat^{\leq r}\to\mathcal D^{\leq r}$ \emph{intertwines the partially defined monoidal structures of} $\cat^{\leq r}$ and $\mathcal D^{\leq r}$ if, whenever $X\otimes Y$ exists in $\cat^{\leq r}$, we have that $F(X)\otimes F(Y)$ exists in $\mathcal D^{\leq r}$ and we have some \emph{unspecified} isomorphism $F(X\otimes Y)\cong F(X)\otimes F(Y)$ in $\mathcal D^{\leq r}$. 
	
	Given a filtered tensor category $\cat$ with filtered components $\cat^{\leq r}$, we get an induced filtration on its Drinfeld center: $\Dr{\cat}^{\leq r}$ is the full subcategory of $\Dr{\cat}$ consisting of those objects $(X,\varphi)$ where the underlying object $X$ lies in the corresponding filtered component $\cat^{\leq r}$. 
	Thus, we get a filtration on $\Dr{\TLf{q}{\KK}}$ and on $\Dr{\VerF{\KK}}$ with filtered components $\Dr{\TLf{q}{\KK}}^{\leq r,s}$ and $\Dr{\VerF{\KK}}^{\leq r,s}$, respectively.
		
	Having a filtration where each filtered component has finitely many objects is very useful when working with ultraproducts. 
	Indeed, the category $\uprod \Ver$ is way too large and we only care about a small part of it, namely $\mathcal A_\infty (q) \simeq \TLf{q}{\CC}$.
	But taking ultraproducts of only a given filtered component gives us a much more manageable category, eliminating the need to further restrict to smaller categories, as the next proposition shows.
	
	\begin{proposition}\label{filteredUltraprodTL}
	The functor $E$ of \autoref{ultraTL} restricted to $\TLf{q}{\CC}^{\leq r,s}$ is an equivalence
		$$E^{\leq r,s}:\TLf{q}{\CC}^{\leq r, s}\simeq \mathcal A_{\infty}^{\leq r,s} =\uprod \Ver^{\leq r, s}  $$ of $\CC$-linear categories intertwining their partially defined monoidal structures.
	\end{proposition}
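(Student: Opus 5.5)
The plan is to compare both sides one filtered piece at a time, using that each piece has only finitely many isomorphism classes of objects. Each object of $\TLf{q}{\CC}^{\leq r,s}$ is, up to isomorphism, a direct sum $T_{n_1}\oplus\dots\oplus T_{n_t}$ with $t\leq s$ and each $n_i\leq r$. This follows from \autoref{thm:TLindecomposables}, since the summands of $\obj n$ with $n\leq r$ are exactly the $T_m$ with $m\leq r$. The same description holds for $\Ver^{\leq r,s}$ once $\varkappa-2\geq r$, which is true for $\mathscr U$-almost all $k$, with the $L_m$ in place of the $T_m$.

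First I will identify the ultraproduct $\uprod\Ver^{\leq r,s}$, viewed as a full subcategory of $\uprod\Ver$. An object is a sequence $(Y_k)$ with $Y_k\in\Ver^{\leq r,s}$. There are only finitely many possible isomorphism types $L_{n_1}\oplus\dots\oplus L_{n_t}$, so the finitely additive ultrafilter picks out one type on a set in $\mathscr U$. Hence every object of $\uprod\Ver^{\leq r,s}$ is isomorphic to $\ulim{k}(L_{n_1}\oplus\dots\oplus L_{n_t})\cong X_{n_1}\oplus\dots\oplus X_{n_t}$. Here $X_n=\ulim{k}L_n$ as in the proof of \autoref{ultraTL}, and ultraproducts commute with finite direct sums by \Los. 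Each $X_n$ is a summand of $X_1^{\otimes n}$, so these objects lie in $\mathcal A_\infty(q)$ and in fact in $\mathcal A_\infty^{\leq r,s}$. Conversely, every object of $\mathcal A_\infty^{\leq r,s}$ has this form, since $\mathcal A_\infty(q)$ is semisimple with simples $X_n$ by \autoref{ultraTL}. This gives the equality $\mathcal A_\infty^{\leq r,s}=\uprod\Ver^{\leq r,s}$ up to equivalence.

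Next I will show that $E$ maps $\TLf{q}{\CC}^{\leq r,s}$ into $\mathcal A_\infty^{\leq r,s}$ and is an equivalence there. The map on objects is $E(T_n)=X_n$, and full faithfulness is inherited from \autoref{ultraTL}. Essential surjectivity follows from the classification just obtained. Linearity over $\CC$ is part of $E$ being a $\CC$-linear tensor functor. The identification of $\CC$ with $\uprod\KK_k$ is fixed, and $\Hom$ spaces in the ultraproduct are the ultraproducts of $\Hom$ spaces. So $\Hom(X_m,X_n)=\uprod\Hom(L_m,L_n)\cong\CC\delta_{mn}$, which is consistent with the previous step.

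Finally I will check the partially defined monoidal structure, which is the part needing most care. Suppose $T_{n_1}\oplus\dots$ and $T_{m_1}\oplus\dots$ have a tensor product that lies in $\TLf{q}{\CC}^{\leq r,s}$. By the Clebsch--Gordan rule in \autoref{thm:TLindecomposables}(3) this requires each $n_i+m_j\leq r$, together with a bound on the number of summands. For $\varkappa-2\geq r$, \autoref{fusionRules} gives the same untruncated decomposition in $\Ver$, so the product lies in $\Ver^{\leq r,s}$ with the same summands, for $\mathscr U$-almost all $k$. Since $E$ is monoidal, it already provides isomorphisms $E(X\otimes Y)\cong E(X)\otimes E(Y)$, and only their existence is needed. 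The main obstacle is making sure the truncation in $\Ver$ never occurs inside the filtered range. This is exactly why I choose $k$ with $\varkappa_k-2\geq r$ and use that this set of $k$ lies in $\mathscr U$, because $\varkappa_k\to\infty$ and $\mathscr U$ is non-principal.
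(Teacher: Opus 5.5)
Your proposal is correct and follows essentially the same route as the paper. In both, the key step is that $\Ver^{\leq r,s}$ has only finitely many object types, so the ultrafilter selects one on a $\mathscr U$-large set, which gives $\uprod\Ver^{\leq r,s}\subseteq\mathcal A_\infty^{\leq r,s}$; the reverse inclusion comes from writing $X_{j_1}\oplus\dots\oplus X_{j_t}$ as $\ulim{k}(L_{j_1}\oplus\dots\oplus L_{j_t})$, and you additionally spell out full faithfulness and the partial tensor compatibility (via $\varkappa_k-2\geq r$ for $\mathscr U$-almost all $k$), which the paper leaves implicit.
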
 
	
	\begin{proof}
		Suppose $Y = X_{j_1} \oplus \dots \oplus X_{j_t}$ with $t\leq s$ and $0\leq j_i\leq r$, where $X_m = E(T_m)$ in $\mathcal A_{\infty}^{\leq r,s}$.
		Then, $Y = \ulim{k} \left(T_{j_1} \oplus \dots \oplus T_{j_t}\right)$. Thus, $\uprod \Ver^{\leq r,s} \supseteq \mathcal A_\infty^{\leq r,s}$.
		
		Conversely, let $\{Y_k\}_{k\in\NN}$ be a sequence of objects $Y_k$ in $\Ver^{\leq r, s}$. 
		We wish to show that $\ulim{k} Y_k$ is in $\mathcal A_\infty^{\leq r, s}$.
		Observe that each $Y_k$ has only finitely many options: $s$ options for the number of summands, and each summand is chosen among the $r+1$ options $L_0, \dots, L_r$.
		Let $Q_1, Q_2, \dots, Q_n$ be an exhaustive list of all the options for each $Y_k$ (they are the same options for $k$ large enough).
		Let $S_i = \{k\in\NN \mid Y_k = Q_i\}$ for $i = 1, \dots, n$. 
		By \autoref{ultraPartitionLemma}, $S_i\in\mathscr U$ for exactly one $i$, say $i^*$.
		It follows that $\ulim{k} Y_k = \ulim{k} Q_{i^*} =: Q$ which is in $\mathcal A_\infty^{\leq r,s}$; indeed, if $Q_{i^*}  = L_{j_1}\oplus \dots \oplus L_{j_t}$, then $Q = X_{j_1} \oplus \dots \oplus X{j_t}$ and each $X_m$ is a summand of $X_1^{\otimes m}$. 
		Thus, $\uprod \Ver^{\leq r,s} \subseteq \mathcal A_\infty^{\leq r,s}$.
	\end{proof}
	
	\subsection*{The Center as a Limit}
	Taking Drinfeld center unfortunately does not commute with taking ultraproducts, but the filtration saves us again.

\begin{proposition}\label{filteredUltraCenter}
	For $r,s\geq 2$, we have an equivalences 
	\begin{align*}
		F^{\leq r,s}: \uprod \Dr{\Ver}^{\leq r,s} &\overset{\sim}{\to} \Dr{\TLf{q}{\CC}}^{\leq r,s}\\
		\ulimit{k}(Y_k, \varphi_k)&\mapsto\left(E^{-1}\left(\ulimit{k} Y_k\right),  E^{-1}\left(\ulimit{k}\varphi_k\right)\right)
	\end{align*} 
	of $\CC$-linear categories intertwining their partially defined monoidal structures. \end{proposition}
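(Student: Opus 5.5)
We prove the statement in three steps: show that $F^{\leq r,s}$ is well defined, show that it is fully faithful, and show that it is essentially surjective. The partially defined monoidal structures are handled at the end.

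\textbf{Well-definedness.} Let $(Y_k,\varphi_k)$ be objects of $\Dr{\Ver}^{\leq r,s}$. By \autoref{filteredUltraprodTL}, $\ulim{k} Y_k$ lies in $\mathcal A_\infty^{\leq r,s}$, so $Y:=E^{-1}(\ulim{k}Y_k)$ makes sense. Since $\varkappa_k\to\infty$, for $\mathscr U$-almost all $k$ we have $r+1\leq \varkappa_k-2$, so $L_1\otimes Y_k$ lies in a fixed finite filtered piece. Hence $\ulim{k}\varphi_{k,L_1}$ is a morphism $\ulim{k}(Y_k\otimes L_1)\to\ulim{k}(L_1\otimes Y_k)$ between objects of $\mathcal A_\infty^{\leq r+1,\,2s}$. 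Transporting it by $E^{-1}$ (monoidal, sending $\obj 1$ to $X_1$) gives an isomorphism $\varphi_{\obj1}:Y\otimes\obj1\to\obj1\otimes Y$; its inverse is the ultralimit of the inverses. The two commuting squares of \autoref{half-braid-criterion-fusion} hold for each $k$. Each square is a single equation between morphisms in a bounded filtered piece, so by \Los{} the ultralimit square commutes. Applying $E^{-1}$ and using $E(\CUP)=\ulim{k}\overline{\CUP}$ and $E(\CAP)=\ulim{k}\overline{\CAP}$, \autoref{half-braid-criterion-TL} then produces a unique half-braiding $\varphi$ on $Y$. This $\varphi$ is what we mean by $E^{-1}(\ulim{k}\varphi_k)$: it is determined by its $\obj1$-component, just as each $\varphi_k$ is determined by its $L_1$-component. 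Morphisms are sent to $E^{-1}$ of their ultralimits. Compatibility with half-braidings only needs to be checked against $L_1$, since again a half-braiding is determined by this component. That check is one equation, so it transfers by \Los.

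\textbf{Full faithfulness.} Morphisms $(Y,\varphi)\to(Y',\varphi')$ in either center form the subspace of $\Hom(Y,Y')$ cut out by the linear condition of commuting with the $\obj1$- (respectively $L_1$-) components of the half-braidings. By \autoref{filteredUltraprodTL}, $\Hom$ spaces in the bounded piece satisfy $\uprod\Hom(Y_k,Y'_k)\cong\Hom(Y,Y')$. These spaces have uniformly bounded dimension (at most $s^2$ by semisimplicity), so the kernel of the linear map $f\mapsto \varphi'_{\obj1}(f\otimes\Id)-(\Id\otimes f)\varphi_{\obj1}$ is the ultraproduct of the kernels. Indeed, the rank of a linear map between spaces of bounded dimension is a first-order property, so \Los{} applies. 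This gives full faithfulness.

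\textbf{Essential surjectivity.} We expect this to be the main step. Start from $(Y,\psi)\in\Dr{\TLf{q}{\CC}}^{\leq r,s}$. By \autoref{filteredUltraprodTL}, $E(Y)\cong\ulim{k}Y_k$ with $Y_k$ eventually equal to a fixed sum $Q$ of $L_j$'s. The component $E(\psi_{\obj1})$ is an element of $\uprod\Hom(Y_k\otimes L_1,L_1\otimes Y_k)$. In bases these are spaces of fixed finite dimension, so we can write $E(\psi_{\obj1})=\ulim{k}\psi_k$ for some $\psi_k$. By \Los{} applied in reverse, the set of $k$ for which $\psi_k$ is invertible and satisfies the two squares of \autoref{half-braid-criterion-fusion} lies in $\mathscr U$. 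The point that needs care is that each condition involves only finitely many equations between matrices of bounded size, since all objects involved lie in $\Ver^{\leq r+2,4s}$. For those $k$, \autoref{half-braid-criterion-fusion} produces half-braidings $\varphi_k$; on the remaining $k$, choose anything, say $\sigma_{Y_k,-}$. Then $F^{\leq r,s}(\ulim{k}(Y_k,\varphi_k))\cong(Y,\psi)$ by the uniqueness in \autoref{half-braid-criterion-TL}.

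\textbf{Partially defined monoidal structures.} The tensor product in the center is given by the explicit formula of \autoref{DrCenter}, applied here with only the $\obj1$/$L_1$-components. It is built from finitely many compositions and tensor products, which commute with ultralimits. Together with \autoref{filteredUltraprodTL}, this shows $F^{\leq r,s}$ intertwines the partially defined monoidal structures. The hypothesis $r,s\geq2$ only guarantees that the objects $\obj1$, $\obj2$ and $\obj0$ used in the criterion lie in the filtered pieces under consideration.
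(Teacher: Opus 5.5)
Your proposal is correct and follows essentially the paper's route. You transfer everything by \Los{} on bounded filtered pieces, and in the inverse direction you use \autoref{half-braid-criterion-fusion}: the finitely many conditions on the $L_1$-components hold $\mathscr U$-almost everywhere, and this already forces genuine half-braidings on all of $\Ver$ for $\mathscr U$-almost every $k$, which is exactly the paper's key point.

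The differences are organizational. You verify full faithfulness and essential surjectivity rather than writing down the inverse assignment, and you also reduce the forward direction to $\obj 1$-components via \autoref{half-braid-criterion-TL}. Your bounded-dimension justification for kernels is unnecessary, since $\uprod \ker T_k=\ker\left(\uprod T_k\right)$ holds by \Los{} for arbitrary linear maps.
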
 

\begin{proof}
	
	Suppose that $(Y_k, \varphi_k)_{k\in\NN}$ is a sequence whose terms are objects in the factors $\Dr{\Ver}^{\leq r,s}$ of the ultraproduct.
	Taking the ultraproduct $Y:=\ulim{k} Y_k$ of the underlying objects $Y_k$ as an object of $\uprod \Ver^{\leq r,s}$ gives an object of $\TLf{q}{\CC}^{\leq r,s}$ by \autoref{filteredUltraprodTL}.
	Similarly, $\varphi:= \ulim{k} (\varphi_k)_X$ gives us a family of morphisms $\varphi_X:X\otimes Y \to Y\otimes X$ in $\TLf{q}{\CC}^{\leq r,s}$, which glue to give a natural transformation since the naturality diagrams commute by \Los. 
	Since each of the $\varphi_k$ satisfies the hexagon axiom of a half-braiding in the whole center $\Dr{\Ver}$, {\Los} for the ultraproduct $\uprod \Ver \supset \mathcal A_\infty (q) \simeq \TLf{q}{\CC}$ implies that $\varphi$ is again a half-braiding on all of $\TLf{q}{\CC}$.
	Finally, again by \Los, we get a well-defined functor $[(Y_k,\varphi_k)_{k\in\NN}] \mapsto (Y,\varphi)$, which is intertwines the partially defined additive and monoidal structures of the two categories. 
	
	Conversely, suppose that $(Y,\varphi)$ is some object in $\Dr{\TLf{q}{\CC}}^{\leq r,s}$.
	By \autoref{filteredUltraprodTL}, we get a sequence (or rather an equivalence class of sequences) $(Y_k, \varphi_k)_{k\in\NN}$ of elements $Y_k$ and half-braidings $\varphi_k$ for them in $\Ver^{\leq r,s}$.
	\emph{A priori} those $\varphi_k$ are only guaranteed by {\Los} to be natural half-braidings on the filtered components, but may not braid $Y_k$ globally over all of $\Ver$ when $\varkappa \gg r$.
	However, \autoref{half-braid-criterion-fusion} assures us that this is not the case: as soon as the naturality and half-braiding condition is satisfied on $\Ver^{\leq 2,2}$, it will automatically hold for all of $\Ver$. 
	Thus, we get an assignment $(Y,\varphi)\mapsto [(Y_k, \varphi_k)_{k\in\NN}]$, which is functorial and compatible with the additive and monoidal structures by \Los.
\end{proof}

Composing the ultraproduct of the restrictions of the canonical equivalences $D_\varkappa:\Ver\boxtimes \Ver^{\rev} \overset{\sim}{\to} \Dr{\Ver}$ of \autoref{factorizableFusions} with $F^{\leq r,s}$, we get an equivalence 
\[\begin{tikzcd}
	{G^{\leq r,s}:} & {\uprod (\Ver\boxtimes \Ver^{\rev})^{\leq r,s}} && {\uprod \Dr{\Ver}^{\leq r,s}} && {\Dr{\TLf{q}{\CC}}^{\leq r,s}}
	\arrow["{\uprod D_\varkappa^{\leq r,s}}", from=1-2, to=1-4]
	\arrow["{F^{\leq r,s}}", from=1-4, to=1-6]
\end{tikzcd}\]

Since $G^{\leq r,s}$ just extends $G^{\leq r',s'}$ whenever $r'<r$ and $s'<s$, taking their direct limit, we get an equivalence of $\CC$-linear categories $$G = \lim\limits_{\longrightarrow} G^{\leq r,s}: \bigcup_{r,s\in\NN} \uprod (\Ver\boxtimes \Ver^{\rev})^{\leq r,s} \overset{\sim}{\to} \bigcup_{r,s\in\NN}\Dr{\TLf{q}{\CC}}^{\leq r,s} = \Dr{\TLf{q}{\CC}},$$
intertwining their monoidal structures (up to some unspecified isomorphisms). 

\begin{remark}\label{filtrationOnTensorProd}
	The filtration on $\Dr{\Ver}$ is somewhat obscured upon identifying it with $\Ver\boxtimes\Ver^\rev$ via the equivalence $D_\varkappa$ of \autoref{factorizableFusions}.
	By definition, the filtered component $\Dr{\Ver}^{\leq r,s}$ consists of the full subcategory of the center whose underlying objects lie in $\Ver^{\leq r,s}$.
	Therefore, the objects $(A\boxtimes B)$ in the filtered components $(\Ver\boxtimes\Ver^{\rev})^{\leq r,s}$ are exactly those such that $A\otimes B$ lies in the filtered component $\Ver^{\leq r,s}$. 
\end{remark}	

\begin{lemma}
	When $2s \geq \varkappa \geq r+2$, the simple objects of $\Ver\boxtimes \Ver^\rev$ in the filtered piece $(\Ver\boxtimes \Ver^\rev)^{\leq r,s}$ are precisely those of the form $L_i\boxtimes L_j$ or $L_{\varkappa-2-i}\boxtimes L_{\varkappa -2 -j}$ where $i+j\leq r$.
\end{lemma}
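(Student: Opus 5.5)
The plan is to use \autoref{filtrationOnTensorProd} to turn the statement into a direct computation with the truncated Clebsch--Gordan rule of \autoref{fusionRules}. Since $\Ver\boxtimes\Ver^\rev$ is semisimple, its simple objects are exactly $L_i\boxtimes L_j$ with $0\leq i,j\leq \varkappa-2$. By \autoref{filtrationOnTensorProd}, such an object lies in $(\Ver\boxtimes\Ver^\rev)^{\leq r,s}$ if and only if $L_i\otimes L_j\in \Ver^{\leq r,s}$. This in turn means two things: every simple summand of $L_i\otimes L_j$ has index at most $r$, and the number of simple summands, counted with multiplicity, is at most $s$.

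First I will read off the summands from \autoref{fusionRules}. In both cases of the fusion rule, $L_i\otimes L_j$ is multiplicity free, with summands $L_{|i-j|}, L_{|i-j|+2},\dots,L_{M}$, where
\[
M=\min\bigl(i+j,\;2\varkappa-4-(i+j)\bigr).
\]
Indeed, the top index is $i+j$ exactly when $i+j\leq\varkappa-2$, and this is precisely when $i+j\leq 2\varkappa-4-(i+j)$. The number of summands is
\[
\tfrac{M-|i-j|}{2}+1=\min\bigl(i,\,j,\,\varkappa-2-i,\,\varkappa-2-j\bigr)+1 .
\]

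Second, I will show that the count condition is automatic under the hypothesis $2s\geq\varkappa$. The quantity $\min(i,\varkappa-2-i)$ is at most $(\varkappa-2)/2$, so the number of summands is at most $\varkappa/2\leq s$. Hence membership in the filtered piece depends only on the index condition $M\leq r$.

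Third, I will unpack $M\leq r$ by the two cases in the minimum. If $i+j\leq r$, then $M\leq r$, and this gives the objects $L_i\boxtimes L_j$ with $i+j\leq r$. Otherwise we need $2\varkappa-4-(i+j)\leq r$. Setting $i'=\varkappa-2-i$ and $j'=\varkappa-2-j$, this condition reads $i'+j'\leq r$, and $L_i\boxtimes L_j=L_{\varkappa-2-i'}\boxtimes L_{\varkappa-2-j'}$. The converse direction uses the same equivalences read backwards, so these two families are exactly the simples in the filtered piece.

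The remaining point, and the only real subtlety, is checking that the labels are legitimate and that the case split is consistent. The hypothesis $\varkappa\geq r+2$ guarantees that $i+j\leq r$ forces $i,j\leq \varkappa-2$, so $L_i$ and $L_j$ are genuine simples and $\varkappa-2-i$, $\varkappa-2-j\geq 0$. It also guarantees that $i+j\leq r\leq\varkappa-2$ puts us in the untruncated case of the fusion rule, so that $M=i+j$ there. I expect no further obstacles beyond keeping this bookkeeping straight.
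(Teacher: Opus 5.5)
Your proposal is correct and follows the same route as the paper. You reduce via \autoref{filtrationOnTensorProd} to the question of whether $L_i\otimes L_j$ lies in $\Ver^{\leq r,s}$, read off the summands from the truncated Clebsch--Gordan rule, observe that the summand-count condition is automatic, and rewrite the top-index condition in terms of $i'=\varkappa-2-i$ and $j'=\varkappa-2-j$. Your explicit count $\min(i,j,\varkappa-2-i,\varkappa-2-j)+1\leq\varkappa/2\leq s$ usefully fills in the paper's one-line claim that there are never too many summands, and you use the hypothesis in the correct direction, where the paper's text misprints it as $2s\leq\varkappa$.
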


\begin{proof}
	The simple objects in $\Ver\boxtimes\Ver^\rev$ are those of the form $L_i\boxtimes L_j$.
	By \autoref{filtrationOnTensorProd}, $L_i\boxtimes L_j$ will be in the filtered component $(\Ver\boxtimes \Ver^\rev)^{\leq r,s}$ if and only if the decomposition of $L_i\otimes L_j$ from \autoref{fusionRules} lies in the filtered component $\Ver^{\leq r, s}$.
	This happens if and only if either $i+j\leq \varkappa-2$ and $i+j\leq r$ or else $i+j>\varkappa -2$ and $2\varkappa-4-(i+j)\leq r$. 
	Since $2s\leq \varkappa$, we never get too many summands in the decomposition than our filtered component allows.
	Moreover, when $\varkappa \geq r+2$, our two conditions reduce to the following: either $i+j\leq r$ or $(\varkappa-2-i)+(\varkappa-2-j)\leq r \iff i'+j' \leq r$, where $i' = \varkappa-2-i$ and $j' = \varkappa-2-j$, and the claim follows.
	\end{proof}

	\subsection*{Semisimplicity}
	
	\begin{lemma}\label{xiLemma}
		In $\Dr{\Ver}$, there exists an isomorphism 
		$$(L_0, \overline\xi) \cong (L_{\varkappa-2}\otimes L_{\varkappa-2}, (\overline\sigma_{L_{\varkappa-2},-}\otimes \Id_{L_{\varkappa-2}})\circ (\Id_{L_{\varkappa-2}}\otimes \overline\sigma_{-,L_{\varkappa-2}}^{-1}) ),$$
		where $\xi$ is the non-trivial half-braiding on $\obj 0$ via the grading of $\TLf{q}{\KK}$.
	\end{lemma}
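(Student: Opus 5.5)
The plan is to first produce an isomorphism of underlying objects and then check that it intertwines the two half-braidings. By \autoref{fusionRules}, with $m=n=\varkappa-2$ we have $m+n>\varkappa-2$, so $L_{\varkappa-2}\otimes L_{\varkappa-2}\cong L_0\oplus\dots\oplus L_{(2\varkappa-4)-(2\varkappa-4)}=L_0$. Hence $L_{\varkappa-2}$ is invertible, and the underlying objects agree. Write $M:=D_\varkappa(L_{\varkappa-2}\boxtimes L_{\varkappa-2})$ for the object on the right-hand side. Choose any isomorphism $f:L_0\to L_{\varkappa-2}\otimes L_{\varkappa-2}$ in $\VerF{\KK}$. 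Because $L_0$ is simple, the transported half-braiding $f^{-1}\circ\varphi^M\circ f$ is a half-braiding on $\id$, and so it is an element of $\Aut_\otimes(\Id_{\VerF{\KK}})$.

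By \autoref{FusionGradingVsHalfBraidings}, this group is the dual of the (abelian) universal grading group. By \autoref{fusionSubCats}, the adjoint subcategory is $\langle L_2\rangle$, so the universal grading group is $\ZZ/2\ZZ$. Therefore the half-braidings on $\id$ are exactly the trivial one and $\overline\xi$. It follows that $M\cong(L_0,\mathrm{triv})$ or $M\cong(L_0,\overline\xi)$. The automorphism is determined by the scalar it induces on $L_1$, that is, by whether $\varphi^M_{L_1}$ equals $+1$ or $-1$ times the identity of $L_1$ after transport along $f$.

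To compute this scalar, I use \autoref{half-braid-criterion-fusion}, which says the half-braiding is determined by its component at $L_1$. The component there is
\[
(\overline\sigma_{L_{\varkappa-2},L_1}\otimes\Id)\circ(\Id\otimes\overline\sigma^{-1}_{L_1,L_{\varkappa-2}}).
\]
I would rather not compute this crossing directly, so I will compare with the trivial half-braiding using the double braiding. Conjugating by $\overline\sigma_{L_{\varkappa-2},L_1}\otimes\Id$ reduces the scalar to the monodromy $\overline\sigma_{L_1,L_{\varkappa-2}}\circ\overline\sigma_{L_{\varkappa-2},L_1}$ acting on the simple object $L_{\varkappa-2}\otimes L_1\cong L_{\varkappa-3}$. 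By the balancing relation used in the proof of \autoref{VerFmodularity}, this monodromy equals the scalar
\[
\frac{\theta_{\varkappa-3}}{\theta_{\varkappa-2}\theta_1}.
\]
Exactly the computation in that proof, with $n=1$, gives $e_1q^{e_2\varkappa}=-e_1$. One still has to pin down the sign $e_1$. I would do this via $\theta_n=(-a)^{n(n+2)}$ with $a^2=\pm q^{\pm1}$: the powers of $a$ combine to $a^{-2\varkappa}=(a^2)^{-\varkappa}=(\pm1)^{\varkappa}q^{\mp\varkappa}$. Taken together with the sign conventions, this scalar should come out to $-1$.

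The main obstacle is the sign bookkeeping. I need to check carefully, across the four choices of $a$, that the monodromy scalar is $-1$ (rather than $+1$). I also need to check that conjugating back from the monodromy to $\varphi^M_{L_1}$ introduces no extra sign. For the latter, I would argue that $L_{\varkappa-2}\otimes-$ is an equivalence, so the transported component is a scalar multiple of the identity on $L_1$, and this scalar equals the monodromy scalar. If the sign turns out to depend on the choice of $a$, I would fall back on a parity argument instead. Since $\varkappa-2$ and the sign pattern from the proof of \autoref{VerFmodularity} give $(-1)^n$ on $L_n$, the resulting automorphism acts by $(-1)^n$ on $L_n$, which is precisely $\overline\xi$ and not the trivial half-braiding. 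With the scalar equal to $-1$ on $L_1$, the isomorphism $f$ is a morphism $(L_0,\overline\xi)\to M$ in $\Dr{\Ver}$, which proves the lemma.
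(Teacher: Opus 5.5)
Your reduction to a monodromy is sound once made precise. Put $P=L_{\varkappa-2}$ and $m_Y=\overline\sigma_{Y,P}\circ\overline\sigma_{P,Y}$. Then $\varphi^M_Y=\overline\sigma_{P\otimes P,Y}\circ(\Id_P\otimes m_Y^{-1})$, and by naturality of $\overline\sigma$, transporting $\overline\sigma_{P\otimes P,-}$ along $L_0\cong P\otimes P$ gives the trivial half-braiding. So the transported automorphism acts on $L_n$ by $c_n^{-1}$, where $c_n$ is the scalar by which $m_{L_n}$ acts on the simple object $P\otimes L_n\cong L_{\varkappa-2-n}$. (It is the inverse of the monodromy, not the monodromy itself; this is harmless since $c_n=\pm1$.)

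The gap is at the one step that carries the content of the lemma: showing this automorphism is $\overline\xi$ and not the trivial one, i.e.\ that $c_1=-1$. You leave this as ``should come out to $-1$.'' The one concrete input you give is wrong: you take $a^2=\pm q^{\pm1}$. Under that assumption, $(a^2)^{-\varkappa}=(\pm1)^{\varkappa}q^{\mp\varkappa}$ would equal $+1$ when $a^2=-q^{\pm1}$ and $\varkappa$ is odd, which would contradict the lemma. So the sign is not bookkeeping to be filled in later; it is the proof. Your fallback does not help either. The computation in \autoref{VerFmodularity} yields $e_1(-1)^n$, and you give no reason why $e_1=1$. Moreover, if the sign really depended on $a$, no parity argument could rescue the statement.

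You can close the gap in two ways.
\begin{itemize}
\item \emph{Computationally.} The naturality computation that classifies braidings gives $ab=1$ and $a^2+a^{-2}=[2]_q=q+q^{-1}$, so $a^2=q^{\pm1}$ exactly. Hence $c_n=(-a)^{-2n\varkappa}=(a^2)^{-n\varkappa}=q^{\mp n\varkappa}=(-1)^n$ for all four braidings, since $q$ is a primitive $2\varkappa$-th root of unity. In particular $e_1=1$, which is in any case forced by $c_0=1$.
\item \emph{Conceptually, with no signs at all.} Since $\Aut_\otimes(\Id_{\Ver})$ has only two elements, it suffices that the transported automorphism is nontrivial. If every $c_n$ were $1$, then $L_{\varkappa-2}\not\cong\id$ would centralize all simples and lie in $\Mug{\Ver}$, contradicting \autoref{VerFmodularity}.
\end{itemize}
The paper's proof is this second argument in packaged form. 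By \autoref{factorizableFusions}, $D_\varkappa$ is an equivalence, and $L_{\varkappa-2}\boxtimes L_{\varkappa-2}\not\cong L_0\boxtimes L_0$. So $D_\varkappa(L_{\varkappa-2}\boxtimes L_{\varkappa-2})$ has underlying object $\id$ but is not the unit, and therefore must be $(\id,\overline\xi)$.
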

	
	\begin{proof}
		First, note that the universal grading group $G$ of $\Ver$ is $\ZZ/2\ZZ$; indeed, it must be cyclic since $\Ver$ is $\otimes$-generated by $L_1$ and every element $g\in G$ must be of order $2$ since $L_n\otimes L_n$ contains $L_0\cong \id$ as a summand. 
		Moreover, we have a faithful $\ZZ/2\ZZ$--grading given by taking $L_n$ to be in degree $1$ for odd $n$ and $0$ for even $n$,
		Thus, by \autoref{FusionGradingVsHalfBraidings}, there are exactly two non-isomorphic objects in $\Dr{\Ver}$ with $\id$ as their underlying object in $\Ver$, namely $(\id, r^{-1}\circ l)$ (which is the monoidal unit of $\Dr{\Ver}$) and $(\id, \overline\xi)$.
		
		Now, the object 
		$$(L_{\varkappa-2}\otimes L_{\varkappa-2}, (\overline\sigma_{L_{\varkappa-2},-}\otimes \Id_{L_{\varkappa-2}})\circ (\Id_{L_{\varkappa-2}}\otimes \overline\sigma_{-,L_{\varkappa-2}}^{-1}) ) \cong D_\varkappa (L_{\varkappa-2}\boxtimes L_{\varkappa -2} )$$
		does have $L_{\varkappa-2}\otimes L_{\varkappa-2} \cong L_0 \cong  \id$ as its underlying object, and it cannot be the monoidal unit since $D_{\varkappa}$ is an equivalence by \autoref{factorizableFusions}. Hence, it must be isomorphic to $(\id, \overline\xi)$ and the lemma follows.
	\end{proof}
	
	\begin{lemma}
				In $\Dr{\TLf{q}{\CC}}$, there exists isomorphisms
				$$M_{i,j}\cong G\left(\ulimit{k} (L_i\boxtimes L_j)\right)\quad \text{and}\quad W_{i,j} \cong G\left(\ulimit{k}(L_{\varkappa-2-i}\boxtimes L_{\varkappa-2-j})\right).$$
	\end{lemma}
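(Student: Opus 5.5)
For the first isomorphism I will unwind the definitions. By \autoref{filteredUltraCenter}, $G$ sends $\ulim{k}(L_i\boxtimes L_j)$ to $F^{\leq r,s}$ of the ultraproduct of the objects $D_\varkappa(L_i\boxtimes L_j)$, for $r\geq i+j$ and $s$ large. Each such object is
\[
\bigl(L_i\otimes L_j,\ (\overline\sigma_{L_i,-}\otimes\Id_{L_j})\circ(\Id_{L_i}\otimes\overline\sigma^{-1}_{-,L_j})\bigr).
\]
By \autoref{ultraTL} and \autoref{filteredUltraprodTL}, $E$ sends $T_i\otimes T_j$ to $\ulim{k}(L_i\otimes L_j)$. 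It remains to see that $E$ carries the braiding of $\TLf{q}{\CC}$ to the ultralimit of the braidings $\overline\sigma$.

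The braiding is determined by $\sigma_{\obj1,\obj1}=a\,\Id+a^{-1}\CUP\circ\CAP$, with $a=\pm q^{\pm1/2}$. On the finite side I choose the constants $a_k$ with $a_k^2=q_k^{\pm1}$ and matching sign conventions. Then, as an element of the ultraproduct field identified with $\CC$, $\ulim{k} a_k$ is a square root of $q^{\pm 1}$, and I adjust the sign choice so that it equals $a$. By \Los, $\ulim{k}\overline\sigma_{L_1,L_1}=E(\sigma_{\obj1,\obj1})$.

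Since $\sigma_{\obj m,\obj n}$ is built from $\sigma_{\obj1,\obj1}$ by tensor and composition, and the braiding on summands is obtained by composing with idempotents, \Los{} gives $E(\sigma_{X,Y})=\ulim{k}\overline\sigma_{X_k,Y_k}$ on the filtered pieces. Only the components $\varphi_{\obj 1}$ matter by \autoref{half-braid-criterion-TL}. Hence $F^{\leq r,s}(\ulim{k} D_\varkappa(L_i\boxtimes L_j))\cong M_{i,j}$.

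The one genuine subtlety here is consistency: the $\ulim{k}$ of the braidings must be the braiding $\sigma$ used to define $M_{i,j}$, and not one of the other three. I handle this by choosing $a_k$ appropriately; if a mismatch were forced, it would only swap $M$ and $W$ or reverse the roles via the remark on other braidings.

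For the second isomorphism, I write
\[
L_{\varkappa-2-i}\boxtimes L_{\varkappa-2-j}\cong (L_i\boxtimes L_j)\otimes(L_{\varkappa-2}\boxtimes L_{\varkappa-2})
\]
in $\Ver\boxtimes\Ver^\rev$. This uses the fusion rule of \autoref{fusionRules}, which gives $L_{\varkappa-2}\otimes L_i\cong L_{\varkappa-2-i}$. Applying the monoidal equivalence $D_\varkappa$ together with \autoref{xiLemma}, I get
\[
D_\varkappa(L_{\varkappa-2-i}\boxtimes L_{\varkappa-2-j})\cong D_\varkappa(L_i\boxtimes L_j)\otimes(L_0,\overline\xi).
\]

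Next I take ultralimits. The object $\ulim{k}(L_0,\overline\xi)$ lies in the filtered piece $\leq 0,1$, and $F$ sends it to $(\obj 0,\xi)$. Indeed, $\overline\xi$ acts by $(-1)^n$ on $L_n$, which is the ultralimit of the grading half-braiding, so by \Los{} the limit is exactly $\xi$ on $T_n$.

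Because $F$ and $G$ intertwine the partial monoidal structures up to unspecified isomorphism, I obtain
\[
G\bigl(\ulim{k}(L_{\varkappa-2-i}\boxtimes L_{\varkappa-2-j})\bigr)\cong M_{i,j}\otimes(\obj0,\xi)\cong W_{i,j}.
\]
The main obstacle is a filtration issue. The object $\ulim{k}(L_{\varkappa-2-i}\boxtimes L_{\varkappa-2-j})$ lies in a filtered piece only because its underlying object is $L_{|i-j|}\oplus\cdots$, which is bounded. So I must check, using the preceding lemma with $2s\geq\varkappa\geq r+2$, that every tensor product I use stays inside some fixed $\leq r,s$ piece. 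Only then does the partially defined monoidal compatibility apply.
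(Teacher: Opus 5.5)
Your proposal is correct and follows essentially the same route as the paper. For $M_{i,j}$ you unwind $G = F^{\leq r,s}\circ\uprod D_\varkappa^{\leq r,s}$, and for $W_{i,j}$ you combine \autoref{xiLemma}, the fusion rule $L_i\otimes L_{\varkappa-2}\cong L_{\varkappa-2-i}$, and monoidality. The only difference is order: you tensor in $\Dr{\Ver}$ before passing to the ultralimit, whereas the paper tensors after applying $G$. You also make explicit two points the paper leaves implicit: the choice of braiding constants $a_k$ with $\ulim{k} a_k = a$, and the identification of $\ulim{k}\overline\xi$ with $\xi$.

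One small correction. The filtration check neither needs nor can use the preceding lemma, since its hypothesis $2s\geq\varkappa$ fails for fixed $s$ as $\varkappa_k\to\infty$. It suffices to observe directly that every underlying object involved lies in the piece $\leq i+j,\,\min(i,j)+1$: namely $L_i\otimes L_j$, $L_0$, and $L_{\varkappa-2-i}\otimes L_{\varkappa-2-j}\cong L_i\otimes L_j$.
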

	
	\begin{proof}
		For $r\geq i+j$ and $s> \min(i,j)$, we have 
		\begin{align*}
			G\left(\ulimit{k} L_i\boxtimes L_j\right) &\cong (G^{\leq r,s})\left(\ulimit{k} L_i\boxtimes L_j\right) = (F^{\leq r,s})\circ \left(\uprod D_{\varkappa}^{\leq r,s}\right) \left(\ulimit{k} L_i\boxtimes L_j\right)\\
			&\cong (F^{\leq r,s})\left(\ulimit{k} (L_i\otimes L_j, (\overline\sigma_{L_i,-}\otimes \Id_{L_j})\circ (\Id_{L_i}\otimes \overline\sigma_{-,L_j}^{-1}) )\right)\\
			&= \left(E^{-1}\left(\ulimit{k} (L_i\otimes L_j)\right),  E^{-1}\left(\ulimit{k}(\overline\sigma_{L_i,-}\otimes \Id_{L_j})\circ (\Id_{L_i}\otimes \overline\sigma_{-,L_j}^{-1}) )\right)\right)\\
			&\cong (T_i\otimes T_j, (\sigma_{T_i,-}\otimes \Id_{T_j})\circ (\Id_{T_i}\otimes \sigma_{-,T_j}^{-1}) )\\
			&\cong Z(T_i\boxtimes T_j\boxtimes \id) = M_{i,j}.
		\end{align*}
		Next, observe that for all $r,s\in\NN$, we have 
		\begin{align*}
			G\left(\ulimit{k} L_{\varkappa-2}\boxtimes L_{\varkappa-2}\right) &\cong (G^{\leq r,s})\left(\ulimit{k} L_{\varkappa-2}\boxtimes L_{\varkappa-2}\right)\\
			&\cong (F^{\leq r,s})\left(\ulimit{k} (L_{\varkappa-2}\otimes L_{\varkappa-2}, (\overline\sigma_{L_{\varkappa-2},-}\otimes \Id_{L_{\varkappa-2}})\circ (\Id_{L_{\varkappa-2}}\otimes \overline\sigma_{-,L_{\varkappa-2}}^{-1}) )\right)\\
			&\cong \left(E^{-1}\left(\ulimit{k} L_0 \right),  E^{-1}\left(\ulimit{k} \overline \xi \right)\right)\cong (\obj 0, \xi),
			\end{align*}
			where the third isomorphism is due to \autoref{xiLemma}.
			Finally, since $G$ intertwines the monoidal strucures, we have
			\begin{align*}
				W_{i,j} &\cong M_{i,j} \otimes (\obj 0, \xi) \cong G\left(\ulimit{k}(L_i\boxtimes L_j)\right)\otimes G\left(\ulimit{k}(L_{\varkappa-2}\boxtimes L_{\varkappa-2})\right)\\
				&\cong G\left(\ulimit{k}(L_i\boxtimes L_j)\otimes \ulimit{k}(L_{\varkappa-2}\boxtimes L_{\varkappa-2})\right)\\
				&\cong G\left(\ulimit{k}((L_i\otimes L_{\varkappa - 2})\boxtimes (L_j \otimes L_{\varkappa -2}))\right) \\
				&\cong G\left(\ulimit{k}(L_{\varkappa -2 -i}\boxtimes L_{\varkappa - 2 -j}) \right).
			\end{align*}
	\end{proof}
	
	\begin{theorem}\label{centerSemisimplicity}
		 For $q\in\CC$ not a root of unity, $\Dr{\TLf{q}{\CC}}$ is semisimple. $\{M_{i,j}, W_{i,j} \mid i,j\in\NN\}$ is a complete and irredundant  list of its isomorphism classes of simple objects. 
	\end{theorem}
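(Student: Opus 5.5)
We transport semisimplicity and the classification of simples from the modular factors $\Dr{\Ver}\simeq\Ver\boxtimes\Ver^\rev$ through the equivalences $G^{\leq r,s}$ of the previous subsection.

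First, we identify which simples appear in each filtered piece. Let $(Y,\varphi)$ be an arbitrary object of $\Dr{\TLf{q}{\CC}}$; it lies in some filtered component $\Dr{\TLf{q}{\CC}}^{\leq r,s}$. By \autoref{filteredUltraCenter} and the construction of $G^{\leq r,s}$, we have $(Y,\varphi)\cong G\left(\ulimit{k} Z_k\right)$ for some sequence $Z_k\in(\Ver\boxtimes\Ver^\rev)^{\leq r,s}$. Each $Z_k$ is a direct sum of at most $s$ simples. By the preceding lemma (once $\varkappa\geq\max(r+2,2s)$, which holds for $\mathscr U$-almost all $k$), these simples are of the form $L_i\boxtimes L_j$ or $L_{\varkappa-2-i}\boxtimes L_{\varkappa-2-j}$ with $i+j\leq r$. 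There are only finitely many such "shapes", indexed by $(i,j)$ and a type bit. Arguing as in \autoref{filteredUltraprodTL} with \autoref{ultraPartitionLemma}, $\mathscr U$-almost all $Z_k$ have the same shape, namely $\bigoplus_t P^{(t)}_k$ with each $P^{(t)}$ of one of the two forms with fixed indices. Since $G$ intertwines direct sums, and by the previous lemma, $(Y,\varphi)$ is isomorphic to a finite direct sum of objects $M_{i,j}$ and $W_{i,j}$.

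Second, we compute Hom spaces. Applying \Los{} to $\Hom$ in $\Ver\boxtimes\Ver^\rev$, which is semisimple with $\End(L_a\boxtimes L_b)=\KK$, gives $\End(M_{i,j})\cong\End(W_{i,j})\cong\CC$. Two of these objects have zero Hom between them unless they have the same type and the same indices, because for large $\varkappa$ the indices $(i,j)$ and $(\varkappa-2-i',\varkappa-2-j')$ never coincide. Here we use that $G^{\leq r,s}$ is fully faithful on a common filtered piece containing both objects, so Hom spaces in $\Dr{\TLf{q}{\CC}}$ are ultraproducts of Hom spaces in $\Dr{\Ver}$.

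From these two steps, every object is a finite direct sum of pairwise orthogonal objects with one-dimensional endomorphism rings. Hence $\Dr{\TLf{q}{\CC}}$ is semisimple: it is Karoubian and all endomorphism algebras are products of matrix algebras, so it is abelian semisimple. Its simples are exactly the $M_{i,j}$ and $W_{i,j}$, pairwise non-isomorphic, so the list is irredundant. Completeness follows from the first step.

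The main obstacle is the faithfulness claim in the second step: that morphisms in the center between objects of a filtered piece are computed as ultralimits of morphisms in $\Dr{\Ver}$. \autoref{filteredUltraCenter} gives this as an equivalence of $\CC$-linear categories, but we must check that it is genuinely full on morphisms. Concretely, a morphism $f:Y\to Y'$ compatible with half-braidings corresponds under $E$ to an ultralimit of morphisms $f_k$. Compatibility need only be checked against $\obj 1$, by the argument of \autoref{half-braid-criterion-TL}, and that condition transfers by \Los{} to compatibility with $L_1$, hence with all of $\Ver$. I would verify this explicitly first. The rest is bookkeeping with ultrafilters.
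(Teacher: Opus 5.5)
Your proof is correct, but it is organized differently from the paper's.

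\emph{The paper's route.} It gets semisimplicity abstractly from \autoref{ultraSemisimplicity}: morphisms in an ultraproduct of semisimple categories split, by \autoref{splitCriterion} and \Los. Only afterwards does it classify simples. A simple $V$ is written as $G\left(\ulimit{k}(L_{i_k}\boxtimes L_{j_k})\right)$ with simple factors. The indices are then pinned down because $|i_k-j_k|$ is almost always constant, while the filtration degree $r$ of $V$ forces $i_k+j_k=r$ or $(\varkappa-2-i_k)+(\varkappa-2-j_k)=r$. Irredundancy is the same \Los{} argument you give.

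\emph{Your route.} You decompose an arbitrary object of a fixed filtered piece in one stroke. This uses the finitely-many-shapes pigeonhole of \autoref{filteredUltraprodTL} and the lemma identifying $M_{i,j}$, $W_{i,j}$ as images of ultralimits. You then deduce semisimplicity from Schur orthogonality of the $M_{i,j}$, $W_{i,j}$, rather than from splitting of morphisms.

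\emph{Comparison.} Your route is more elementary and explicit. It bypasses the appendix's splitting lemmas, avoids checking that the center is Karoubian and Krull--Schmidt, and exhibits the decomposition of every object. The paper's route is shorter, given its general ultraproduct criterion. Both rest equally on the full faithfulness of $G^{\leq r,s}$, which the paper takes as part of \autoref{filteredUltraCenter}. Your proposed verification is the right one: compatibility with half-braidings need only be tested at $\obj 1$, and this transfers by \Los{} to $L_1$ and hence to all of $\Ver$.

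\emph{A small correction.} The lemma on simples of $(\Ver\boxtimes\Ver^{\rev})^{\leq r,s}$ is stated under $2s\geq\varkappa\geq r+2$, which fails for almost all $k$. You only need one inclusion: every such simple is $L_i\boxtimes L_j$ or $L_{\varkappa-2-i}\boxtimes L_{\varkappa-2-j}$ with $i+j\leq r$. This follows directly from the truncated fusion rules, since the top summand of $L_i\otimes L_j$ is $L_{\min(i+j,\,2\varkappa-4-i-j)}$. No condition on $s$ is needed, and the two types are disjoint once $\varkappa>r+2$.
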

	
	\begin{proof} 
	Semisimplicity follows directly by \autoref{ultraSemisimplicity} since the functor $G$ identifies $\Dr{\TLf{q}{\CC}}$ with a (Karoubian, Krull-Schmidt) subcategory of $\uprod \Ver \boxtimes \Ver^\rev$.
	 
		Suppose $V$ is some simple object of the center.
		$V\cong G\left(\ulim{k} (L_{i_k} \boxtimes L_{j_k})\right)$ for sequences $\{L_{i_k}\}_{k\in\NN}$ and $\{L_{j_k}\}_{k\in\NN}$ of objects in $\Ver$ and $\Ver^\rev$, which must be simple by \autoref{ultraSemisimplicity}. 
		Since the underlying object of $V$ in $\TLf{q}{\CC}$ must correspond (under $E$) to $\ulim{k} (L_{i_k}\otimes L_{j_k}) \cong \ulim{k} (L_{|i_k-j_k|}\oplus \dots)$, we see that $|i_k - j_k|$ must be $\mathscr U$-almost always a constant as $k\to\infty$.
		Let $r\in\NN$ be the unique positive integer such that $V$ lies in $\Dr{\TLf{q}{\CC}}^{\leq r}$ but not in $\Dr{\TLf{q}{\CC}}^{\leq r-1}$. 
		Since $G$ is compatible with the filtrations, it must be the case that $L_{i_k}\boxtimes L_{j_k}$ lies in $(\Ver \boxtimes \Ver^\rev)^{\leq r}$ but not in $(\Ver \boxtimes \Ver^\rev)^{\leq r-1}$ for $\mathscr U$-almost all $k$.
		From \autoref{fusionRules}, we see that either $i_k+j_k = r$ or $(\varkappa - 2 - i_k) + (\varkappa - 2 -  j_k) = r$. 
		It follows that either $i_k$ and $j_k$ are both $\mathscr U$-almost always constants or else $(\varkappa - 2 - i_k)$ and $(\varkappa - 2 -  j_k)$ are both $\mathscr U$-almost always constants as $k\to\infty$.
		In the former case, we get $V\cong M_{i,j}$ and in the latter we get $V\cong W_{i,j}$.
		Finally, the existence of an isomorphism between any two of these simple objects will imply (by \Los) the existence of isomorphisms between non-isomorphic simples in $\Ver\boxtimes\Ver^\rev$, which is impossible, so our list is irredundant.
	\end{proof}

		\begin{theorem}\label{EQUIVALENCE}
		For $q\in\CC$ not a root of unity, the tensor functor 
		$$Z:\TLf{q}{\CC}\boxtimes\TLf{q}{\CC}^\rev\boxtimes \Rep{\ZZ/2\ZZ} \to \Dr{\TLf{q}{\CC}}$$
		of \autoref{functorZ} is an equivalence.
	\end{theorem}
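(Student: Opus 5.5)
Both categories are semisimple, so the plan is to show that $Z$ is fully faithful on simple objects and that its essential image contains every simple object of $\Dr{\TLf{q}{\CC}}$. The source $\TLf{q}{\CC}\boxtimes\TLf{q}{\CC}^\rev\boxtimes \Rep{\ZZ/2\ZZ}$ is semisimple because each factor is: \autoref{thm:TLindecomposables} covers the two Temperley--Lieb factors, and $\Rep{\ZZ/2\ZZ}$ is semisimple over $\CC$. Its simple objects are $T_i\boxtimes T_j\boxtimes \id$ and $T_i\boxtimes T_j\boxtimes S$ for $i,j\in\NN$. By \autoref{MWdef}, $Z$ sends these to $M_{i,j}$ and $W_{i,j}$ respectively.

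By \autoref{centerSemisimplicity}, the target $\Dr{\TLf{q}{\CC}}$ is semisimple, and the $M_{i,j}$ and $W_{i,j}$ form a complete and irredundant list of its simple objects. Hence $Z$ induces a bijection on isomorphism classes of simple objects, and in particular it is essentially surjective, since every object is a finite direct sum of simples.

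For full faithfulness, it suffices to compare Hom spaces between simples, because a $\CC$-linear functor between semisimple categories is additive. If $A\not\cong B$ are simple in the source, then $\Hom(A,B)=0$, and $\Hom(Z(A),Z(B))=0$ by irredundancy. If $A$ is simple, then $\End(A)\cong\CC$, and $Z$ sends $\Id_A$ to $\Id_{Z(A)}\neq 0$, so $\End(A)\to\End(Z(A))$ is an injective map $\CC\to\CC$. The target endomorphism space is one-dimensional by Schur's lemma over the algebraically closed field $\CC$, since $Z(A)$ is simple. So $Z$ is fully faithful.

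The only point needing care is that $Z$ is a genuine $\CC$-linear tensor functor, which is \autoref{functorZ}. Since $Z$ is already monoidal, it is then a monoidal equivalence. I expect no real obstacle here: the substantive work (semisimplicity of the center and the classification of its simples) is already contained in \autoref{centerSemisimplicity}. The one subtlety to check is that the identification $Z(T_i\boxtimes T_j\boxtimes S)\cong W_{i,j}$ uses exactly the same half-braiding $\xi$ as the one appearing in that classification. This holds by \autoref{MWdef} together with the fact that $Z$ is monoidal, which gives $Z(T_i\boxtimes T_j\boxtimes S)\cong Z(T_i\boxtimes T_j\boxtimes\id)\otimes G(S)$.
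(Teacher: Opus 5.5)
Your proposal is correct and follows the paper's proof. Both reduce the theorem to the fact that a $\CC$-linear functor between semisimple categories which induces a bijection on isomorphism classes of simple objects is an equivalence, and both read off that bijection from \autoref{MWdef} and \autoref{centerSemisimplicity}; you also spell out the Schur's-lemma check of full faithfulness that the paper leaves implicit.
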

	
	\begin{proof}
		Both the domain and codomain of $Z$ are semisimple and linear over an algebraically closed field (so Schur's Lemma holds).
		Thus, to see that $Z$ is an equivalence, it is enough to show that $Z$ is a bijection on simples which is clear from \autoref{MWdef} and \autoref{centerSemisimplicity}.
	\end{proof}
	
	\begin{corollary}\label{centerFusion}
		For $q\in\CC$ not a root of unity, we have the following fusion rules for $\Dr{\TLf{q}{\ZZ}}$:
		$$M_{i,j}\otimes M_{i',j'} \cong W_{i,j}\otimes W_{i',j'}\cong \bigoplus_{n = 0}^{\min(i',j')}\bigoplus_{m = 0}^{\min(i,j)} M_{i+j-2m,\, i'+j'-2n}$$
		$$\text{and}\quad  W_{i,j}\otimes M_{i',j'}\cong \bigoplus_{n = 0}^{\min(i',j')}\bigoplus_{m = 0}^{\min(i,j)} W_{i+j-2m,\, i'+j'-2n}.$$
		\hfill $\square$
	\end{corollary}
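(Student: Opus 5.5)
The plan is to transport the computation through the monoidal equivalence $Z$ of \autoref{EQUIVALENCE}. Since $Z$ is a tensor functor (\autoref{functorZ}) and an equivalence, it preserves both tensor products and direct sum decompositions, up to isomorphism. By \autoref{MWdef} we have $M_{i,j}=Z(T_i\boxtimes T_j\boxtimes\id)$ and $W_{i,j}=Z(T_i\boxtimes T_j\boxtimes S)$. So every product in the statement is $Z$ applied to a product in $\TLf{q}{\CC}\boxtimes\TLf{q}{\CC}^\rev\boxtimes\Rep{\ZZ/2\ZZ}$. The monoidal structure of a Deligne product of semisimple categories is computed componentwise. Reversing the braiding does not change the underlying monoidal category, so the $\rev$ factor multiplies by the same Clebsch--Gordan rule.

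First I treat the $\Rep{\ZZ/2\ZZ}$ factor. There $\id\otimes\id\cong S\otimes S\cong\id$ and $\id\otimes S\cong S$. Hence $W\otimes W$ and $M\otimes M$ land in the same ($\id$-)component, while $W\otimes M$ lands in the $S$-component. This accounts for the outer shape of the two displayed formulas: the first line for $M\otimes M$ and $W\otimes W$, the second for $W\otimes M$. It also shows that $W_{i,j}\otimes W_{i',j'}\cong M_{i,j}\otimes M_{i',j'}$ follows from $(\obj0,\xi)^{\otimes 2}\cong\id$. That last isomorphism is \autoref{grading-duality} applied to the order-two character.

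Second, in the two Temperley--Lieb factors I apply the Clebsch--Gordan rule (3) of \autoref{thm:TLindecomposables}, valid because $q$ is not a root of unity. This gives $T_a\otimes T_b\cong\bigoplus_{m=0}^{\min(a,b)}T_{a+b-2m}$, with the sum taken independently in each factor. I then expand the resulting double direct sum of simples $T_\bullet\boxtimes T_\bullet\boxtimes(\id\text{ or }S)$ and apply $Z$ termwise. By \autoref{centerSemisimplicity} these are exactly the simples $M_{\bullet,\bullet}$ and $W_{\bullet,\bullet}$ of the center.

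The main obstacle is the bookkeeping of indices. The double sum must be written with the two Clebsch--Gordan expansions labelled as in the statement: the outer index $n$ runs up to $\min(i',j')$ and contributes $i'+j'-2n$, and the inner index $m$ runs up to $\min(i,j)$ and contributes $i+j-2m$. In the first pass I would write the componentwise product in the ordering $(i,j)$ versus $(i',j')$ that \autoref{MWdef} imposes on the two factors. I would then re-index the summation variables to match the displayed form exactly, checking the bounds $\min(i,j)$ and $\min(i',j')$ against the pairs whose Clebsch--Gordan expansion they govern. This relabelling is where I would be most careful, verifying it on small cases such as $i=j=i'=j'=1$ before asserting it in general. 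No further input is needed: $Z$ respects the decomposition, so the stated fusion rules follow once the indices are aligned.
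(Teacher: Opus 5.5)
Your route is the one the paper leaves implicit behind the $\square$. You transport along the monoidal equivalence $Z$ of \autoref{EQUIVALENCE}, multiply componentwise in the Deligne product, expand with Clebsch--Gordan, and read off simples via \autoref{centerSemisimplicity}. Everything up to your last paragraph is sound, including $W_{i,j}\otimes W_{i',j'}\cong M_{i,j}\otimes M_{i',j'}$ from $S\otimes S\cong\id$. The step that fails is the final ``re-indexing''. Componentwise multiplication gives
\[
M_{i,j}\otimes M_{i',j'}\cong Z\bigl((T_i\otimes T_{i'})\boxtimes(T_j\otimes T_{j'})\boxtimes\id\bigr)\cong\bigoplus_{m=0}^{\min(i,i')}\bigoplus_{n=0}^{\min(j,j')}M_{i+i'-2m,\,j+j'-2n}.
\]
The same holds for $W_{i,j}\otimes M_{i',j'}$, with $W$ in place of $M$ on the right. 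Here the Clebsch--Gordan expansions pair $i$ with $i'$ and $j$ with $j'$, whereas the displayed formula expands $T_i\otimes T_j$ and $T_{i'}\otimes T_{j'}$. No relabelling of summation variables turns one into the other, because they are different objects. For $i=i'=1$, $j=j'=0$ you get $M_{1,0}\otimes M_{1,0}\cong M_{0,0}\oplus M_{2,0}$, while the display gives the single simple $M_{1,1}$. These are non-isomorphic by the irredundancy in \autoref{centerSemisimplicity}: only the former contains the unit $M_{0,0}$. Your planned check $i=j=i'=j'=1$ cannot catch this, since the two index patterns agree precisely when $j=i'$.

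So the corollary as displayed is misindexed, and your argument cannot prove it. What it proves is the corrected rule above, together with its $W$-analogues. The misindexing appears to come from the monoidal structure on Deligne products stated in \autoref{sec:prelim}, namely $(X\boxtimes Y)\otimes(X'\boxtimes Y')=(X\otimes Y)\boxtimes(X'\otimes Y')$. That formula does not typecheck in general and should read $(X\otimes X')\boxtimes(Y\otimes Y')$, as the componentwise associator written right after it indicates. Here it happens to typecheck only because both Temperley--Lieb factors share the same underlying monoidal category, which is how the error slips through. You already use the correct componentwise structure, so the fix is simply to stop at the componentwise computation rather than forcing it into the displayed shape.
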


	Since each of the categories $\TLf{q}{\CC}$, $\TLf{q}{\CC}^\rev$, and $\Rep{\ZZ/2\ZZ}$ has a faithful $\ZZ/2\ZZ$--grading, we get a grading on the center.
	
	\begin{lemma}
		The category $\Dr{\TLf{q}{\CC}}\simeq \TLf{q}{\CC}\boxtimes \TLf{q}{\CC}\boxtimes \Rep{\ZZ/2\ZZ}$ has a faithful $(\ZZ/2\ZZ)^{\oplus 3}$--grading given by $M_{i,j}$ and $W_{i,j}$ being in the degrees $(\overline i, \overline j, 0)$ and $(\overline i,\overline  j, 1)$, respectively, where $\overline n:=n\mod 2. \hfill \square$
	\end{lemma}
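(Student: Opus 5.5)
The plan is to obtain the grading directly from the monoidal equivalence $Z$ of \autoref{EQUIVALENCE}. Each factor carries a faithful $\ZZ/2\ZZ$--grading: on $\TLf{q}{\CC}$ (and hence on $\TLf{q}{\CC}^\rev$, which has the same monoidal structure) it is the grading of \autoref{TLgrading}, with $T_n$ in degree $\overline n$; on $\Rep{\ZZ/2\ZZ}$ the unit is placed in degree $0$ and the sign representation $S$ in degree $1$. We first check that the Deligne product of graded semisimple categories is graded by the product group. Its simples are exactly the $A\boxtimes B\boxtimes C$ with $A,B,C$ simple, and we assign such an object the degree $(\deg A,\deg B,\deg C)$. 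Tensor products are computed componentwise, so this assignment is multiplicative, and the unit $T_0\boxtimes T_0\boxtimes\id$ lies in degree $(0,0,0)$.

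The grading is faithful because each of the eight degrees is inhabited, for instance by $T_a\boxtimes T_b\boxtimes C$ with $a,b\in\{0,1\}$ and $C\in\{\id,S\}$. Since $Z$ is a monoidal equivalence, it transports this to a faithful $(\ZZ/2\ZZ)^{\oplus3}$--grading on $\Dr{\TLf{q}{\CC}}$. By \autoref{MWdef}, the simples $M_{i,j}=Z(T_i\boxtimes T_j\boxtimes\id)$ and $W_{i,j}=Z(T_i\boxtimes T_j\boxtimes S)$ then land in degrees $(\overline i,\overline j,0)$ and $(\overline i,\overline j,1)$, as claimed.

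As a sanity check independent of $Z$, we can use the fusion rules of \autoref{centerFusion}. We verify that $M$-type tensor $M$-type and $W$-type tensor $W$-type land only in $M$-types, while mixed products land only in $W$-types, which accounts for the third coordinate. The parities of the indices add correctly because every summand has indices $i+j-2m$ and $i'+j'-2n$ shifted only by even numbers.

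The only point needing care is this parity bookkeeping. The displayed fusion rule in \autoref{centerFusion} appears to have its indices arranged oddly. The safer route is therefore to rely on the componentwise Clebsch--Gordan rule $(T_i\boxtimes T_j)\otimes(T_{i'}\boxtimes T_{j'})=(T_i\otimes T_{i'})\boxtimes(T_j\otimes T_{j'})$ together with \autoref{thm:TLindecomposables}(3), under which the parities are visibly additive in each factor.
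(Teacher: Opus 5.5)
Your argument is correct and is the paper's own: the lemma is left without proof, justified only by the preceding remark that the three factors carry faithful $\ZZ/2\ZZ$--gradings, whose product grading on the Deligne product is transported along the monoidal equivalence $Z$ of \autoref{EQUIVALENCE}. Your suspicion about \autoref{centerFusion} is well founded: its summands should read $M_{i+i'-2m,\,j+j'-2n}$ with $m\le\min(i,i')$ and $n\le\min(j,j')$, since for example $M_{1,0}\otimes M_{1,0}\cong M_{0,0}\oplus M_{2,0}$, not $M_{1,1}$. Note, however, that this check is not independent of $Z$, because that corollary is itself read off from $Z$.
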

	
	Consider the bicharacter $\gamma:(\ZZ/2\ZZ)^{\oplus 3}\times (\ZZ/2\ZZ)^{\oplus 3} \to \CC^\times$ given by $\gamma\left((i,j,k), (i', j', k')\right) = (-1)^{k(i'+j')}$. 
	We can twist the braiding on $\TLf{q}{\CC}\boxtimes \TLf{q}{\CC}\boxtimes \Rep{\ZZ/2\ZZ}$ by $\gamma$ to get a new braided tensor category, which turns out to be equivalent to the center as \emph{braided} tensor categories.
	
	\begin{theorem}\label{mainThm}
		For $q\in\CC$ not a root of unity, the canonical tensor functor 
		$$Z:\bigg(\TLf{q}{\CC}\boxtimes \TLf{q}{\CC}^\rev\boxtimes \Rep{\ZZ/2\ZZ}\bigg)^\gamma \to \Dr{\TLf{q}{\CC}}$$
		is an equivalence of braided tensor categories.
	\end{theorem}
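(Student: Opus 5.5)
By \autoref{EQUIVALENCE}, $Z$ is already a monoidal equivalence. Twisting the braiding by $\gamma$ leaves the monoidal structure unchanged, so the only thing left to prove is that $Z$ intertwines the braidings. Both braidings are natural, and both categories are semisimple and additively generated by the images of $T_i\boxtimes T_j\boxtimes \id$ and $\id\boxtimes\id\boxtimes S$. It is therefore enough to compare the two braidings on pairs of generators.

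The plan is to split the generators into two types. The first type is $A=T_i\boxtimes T_j\boxtimes\id$; the second is $S$, whose image is $(\obj 0,\xi)$.

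First, consider two generators of the first type. The restriction of $Z$ to $\TLf{q}{\CC}\boxtimes\TLf{q}{\CC}^\rev$ is the functor $D$, and $D$ is braided by \cite[Proposition 8.6.1]{EGNO}. On these objects $\gamma$ is trivial, because the third grading component $k$ vanishes. So the two braidings agree here.

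Second, consider $(\obj 0,\xi)$ paired with $Z(A)$, where $A\in\TLf{q}{\CC}_{\bar n}$ has total parity $\bar n=\overline{i+j}$. In the center, the braiding
\[
(\obj 0,\xi)\otimes Z(A)\to Z(A)\otimes(\obj 0,\xi)
\]
is $\xi_{A}=(-1)^{n}\,\Id$ up to unitors. In the source, $\Rep{\ZZ/2\ZZ}$ is symmetric and the factors in a Deligne product braid trivially past one another, so the source braiding of $S$ with $A$ is the identity. The twist multiplies it by $\gamma((0,0,1),(i,j,0))=(-1)^{i+j}$, which matches.

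In the opposite order, the center braiding
\[
Z(A)\otimes(\obj 0,\xi)\to(\obj 0,\xi)\otimes Z(A)
\]
is the half-braiding of $Z(A)$ evaluated at $\obj 0$, and this is trivial by naturality and unitality. On the source side, $\gamma((i,j,0),(0,0,1))=(-1)^{0}=1$, so this case also agrees.

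Third, for $S$ with $S$: the center braiding $\xi_{\obj 0}$ is $+1$, since $\obj 0$ has degree $0$. In the source, the braiding on $S\otimes S$ in $\Rep{\ZZ/2\ZZ}$ is $+1$ (this is the ordinary, non-super braiding), and $\gamma((0,0,1),(0,0,1))=(-1)^{0}=1$. So these agree as well.

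The step I expect to be the main obstacle is the bookkeeping of the monoidal structure of $Z$. It is built from $D$ and $G$ via the universal property of $\boxtimes$, with coherence isomorphisms that reorder tensor factors. To pass from braidings on generators to braidings on tensor products such as $W_{i,j}=M_{i,j}\otimes(\obj 0,\xi)$, I will check compatibility on generators and then extend using the hexagon axioms on both sides. This extension is valid because $\gamma$ is a bicharacter, which is exactly what makes the twisted braiding satisfy the hexagon. I would also double-check the sign convention for which of $\xi$ or its inverse appears in the center braiding. Since $\xi^2=1$ the sign is unaffected, but the direction of asymmetry in $\gamma$ must agree with the convention in \autoref{DrCenter} that the braiding is $\varphi_{X'}$.
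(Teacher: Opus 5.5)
Your proposal is correct and is essentially the paper's own proof: after \autoref{EQUIVALENCE}, both arguments reduce to comparing braidings on the generators $T_i\boxtimes T_j\boxtimes\id$ and $\id\boxtimes\id\boxtimes S$. Both use that $D$ and the functor from $\Rep{\ZZ/2\ZZ}$ are braided, that $\sigma^{\mathcal Z}_{M_{i,j},W_{0,0}}$ is trivial, and that $\sigma^{\mathcal Z}_{W_{0,0},M_{i,j}}=(-1)^{i+j}$, and both match these with the same values of $\gamma$.

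On the bookkeeping you flag (which the paper also leaves implicit), your direct comparisons are valid once the tensor structure of $Z$ reorders $(\obj 0,\xi)\otimes M_{i,j}\to M_{i,j}\otimes(\obj 0,\xi)$ by $(\sigma^{\mathcal Z}_{M_{i,j},W_{0,0}})^{-1}$, which is a unitor, so that all coherence maps on generator pairs are trivial. Had one used $\sigma^{\mathcal Z}_{W_{0,0},M_{i,j}}$ instead, the coherence map $Z(\id\boxtimes\id\boxtimes S)\otimes Z(T_i\boxtimes T_j\boxtimes\id)\to Z(T_i\boxtimes T_j\boxtimes S)$ would carry the sign $(-1)^{i+j}$, and the transposed bicharacter $(x,y)\mapsto\gamma(y,x)$ would be required; that twist is braided equivalent to the $\gamma$-twist, so the theorem is unaffected.
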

	
	\begin{proof}
		We need to show that the equivalence $Z$ of \autoref{EQUIVALENCE} intertwines the braidings of the two categories on their monoidal generators, namely $T_i\boxtimes T_j\boxtimes \id$ and $\id\boxtimes \id \boxtimes S$ versus $M_{i,j}$ and $W_{0,0}$.
		Let $\sigma$ be the braiding of $\TLf{q}{\CC}$ as before, and let $\sigma^{\mathcal Z}$ and $\sigma^\gamma$ be the braiding of $\Dr{\TLf{q}{\CC}}$ and $\TLf{q}{\CC}\boxtimes \TLf{q}{\CC}^\rev\boxtimes \Rep{\ZZ/2\ZZ}$, respectively.
		Since the canonical functor $D:\TLf{q}{\CC}\boxtimes \TLf{q}{\CC}^\rev \to \Dr{\TLf{q}{\CC}}$ is automatically braided, and $Z$ extends $D$ by definition, it is clear that $Z$ intertwines the braidings $\sigma^\gamma_{T_i\boxtimes T_j\boxtimes \id, T_{i'}\boxtimes T_{j'}\boxtimes \id}$ (which are unaffected by the $\gamma$-twist) with $\sigma^\mathcal Z_{M_{i,j}, M_{i',j'}}$.
		Similarly, as $Z$ extends the braided functor $\Rep{\ZZ/2\ZZ}\to \Dr{\TLf{q}{\CC}}$, it is clear that it intertwines $\sigma^\gamma_{\id\boxtimes\id\boxtimes S, \id\boxtimes\id\boxtimes S}$ (which is again unaffected by the $\gamma$-twist) with $\sigma^\mathcal Z_{W_{0,0}, W_{0,0}}$.
		So, it is enough to check that it also intertwines the braidings $\sigma^\gamma_{\id\boxtimes \id\boxtimes S , T_i, T_j, \id}$ and $\sigma^\gamma_{T_i, T_j, \id,\id\boxtimes \id\boxtimes S}$ with $\sigma^\mathcal Z_{W_{0,0}, M_{i,j}}$ and $\sigma^\mathcal Z_{M_{i,j}, W_{0,0}}$, respectively. 
		
		First,
		$$\sigma^\mathcal Z_{M_{i,j}, W_{0,0}} := \varphi_{\id}:(T_i\otimes T_j, \varphi)\otimes (\id, \xi)\to (\id,\xi)\otimes (T_i\otimes T_j, \varphi),$$
		where $\varphi$ is the half-braiding $(\sigma_{T_i,-}\otimes \Id_{T_j})\circ (\Id_{T_i}\otimes \sigma_{-,T_j}^{-1})$. 
		But $\varphi_\id$ is trivial (i.e. a composition of left and right unitors) by the basic properties of braidings.
		On the other hand, 
		$$\sigma^\gamma_{T_i, T_j, \id,\id\boxtimes \id\boxtimes S} := \gamma\left((\overline i,\overline j,0), (0,0,1)\right) \,\sigma_{T_i, \id}\boxtimes \sigma^{-1}_{\id, T_j}\boxtimes \sigma^{\Rep{\ZZ/2\ZZ}}_{\id, S} = \sigma_{T_i, \id}\boxtimes \sigma^{-1}_{\id, T_j}\boxtimes \sigma^{\Rep{\ZZ/2\ZZ}}_{\id, S},$$
		which again consists only of left and right unitors. Applying $Z$ to the latter we get two morphisms in the center with the same source and target, which consist entirely of unitors and hence coincide by Mac Lane's Coherence Theorem. 
		
		Next, we have 
		$$\sigma^\mathcal Z_{W_{0,0},M_{i,j}}:= \xi_{T_i\otimes T_j}:(\id,\xi)\otimes (T_i\otimes T_j, \varphi) \to (T_i\otimes T_j,\varphi)\otimes (\id, \xi),$$
		which is given by $(-1)^{i+j}$ (again multiplied by a map consisting of unitors, which we can ignore due to the same Coherence Theorem argument). On the other hand,
		$$\sigma^\gamma_{\id\boxtimes \id\boxtimes S, T_i, T_j, \id} := \gamma\left((0,0,1), (\overline i, \overline j, 0)\right) \,\sigma_{\id, T_i}\boxtimes \sigma^{-1}_{T_j, \id}\boxtimes \sigma^{\Rep{\ZZ/2\ZZ}}_{S, \id} = (-1)^{i+j}.$$
	\end{proof}


\section{Some Exceptional Cases and Concluding Remarks}\label{sec:exceptionalQ}

	\subsection*{The Classical Limit, $q = 1$}
	
	The ultraproduct techniques we used so far still tells us something interesting about the center at $q=1$, but it does not help us fully understand it. 	
	
	\begin{proposition}\label{VerpApprox}
		The category $\TLf{1}{\CC}$ is equivalent to the tensor subcategory of $\uprod \Verp$ generated by the object $\ulim{p} L_1$.
	\end{proposition}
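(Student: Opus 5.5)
We mirror the proof of \autoref{ultraTL}, replacing the quantum fusion categories $\Ver$ by the Verlinde categories $\Verp$ and letting the characteristic $p\to\infty$. Fix a non-principal ultrafilter $\mathscr U$ on the set of primes. By \autoref{uprodFields} there is a field isomorphism $\uprod \overline\FF_p\cong\CC$. Under it, $\ulim{p} 1 = 1$, so no Galois adjustment is needed; this is simpler than the algebraic case. By \autoref{tensorCatsUltraprod}, $\uprod\Verp$ is then a $\CC$-linear tensor category. Let $\mathcal A_\infty(1)$ denote its full subcategory generated by $X_1:=\ulim{p}L_1$ under tensor products, direct sums and summands.

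The first step is to build the functor. Set $f=\ulim{p}\overline{\CUP}$ and $g=\ulim{p}\overline{\CAP}$. In each $\Verp$ (the semisimplification of $\TLf{1}{\overline\FF_p}$), the zigzag relations hold and $\overline\CAP\circ\overline\CUP=-2\,\Id$. By \Los, the same relations hold in the ultraproduct, with $-2=-[2]_1$. The universal property \autoref{univ-prop} then gives a tensor functor $E:\TLf{1}{\CC}\to\mathcal A_\infty(1)$ with $E(\obj1)=X_1$, and $E$ is essentially surjective by construction.

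The second step is full faithfulness, and here we use semisimplicity of $\TLf{1}{\CC}$. Over $\CC$ with $q=1$, the category $\TLf{1}{\CC}\simeq\Rep{SL_2}$ is semisimple with simples $T_n$ obeying the Clebsch--Gordan rule. The argument of \autoref{thm:TLindecomposables}(3),(4) applies verbatim, since $[n+1]_1=n+1\neq0$ in characteristic $0$. In $\Verp$, the simples $L_0,\dots,L_{p-2}$ obey the truncated Clebsch--Gordan rule, which is untruncated whenever $m+n\le p-2$. For fixed $n$ and all $p>n+2$, set $X_n:=\ulim{p}L_n$; it is a summand of $X_1^{\otimes n}$. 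By \Los, the $X_n$ satisfy the same fusion rules as the $T_n$ and $\Hom(X_n,X_m)\cong\CC\,\delta_{mn}$.

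It remains to identify $E(T_n)$ with $X_n$, and this is the main point requiring care. I would argue by induction using \autoref{fusionRule}. The image of the Jones--Wenzl-type idempotent (the symmetrizer) in $\End(\obj n)$ is sent by $E$ to the ultraproduct of its reductions mod $p$, and these are the idempotents cutting out $L_n$ for $p>n$. Alternatively, one can compare multiplicities: $X_1^{\otimes n}$ and $\obj n$ have the same decomposition, so $\dim\End(X_1^{\otimes n})$ equals the Catalan number, which is $\dim\End(\obj n)$. Hence $E$ is injective on $\End(\obj n)$ precisely when these dimensions match. Given the matching dimensions, $E$ is a bijection on Hom spaces between the generators $\obj n$ (by also considering $\Hom(\obj n,\obj m)$ via rotation to $\End$), and therefore an equivalence after Cauchy completion.

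The expected obstacle is justifying faithfulness rigorously. Surjectivity onto $\End(X_1^{\otimes n})$ follows because $\Verp$ is a quotient of $\TLf{1}{\overline\FF_p}$, so each Hom space in $\Verp$ is spanned by the images of Temperley--Lieb diagrams, and by \Los{} the same holds in the ultraproduct. Injectivity then follows from the dimension count.
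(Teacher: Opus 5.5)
Your proposal is correct and follows the paper's route. The paper's proof of this proposition is simply that the proof of \autoref{ultraTL} carries over, and that is what you do: ultralimits of $\overline{\CUP}$ and $\overline{\CAP}$, the universal property \autoref{univ-prop}, and \Los{} applied to the simples $X_n=\ulim{p}L_n$ for large $p$. Your spanning-by-diagrams argument combined with the Catalan-number dimension count is a good way to make rigorous the full faithfulness step, which the paper only asserts as ``$E$ induces a bijection between the morphism spaces.''
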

	
	\begin{proof}
		The same proof as that of \autoref{ultraTL} applies.
	\end{proof}
	
	The main obstacle, however, is that the Drinfeld center of $\mathbf{Ver}_{p_k}$ is complicated; indeed, $\Dr{\Verp}$ is non-semisimple, and it is not well-understood to the author's knowledge. 
	
	Instead, an algebro-geometric approach is more useful here.
	In the $q=1$ case, it is well-known that the Temperley--Lieb category is equivalent to the category $\Rep{SL_2}$ of complex representations of the algebraic group $SL_2$.
	The following results on the center of $\Rep{G}$ for an algebraic group $G$ are folklore.

	\begin{proposition}\label{sheafyCenter}
		For a connected algebraic group $G$, we have a canonical equivalence of tensor categories between the center $\Dr{\Rep{G}}$ and the category $\mathbf{Coh}_G^\mathrm{fin.supp.}(G)$ of $G$-equivariant coherent sheaves on $G$ supported on finitely many points.  	\end{proposition}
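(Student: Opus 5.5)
The plan is to pass through the standard identification of the center of a module category with modules over a central algebra. I will use $\Rep{G}\simeq\mathbf{Coh}_G(\mathrm{pt})$, and work in the ind-completion $\mathbf{QCoh}_G(\mathrm{pt})$, where the regular representation $\mathcal O(G)$ is available.

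First, the center of $\Rep{G}$ should be identified with the category of $G$-equivariant $\mathcal O(G)$-modules. Here $G$ acts on itself by conjugation, and the module is finitely generated (hence finite dimensional) as a representation. Concretely, take a half-braiding $\varphi$ on a finite-dimensional representation $X$. Evaluating it on $\mathcal O(G)$ (extended to ind-objects) gives a map $\varphi_{\mathcal O(G)}:X\otimes\mathcal O(G)\to\mathcal O(G)\otimes X$. Using that every $V$ embeds in $\mathcal O(G)^{\oplus n}$ and that $\End_G(\mathcal O(G))=\mathcal O(G)^{op}$, naturality forces $\varphi$ to be determined by a comodule-like datum. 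In practice this datum is a coaction $X\to \mathcal O(G)\otimes X$, equivalently an $\mathcal O(G)$-module structure on $X$. Equivalently, half-braidings correspond to a family $\{g\mapsto \rho(g)\}$, where $\varphi_V(x\otimes v)=\rho_x\cdot v\otimes x$. The hexagon axiom translates into multiplicativity/coassociativity, and $G$-equivariance of $\varphi$ translates into conjugation-equivariance. This is the classical statement $\Dr{\Rep G}\simeq \mathbf{Rep}(D(\mathcal O(G)))$, i.e. Yetter--Drinfeld modules $\YD{G}$ in the finite-dimensional setting. I would then check that the monoidal structure on the center matches convolution of sheaves on $G$ (pushforward along multiplication). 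This holds because the tensor product of half-braidings composes the coactions.

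Second, translate finite-dimensional $G$-equivariant $\mathcal O(G)$-modules into $G$-equivariant coherent sheaves on $G$. A finite-dimensional $\mathcal O(G)$-module is a coherent sheaf with finite (zero-dimensional) support, and conversely a coherent sheaf supported at finitely many points has finite-dimensional global sections. So the category is exactly $\mathbf{Coh}_G^{\mathrm{fin.supp.}}(G)$. The role of connectedness would be to guarantee that supports are unions of finite conjugacy classes, which for connected $G$ lie in the center. I would note this but it is not essential to the equivalence itself.

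The main obstacle is the first step: rigorously showing that an arbitrary natural half-braiding on $\Rep{G}$ (finite-dimensional representations only) comes from a coaction, since $\mathcal O(G)$ is not an object of $\Rep{G}$. I would handle this by extending $\varphi$ to $\mathrm{Ind}(\Rep G)$ by colimits, which is legitimate because $\otimes$ commutes with filtered colimits. Then $X\to \mathcal O(G)\otimes X$ is read off by evaluating at $\mathcal O(G)$ and composing with the counit $\varepsilon:\mathcal O(G)\to\CC$. Finiteness of the support then comes for free from $\dim X<\infty$.
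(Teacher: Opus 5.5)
Your route is the same as the paper's. You identify $\Dr{\Rep{G}}$ with finite-dimensional Yetter--Drinfeld modules over $\mathcal O(G)$; the paper does not prove this step but cites \cite[Proposition 7.15.3]{EGNO}. You then read these as conjugation-equivariant finite-dimensional $\mathcal O(G)$-modules, i.e.\ finitely supported $G$-equivariant coherent sheaves with the convolution product, and use connectedness only to put the support in $Z(G)$. The conclusion you reach for the first step is correct. However, your sketch of that step, which you yourself flag as the crux, interchanges $\mathcal O(G)$ with its dual $\mathcal O(G)^*$ in three places, and as written the justification does not go through.

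First, $\End_G(\mathcal O(G))$ is not $\mathcal O(G)^{op}$, because multiplication by a nonconstant function does not commute with translations. The commutant of the translation action is $\mathcal O(G)^*$ acting by convolution, and it contains the translations on the other side.

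Second, a half-braiding encodes an action $\mathcal O(G)\otimes X\to X$, not a coaction $X\to\mathcal O(G)\otimes X$. A coaction of $\mathcal O(G)$ is just a $G$-representation, which $X$ already carries. Indeed, naturality along the unit $\CC\to\mathcal O(G)$ forces $\varphi_{\mathcal O(G)}(x\otimes 1)=1\otimes x$, so nothing of that shape can be read off. Your counit recipe in fact produces the correct datum, $a\cdot x:=(\varepsilon\otimes\Id_X)\,\varphi_{\mathcal O(G)}(x\otimes a)$, and the rest goes as follows:
\begin{itemize}
\item naturality with respect to $\mathcal O(G)^*$ shows that $\varphi_{\mathcal O(G)}$ is determined by this action;
\item via $V\hookrightarrow\mathcal O(G)^{\oplus\dim V}$, all of $\varphi$ is then recovered as $\varphi_V(x\otimes v)=v_{(0)}\otimes v_{(-1)}\cdot x$;
\item the hexagon axiom, together with naturality along the equivariant multiplication $\mathcal O(G)\otimes\mathcal O(G)\to\mathcal O(G)$, gives associativity of the action;
\item $G$-equivariance of $\varphi_{\mathcal O(G)}$ gives the Yetter--Drinfeld (conjugation) compatibility.
\end{itemize}

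Third, your formula $\varphi_V(x\otimes v)=\rho_x\cdot v\otimes x$ with $\rho_x\in G$ is valid only for reduced modules, i.e.\ sums of skyscrapers at points. For a non-reduced module such as $\mathcal O(G)/\mathfrak m^2$, the half-braiding involves the $\mathfrak g$-action on $V$. This is exactly the nilpotent data the paper exploits right after this proposition.

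With these corrections, or by simply citing the Yetter--Drinfeld equivalence as the paper does, your argument goes through.
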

	
	\begin{proof}[Proof Sketch]
		 The category $\Rep{G}$ is equivalent as a tensor category to the category $\mathcal O(G)\mathbf{-comod}$ of finite dimensional comodules over the ring of regular functions $\mathcal O(G)$. 
	\cite[Proposition 7.15.3]{EGNO} says that the center of the category $K\mathbf{-comod}$ for a Hopf algebra $K$ is monoidally equivalent to the category $\mathcal{YD}_{K}^K$ of finite-dimensional Yetter--Drinfeld modules over $K$. In particular, $\Dr{\Rep{G}}\simeq \YD{G}$. 
	The category $\YD{G}$ is naturally equivalent as a tensor category to the category $\mathbf{Coh}_G^{f.s.}(G)$ of $G$-equivariant coherent sheaves over $G$ supported on finitely many points, where $G$ acts by conjugation.
	Since we are considering coherent sheaves that are equivariant with respect to the conjugation action of $G$, the support must be a union of conjugacy classes of $G$, which is finite if and only if the support lies entirely in the center of $G$ assuming $G$ is connected. 
	\end{proof}
	
	\begin{corollary}
		We have a $Z(G)$-grading on the center $\Dr{\Rep{G}} \simeq \bigoplus_{g\in Z(G)} \mathcal Z_{g},$
		with mutually equivalent (as abelian categories) graded components.
	\end{corollary}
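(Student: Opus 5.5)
The plan is to read the grading off the previous proposition, \autoref{sheafyCenter}. An object of $\Dr{\Rep{G}}$ corresponds to a $G$-equivariant coherent sheaf $\mathcal F$ on $G$, where $G$ acts on itself by conjugation, with finite support. The proof of \autoref{sheafyCenter} already shows that such a support is a finite union of conjugacy classes. Since $G$ is connected, each conjugacy class is connected, so a finite class must be a single point, i.e.\ an element of $Z(G)$. The support of $\mathcal F$ is therefore a finite subset of $Z(G)$, and $\mathcal F$ splits canonically as $\bigoplus_{g\in Z(G)} \mathcal F_g$, where $\mathcal F_g$ is the part supported at $g$. This uses that a sheaf with finite discrete support is the direct sum of its stalk parts, and each point of $Z(G)$ is $G$-fixed, so the splitting is equivariant. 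Define $\mathcal Z_g$ to be the full subcategory of sheaves supported at $\{g\}$. Morphisms between sheaves with disjoint supports vanish, so this gives a direct sum decomposition of $\Dr{\Rep{G}}$ as an abelian category.

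Next I check compatibility with the tensor product. Under the equivalence with Yetter--Drinfeld modules, the tensor product in $\Dr{\Rep{G}}$ is the convolution $m_*(\mathcal F\boxtimes\mathcal F')$ along the multiplication $m:G\times G\to G$. Equivalently, the $\mathcal O(G)$-coaction on a tensor product is the product of the coactions. Hence sheaves supported at $g$ and $h$ convolve to a sheaf supported at $gh$. The unit is the skyscraper at $e$, so it lies in $\mathcal Z_e$. This gives a $Z(G)$-grading in the sense of the definition in \autoref{sec:prelim}.

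For the mutual equivalence of the graded components, I use translation by central elements. For $z\in Z(G)$, left multiplication $t_z:G\to G$ commutes with conjugation, because $z$ is central. Pushforward $(t_z)_*$ therefore preserves equivariant coherent sheaves and carries support at $g$ to support at $zg$. Its inverse is $(t_{z^{-1}})_*$, so it gives an abelian equivalence $\mathcal Z_g\simeq\mathcal Z_{zg}$. Taking $z=hg^{-1}$ gives $\mathcal Z_g\simeq\mathcal Z_h$ for all $g,h\in Z(G)$. Alternatively, $(t_z)_*$ is the convolution with the skyscraper at $z$, equipped with the trivial equivariant structure, which is an invertible object of $\mathcal Z_z$.

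The main point requiring care is that this translation is not monoidal: it only identifies the components as abelian categories, which is all the statement claims. The other point to verify is that the equivalence of \autoref{sheafyCenter} turns the tensor product of the center into convolution, so that the support multiplies. In the Yetter--Drinfeld picture this is immediate from the coaction being multiplicative. The rest is routine.
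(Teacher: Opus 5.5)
Your proposal is correct and follows the same route as the paper. You split a finitely supported equivariant sheaf according to its support points, which are necessarily central, to get the components $\mathcal Z_g$, and you identify these components via pushforward along translation by central elements. You additionally check that supports multiply under convolution, which the paper leaves implicit; one small slip there is that in $\mathcal{YD}^{\mathcal O(G)}_{\mathcal O(G)}$ the support is recorded by the $\mathcal O(G)$-action, which acts on a tensor product through $\Delta=m^*$, not by the coaction (that is the $G$-representation), though your convolution description $m_*(\mathcal F\boxtimes\mathcal F')$ already gives the right conclusion.
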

	
	\begin{proof}
		$\mathbf{Coh}_G^\mathrm{fin.supp.}(G)$ is a direct sum of categories $\mathcal Z_g$ of $G$-equivariant coherent sheaves supported within $\{g\}$ for $g\in Z(G)$.
		Translation by $g$ induces an equivalence between $\mathcal Z_e$ and $\mathcal Z_g$.  
	\end{proof}
	
	\begin{corollary}\label{sheafy-center-sl2}
		Taking $G=SL_2$, we have a monoidal equivalence between $\Dr{\TLf{1}{\CC}}$ and $\mathbf{Coh}_{SL_2}^{\mathrm{supp}\subset\{\pm1\}}(SL_2)$ of $SL_2$--equivariant (with respect to the conjugation action) coherent sheaves on $SL_2$ supported $\{\pm 1\}$.
	\end{corollary}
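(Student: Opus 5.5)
The plan is to specialize \autoref{sheafyCenter} to $G=SL_2$ and then transport the result along the known equivalence $\TLf{1}{\CC}\simeq\Rep{SL_2}$.

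\textbf{Step 1: reduce to $\Rep{SL_2}$.} At $q=1$, \autoref{thm:sl2fund} identifies $\preTLf{1}{\CC}$ with $\mathbf{Fund}(U(\mathfrak{sl}_2))$, i.e.\ with the full monoidal subcategory of $\Rep{SL_2}$ generated by the defining representation $V$. Every finite-dimensional $SL_2$-representation is a direct summand of a direct sum of tensor powers of $V$, and $\Rep{SL_2}$ is semisimple in characteristic zero. Hence taking Cauchy completions gives a monoidal equivalence $\TLf{1}{\CC}\simeq\Rep{SL_2}$; this can also be read off from the Clebsch--Gordan argument in the proof of \autoref{thm:TLindecomposables}(4).

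\textbf{Step 2: transport the center.} The Drinfeld center is invariant under monoidal equivalence: an equivalence $\cat\simeq\mathcal D$ transports half-braidings to half-braidings and yields a braided equivalence $\Dr{\cat}\simeq\Dr{\mathcal D}$. One should check that the Cauchy completion does not interfere here. Since $\Rep{SL_2}$ is already Karoubian, \autoref{Kar-Dr} shows that no extra objects appear.

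\textbf{Step 3: apply \autoref{sheafyCenter}.} Since $SL_2$ is connected, \autoref{sheafyCenter} gives $\Dr{\Rep{SL_2}}\simeq\mathbf{Coh}^{\mathrm{fin.supp.}}_{SL_2}(SL_2)$ for the conjugation action. It remains to identify the possible supports. The support of an equivariant sheaf is a finite union of conjugacy classes, and the proof sketch shows that a conjugacy class is finite exactly when it is a single central point. The center of $SL_2(\CC)$ is $\{\pm1\}$, so the finitely supported equivariant sheaves are precisely those supported in $\{\pm 1\}$. Composing the equivalences from Steps 1--3 gives the stated monoidal equivalence.

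The only step that needs care is the finiteness claim about conjugacy classes. For a noncentral $g$, the conjugacy class is the orbit $G/C_G(g)$ with $C_G(g)\neq G$. Connectedness of $G$ forces this orbit to have positive dimension, because a connected group acting on a finite orbit must fix each of its points. So the class is infinite, and the support, being a finite union of classes, consists only of central points.
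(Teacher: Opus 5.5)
Your proposal is correct and follows the paper's own route: specialize \autoref{sheafyCenter} to $G=SL_2$, transport it along the classical equivalence $\TLf{1}{\CC}\simeq\Rep{SL_2}$, and pin down the support by $Z(SL_2)=\{\pm 1\}$, exactly as in the paper's proof sketch. Two small cosmetic points. First, the appeal to \autoref{Kar-Dr} in Step 2 is unnecessary, since $\TLf{1}{\CC}$ is already Cauchy complete; that proposition also only gives full faithfulness. Second, \autoref{thm:TLindecomposables} formally excludes $q=1$ (it assumes $q^4\neq 1$), so your aside about it is an adaptation of its argument rather than a direct citation.
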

	
	\begin{proposition}\label{sheavesVsFields}
		Objects in $\mathcal Z_e$ are in one-to-one correspondence with rational $G$-representations $V$ equipped with a $G$-equivariant linear map $\mathfrak g^*\to \End_\CC(V)$ whose image consists of mutually commuting nilpotent endomorphisms, where $\mathfrak g^*$ has the coadjoint $G$-action.
	\end{proposition}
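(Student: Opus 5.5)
We work with the Yetter--Drinfeld description from the proof sketch of \autoref{sheafyCenter}: an object of $\mathcal Z_e$ is a finite-dimensional $\mathcal O(G)$-comodule $V$ (a rational $G$-representation) that is also an $\mathcal O(G)$-module. The action map $\mathcal O(G)\otimes V\to V$ is $G$-equivariant for the conjugation action on $\mathcal O(G)$, and the module structure is set-theoretically supported at the identity $e$. Since $\mathcal O(G)$ is commutative, such a module is the same as a commuting family of operators. Because the support is $\{e\}$ and $V$ is finite dimensional, the augmentation ideal $\mathfrak m_e$ acts nilpotently, so the action factors through $\mathcal O(G)/\mathfrak m_e^N$ for some $N$.

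The key step is to translate this module data into the data in the statement.

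\begin{enumerate}
\item Restrict the action to $\mathfrak m_e$. Composing with $\mathfrak m_e\to\mathfrak m_e/\mathfrak m_e^2\cong\mathfrak g^*$ is not canonical, so we choose a $G$-equivariant splitting. Since $G$ is reductive in our case of interest ($SL_2$, characteristic $0$), the $G$-module surjection $\mathfrak m_e\to\mathfrak g^*$ of locally finite representations splits equivariantly. This gives a map $s:\mathfrak g^*\to\mathfrak m_e$.
\item Define $\rho:\mathfrak g^*\to\End(V)$ as the composite of $s$ with the action. Its image consists of commuting nilpotent operators, and $\rho$ is $G$-equivariant because both the action and $s$ are.
\item A cleaner way to see this: an equivariant module over $\mathcal O(G)$ supported at $e$ is the same as a $G$-equivariant module over the completed local ring $\widehat{\mathcal O}_{G,e}$ on which $\mathfrak m_e$ acts nilpotently. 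In characteristic $0$, the exponential map (or a $G$-equivariant formal coordinate, e.g.\ the Cayley transform $g\mapsto (g-1)(g+1)^{-1}$ for $SL_2$, which is conjugation-equivariant near $e$) identifies $\widehat{\mathcal O}_{G,e}$ equivariantly with $\widehat{\Sym}(\mathfrak g^*)$. A module over $\Sym(\mathfrak g^*)$ with nilpotent augmentation action is exactly a linear map $\mathfrak g^*\to\End(V)$ with commuting nilpotent image.
\end{enumerate}

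For the converse, given $\rho$, extend it to $\Sym(\mathfrak g^*)\to\End(V)$. Commutativity makes this an algebra map, and nilpotence makes it factor through a finite truncation. Pulling back along the equivariant identification gives an $\mathcal O(G)$-action supported at $e$, and the compatibility with the coaction is exactly $G$-equivariance of $\rho$. The two constructions are mutually inverse on objects, which gives the claimed one-to-one correspondence, and in fact an equivalence of abelian categories with morphisms being $G$-maps intertwining $\rho$.

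The main obstacle is the equivariant identification $\widehat{\mathcal O}_{G,e}\cong\widehat{\Sym}(\mathfrak g^*)$. Without it, the correspondence would only be with $\mathfrak m_e$-module data, and the relations in $\mathfrak m_e^2$ would impose extra constraints. For general connected $G$ in characteristic $0$ we would use the exponential map, which is $\Ad$-equivariant and a formal isomorphism at $0$. For $G=SL_2$ we would give the explicit Cayley map as a check. We also need to confirm that the Yetter--Drinfeld compatibility condition is indeed equivalent to equivariance of the action map under the conjugation coaction; we expect this to be routine.
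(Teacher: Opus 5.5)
Your proposal is correct and follows essentially the same route as the paper. Both identify the $\mathfrak m_e$-adic completion of $\mathcal O(G)$ with $\widehat{\Sym}(\mathfrak g^*)$, use that the module is killed by $\mathfrak m_e^N$, and read off the commuting nilpotent action of the degree-one part, transporting $G$-equivariance across. You are more careful than the paper, which never justifies that this identification is $G$-equivariant.

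One simplification: your equivariant splitting $s$ from step 1 already gives the needed identification. The induced map $\Sym(\mathfrak g^*)/\mathfrak m_0^N\to\mathcal O(G)/\mathfrak m_e^N$, with $\mathfrak m_0$ the augmentation ideal, is surjective because it is onto on cotangent spaces. Both sides have the same dimension because $e$ is a smooth point, so it is an equivariant isomorphism. The exponential or Cayley map is therefore only needed for non-reductive $G$.
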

	
	\begin{proof}
	Let $\mathfrak m$ be the maximal ideal of $\mathcal O(G)$ corresponding to the group unit. 
	Then, a coherent sheaf supported on $\{e\}$ is the same data as a finitely generated module $V$ over $\mathcal O(G)_{\mathfrak m}$ which is annihilated by $\mathfrak m^N$ for some $N>0$. 
	Moreover, there is an isomorphism between the $\mathfrak m$-adic completion of $O(G)$ and the completion of ${\mathcal O}(\mathfrak m/\mathfrak m^2)\cong \Sym(\mathfrak g^*)$. 
	Thus, $V$ acquires a natural action of the completion of $\Sym(\mathfrak g^*)$. 
	Because $V$ is annihilated by $\mathfrak m^N$, this action must factor through $\Sym(\mathfrak g^*)/\mathfrak m_0^N$, where $\mathfrak m_0$ is the irrelevant ideal generated by $\mathfrak g^*$.
	Consequently, the module structure on $V$ determines (by restricting to degree-one elements) a linear map $\theta:\mathfrak g^*\to \End_\CC(V)$ such that $\theta(x)$ is nilpotent for all $x\in\mathfrak g^*$. 
	Conversely, a map $\theta:\mathfrak g^*\to \End_\CC(V)$ whose image consists of mutually commuting nilpotent endomorphisms lifts to an action of $\Sym(\mathfrak g^*)$ (and hence of $\mathcal O(G)/\mathfrak m^N$ for some $N>0$). 
	Requiring that our sheaves be $G$-equivariant is the same as requiring that $V$ is a rational $G$-representation and $\theta$ is $G$-equivariant with respect to the coadjoint action on $\mathfrak g^*$. 
	\end{proof}
	
	\begin{remark}
	As an example, some obvious objects in the $\mathcal Z_e$ are given by $\mathcal O(G)/\mathfrak m^n$ for $n>0$, which are all indecomposable.
	However, this is far from being a complete classification.
	Indeed, by \autoref{sheavesVsFields}, classifying coherent sheaves on $G$ supported on the group unit is equivalent to classifying $\dim_\CC(\mathfrak g^*)$-tuples of commuting nilpotent matrices, which is a wild problem in the formal sense of \cite{Dro} whenever $\dim_\CC(\mathfrak g^*)\geq 2$.
	Even, under extra restriction of equivariance, the classification problem for indecomposables in $\mathcal Z_e$ seems likely to be a wild problem as well. 
	\end{remark}

	\subsection*{The case of $q=-1$}
	
	For this case, our plan is to use a cocycle twist of the $q=1$ case to compute its center.
	For what follows, we fix an abelian 3-cocycle $(\omega,\gamma)\in Z_{\mathrm{ab}}^3(\ZZ/2\ZZ, \KK^\times)$ given by $\omega(g_1,g_2,g_3) = (-1)^{g_1g_2g_3}$ and $\gamma(g_1,g_2) = (\sqrt{-1})^{g_1g_2}$.
		
	\begin{proposition}\label{qTwistedTL}
		 There exists a braided equivalence of tensor categories $\TLf{q}{\KK}^{(\omega,\gamma)}\simeq \TLf{-q}{\KK}$.
	\end{proposition}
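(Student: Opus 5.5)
The plan is to use the universal property of \autoref{univ-prop} to build a tensor functor from $\TLf{-q}{\KK}$ into the twisted category, and then to check that it is an equivalence compatible with the braidings.

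First, I will set up the twisted data. The twisted category $\TLf{q}{\KK}^{(\omega,\gamma)}$ uses the $\ZZ/2\ZZ$--grading of \autoref{TLgrading}, in which $\obj 1$ has degree $1$. Write $\alpha^\omega$ for its associator. Then $\alpha^\omega_{\obj1,\obj1,\obj1} = \omega(1,1,1)\,\alpha_{\obj1,\obj1,\obj1} = -\alpha_{\obj1,\obj1,\obj1}$; since $\TLf{q}{\KK}$ is strict, this is just $-\Id_{\obj 3}$.

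Next, I will choose duality data on $\obj 1$ in the twisted category. Take $f:=\CUP$ and $g:=-\CAP$. The zigzag identities acquire one factor of $\omega(1,1,1)=-1$ from the associator and one factor of $-1$ from $g$, so they still hold. The loop becomes $g\circ f=-(\CAP\circ\CUP)=[2]_q=-[2]_{-q}$, using $[2]_{-q}=-q-q^{-1}$. By \autoref{univ-prop} applied with parameter $-q$, this gives a tensor functor $H:\TLf{-q}{\KK}\to\TLf{q}{\KK}^{(\omega,\gamma)}$ with $H(\obj1)=\obj1$. The associativity of $H$ is controlled by the universal property, so the coherence data is already taken care of there.

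I will then show that $H$ is an equivalence. Since $H$ is the identity on objects $\obj n$, it suffices to show that $H$ is bijective on $\Hom(\obj n,\obj m)$. Any TL diagram is sent to the same diagram multiplied by a sign $\pm1$. This sign comes from the number of caps and from reassociation factors; the reassociations are well defined because the twist is a coboundary-free global rescaling on strict generators. A diagram basis is therefore mapped to a basis up to sign, so $H$ is fully faithful. It is essentially surjective because both categories are Cauchy completions generated by $\obj1$. Alternatively, one could run the same construction in reverse, using $\omega^{-1}=\omega$, to get an inverse functor.

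Finally, I will treat the braiding. Let $\sigma$ be a braiding on $\TLf{q}{\KK}$ with constant $a$ in \eqref{braiding}, so that $\sigma=a\,\Id+a^{-1}\CUP\CAP$. In the twisted category, $\sigma^\gamma_{\obj1,\obj1}=\sqrt{-1}\,\sigma_{\obj1,\obj1}=\sqrt{-1}a\,\Id+\sqrt{-1}a^{-1}\CUP\CAP$. Expressed in terms of $H$'s cup and cap, $\CUP\CAP=-f g$, so this equals $a'\Id+a'^{-1}fg$ with $a'=\sqrt{-1}a$. Indeed, $a'^{-1}=-\sqrt{-1}a^{-1}$, which matches. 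The new constant satisfies $a'^2+a'^{-2}=-(a^2+a^{-2})=-[2]_q=[2]_{-q}$, so $a'$ is one of the four braiding constants for $-q$. Since a braiding is determined by $\sigma_{\obj1,\obj1}$, $H$ is braided when $\TLf{-q}{\KK}$ is given the braiding with constant $a'$. The main obstacle I expect is the sign bookkeeping in the previous step: ensuring that the associator signs interact consistently with the cup/cap rescaling for all diagrams, not only the zigzags. I would handle this by trusting the universal property rather than tracking diagram signs.
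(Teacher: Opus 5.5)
Your proposal is correct and follows essentially the same route as the paper: apply the universal property to $\obj 1$ in the twisted category with $f=\CUP$ and $g=-\CAP$, then argue that the resulting functor is an equivalence and that it intertwines the braidings. Your sign checks, your argument that each diagram is sent to $\pm$ itself, and your computation that $a'=\sqrt{-1}\,a$ satisfies $a'^2+a'^{-2}=[2]_{-q}$ (which covers all four braidings for $-q$) just fill in what the paper calls ``clear''; the phrase ``coboundary-free global rescaling'' should be replaced by the simpler reason that the sign is well defined because $H$ is a functor whose structure isomorphisms and the twisted associators are all $\pm$ identities.
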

	
	\begin{proof}
		The object $\obj 1$ in $\TLf{q}{\KK}^{(\omega,\gamma)}$ together with the maps $f = \CUP$ and $g = -\CAP$ satisfy the condition of \autoref{univ-prop} of the universal property of $\TLf{-q}{\KK}$.
		Thus, we get a canonical tensor functor $\TLf{-q}{\KK} \to \TLf{q}{\KK}^{(\omega,\gamma)}$ given by $\obj 1\mapsto \obj 1$, $\CUP\mapsto \CUP$ and $\CAP\mapsto -\CAP$.
		It is clearly an equivalence (in fact, an isomorphism) and it intertwines every braiding on $\TLf{-q}{\KK}$ with some braiding on $\TLf{q}{\KK}$ (by choosing the appropriate square root of $q$). 
	\end{proof}
	
	\begin{corollary}
		$\Dr{\TLf{-q}{\KK}}\simeq \Dr{\TLf{q}{\KK}}^{(\omega,\gamma)}$. Thus, $\Dr{\TLf{-1}{\KK}}\simeq \mathbf{Coh}_{SL_2}^{\mathrm{supp}\subset\{\pm1\}}(SL_2)^{(\omega,\gamma)}$. 
	\end{corollary}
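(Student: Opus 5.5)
The plan is to combine the twisting-of-centers result \autoref{TwistedCenter} with \autoref{qTwistedTL}, and then specialize to $q=1$ using \autoref{sheafy-center-sl2}.

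First, \autoref{qTwistedTL} gives a braided equivalence $\TLf{-q}{\KK}\simeq\TLf{q}{\KK}^{(\omega,\gamma)}$. The Drinfeld center depends only on the underlying monoidal category, not on the braiding, so we only need the monoidal part of this equivalence. That part is $\TLf{-q}{\KK}\simeq\TLf{q}{\KK}_\omega$, since twisting by $\gamma$ leaves the monoidal structure unchanged. A monoidal equivalence induces a braided equivalence of Drinfeld centers, by transporting half-braidings along the equivalence and its monoidal structure. Hence
\[
\Dr{\TLf{-q}{\KK}}\simeq\Dr{\TLf{q}{\KK}_\omega}
\]
as braided tensor categories.

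Next, I apply \autoref{TwistedCenter} to $\cat=\TLf{q}{\KK}$ with its $\ZZ/2\ZZ$--grading from \autoref{TLgrading} and the abelian $3$-cocycle $(\omega,\gamma)$. This gives
\[
\Dr{\TLf{q}{\KK}_\omega}\simeq\Dr{\TLf{q}{\KK}}^{(\omega,\gamma)}.
\]
Before doing so, I must check that $(\omega,\gamma)$ really satisfies the abelian cocycle identities. With $\omega(g_1,g_2,g_3)=(-1)^{g_1g_2g_3}$ and $\gamma(g_1,g_2)=(\sqrt{-1})^{g_1g_2}$, this is a finite check on $(\ZZ/2\ZZ)^3$. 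The only nontrivial hexagon case is $g_1=g_2=g_3=1$, which reduces to $(-1)\cdot\gamma(1,0)\cdot(-1)=\gamma(1,1)^2(-1)$. This gives $1=(-1)(-1)$, which is consistent. I also note that the grading on $\Dr{\TLf{q}{\KK}}$ used in the twist is the one inherited through the forgetful functor, as stated before \autoref{TwistedCenter}.

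For the second claim I set $q=1$. Then $\Dr{\TLf{-1}{\KK}}\simeq\Dr{\TLf{1}{\KK}}^{(\omega,\gamma)}$. By \autoref{sheafy-center-sl2}, $\Dr{\TLf{1}{\CC}}\simeq\mathbf{Coh}_{SL_2}^{\mathrm{supp}\subset\{\pm1\}}(SL_2)$. Twisting this by $(\omega,\gamma)$ requires a $\ZZ/2\ZZ$--grading on the sheaf category that matches the grading on the center. The matching grading is the action of the central element $-1\in SL_2$ on equivariant sheaves: sheaves on which $-1$ acts by $(-1)^n$ sit over objects of degree $n$, which correspond to the odd or even $T_n$.

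I expect this identification of gradings to be the main obstacle. It needs the equivalence of \autoref{sheafy-center-sl2} to be compatible with the forgetful functors to $\Rep{SL_2}$, so that the grading by the action of $-1$ agrees with the parity grading of the underlying objects. Since the equivalence goes through Yetter--Drinfeld modules and keeps the underlying comodule, I would argue this compatibility directly. This also implicitly takes $\KK=\CC$ in the last statement.
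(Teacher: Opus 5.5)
Your proposal is correct and takes the same route as the paper. The paper's proof just cites \autoref{qTwistedTL}, \autoref{TwistedCenter}, and \autoref{sheafy-center-sl2}, combined exactly as you do. Your additional checks are details the paper leaves implicit, and your handling of each is sound: the abelian cocycle identities, matching the $\ZZ/2\ZZ$--grading on the sheaf side with the action of the central element $-1\in SL_2$ via the underlying comodule, and the observation that the second statement really needs $\KK=\CC$.
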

	
	\begin{proof}
		This is a direct consequence of \autoref{TwistedCenter}, \autoref{qTwistedTL}, and \autoref{sheafy-center-sl2}.
	\end{proof}
	
	\begin{remark}
		One can also approximate $\Dr{\TLf{-1}{\KK}}$ as an ultraproduct of $\Verp^{(\omega,\gamma)}\simeq \Verp^+\boxtimes \mathbf{Sem}$ as in \autoref{VerpApprox}, where $\Verp^+$ is the tensor subcategory of $\Verp$ generated by $L_2$ and $\mathbf{Sem}\simeq\sVect^{(\omega,\gamma)}$ is the modular tensor category of \emph{semions}. This is because the operation of  twisting associators and braidings commutes with taking ultraproducts by \Los.
	\end{remark}

	\subsection*{The Root of Unity Case}
	
	Suppose now that $q$ is a primitive root of unity.
	\autoref{ultraTL} still holds in this case, but the approximating categories $\TLf{q_k}{\FFpk}$ are much less well-behaved. 
	In particular, the sequence $q_k$ must also be roots of unity in $\FFpk$ for almost all $k$; so, the approximating categories are almost never semisimple. 
	Thus, the problem of understanding the center in this case is more complicated with some interesting variants. 
	
	\begin{problem}
		Compute the Drinfeld center of the Temperley-Lieb at roots of unity and the canonical functor from its semisimplification to the centers of the semisimplified categories $\Ver$.
	\end{problem}

	Another interesting variant of this problem which may lead to more non-trivial half-braidings was suggested by Ostrik (unpublished).
	
	\begin{problem}
		Compute the center of the derived category of the Temperley--Lieb category at a root of unity. 
	\end{problem}

	\subsection*{Other Lie Types}
	
	Fix a semisimple lie algebra $\mathfrak g$ with weight and root lattices $P$ and $Q$ respectively. 
	The category $U_q(\mathfrak g)\mathbf{-mod}$ of finite dimensional type-I representations of $U_q(\mathfrak g)$ admits a faithful $P/Q$ grading. 
	Thus, we have a canonical functor $G:\Rep{P/Q}\to \Dr{U_q(\mathfrak g)\mathbf{-mod}}$ by \autoref{grading-duality}. 
	Moreover, the braiding on $U_q(\mathfrak g)\mathbf{-mod}$ gives us the canonical functor $D: U_q(\mathfrak g)\mathbf{-mod}\boxtimes U_q(\mathfrak g)\mathbf{-mod}^\rev \to \Dr{U_q(\mathfrak g)\mathbf{-mod}}$. 
	
	\begin{conjecture}
	There exists a $(P/Q)^{\oplus 3}$ bicharacter $\gamma$ such that the canonical functor $$Z:\bigg(U_q(\mathfrak g)\mathbf{-mod}\boxtimes U_q(\mathfrak g)\mathbf{-mod}^\rev \boxtimes \Rep{P/Q} \bigg)^\gamma \to \Dr{U_q(\mathfrak g)\mathbf{-mod}}$$
	is equivalences of braided tensor categories whenever $q\in\CC$ is not a root of unity.
	\end{conjecture}
	
	The same ultraproduct techniques should be applicable in this setting. 
	One gets modular tensor categories by setting $q$ to some appropriate roots of unity and semisimplifying. 
	The ultraproduct of the resulting categories contains $U_q(\mathfrak g)\mathbf{-mod}$, and, considering appropriate filtrations, one gets equivalences on the filtered pieces.
	The main difficulty is that the filtrations on the centers of these fusion categories is more complicated as the affine Weyl groups become more complex. 
	We leave this investigation for later work.

	\subsection*{The Crystal Limit, $q\to 0$}
	
	In a previous joint work with Stroi\'nski \cite{AS}, we studied a renormalization of $\TLf{q}{\KK}$ which allows setting $q=0$. The resulting category can be defined exactly as in \autoref{TL-def} except that the relations are modified into
	
				\[
			\begin{tikzpicture}
	\begin{pgfonlayer}{nodelayer}
		\node [style=none] (0) at (-1.5, 0.75) {};
		\node [style=none] (1) at (-1.25, 1) {};
		\node [style=none] (2) at (-1, 0.75) {};
		\node [style=none] (3) at (-2, 0.5) {};
		\node [style=none] (4) at (-1.75, 0.25) {};
		\node [style=none] (5) at (-1.5, 0.5) {};
		\node [style=none] (6) at (-1, 0) {};
		\node [style=none] (7) at (-2, 1.25) {};
		\node [style=none] (8) at (-0.5, 0.75) {$=$};
		\node [style=none] (9) at (0, 0.75) {0};
		\node [style=none] (11) at (1.5, 0.75) {};
		\node [style=none] (12) at (1.25, 1) {};
		\node [style=none] (13) at (1, 0.75) {};
		\node [style=none] (14) at (2, 0.5) {};
		\node [style=none] (15) at (1.75, 0.25) {};
		\node [style=none] (16) at (1.5, 0.5) {};
		\node [style=none] (17) at (1, 0) {};
		\node [style=none] (18) at (2, 1.25) {};
		\node [style=none] (19) at (0.5, 0.75) {$=$};
	\end{pgfonlayer}
	\begin{pgfonlayer}{edgelayer}
		\draw [style=thickstrand, bend left=45] (0.center) to (1.center);
		\draw [style=thickstrand, bend right=45] (2.center) to (1.center);
		\draw [style=thickstrand, bend right=45] (3.center) to (4.center);
		\draw [style=thickstrand, bend left=45] (5.center) to (4.center);
		\draw [style=thickstrand] (5.center) to (0.center);
		\draw [style=thickstrand] (6.center) to (2.center);
		\draw [style=thickstrand] (3.center) to (7.center);
		\draw [style=thickstrand, bend right=45] (11.center) to (12.center);
		\draw [style=thickstrand, bend left=45] (13.center) to (12.center);
		\draw [style=thickstrand, bend left=45] (14.center) to (15.center);
		\draw [style=thickstrand, bend right=45] (16.center) to (15.center);
		\draw [style=thickstrand] (16.center) to (11.center);
		\draw [style=thickstrand] (17.center) to (13.center);
		\draw [style=thickstrand] (14.center) to (18.center);
	\end{pgfonlayer}
\end{tikzpicture}
			\]
			and
			\[
\begin{tikzpicture}
	\begin{pgfonlayer}{nodelayer}
		\node [style=none] (1) at (0, 1) {};
		\node [style=none] (3) at (-0.5, 0.5) {};
		\node [style=none] (4) at (0, 0) {};
		\node [style=none] (5) at (0.5, 0.5) {};
		\node [style=none] (6) at (1.65, 0.5) {$=\Id_{{\obj{0}}}.$};
	\end{pgfonlayer}
	\begin{pgfonlayer}{edgelayer}
		\draw [style=thickstrand, bend right=45] (3.center) to (4.center);
		\draw [style=thickstrand, bend left=45] (5.center) to (4.center);
		\draw [style=thickstrand, bend right=45] (5.center) to (1.center);
		\draw [style=thickstrand, bend left=45] (3.center) to (1.center);
	\end{pgfonlayer}
\end{tikzpicture}
			\]
			
			The Cauchy completion of the resulting category, which we denote $\mathbf{CrysTL}$, is monoidally equivalent to the linearized category of $\mathfrak{sl}_2$--crystals.
			Let us use the same notation for the objects of $\mathbf{CrysTL}$ as we did for $\TLf{q}{\KK}$; in particular, the object corresponding to $n$ dots is denoted $\obj n$ and the simple objects are denoted $T_0, T_1, T_2, \dots$.
			Note that, \autoref{half-braid-criterion-TL} still holds in $\mathbf{CrysTL}$ with the exact same proof. 
			
			The category $\mathbf{CrysTL}$ is not braided or rigid, but it is still semisimple with the same Grothendieck ring as the generic Temperley--Lieb category. In particular, we still have a $\ZZ/2\ZZ$--grading, so we get a braided tensor functor $G:\Rep{\ZZ/2\ZZ}\to \Dr{\mathbf{CrysTL}}$ which gives us two distinct half-braidings on $\id$.
			
			\begin{conjecture}\label{crystal-conj}
				The canonical functor $G:\Rep{\ZZ/2\ZZ}\to \Dr{\mathbf{CrysTL}}$ is a monoidal equivalence.
			\end{conjecture}
			
			We give some evidence for this claim presently.

			\begin{enumerate}
				\item Let $\varphi$ be a half-braiding on $\obj 0 \cong \id$. 
				Then, $\varphi_{\obj 1}:\obj 0\otimes \obj 1\to \obj 1 \otimes \obj 0$ is of the form $\lambda \cdot r^{-1}l$ for some scalar $\lambda\in\KK^\times$. 
				Naturality of $\varphi$ with respect to $\CAP:\obj 1\otimes \obj 1 \to \obj 0$ along with the hexagon axiom imply that $\lambda = \pm 1$ each of which giving a distinct half-braiding by \autoref{half-braid-criterion-TL}; they generate the image of the functor $G$.
				\item Next, observe that the data of a half-braiding on $\obj 0^{\oplus n}$ is equivalent to the data of an $n$-dimensional representation of the grading group $\ZZ/2\ZZ$, which is a direct sum of one-dimensional representations.
			It follows that the only half-braidings $\obj 0^{\oplus n}$ admits are direct sums of half-braidings on $\obj 0$. 
			\item $\obj{m}$ admits no half-braidings in $\mathbf{CrysTL}$ for all positive integers $m$. To see this, suppose that $\varphi$ is a half-braiding on $\obj m$ for a positive integer $m$. 
				So, we have an isomorphism $$\varphi_{\obj 1}: \obj{m}\otimes \obj {1}\to \obj{1}\otimes \obj{m},$$
				which by naturality with respect to $\CAP:\obj{1}\otimes \obj{1}\to \obj{0}$ and the hexagon axiom satisfies
				\[
\begin{tikzpicture}[scale=.75]
	\begin{pgfonlayer}{nodelayer}
		\node [style=none] (0) at (-3, -4.25) {};
		\node [style=none] (1) at (-2.5, -4.25) {};
		\node [style=none] (2) at (-2, -4.25) {};
		\node [style=none] (3) at (-1.5, -4.25) {};
		\node [style=none] (4) at (-1, -4.25) {};
		\node [style=jonesrectangle] (5) at (-2.25, -2.75) {$\quad\varphi_{\obj{1}}\quad$};
		\node [style=none] (7) at (-2.5, -2.5) {};
		\node [style=none] (8) at (-2, -2.5) {};
		\node [style=none] (11) at (-3, -0.25) {};
		\node [style=none] (12) at (-2.5, -0.25) {};
		\node [style=none] (14) at (-1.5, 0.25) {};
		\node [style=none] (15) at (-1, 0.25) {};
		\node [style=none] (16) at (0.5, -2.25) {$=$};
		\node [style=none] (17) at (2, -4.25) {};
		\node [style=none] (18) at (2.5, -4.25) {};
		\node [style=none] (19) at (3, -4.25) {};
		\node [style=none] (20) at (3.5, -4.25) {};
		\node [style=none] (21) at (4, -4.25) {};
		\node [style=none] (22) at (2, 0.25) {};
		\node [style=none] (23) at (2.5, 0.25) {};
		\node [style=none] (24) at (3, 0.25) {};
		\node [style=jonesrectangle] (25) at (-1.75, -1.25) {$\quad\varphi_{\obj{1}}\quad$};
		\node [style=none] (26) at (-2.5, -3.75) {$\dots$};
		\node [style=none] (27) at (-2, -2) {$\dots$};
		\node [style=none] (28) at (-2, -1) {};
		\node [style=none] (29) at (-2, 0.25) {};
		\node [style=none] (30) at (-1.5, -1) {};
		\node [style=none] (31) at (2.5, -2.25) {$\dots$};
		\node [style=none] (32) at (-1.5, -0.5) {$\dots$};
		\node [style=none] (33) at (4.5, -2.25) {,};
	\end{pgfonlayer}
	\begin{pgfonlayer}{edgelayer}
		\draw [style=thickstrand] (4.center) to (15.center);
		\draw [style=thickstrand] (2.center) to (8.center);
		\draw [style=thickstrand] (7.center) to (12.center);
		\draw [style=thickstrand] (17.center) to (22.center);
		\draw [style=thickstrand] (19.center) to (24.center);
		\draw [style=thickstrand, bend left=90, looseness=2.50] (20.center) to (21.center);
		\draw [style=thickstrand] (0.center) to (11.center);
		\draw [style=thickstrand, bend left=90, looseness=2.50] (11.center) to (12.center);
		\draw [style=thickstrand] (29.center) to (28.center);
		\draw [style=thickstrand] (30.center) to (3.center);
	\end{pgfonlayer}
\end{tikzpicture}
				\]
				where the ``\dots'' represent $\obj{m-2}$ vertical lines. Suppose $\varphi_{\obj{1}}$ is some Temperley--Lieb diagram. 
				Due to the ziz-zag-equal-zero relation, the cap on the left-hand side cannot be straightened.
				It cannot be removed via the circle relation either since that would require having a cup connecting the first strand to some other strand in the bottom copy of $\varphi_{\obj{1}}$, which causes a zig-zag with the top copy of $\varphi_{\obj{1}}$.
				Thus, the only way to bring the cap to the right as in the righ-hand side of the equation is if $\varphi_{\obj 1}$ is of the form
				$$\begin{tikzpicture}[scale=.75]
	\begin{pgfonlayer}{nodelayer}
		\node [style=none] (0) at (-2, 1) {};
		\node [style=none] (1) at (2, 1) {};
		\node [style=none] (2) at (2, -1) {};
		\node [style=none] (3) at (-2, -1) {};
		\node [style=none] (4) at (0, 0) {};
		\node [style=none] (5) at (1, 0.5) {$\dots$};
		\node [style=none] (6) at (-1, -0.5) {$\dots$};
		\node [style=none] (7) at (-1.5, 1) {};
		\node [style=none] (8) at (1.5, -1) {};
	\end{pgfonlayer}
	\begin{pgfonlayer}{edgelayer}
		\draw (0.center) to (1.center);
		\draw (1.center) to (2.center);
		\draw (2.center) to (3.center);
		\draw (3.center) to (0.center);
		\draw [style=thickstrand, in=-180, out=-90] (7.center) to (4.center);
		\draw [style=thickstrand, in=90, out=0] (4.center) to (8.center);
	\end{pgfonlayer}
\end{tikzpicture}$$
with the subdiagrams ``$\dots$'' necessarily having no through strands. 
But this implies the whole left-hand side morphism has no through strands, contradicting the right-hand side of the equation.
Finally, if $\varphi_{\obj{1}}$ was a linear combination of multiple diagrams, upon inserting each diagram in the left-hand side of the equation, we do not get a contribution to the term on the right from any of the terms of $\varphi_{\obj{1}}$ due to the analysis above.
Thus, $\obj{m}$ admits no half-braidings. 
\item $\obj m \oplus \obj {m+1}$ admits no half-braiding in $\mathbf{CrysTL}$ for any positive $m$. Indeed, suppose now that $\varphi$ is a half-braiding on $\obj{m}\oplus\obj{m+1}$ for some positive integer $m$. In particular, we get an isomorphism $$(\obj{m-1}\oplus\obj {m+1}) \oplus (\obj{m}\oplus\obj{m+2})\cong (\obj m \oplus \obj{m+1})\otimes \obj 1 \overset{\varphi_{\obj{1}}}{\longrightarrow} \obj 1\otimes (\obj m \oplus \obj{m+1})\cong (\obj{m-1}\oplus\obj {m+1}) \oplus (\obj{m}\oplus\obj{m+2}) .$$
Since there are no nonzero morphisms from $\obj{m}$ to $\obj{n}$ for $m$ and $n$ of different parity, we must have $\varphi_{\obj{1}} = \psi_{\obj{1}}\oplus \vartheta_{\obj{1}}$ for some isomorphisms $\psi_{\obj 1}:\obj m\otimes \obj 1\to \obj 1 \otimes \obj m$ and $\vartheta_{\obj 1}:\obj{m+1}\otimes \obj 1\to \obj 1 \otimes \obj{m+1}$ both satisfying the naturality condition of \autoref{half-braid-criterion-TL}. 
Thus, we get half-braidings on $\obj m$ and on $\obj{m+1}$, which was shown to be impossible. 
\item Every object in $\mathbf{CrysTL}$ embeds in $\obj m \oplus \obj{m+1}$ for sufficiently large $m$. This is easily seen by the fact that the multiplicity of $T_n$ strictly increases in $\obj m$ as $m\to \infty$ whenever $m$ and $n$ have the same parity.
			\end{enumerate}
			
Thus, to prove \autoref{crystal-conj} it suffices to show that a summand $X$ of $\obj m$ admits a half-braiding only if $X\cong \id^{\oplus n}$.
Suppose $\varphi$ is a half-braiding on a summand $X$ of $\obj{m}$.
Let the idempotent $e\in\End({\obj m})$ be the projection onto $X$. 
Then, the morphisms $\psi_Z:= (\Id_Z\otimes e)\circ \varphi_Z \circ  (e\otimes \Id_Z)$ define a natural transformation $\psi:\obj m \otimes - \implies -\otimes \obj{m}$ obeying the hexagon axiom (but is not an isomorphism). 
By naturality with respect to $\CAP$, we have 
\[\begin{tikzcd}
	{\obj m\otimes \obj 1\otimes \obj 1} & {\obj 1\otimes \obj m\otimes \obj 1} & {\obj 1\otimes \obj 1\otimes \obj m} \\
	{\obj m\otimes \obj 0} && {\obj 0\otimes \obj m}
	\arrow["{\psi_{\obj 1}\otimes \Id_{\obj 1}}", from=1-1, to=1-2]
	\arrow["{\Id_X\otimes \CAP}"', from=1-1, to=2-1]
	\arrow["{ \Id_{\obj 1}\otimes \psi_{\obj 1}}", from=1-2, to=1-3]
	\arrow["{\CAP\otimes X}", from=1-3, to=2-3]
	\arrow["{\varphi_{\obj 0} = e\,\circ \, r^{-1}l\,\circ\, e\,}"', from=2-1, to=2-3]
\end{tikzcd}\] 
To prove that $X$ is a direct sum of $\obj 0$, it suffices to show that the diagrams appearing in $e$ contain no through strands, so that $e$ factors through $\obj 0^{\oplus n}$. Thus, we get the following combinatorial version of \autoref{crystal-conj}.

\textbf{Conjecture 2 (diagrammatic version).} \textit{Let $e\in\End(\obj m)$ be an idempotent, $\varphi\in\End((\obj{m},e)\otimes \obj 1)$ an isomorphism, and $\psi = (\Id_{\obj 1}\otimes e) \circ \varphi \circ (e\otimes \Id_{\obj 1})$. 
If the equation 
\[\begin{tikzpicture}[scale=.75]
	\begin{pgfonlayer}{nodelayer}
		\node [style=none] (0) at (-3, -4.25) {};
		\node [style=none] (1) at (-2.5, -4.25) {};
		\node [style=none] (2) at (-2, -4.25) {};
		\node [style=none] (3) at (-1.5, -4.25) {};
		\node [style=none] (4) at (-1, -4.25) {};
		\node [style=jonesrectangle] (5) at (-2.25, -2.75) {$\quad\;\psi\;\quad$};
		\node [style=none] (7) at (-2.5, -2.5) {};
		\node [style=none] (8) at (-2, -2.5) {};
		\node [style=none] (11) at (-3, -0.25) {};
		\node [style=none] (12) at (-2.5, -0.25) {};
		\node [style=none] (14) at (-1.5, 0.25) {};
		\node [style=none] (15) at (-1, 0.25) {};
		\node [style=none] (16) at (0.5, -2.25) {$=$};
		\node [style=none] (17) at (2, -4.25) {};
		\node [style=none] (18) at (2.5, -4.25) {};
		\node [style=none] (19) at (3, -4.25) {};
		\node [style=none] (20) at (3.5, -4.25) {};
		\node [style=none] (21) at (4, -4.25) {};
		\node [style=none] (22) at (2, 0.25) {};
		\node [style=none] (23) at (2.5, 0.25) {};
		\node [style=none] (24) at (3, 0.25) {};
		\node [style=jonesrectangle] (25) at (-1.75, -1.25) {$\quad\;\psi\;\quad$};
		\node [style=none] (26) at (-2.5, -3.75) {$\dots$};
		\node [style=none] (27) at (-2, -2) {$\dots$};
		\node [style=none] (28) at (-2, -1) {};
		\node [style=none] (29) at (-2, 0.25) {};
		\node [style=none] (30) at (-1.5, -1) {};
		\node [style=none] (31) at (2.5, -3.25) {$\dots$};
		\node [style=none] (32) at (-1.5, -0.5) {$\dots$};
		\node [style=none] (33) at (4.5, -2.25) {.};
		\node [style=none] (34) at (2.5, -0.75) {$\dots$};
		\node [style=jonesrectangle] (35) at (2.5, -2) {};
		\node [style=jonesrectangle] (36) at (2.5, -2) {$\quad e\quad$};
	\end{pgfonlayer}
	\begin{pgfonlayer}{edgelayer}
		\draw [style=thickstrand] (4.center) to (15.center);
		\draw [style=thickstrand] (2.center) to (8.center);
		\draw [style=thickstrand] (7.center) to (12.center);
		\draw [style=thickstrand] (17.center) to (22.center);
		\draw [style=thickstrand] (19.center) to (24.center);
		\draw [style=thickstrand, bend left=90, looseness=2.50] (20.center) to (21.center);
		\draw [style=thickstrand] (0.center) to (11.center);
		\draw [style=thickstrand, bend left=90, looseness=2.50] (11.center) to (12.center);
		\draw [style=thickstrand] (29.center) to (28.center);
		\draw [style=thickstrand] (30.center) to (3.center);
	\end{pgfonlayer}
\end{tikzpicture}
\] holds, $e$ must be a linear combination of diagrams containing no through-strands.}

It is conceivable that this conjecture can be settled by a simple combinatorial argument as in (3) above, but at this point, such a proof seems evasive.



\appendix

\section{Ultraproducts for the Working $\otimes$-Categorician}\label{sec:ultraprod}

In this appendix, we shall give a quick and elementary review of the basic notions of model theory, building up to ultraproducts and \Los, as well as their applicability to tensor categories and tensor functors. 
For a comprehensive introduction to ultrafilters and ultraproducts and their applications, see \cite{G}. For a treatment of ultraproducts focused on their usage in algebra, refer to \cite{Schout}.

	\subsection*{Languages, Theories, and Models}
	
	\begin{definition}
		A \emph{first-order language} $\mathcal L$ is a set of symbols of three types: constant symbols $c_i$, function symbols $f_j$ and relation symbols $R_k$. 
		Associated to each function symbol and relation symbol is a positive integer called its \emph{arity}.
	\end{definition}
	
	\begin{definition}
		Let $\mathcal L$ be a first-order language. An $\mathcal L$-structure $\mathcal M$ is the following data:
		\begin{itemize}
			\item a nonempty underlying set $M$;
			\item for each constant symbol $c\in\mathcal L$, a distinguished element $c^\mathcal M \in M$, called the \emph{interpretation} of $c$ in $\mathcal M$. 
			\item  for each $n$-ary function symbol $f\in\mathcal L$, a function $f^\mathcal M: M^n\to M$, called the \emph{interpretation} of $f$ in $\mathcal M$. 
			\item for each $n$-ary relation symbol $R\in\mathcal L$, a relation $R^\mathcal M \subseteq M^n$, called the \emph{interpretation} of $R$ in $\mathcal M$. 
		\end{itemize}
	\end{definition}
	
	\begin{example} [The language of rings] Let $\mathcal L = \{0, 1, +, -, \cdot\}$ be a language, where $0$ and $1$ are constant symbols and $+$, $-$, and $\cdot$ are binary function symbols. An example of an $\mathcal L$-structure is the set $\ZZ$ with the usual interpretations of the symbols.
	\end{example}
		
		The \emph{alphabet} for writing statements in a language $\mathcal L$ consists of the following symbols: the logical connectives $\neg, \land, \lor, \to, \leftrightarrow$; the parentheses `$($' and `$)$'  and comma `,'; the quantifiers $\forall$ and $\exists$; countably many variable symbols $x_0, x_1, x_2, \dots$; along with the symbols of $\mathcal L$. 
	
	\begin{definition}
		The set $\mathcal T$ of $\mathcal L$-\emph{terms} is defined inductively as follows:
		\begin{itemize}
			\item Let $\mathcal T_0 = \{c\in \mathcal L\mid c \text{ is a constant symbol}\}\cup \{x_0, x_1, x_2, \dots\}$.
			\item For $n\in\NN$, let $\mathcal T_{k+1} = \mathcal T_k \,\cup \{f(t_1, t_2, \dots, t_n) \mid f\in\mathcal L \text{ is an }n\text{-ary function symbol and }t_1, t_2, \dots t_n\in \mathcal T_k\}$.
			\item Let $\mathcal T = \bigcup_{k\in\NN} \mathcal T_k$.
		\end{itemize}
	\end{definition}
	
	\begin{definition}
		Let $\mathcal L$ be a first-order language. 
		We define the set $\mathcal F$ of first-order formulae in $\mathcal L$ inductively, as follows:
		\begin{itemize}
			\item Let $\mathcal F_0$ be the set of expression (so-called \emph{atomic formulae}) of the form $R(t_1, t_2, \dots, t_n)$ where $R\in\mathcal L$ is relation symbol and $t_1, t_2, \dots, t_n$ are $\mathcal L$-terms.
			\item For every $n\in\NN$, let 
			\begin{align*}
				\mathcal F_{n+1} = \mathcal F_n &\cup \{\neg \varphi \mid \varphi \in \mathcal F_n \} \cup \{\varphi \,\land \psi \mid \varphi, \psi\in\mathcal F_n \} \cup \{\varphi \,\lor \psi \mid \varphi, \psi\in\mathcal F_n \} \cup \{\varphi \,\to \psi \mid \varphi, \psi\in\mathcal F_n \}\\
				&\cup \{\varphi \,\leftrightarrow \psi \mid \varphi, \psi\in\mathcal F_n \} \cup \{\forall x_i \,\varphi \mid \varphi\in\mathcal F_n, \,i\in\NN \} \cup \{\exists x_i \,\varphi \mid \varphi\in\mathcal F_n, \,i\in\NN \}.
			\end{align*}
			\item Let $\mathcal F = \bigcup_{n\in\NN} \mathcal F_n$.
		\end{itemize}
		
		We add parenthesis to clarify the scopes of connectives and quantifiers as usual. 
		An occurrence of a variable in a first-order formula is called \emph{free} if it does not fall under the scope of a quantifier; a variable which has at least one free occurrence in a given formula is called a \emph{free variable}. 
		A first-order formula which has no free variables is called a \emph{sentence}.
	\end{definition}
	
	Informally, a first-order sentence can be thought of as a ``grammatically correct'' and ``finitary'' proposition.
	More precisely, in such a sentence, \emph{we are allowed to use only finitely many variables, finitely many applications of function/relation in our language, finitely many logical connectives, and finitely many quantifiers}. 
	Moreover, each variable must occur under the action of some quantifier for the sentence to make sense. 
	It is also crucial to realize what we are allowed or not allowed to quantify over. 
	The next two definition will clarify that the variables $x_0, x_1, x_2, \dots$, are to be interpreted  as placeholders for elements of the underlying set $M$ of our $\mathcal L$-structure $\mathcal M$. 
	This has the effect of \emph{only allowing us to quantify over elements of the structure} but not, for example, over subsets thereof, nor over families of statements.
	
	\begin{definition}
		Let $t = t(x_1, x_2, \dots, x_n)$ be a term of $\mathcal{L}$. Let $\mathcal{M}$ be an $\mathcal{L}$-structure with underlying set $M$ and let $a_1, a_2, \dots, a_n \in M$. The \textit{interpretation of the term $t$ in the structure $\mathcal{M}$ when the variables $x_1, x_2, \dots, x_n$ are interpreted respectively by the elements $a_1, a_2, \dots, a_n$} is an element of $M$, denoted by $t^\mathcal{M}(a_1, a_2, \dots, a_n)$
and is defined inductively as follows:
\begin{itemize}
    \item if $t = c$ where $c \in \mathcal{L}$ is a constant symbol, then
       $t^\mathcal{M}(a_1, a_2, \dots, a_n) = c^\mathcal{M}$;

    \item if $t = x_i$ where $1 \le i \le n$, then
       $t^\mathcal{M}(a_1, a_2, \dots, a_n) = a_i$;

    \item if $t = f(t_1, t_2, \dots, t_k)$ where $f \in \mathcal{L}$ is a $k$-ary function symbol and $t_1, t_2, \dots, t_k$ are terms of $\mathcal{L}$, then $t^\mathcal{M}(a_1, a_2, \dots, a_n) = f^\mathcal{M}\bigl(t_1^\mathcal{M}(a_1, \dots, a_n), \dots, t_k^\mathcal{M}(a_1, \dots, a_n)\bigr)$.
\end{itemize}
	\end{definition}
		
	\begin{definition}
	Suppose that $\varphi(x_1, x_2, \dots, x_n)$ is a first-order formula of some language $\mathcal{L}$. Let $\mathcal{M}$ be an $\mathcal{L}$-structure with underlying set $M$ and $a_1, a_2, \dots, a_n\in M$. We write
\[
    \mathcal{M} \models \varphi(a_1, a_2, \dots, a_n)
\]
when \textit{the formula $\varphi$ is satisfied in the structure $\mathcal{M}$ when the variables $x_1, x_2, \dots, x_n$ are interpreted respectively by the elements $a_1, a_2, \dots, a_n$}. 
We write $\mathcal{M} \not\models \psi$ when $\mathcal{M}$ does not satisfy the formula $\psi$. 
More precisely, the satisfaction of a formula in a structure is defined inductively as follows:

\begin{itemize}
    \item Suppose the formula $\varphi(x_1, \dots, x_n)$ is an atomic formula, say $\varphi = R(t_1, \dots, t_k)$ where $R \in \mathcal{L}$ is a $k$-ary relation symbol and $t_1, \dots, t_k$ are $\mathcal{L}$-terms, where the variables in each term $t_i$ are among $x_1, \dots, x_n$. Then we define $\mathcal{M} \models \varphi(a_1, \dots, a_n)$ if and only if
    \[
        (t_1^\mathcal{M}(a_1, \dots, a_n), \dots, t_k^\mathcal{M}(a_1, \dots, a_n)) \in R^\mathcal{M}.
    \]

    \item Suppose $\varphi = \neg\psi(x_1, \dots, x_n)$. Then $\mathcal{M} \models \varphi(a_1, \dots, a_n)$ if and only if $\mathcal{M} \not\models \psi(a_1, \dots, a_n)$.

    \item Suppose $\varphi = (\psi \wedge \theta)$. Then $\mathcal{M} \models \varphi(a_1, \dots, a_n)$ if and only if $\mathcal{M} \models \psi(a_1, \dots, a_n)$ and $\mathcal{M} \models \theta(a_1, \dots, a_n)$.

    \item Suppose $\varphi = (\psi \vee \theta)$. Then $\mathcal{M} \models \varphi(a_1, \dots, a_n)$ if and only if $\mathcal{M} \models \psi(a_1, \dots, a_n)$ or $\mathcal{M} \models \theta(a_1, \dots, a_n)$.

    \item Suppose $\varphi = (\psi \rightarrow \theta)$. Then $\mathcal{M} \models \varphi(a_1, \dots, a_n)$ if and only if $\mathcal{M} \not\models \psi(a_1, \dots, a_n)$ or $\mathcal{M} \models \theta(a_1, \dots, a_n)$.

    \item Suppose $\varphi = (\psi \leftrightarrow \theta)$. Then $\mathcal{M} \models \varphi(a_1, \dots, a_n)$ if and only if either both $\mathcal{M} \models \psi(a_1, \dots, a_n)$ and $\mathcal{M} \models \theta(a_1, \dots, a_n)$, or else $\mathcal{M} \not\models \psi(a_1, \dots, a_n)$ and $\mathcal{M} \not\models \theta(a_1, \dots, a_n)$.

    \item Suppose $\varphi = \forall y \, \psi(x_1, x_2, \dots, x_n, y)$ where $y \notin \{x_1, \dots, x_n\}$. 
    Then $\mathcal{M} \models \varphi(a_1, a_2, \dots, a_n)$ if and only if for every element $b$ in $\mathcal{M}$ we have that $\mathcal{M} \models \psi(a_1, a_2, \dots, a_n, b)$.

    \item Suppose $\varphi = \exists y \, \psi(x_1, x_2, \dots, x_n, y)$ where $y \notin \{x_1, \dots, x_n\}$. Then $\mathcal{M} \models \varphi(a_1, a_2, \dots, a_n)$ if and only if there exists at least one element $b$ in $\mathcal{M}$ such that $\mathcal{M} \models \psi(a_1, a_2, \dots, a_n, b)$.

    \item Suppose $\varphi = \forall x_i \, \psi(x_1, x_2, \dots, x_n)$ where $1 \le i \le n$. Then $\mathcal{M} \models \varphi(a_1, \dots, a_i, \dots, a_n)$ if and only if for every element $b$ in ${M}$ we have that $\mathcal{M} \models \psi(a_1, \dots, a_{i-1}, b, a_{i+1}, \dots, a_n)$.

    \item Suppose $\varphi = \exists x_i \, \psi(x_1, x_2, \dots, x_n)$ where $1 \le i \le n$. Then $\mathcal{M} \models \varphi(a_1, \dots, a_i, \dots, a_n)$ if and only if there is at least one element $b$ in ${M}$ such that $\mathcal{M} \models \psi(a_1, \dots, a_{i-1}, b, a_{i+1}, \dots, a_n)$.
\end{itemize}
	\end{definition}

	\begin{definition}
		Given a language $\mathcal L$, and a set $T$ of first order $\mathcal L$-sentences, a \emph{model} for $T$ is an $\mathcal L$-structure $\mathcal M$ satisfying every sentence in $T$; symbolically, $\forall \varphi\in T, \,\mathcal M\models \varphi$. 
		When $\mathcal M$ is a model of $T$, we write $\mathcal M\models T$.
	\end{definition}
	
	\begin{definition}
		Given a set of $\mathcal L$-sentences $T$, an $\mathcal L$-sentence $\varphi$ is called a \emph{logical consequence} of $T$ if every model of $T$ satisfies $\varphi$. 
		A \emph{first-order theory} or an $\mathcal L$\emph{-theory} is a set of $\mathcal L$-sentences together with all their logical consequences.
		A set of $\mathcal L$-sentences generating a theory $\mathcal T$ under logical consequence are called \emph{axioms} for $\mathcal T$.
	\end{definition}
	
	\begin{example}
		The theory of groups can be expressed in the language $\mathcal L = \{e, *\}$, where $e$ is a constant symbol and $*$ is a binary function symbol, with the following axioms:
		\begin{itemize}
			\item $\forall x\,\forall y\,\forall z\;(x*(y*z)) = ((x*y)*z)$,
			\item $\forall x\,(x*e = x) \land (e*x = x)$,
			\item $\forall x\,\exists y\; (x*y = e) \land (y*x=e)$.
		\end{itemize}
		Note that `$=$' is always assumed to be a binary relation symbol in every language and has the usual interpretation. A model for the theory of groups is a group. 
	\end{example}	
		
	\subsection*{Ultrafilters and Ultraproducts}
	
	\begin{definition}
		Given a set $I$, a family $\mathscr F\subset 2^I$ of subsets of $I$ is called a filter on $I$ if it satisfies the following:
		\begin{itemize}
			\item $I\in \mathscr F$ and $\emptyset\notin \mathscr F$;
			\item if $A\in \mathscr F$ and $B\supseteq A$, then $B\in \mathscr F$;
			\item if $A, B\in \mathscr F$, then $A\cap B\in \mathscr F$.
		\end{itemize}
	\end{definition}
	
	One should think of filter as a declaration of some sets which we consider to be ``large''. 
	Consequently, following example illustrates a type of filter which we wish to avoid.
	
	\begin{example}[Principal filter]
		Fix some $x\in I$, and define $\mathscr F = \{A\subseteq I\mid x\in A\}$. Then, $\mathscr F$ is a filter on $I$ called the \emph{principal filter generated by $x$}. Filters which are not of this type are called \emph{non-principal}.
	\end{example}
	
	Everything that follows works fine, but is utterly useless, for principal filters, so we shall often restrict ourselves to the case of a non-principal filter. A good example of a non principal filter is the following.
	
	\begin{example}[The Frech\'et Filter]
		Let $I$ be an infinite set, and let $\mathscr F$ be the set of cofinite subsets of $I$. Then, $\mathscr F$ is a non-principal filter on $I$, called the \emph{Frech\'et Filter}.
	\end{example}
	
	\begin{definition}
		An \emph{ultrafilter} on $I$ is a filter $\mathscr U$ such that, for every subset $A\subseteq I$, either $A\in \mathscr U$ or $A\notin \mathscr U$.
	\end{definition}
	
	The Frech\'et filter is a great example to keep in mind due to the following well-known fact: every non-principal ultrafilter must contain the Frech\'et filter. This justifies the intuition of ``sets in the ultrafilter are `large' ''. Indeed, from now on, we will refer to sets in a some ultrafilter $\mathscr U$ as \emph{$\mathscr U$-large}, and we will say that a set of proposition $\{\varphi_i\}_{i\in I}$ is true \emph{$\mathscr U$-almost everywhere} if $\{i\in I\mid \varphi_i \text{ is true}\}\in \mathscr U$. We will also omit the mention of $\mathscr U$ and simply say \emph{``large''} or \emph{``almost everywhere''} whenever the ultrafilter is clear from context. 
	
	The following result is a standard application of Zorn's Lemma.
	
	\begin{proposition}
		Every filter can be extended to an ultrafilter. In particular, there exists an ultrafilter on every infinite set.
	\end{proposition}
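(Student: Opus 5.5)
The plan is the standard Zorn's Lemma argument. Throughout, I read the ultrafilter condition in the intended sense: for every $A\subseteq I$, either $A\in\mathscr U$ or $I\setminus A\in\mathscr U$.

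\textbf{Existence of a maximal filter.} Fix a filter $\mathscr F$ on $I$. Let $\Sigma$ be the set of all filters on $I$ containing $\mathscr F$, partially ordered by inclusion. It is nonempty, since $\mathscr F\in\Sigma$. Given a nonempty chain $\{\mathscr F_\lambda\}$ in $\Sigma$, I will check that $\bigcup_\lambda \mathscr F_\lambda$ is again a filter.
\begin{itemize}
\item It contains $I$, and it does not contain $\emptyset$, because no $\mathscr F_\lambda$ does.
\item It is upward closed, because each $\mathscr F_\lambda$ is.
\item It is closed under finite intersections: if $A\in\mathscr F_\lambda$ and $B\in\mathscr F_\mu$, then both lie in the larger of the two filters (this is where the chain property is used), so $A\cap B$ does as well.
\end{itemize}
So every chain has an upper bound in $\Sigma$, and Zorn's Lemma gives a maximal element $\mathscr U\in\Sigma$.

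\textbf{Maximality implies the ultrafilter property.} This is the only step with real content. Let $A\subseteq I$, and split into two cases.
\begin{itemize}
\item Suppose $A\cap B\neq\emptyset$ for every $B\in\mathscr U$. Then
$$\mathscr U_A:=\{C\subseteq I\mid C\supseteq A\cap B \text{ for some } B\in\mathscr U\}$$
is a filter: it excludes $\emptyset$ by the case hypothesis, and it is closed under intersections since $(A\cap B)\cap(A\cap B')=A\cap(B\cap B')$. It contains both $\mathscr U$ and $A$. By maximality $\mathscr U_A=\mathscr U$, so $A\in\mathscr U$.
\item Otherwise there is some $B\in\mathscr U$ with $A\cap B=\emptyset$. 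Then $B\subseteq I\setminus A$, so $I\setminus A\in\mathscr U$ by upward closure.
\end{itemize}
In either case one of $A$ and $I\setminus A$ lies in $\mathscr U$, so $\mathscr U$ is an ultrafilter extending $\mathscr F$. (Both cannot lie in $\mathscr U$, since their intersection is $\emptyset$.)

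\textbf{Ultrafilters on infinite sets.} For the second claim, let $I$ be infinite and apply the first part to the Fr\'echet filter of cofinite subsets. This is a genuine filter precisely because $I$ is infinite, which guarantees $\emptyset$ is not cofinite. The resulting ultrafilter contains every cofinite set, so it is non-principal: a principal ultrafilter generated by $x$ would contain $\{x\}$, while it also contains the cofinite set $I\setminus\{x\}$, and their intersection is empty. This non-principal ultrafilter is the kind used throughout the paper.
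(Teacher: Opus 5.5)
Your proof is correct and is exactly the standard Zorn's Lemma argument that the paper invokes without writing out. Your reading of the ultrafilter condition as ``$A\in\mathscr U$ or $I\setminus A\in\mathscr U$'' correctly fixes the paper's literally vacuous phrasing, and applying the result to the Fr\'echet filter correctly yields the non-principal ultrafilters the paper actually uses.
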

	
	\begin{remark}
		Henceforth, let us only consider the case when $I$ is countable, or simply $I=\NN$. 
		Ultraproducts over uncountable sets still make sense but to simplify notations and terminology (e.g. talking about sequences, rather than choice functions), let us focus on this special case which suffices for our purposes.
	\end{remark}
	
	\begin{lemma}\label{ultraPartitionLemma}
		Suppose $S_1, S_2, \dots, S_n$ is a partition of $\NN$, and $\mathscr U$ is an ultrafilter on $\NN$. Then, $S_i\in\mathscr U$ for exactly one $i\in\{1,2,\dots, n\}$.
	\end{lemma}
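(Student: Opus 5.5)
Since $S_1,\dots,S_n$ partition $\NN$, the argument splits into two parts: at most one $S_i$ lies in $\mathscr U$, and at least one does. I will use only the filter axioms and the ultrafilter dichotomy. Note that the dichotomy as literally written ("either $A\in\mathscr U$ or $A\notin\mathscr U$") is a tautology. The intended axiom is that for every $A\subseteq\NN$, either $A\in\mathscr U$ or $\NN\setminus A\in\mathscr U$, and I will read the definition that way.

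\emph{Uniqueness.} Suppose $S_i,S_j\in\mathscr U$ with $i\neq j$. Closure under finite intersections gives $S_i\cap S_j\in\mathscr U$. Since the sets of a partition are pairwise disjoint, $S_i\cap S_j=\emptyset$, which contradicts $\emptyset\notin\mathscr U$.

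\emph{Existence.} I argue by contradiction, or equivalently by induction on $n$. Suppose no $S_i$ lies in $\mathscr U$. By the ultrafilter dichotomy, each complement $\NN\setminus S_i$ then lies in $\mathscr U$. Intersecting these finitely many sets, closure under finite intersections gives
\[
\bigcap_{i=1}^n (\NN\setminus S_i)=\NN\setminus\bigcup_{i=1}^n S_i\in\mathscr U .
\]
But the $S_i$ cover $\NN$, so this set is $\emptyset$, again contradicting $\emptyset\notin\mathscr U$. Hence some $S_i\in\mathscr U$, and by the first part it is unique.

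There is no real obstacle here. The only care needed is to use the correct form of the ultrafilter axiom, namely that $A$ or its complement belongs to $\mathscr U$. Empty parts of the partition cause no trouble, since $\emptyset\notin\mathscr U$ automatically.
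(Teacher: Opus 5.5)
Your proof is correct. You are also right that the paper's definition of an ultrafilter, as printed, is a tautology; the intended axiom is exactly the one you use, namely $A\in\mathscr U$ or $\NN\setminus A\in\mathscr U$.

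The paper proves the lemma by induction on $n$. It observes that $S_n$ and $S_1\cup\dots\cup S_{n-1}$ are complements, so exactly one of them lies in $\mathscr U$, and then recurses. You use the same two ingredients, the ultrafilter dichotomy and closure of $\mathscr U$ under finite intersections, but you organize them directly. Uniqueness comes from pairwise disjointness together with $\emptyset\notin\mathscr U$. Existence comes from intersecting the complements and applying De Morgan.

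The direct route is slightly cleaner, because the paper's one-line induction leaves some bookkeeping implicit. First, when $S_1\cup\dots\cup S_{n-1}\in\mathscr U$, the sets $S_1,\dots,S_{n-1}$ partition that union rather than $\NN$. Applying the inductive hypothesis then requires either restricting $\mathscr U$ to a $\mathscr U$-large set or merging two parts into an $(n-1)$-part partition of $\NN$. Second, when $S_n\in\mathscr U$, one still needs a closure argument to rule out $S_i\in\mathscr U$ for $i<n$. Your argument avoids both points, at no extra cost.
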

	
	\begin{proof}
		If $n=1$, this is trivial as $S_1 = \NN\in\mathscr U$. Otherwise, the definition of an ultrafilter guarantees that exactly one of $S_1\cup \dots \cup S_{n-1}$ or $S_n$ lies in $\mathscr U$, so the claim follows by induction on $n$.
	\end{proof}
	
	\begin{definition}
		Given a sequence of sets $\{A_i\}_{i\in\NN}$, their ultraproduct with respect to some ultrafilter $\mathscr U$ is the quotient set $\uprod A_i :=\left(\prod_{i\in\NN} A_i\right)/\sim_{\mathscr U}$, where the equivalence relation $\sim_{\mathscr U}$ is defined as follows: $(a_i)_{i\in\NN}\sim (a_i')_{i\in\NN}$ if $\{i\in \NN\mid a_i = a_i'\}\in\mathscr U$; in other words, the set of indices on which the two sequences agree is $\mathscr U$-large, or equivalently, if the two sequences are equal $\mathscr U$-almost everywhere.
		We denote the equivalence class of a sequence $a = (a_i)_{i\in\NN}$ in the ultraproduct $\uprod A_i$ by $[a]$ or $\ulimit{i} a_i$.
		\end{definition}
		
		If the factors in the ultraproduct have more interesting structures than merely being sets, the ultraproduct inherits that structure as shown below.
		
		\begin{definition}
			Let $\mathscr U$ be an ultrafilter on $\NN$ and $\{\mathcal M_i\}_{i\in\NN}$ be a sequence of $\mathcal L$-structures with underlying sets $\{M_i\}_{i\in\NN}$.
			We define their ultraproduct $\mathcal M^* = \uprod \mathcal M_i$ to be the $\mathcal L$-structure with underlying set $\uprod M_i$, where the interpretations of the language symbols are defined as follows:
			\begin{itemize}
				\item for a constant symbol $c\in\mathcal L$, let $c^{\mathcal M^*} = \ulimit{i} c^{\mathcal M_i}$;
				\item for a function symbol $f\in\mathcal L$ of arity $k$ and sequence $a^1, a^2, \dots, a^k\in \prod_{i\in\NN} M_i$, let $$f^{\mathcal M^*}\left([a^1], [a^2], \dots, [a^k]\right) = \ulimit{i}  f^{\mathcal M_i}\left(a^1_i, a^2_i, \dots, a^k_i\right);$$
				\item for a relation symbol $R\in\mathcal L$ of arity $k$ and sequence $a^1, a^2, \dots, a^k\in \prod_{i\in\NN} M_i$, let 
				$$\left([a^1], [a^2], \dots, [a^k]\right)\in R^{\mathcal M^*} \iff \left\{i\in\NN\mid \left(a^1_i, a^2_i, \dots, a^k_i\right)\in R^{\mathcal M_i}\right\}\in \mathscr U.$$
			\end{itemize}
		\end{definition}
		
	Now, we can finally state the \emph{Fundamental Theorem of Ultraproducts}: 
	
	\begin{theorem}[\Los]
		Let $\mathscr U$ be an ultrafilter on $\NN$ and $\{\mathcal M_i\}_{i\in\NN}$ be a sequence of $\mathcal L$-structures with underlying sets $\{M_i\}_{i\in\NN}$.
		Then, for any first-order formula $\varphi(x_1, x_2, \dots, x_n)$ of $\mathcal L$ and any sequences $a^1, a^2, \dots, a^n\in\prod_{i\in\NN} M_i$, we have that
		$$\uprod \mathcal M_i \models \varphi\left([a^1], [a^2], \dots, [a^n]\right) \iff \left\{i\in\NN \mid \mathcal M_i \models \varphi\left(a^1_i, a^2_i, \dots, a^k_i\right)\right\}\in\mathscr U.$$
		In particular, since $\mathscr U$ must contain the Frech\'et filter, if $\{i\in\NN\mid \mathcal M_i\not\models \varphi\}$ is finite, then $\uprod \mathcal M_i \models \varphi$.
	\end{theorem}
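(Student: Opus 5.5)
The plan is to prove \Los\ by induction on the construction of the formula $\varphi$, after first checking a lemma about terms. For every term $t(x_1,\dots,x_n)$ and sequences $a^1,\dots,a^n$, I will show that
$$t^{\mathcal M^*}([a^1],\dots,[a^n]) = \ulimit{i}\, t^{\mathcal M_i}(a^1_i,\dots,a^n_i).$$
This goes by induction on the level $\mathcal T_k$ containing $t$. Constants and variables are immediate from the definitions. For the inductive step $f(t_1,\dots,t_k)$, apply the definition of $f^{\mathcal M^*}$ to representatives. The only point to check is that the interpretations in $\mathcal M^*$ are well defined on $\sim_{\mathscr U}$-classes, which follows from closure of $\mathscr U$ under finite intersections and supersets.

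\emph{Atomic formulas.} For $R(t_1,\dots,t_k)$, the term lemma combined with the definition of $R^{\mathcal M^*}$ gives the claim directly. Equality is the relation symbol $=$, and its ultraproduct interpretation agrees with the definition of $\sim_{\mathscr U}$, so it is covered as well.

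\emph{Connectives.} Write $A_\psi=\{i\mid \mathcal M_i\models\psi(a_i)\}$.
\begin{itemize}
\item For $\neg\psi$, use that exactly one of $A_\psi$ and its complement lies in $\mathscr U$. This is the ultrafilter property; note that it should read ``$A\in\mathscr U$ or $I\setminus A\in\mathscr U$''.
\item For $\psi\land\theta$, use $A_{\psi\land\theta}=A_\psi\cap A_\theta$. This set lies in $\mathscr U$ iff both factors do: the forward direction is upward closure, the reverse is closure under intersection.
\item The connectives $\lor,\to,\leftrightarrow$ are expressible through $\neg$ and $\land$, and the sets involved coincide, so they follow.
\end{itemize}

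\emph{Quantifiers.} It suffices to treat $\exists y\,\psi$, since $\forall=\neg\exists\neg$.

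If $\mathcal M^*\models\exists y\,\psi([a],y)$, pick a witness $[b]$. The induction hypothesis puts $A_{\psi(a,b)}$ in $\mathscr U$, and this set is contained in $\{i\mid \mathcal M_i\models\exists y\,\psi(a_i,y)\}$, so the latter is in $\mathscr U$ by upward closure.

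Conversely, suppose $B=\{i\mid \mathcal M_i\models \exists y\,\psi(a_i,y)\}\in\mathscr U$. Using the axiom of choice, choose $b_i$ a witness for $i\in B$, and $b_i\in M_i$ arbitrary otherwise; this uses that each $M_i$ is nonempty. Then $A_{\psi(a,b)}\supseteq B$, so by induction $\mathcal M^*\models\psi([a],[b])$.

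I expect this witness-selection step to be the only delicate point: it needs choice, nonemptiness of the $M_i$, and the induction to range over formulas with arbitrarily many free variables rather than fixed $n$.

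\emph{Final clause.} This follows because a non-principal ultrafilter contains every cofinite set. If some finite set $F$ belonged to $\mathscr U$, then by Lemma~\ref{ultraPartitionLemma} applied to the partition of $\NN$ into the singletons of $F$ and $\NN\setminus F$, some singleton $\{x\}$ would be in $\mathscr U$. Then $\mathscr U$ would contain the principal filter at $x$, and by maximality would equal it, a contradiction. So the complement of any finite set lies in $\mathscr U$.
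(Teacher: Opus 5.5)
Your proof is correct. The paper gives no proof of this statement: \L o\'s's Theorem is quoted in the appendix as the standard Fundamental Theorem of Ultraproducts, and the reader is referred to the general references on ultraproducts. So there is no competing argument to compare against. What you wrote is the textbook proof found in those references: a term lemma, induction on formulas, and the axiom of choice to assemble witnesses in the existential step. Your care about nonempty $M_i$, and about letting the induction range over all tuples of free variables, is exactly what that step needs.

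Two of your side remarks fix the paper's text rather than merely your proof, and are worth keeping.
\begin{itemize}
\item The ultrafilter condition must read ``$A\in\mathscr U$ or $\NN\setminus A\in\mathscr U$''.
\item The final ``in particular'' clause really does require $\mathscr U$ to be non-principal, which the statement omits. If $\mathscr U$ is principal at $x$, then $[a]\mapsto a_x$ is an isomorphism $\uprod \mathcal M_i\cong\mathcal M_x$. A single failure of $\varphi$ at $i=x$ therefore already defeats the conclusion. You correctly supply this hypothesis, and your derivation of ``cofinite sets lie in $\mathscr U$'' from the partition lemma is sound.
\end{itemize}

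One cosmetic point: $\neg\exists y\,\neg\psi$ is not a subformula of $\forall y\,\psi$. The reduction is therefore best phrased as ``the $\neg$-step and the $\exists$-step each preserve the property''. You then apply them successively starting from $\psi$ and use that $\forall y\,\psi$ and $\neg\exists y\,\neg\psi$ are satisfied in exactly the same structures. Alternatively, treat $\forall$ directly by the same argument.
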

		
	\begin{corollary}
		Let $\mathscr U$ be an ultrafilter on $\NN$, $\{\mathcal M_i\}_{i\in\NN}$ be a sequence of $\mathcal L$-structures, and $\mathcal T$ be a first-order theory. Then, $\{i\in\NN\mid \mathcal M_i\models \mathcal T\}\in \mathscr U$ if and only if $\uprod \mathcal M_i \models \mathcal T$.
	\end{corollary}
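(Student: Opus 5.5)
The corollary follows from \Los\ once we note that satisfying a theory means satisfying each of its sentences. The difficulty is that the right-hand condition involves possibly infinitely many sentences at once.

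For the forward direction, suppose $S:=\{i\in\NN\mid \mathcal M_i\models\mathcal T\}\in\mathscr U$. Fix any sentence $\varphi\in\mathcal T$. The set $\{i\mid \mathcal M_i\models\varphi\}$ contains $S$, so it lies in $\mathscr U$ by upward closure of filters. \Los\ (with $n=0$ free variables) then gives $\uprod\mathcal M_i\models\varphi$. Since $\varphi$ was arbitrary, $\uprod\mathcal M_i\models\mathcal T$.

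For the converse, assume $\uprod\mathcal M_i\models\mathcal T$. \Los\ gives, for each individual $\varphi\in\mathcal T$, that $S_\varphi:=\{i\mid\mathcal M_i\models\varphi\}\in\mathscr U$. We need $S=\bigcap_{\varphi\in\mathcal T}S_\varphi\in\mathscr U$. This is the main obstacle: filters are closed only under finite intersections, and a countable intersection of $\mathscr U$-large sets can be empty for a non-principal ultrafilter. Closedness under finite intersections handles any finite subset of $\mathcal T$. So the plan is to treat the case where $\mathcal T$ is \emph{finitely axiomatized}, i.e.\ generated by finitely many axioms $\varphi_1,\dots,\varphi_m$, which covers the theories used in the paper (e.g.\ groups, fields, and the finitely many axioms defining tensor-category structure on a fixed signature). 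Then $\mathcal M\models\mathcal T\iff\mathcal M\models\varphi_1\wedge\dots\wedge\varphi_m$. This is a single sentence, and applying \Los\ to it yields both directions at once.

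For a general, infinitely axiomatized $\mathcal T$, the converse direction can fail. The standard example is the theory of fields of characteristic zero with $\mathcal M_i=\overline{\FF}_{p_i}$: the ultraproduct has characteristic zero, yet no factor does. So I would state the corollary with the hypothesis that $\mathcal T$ is finitely axiomatizable, or else weaken its forward implication appropriately. In the paper this matters: the forward direction is what transfers a theory to the ultraproduct, and for non-finitely-axiomatized properties such as characteristic zero one instead applies \Los\ sentence by sentence.
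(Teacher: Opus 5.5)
Your argument is correct, and your diagnosis of the statement is also correct. The paper gives no proof of this corollary; it presents it as an immediate consequence of \Los. As printed, the implication from $\uprod\mathcal M_i\models\mathcal T$ to $\{i\in\NN\mid\mathcal M_i\models\mathcal T\}\in\mathscr U$ is false. Your forward direction (upward closure of $\mathscr U$, then \Los\ applied one sentence at a time) is exactly the implicit argument. Your treatment of the finitely axiomatized case via the single sentence $\varphi_1\wedge\dots\wedge\varphi_m$ is right, since any model of the axioms satisfies all of their logical consequences.

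In your counterexample, state explicitly that $\mathscr U$ is non-principal and that each prime occurs only finitely often among the $p_i$ (e.g.\ $p_i\to\infty$). Then $\{i\mid p_i\neq p\}$ is cofinite for every prime $p$, so $\uprod\overline{\FF}_{p_i}$ has characteristic $0$ by \Los, while no factor does. For principal $\mathscr U$ the statement holds for every theory, so non-principality is essential.

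Your hypothesis is also the sharp one. Over a countable language, suppose $\mathcal T=\{\psi_1,\psi_2,\dots\}$ is not finitely axiomatizable. Choose $\mathcal M_n\models\psi_1\wedge\dots\wedge\psi_n$ with $\mathcal M_n\not\models\mathcal T$. Then every non-principal ultraproduct of the $\mathcal M_n$ is a model of $\mathcal T$, although no factor is.

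The flaw does not propagate, because the paper only ever needs the forward direction (e.g.\ in \autoref{tensorCatsUltraprod}). At one point it does need to pass a condition involving unboundedly many objects back from the ultraproduct to the factors: recovering half-braidings on all of $\Ver$ in \autoref{filteredUltraCenter}. There it reduces that condition to finitely many diagrams via \autoref{half-braid-criterion-fusion}, which is the same finite-axiomatization idea you use.

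One small overstatement in your write-up: not every theory relevant to the paper is finitely axiomatizable. Algebraically closed fields, and fields of characteristic $0$, are not. This is harmless, since for those the paper uses only the forward direction.
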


	\subsection*{Ultraproducts of Tensor Categories}
	
	In \cite[Chapter 4]{C}, a first-order theory of \emph{small} (tannakian) tensor categories is constructed.
	It is expressed in a language with relation symbols that specify the following data: whether an element is an object or morphism, whether a morphism has two objects as its source and target, whether a morphism/object is the composition/sum/tensor product of two others, whether a morphism is the associator/commutor of two objects. 
	The axioms for this theory guarantee that these relations behave as expected and satisfy the axioms for rigid abelian symmetric/braided monoidal $\KK$-linear category, with $\End(\id) \cong \KK$. In fact, the way $\KK$-linearity is encoded in the axioms is by asserting that $\End(\id)$ is a field, as one can think of scaling a morphism $f:X\to Y$ by $c\in\KK\cong \End(\id)$ to be the morphism $l_Y\circ (c\otimes f)\circ l_X^{-1}$, where $l_{-}:\id\otimes - \to -$ is the left unitor.
	We will not repeat all the axioms here for brevity, but since the categories we are dealing with have slightly different structure, we must be careful in using those results. 
	
	There are three main differences between our \autoref{tensorCat} and the categories in \emph{loc. cit.}: 1) our categories are not assumed to be braided from the start; 2) we work with idempotent-complete instead of abelian categories; 3) our categories are assumed to have finite-dimensional morphism spaces.
	The first issue is easily remedied. 
	One may simply drop the relation assigning a braiding to every two objects along with all axioms relating to it.
	Secondly, one keeps the additive category structure but drops the axioms of being an abelian category, i.e. the existence of kernels and cokernels and the normality of monos and epis.
	Instead, we assert that idempotents split, which is expressible by a first-order sentence in this language as follows:
	$$\forall e\;(((e:A\to A) \land (e\circ e = e)) \implies \exists B \,\exists f\,\exists g\; ((f:A\to B) \land (g:B\to A)\land (g\circ f = e) \land (f\circ g = \Id_B))).$$
	This ensures our categories are idempotent-complete. 
	The third issue is impossible to remedy since the property of being finite-dimensional is usually \emph{not} expressible in first-order logic.
	Indeed, taking an ultraproduct of vector spaces of unbounded finite dimensions yields an infinite-dimensional vector space.
	The way to deal with this is by considering only a smaller more manageable subcategory of the ultraproduct which has finite dimensional morphism spaces.

	The discussion above together with the results of \cite[Chapter 4]{C} can be summarized in the following Theorem.
	
	\begin{theorem}\label{tensorCatsUltraprod}
		Let $\mathscr U$ be an ultrafilter, and let $\{\mathcal C_i\}_{i\in\NN}$ be a sequence of small categories. 
		The ultraproduct $\uprod \mathcal C_i$ is a well-defined small category whose objects and morphisms are, respectively, $\sim_{\mathcal U}$ equivalence classes of sequences of objects and morphisms in $\mathcal C_i$. 
		Moreover, if the categories $\mathcal C_i$ are monoidal, rigid, additive, idempotent-complete, abelian, braided, or symmetric $\mathscr U$-almost everywhere, then so is $\uprod \mathcal C_i$. 
		If, for $\mathscr U$-many $i\in\NN$, $\mathcal C_i$ is $\KK_i$-linear, then $\uprod \mathcal C_i$ is $\left(\uprod \KK_i\right)$-linear.
	\end{theorem}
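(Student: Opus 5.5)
The plan is to realize $\uprod \mathcal C_i$ concretely and then transport each structure through \Los. First, encode each small category $\mathcal C_i$ as an $\mathcal L$-structure in the language of \cite[Chapter 4]{C}. The underlying set is $\mathrm{Ob}(\mathcal C_i)\sqcup \mathrm{Mor}(\mathcal C_i)$. The relation symbols are: ``is an object'', ``is a morphism'', $\mathrm{Src}(f,X)$, $\mathrm{Tgt}(f,X)$, the ternary composition relation $\mathrm{Comp}(f,g,h)$ meaning $h=g\circ f$, and the identity relation. The category axioms form a finite list of first-order sentences: every element is exactly one of object or morphism, every morphism has a unique source and target, composition is defined exactly for composable pairs and is unique, it is associative, and identities exist and are unital.

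By \Los, the ultraproduct structure satisfies the same sentences. So $\uprod\mathcal C_i$ is a small category whose objects and morphisms are classes of sequences that are $\mathscr U$-almost everywhere objects, respectively morphisms. One checks directly from the definition of the ultraproduct structure that composition is computed componentwise on a $\mathscr U$-large set. This gives the first claim.

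Next, for each listed property, I would extend the language by the relevant relation symbols and write the axioms in first-order form.
\begin{itemize}
\item Monoidal: a ternary relation for $\otimes$ on objects and on morphisms, a constant $\id$, and relations picking out the associator and unitors, with the pentagon and triangle axioms and naturality.
\item Rigid: for every object there exist a dual and evaluation/coevaluation morphisms satisfying the zig-zag identities.
\item Additive: abelian group structure on hom-sets via a relation for sums, bilinearity of composition, a zero object, and the existence of biproducts given by injections and projections.
\item Idempotent-complete: the sentence displayed just before the theorem.
\item Abelian: the axioms of \cite{C}, namely existence of kernels and cokernels and normality of monos and epis.
\item Braided or symmetric: a braiding relation with the hexagon axioms and naturality, plus $\sigma^2=\Id$ in the symmetric case.
\end{itemize}
Each item is a set of sentences that holds in $\mathcal C_i$ for $\mathscr U$-almost all $i$. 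The corollary to \Los{} for theories then shows that $\uprod\mathcal C_i$ satisfies them.

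For linearity, I would follow \cite{C}: the axiom that $\End(\id)$ is a field is first order. The scalar action $c\cdot f := l_Y\circ(c\otimes f)\circ l_X^{-1}$ is defined by a formula, and its bilinearity and compatibility with composition are sentences. Hence $\End_{\uprod\mathcal C_i}(\id)\cong \uprod\End_{\mathcal C_i}(\id)\cong\uprod\KK_i$ as fields, with the isomorphism holding componentwise, and $\uprod\mathcal C_i$ is linear over this field.

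I expect the main obstacle to be the second step: making sure every axiom really is first order. Naturality, for instance, must be phrased as ``for all morphisms $f$ and all objects, the relevant composites agree'', quantifying over elements rather than over families. Likewise the structure maps (associator, braiding) must be encoded as relations whose uniqueness and existence are asserted, not as functions indexed by objects. Once each axiom is written with quantifiers ranging only over elements of the structure, the rest is a mechanical application of \Los. Finite-dimensionality of hom-spaces is deliberately excluded, since it is not first order.
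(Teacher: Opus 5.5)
Your proposal is correct and follows essentially the same route as the paper, which invokes Crumley's first-order theory of small tensor categories, adjusts its axioms (dropping the braiding, replacing the abelian axioms by the idempotent-splitting sentence, noting that finite-dimensionality of Hom-spaces is not first order), encodes linearity by requiring $\End(\id)$ to be a field, and applies \Los. As in the paper, your linearity step implicitly assumes that the $\mathcal C_i$ are monoidal with $\End(\id)\cong\KK_i$; this assumption is what gives $\End_{\uprod\mathcal C_i}(\id)\cong\uprod\KK_i$.
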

	
	Due to the following well-known result, one can pass information between categories in characteristic $p$ to their ultraproduct in characteristic $0$. 
	It is a consequence of Steinitz' Theorem, and a full-proof is given, for example, in \cite[Section 2.4.1]{Schout}. 
	
	\begin{proposition}\label{uprodFields}
	Let $\mathscr U$ be a non-principal ultra filter, and let $\{\KK_i\}_{i\in\NN}$ be a sequence of algebraically closed fields, each of which is of cardinality at most $2^{\aleph_0}$. 
	Suppose for every prime number $p$, only finitely many $\KK_i$ have characteristic $p$. 
	Then, we have an isomorphism of fields $\uprod \KK_i \cong \CC$.
	In particular,
	\begin{itemize}
		\item $\uprod \CC \cong \CC$.
		\item Let $\{p_i\}_{i\in\NN}$ be a sequence of prime numbers with $\liminf\limits_{i\to\infty} p_i = \infty$. Then, $\uprod \overline{\FF_{p_i}} \cong \CC$.
	\end{itemize}
	\end{proposition}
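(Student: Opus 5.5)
We must prove the last statement, \autoref{uprodFields}: an ultraproduct of algebraically closed fields $\KK_i$, each of cardinality at most $2^{\aleph_0}$ and with each prime characteristic occurring only finitely often, is isomorphic to $\CC$. The plan is to invoke Steinitz' theorem. An algebraically closed field is determined up to isomorphism by its characteristic and its transcendence degree over the prime field. So it suffices to show three things about $\KK^* := \uprod \KK_i$: it is an algebraically closed field, it has characteristic $0$, and it has transcendence degree $2^{\aleph_0}$ over $\QQ$, the same as $\CC$.

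The first two properties follow from \Los. Being a field, and being algebraically closed, are first-order in the language of rings; the latter is expressed by the axiom scheme ``every monic polynomial of degree $n$ has a root'', one sentence for each $n$. All these sentences hold in every $\KK_i$, so they hold in $\KK^*$. For each prime $p$, the sentence $\underbrace{1+\dots+1}_{p}\neq 0$ fails in only finitely many $\KK_i$ by hypothesis. Since a non-principal ultrafilter contains the Fréchet filter, this sentence holds in $\KK^*$, and hence $\char \KK^* = 0$.

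The main obstacle is the cardinality computation. For the upper bound, note that $|\prod_i \KK_i| \leq (2^{\aleph_0})^{\aleph_0} = 2^{\aleph_0}$. For the lower bound, we need $|\KK^*| \geq 2^{\aleph_0}$. First, each $\KK_i$ is infinite, being algebraically closed. Fix an injection $\NN\hookrightarrow \KK_i$ for each $i$, written $n\mapsto c_{i,n}$. We then use the standard almost-disjoint family trick. Fix a bijection between $\NN$ and the finite binary strings, and for each $x\in\{0,1\}^{\NN}$ define the sequence $a^x_i := c_{i,\, \text{code}(x|_{i})}$. If $x\neq y$, then $x|_i\neq y|_i$ for all large $i$. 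So $a^x$ and $a^y$ differ on a cofinite set, hence on a $\mathscr U$-large set by non-principality. This gives $2^{\aleph_0}$ distinct elements of $\KK^*$, so $|\KK^*| = 2^{\aleph_0}$.

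Finally, an uncountable field of characteristic $0$ has cardinality equal to its transcendence degree over $\QQ$, because an algebraic extension of a field $F$ has cardinality $\max(|F|,\aleph_0)$. Hence $\mathrm{trdeg}_\QQ \KK^* = 2^{\aleph_0} = \mathrm{trdeg}_\QQ \CC$, and Steinitz gives $\KK^*\cong\CC$.

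The two bullet points are special cases. For constant $\KK_i=\CC$, no prime characteristic occurs at all. For $\KK_i=\overline{\FF}_{p_i}$ with $\liminf p_i=\infty$, each prime occurs only finitely often. In both cases every $\KK_i$ has cardinality at most $2^{\aleph_0}$, since each $\overline{\FF}_{p_i}$ is countable.
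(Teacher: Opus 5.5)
Your proof is correct and follows the paper's approach: the paper gives no argument of its own, only noting that the statement follows from Steinitz' theorem (with details deferred to Schoutens), and your combination is exactly that standard argument — \L o\'s's theorem for algebraic closedness and characteristic $0$, the count $|\uprod \KK_i| = 2^{\aleph_0}$ via an almost-disjoint family, and Steinitz' classification by characteristic and transcendence degree. (A minor nit: for an algebraic extension $L/F$, the formula $\max(|F|,\aleph_0)$ is in general only an upper bound for $|L|$; since $F=\QQ(B)$ is infinite here, $|L|=|F|$ and your conclusion is unaffected.)
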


	Now, we would like to examine the semisimplicity of (subcategories of) an ultraproduct of semisimple categories.
	In general, the ultraproduct of semisimple categories will not be semisimple, but sufficiently well-behaved subcategories thereof will still be semisimple.
	To exploit \Los, we need to express semisimplicity by a first-order language sentence. 
	The next two general folklore lemmas do the trick.
	
	Recall that a morphism is called \emph{split} if it can be written as a composition $\iota\circ \pi$ where $\iota$ is a split monomorphism and $\pi$ is a split epimorphism.
	
\begin{lemma}\label{split-semisimple}
	A tensor category $\cat$ over an algebraically closed field is semisimple if and only if every morphism of $\cat$ is split.
\end{lemma}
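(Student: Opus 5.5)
We prove the two implications separately, working throughout with the paper's notion of tensor category (Karoubian, finite-dimensional $\Hom$ spaces, $\End(\id)=\KK$); semisimplicity then means that every object is a finite direct sum of simple objects.

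\emph{Semisimple implies every morphism is split.} Suppose $\cat$ is semisimple and let $f:X\to Y$ be a morphism. Decompose $X\cong\bigoplus_i S_i^{\oplus a_i}$ and $Y\cong\bigoplus_i S_i^{\oplus b_i}$ into simples. By Schur's Lemma over the algebraically closed field $\KK$, we have $\Hom(S_i,S_j)=\delta_{ij}\KK$. So $f$ is a block-diagonal family of matrices $f_i\in \mathrm{Mat}_{b_i\times a_i}(\KK)$. Each matrix $f_i$ factors as $f_i=\iota_i\pi_i$ through $\KK^{\operatorname{rk} f_i}$, with $\pi_i$ surjective and $\iota_i$ injective; linear algebra provides one-sided inverses for both. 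Tensoring with $S_i$ and summing over $i$ gives $f=\iota\circ\pi$, where $\iota$ is a split monomorphism and $\pi$ is a split epimorphism, both passing through $I=\bigoplus_i S_i^{\oplus \operatorname{rk} f_i}$.

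\emph{Every morphism split implies semisimple.} Suppose every morphism of $\cat$ is split.
\begin{enumerate}
\item \emph{$\cat$ is abelian.} Given $f=\iota\pi$ through an object $I$, choose $s$ and $t$ with $\pi s=\Id_I$ and $t\iota=\Id_I$. The idempotents $e=\Id_X-s\pi$ on $X$ and $e'=\Id_Y-\iota t$ on $Y$ split because $\cat$ is Karoubian. Their images serve as a kernel of $f$ and a cokernel of $f$ respectively, and $I$ is simultaneously the image and the coimage. So every morphism has a kernel and a cokernel, and the canonical map from coimage to image is an isomorphism. Hence $\cat$ is abelian.
\item \emph{Indecomposables are simple.} Let $X$ be indecomposable. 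By \autoref{Krull-Schmidt}, $\End(X)$ is local and finite dimensional. Take any non-invertible endomorphism $f$ of $X$ and split it as $f=\iota\pi$ through $I$. Then $s\pi$ is an idempotent of $X$ with image isomorphic to $I$, so $I$ is a summand of $X$. Indecomposability forces $I=0$ or $I\cong X$. If $I\cong X$, then $\iota$ and $\pi$ are split mono and split epi between objects isomorphic to $X$. Since $\End(X)$ is finite dimensional, a split mono (or split epi) $X\to X$ is invertible, so $f$ would be invertible, a contradiction. Hence $I=0$ and $f=0$. Thus $\End(X)=\KK$, and the same splitting argument shows that any nonzero morphism into $X$ (and any nonzero morphism out of $X$) is a split epimorphism (respectively a split monomorphism). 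Therefore every monomorphism into $X$ is either zero or an isomorphism, that is, $X$ is simple.
\item \emph{Conclusion.} By \autoref{Krull-Schmidt}, every object is a finite direct sum of indecomposables, each of which is simple by the previous step. So $\cat$ is semisimple.
\end{enumerate}

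The main obstacle is Step 1: deriving genuine kernels and cokernels from the splittings requires idempotent completeness in an essential way. The argument must also verify that these kernels and cokernels exist for every morphism, not merely that the morphism factors. Everything else is routine bookkeeping with idempotents.
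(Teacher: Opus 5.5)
Your proof is correct, and its core mechanism matches the paper's: a morphism into or out of an indecomposable $X$ splits through an object $I$ that is a summand of $X$. So $I$ is $0$ or isomorphic to $X$, which forces the morphism to be zero or an isomorphism (via finite-dimensionality in your Step 2).

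The genuine difference is how you get abelianness. The paper applies the zero-or-isomorphism dichotomy to arbitrary morphisms between indecomposables and concludes $\End(X)\cong\KK$ and $\Hom(X,Y)=0$ for $X\ncong Y$. It then asserts that such a Krull--Schmidt category is equivalent to finite-dimensional vector spaces graded by isomorphism classes of indecomposables, so abelianness and semisimplicity come with that identification.

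You instead build kernels and cokernels directly from the split idempotents $\Id_X-s\pi$ and $\Id_Y-\iota t$, check that coimage and image are both $I$, and then prove indecomposables are simple by examining their subobjects. You never need $\Hom(X,Y)=0$ for non-isomorphic indecomposables, since that follows from Schur once simplicity is known.

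Your Step 1 is a general fact that needs no finiteness: any additive Karoubian category in which every morphism splits is abelian. It makes explicit a point the paper leaves inside the phrase ``such a category is equivalent to\dots''. The paper's route is shorter and avoids kernels entirely.

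One step you leave implicit: passing from ``every non-invertible endomorphism of $X$ is zero'' to $\End(X)=\KK$ uses that a finite-dimensional division algebra over an algebraically closed field is $\KK$. In the forward direction, your Schur-plus-rank-factorization argument is an explicit version of the paper's factorization of $f$ through $\im(f)$.
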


\begin{proof}
	One direction is obvious: if $\cat$ is semisimple, and $f:X\to Y$ is a morphism, then $f$ factors through its image, so the projection $\pi:X\to\im(f)$ and inclusion $\iota:\im(f)\to Y$ give a splitting of $f$. 
	
	Conversely, suppose every morphism is split. 
	Since $\cat$ is Krull-Schmidt (see \autoref{Krull-Schmidt}), to prove that it is semisimple, it is enough to show that, for any indecomposables $X\ncong Y$, we have $\Hom(X,Y) = 0$ and $\End(X) \cong \KK$; indeed, such a category is equivalent to finite-dimensional vector spaces graded by isomorphism classes of indecomposables.  
	Suppose $f:X\to Y$ is any morphism between two indecomposables, and let $\pi:X\to Z$ and $\iota:Z\to Y$ be split epimorphism and split monomorphism such that $\iota\circ\pi = f$. 
	Thus, $Z$ is a summand of $X$ and of $Y$; but since they are both indecomposables, either $Z=0$ so that $f=0$ or $\pi$ and $\iota$ are isomorphisms and, hence, so is $f$. 
\end{proof}

\begin{lemma}\label{splitCriterion}
	In a Karoubian category, a morphism $f: A\to B$ is a split if and only if there exists a morphism $g:B\to A$ satisfying $f\circ g\circ f = f$.
\end{lemma}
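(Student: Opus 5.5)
The plan is to prove the two directions separately, working directly from the definition of a split morphism as $\iota\circ\pi$ with $\iota$ a split monomorphism and $\pi$ a split epimorphism.

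For the forward direction, suppose $f=\iota\circ\pi$ with $\pi:A\to Z$, $\iota:Z\to B$. Choose a section $s:Z\to A$ with $\pi\circ s=\Id_Z$ and a retraction $r:B\to Z$ with $r\circ\iota=\Id_Z$. Set $g:=s\circ r:B\to A$. Then
$$f\circ g\circ f=\iota\circ(\pi\circ s)\circ(r\circ\iota)\circ\pi=\iota\circ\pi=f,$$
as required. This direction does not use the Karoubian hypothesis.

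For the converse, assume $f\circ g\circ f=f$. Put $e:=g\circ f\in\End(A)$, which is idempotent since $e\circ e=g\circ(f\circ g\circ f)=g\circ f=e$. Because the category is Karoubian, $e$ splits: there are an object $Z$ and morphisms $\pi:A\to Z$, $s:Z\to A$ with $s\circ\pi=e$ and $\pi\circ s=\Id_Z$. In particular $\pi$ is a split epimorphism. Define $\iota:=f\circ s:Z\to B$. Then
$$\iota\circ\pi=f\circ s\circ\pi=f\circ g\circ f=f.$$
It remains to check that $\iota$ is a split monomorphism, and the candidate retraction is $r:=\pi\circ g:B\to Z$. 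We compute
$$r\circ\iota=\pi\circ g\circ f\circ s=\pi\circ e\circ s=\pi\circ s\circ\pi\circ s=\Id_Z.$$
Hence $f=\iota\circ\pi$ is split.

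The only step needing any thought is the choice of which idempotent to split. Splitting $g\circ f$ on $A$, as above, makes both the factorization and the retraction immediate. One could instead split $f\circ g$ on $B$, which works symmetrically.
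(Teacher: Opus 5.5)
Your proposal is correct and follows essentially the same route as the paper: the forward direction takes $g=s\circ r$, and the converse splits the idempotent $e=g\circ f$ and uses $\iota=f\circ s$ with retraction $\pi\circ g$. Your write-up is in fact slightly more complete, since you verify $e^2=e$, $\iota\circ\pi=f$ and $r\circ\iota=\Id_Z$ explicitly where the paper calls these ``a quick calculation.''
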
	

\begin{proof}
	Suppose $f = \iota\circ \pi$ where $\pi:A\to Z$ is a split epimorphism with a section $s:Z\to A$ (i.e. $\pi\circ s = \Id_Z$) and $\iota: Z\to B$ is a split monomorphism with retraction $r:B\to Z$ (i.e. $r\circ \iota = \Id_Z$). 
	Then, a quick calculation shows that $g := s\circ r$ satisfies $f\circ g\circ f = f$. 
	
	Conversely, suppose $g:B\to A$ satisfies $f\circ g\circ f = f$.
	Then, let $e:= g\circ f$ is an idempotent.
	Since $\cat$ is Karoubian, we have a splitting $e = u\circ p$ for some $p:A\to Z$ and $u:Z\to A$ with $p\circ u = \Id_Z$ (so $p$ is a split epimorphism). 
	Now, take $\pi := p$ and $\iota := f\circ u$.
	Note that $\iota$ is a split monomorphism with left-inverse $p\circ g$. 
	Moreover, a quick calculation shows that $f = \iota \circ \pi$, as desired.
\end{proof}

	\begin{proposition}\label{ultraSemisimplicity}
		Let $\mathscr U$ be an ultrafilter, $\KK = \uprod \KK_i$ an algebraically closed field, and $\{\mathcal C_i\}_{i\in\NN}$ a sequence of small semisimple $\KK_i$-linear categories. 
		Suppose $\mathcal D$ is a Karoubian, Krull-Schmidt $\KK$-linear subcategory of $\uprod \cat_i$.
		Then, $\mathcal D$ is semisimple.
		Moreover, $\ulim{i} X_i$ is a simple object in $ \uprod \cat_i$ if and only if $X_i$ is simple for $\mathscr U$-almost every $i$.
	\end{proposition}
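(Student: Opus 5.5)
The plan is to combine \autoref{split-semisimple} and \autoref{splitCriterion} with \Los. This lets us move the property ``every morphism is split'' from the factors $\cat_i$ to $\uprod\cat_i$, and from there to $\mathcal D$.

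First, every morphism in $\uprod\cat_i$ satisfies the von Neumann regularity condition. Let $f=\ulim{i} f_i: \ulim{i}A_i\to\ulim{i}B_i$ be a morphism. Each $\cat_i$ is semisimple, so by the easy direction of \autoref{split-semisimple} every $f_i$ is split. By \autoref{splitCriterion} this gives $g_i$ with $f_i g_i f_i=f_i$. Put $g:=\ulim{i} g_i$. Then $fgf=f$, which is simply \Los\ applied to the first-order formula $\exists g\,(f\circ g\circ f=f)$; one can also verify it directly from the componentwise definition of composition. Now take a morphism $f$ of $\mathcal D$. We want the witness $g$ to lie in $\mathcal D$. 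If $\mathcal D$ is full, which is the situation in every application in the paper, this is automatic, since $g$ goes between objects of $\mathcal D$. So I will assume ``subcategory'' means full subcategory, as it does in those applications. Then $\mathcal D$ is Karoubian and every morphism satisfies $fgf=f$, so by \autoref{splitCriterion} every morphism of $\mathcal D$ is split.

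Next I will rerun the converse direction of the proof of \autoref{split-semisimple} inside $\mathcal D$. That direction uses only that $\mathcal D$ is Krull--Schmidt, which is a hypothesis here. It shows that every nonzero map between indecomposables is an isomorphism, so each $\End(X)$ is a division algebra. To conclude $\End(X)\cong\KK$ we need $\End(X)$ finite dimensional over the algebraically closed field $\KK$. I expect this to be the main obstacle, because finite dimensionality is not first-order and can fail in the ultraproduct. I would handle it by using that $\mathcal D$ is a $\KK$-linear Krull--Schmidt category in the sense of \autoref{tensorCat}, where Hom-spaces are finite dimensional. Failing that, I would argue directly: for a simple-type object, every element of $\End(X)$ is an ultraproduct of scalars by Schur's lemma in the factors, so it lies in $\KK$. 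Once $\End(X)\cong\KK$ and there are no nonzero Homs between non-isomorphic indecomposables, $\mathcal D$ is equivalent to a category of graded vector spaces, hence semisimple.

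For the ``moreover'' part, I read ``simple'' in $\uprod\cat_i$ as ``$\End$ is one-dimensional,'' equivalently ``indecomposable with $\End\cong\KK$.'' If $X_i$ is simple for $\mathscr U$-almost all $i$, then $\End(X_i)=\KK_i\cdot\Id$ there. This is first-order: every endomorphism is a scalar multiple of the identity, and $\Id\neq0$. By \Los, $\End(\ulim{i} X_i)=\KK\cdot\Id\neq 0$, so $\ulim{i}X_i$ is simple. Conversely, suppose the set of $i$ with $X_i$ simple is not in $\mathscr U$. Then its complement is. On that complement each $X_i$ is either zero or a nontrivial direct sum by semisimplicity, so it admits an idempotent $e_i\neq0,\Id$ or is zero. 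By \autoref{ultraPartitionLemma} one of these two alternatives holds $\mathscr U$-almost everywhere. By \Los\ the ultraproduct is then zero or has a nontrivial idempotent, so it is not simple.
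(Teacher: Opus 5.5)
Your proof is correct and follows the paper's route: you transfer $f\circ g\circ f=f$ from the factors via \Los\ and \autoref{splitCriterion}, conclude with \autoref{split-semisimple}, and detect simplicity of $\ulim{i} X_i$ by transferring a first-order statement about $\End(X_i)$. You are more careful than the paper about fullness of $\mathcal D$, the zero object, and the identification $\End(X)\cong\KK$ for an indecomposable $X$ of $\mathcal D$; that last point is closed by your own ``moreover'' argument, since a division ring has no idempotents other than $0$ and $1$, which forces $X_i$ to be simple $\mathscr U$-almost everywhere.
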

	
	\begin{proof}
	By \autoref{split-semisimple}, it is enough show that every morphism in $\mathcal D$ is split.
		Any morphism $f:A\to B$ in $\mathcal D$ can be identified with $\ulim{i} f_i$ in $\uprod \cat_i $ for a sequence of morphisms $f_i:A_i\to B_i$ in $\cat_i$.
		But, by \autoref{split-semisimple}, each $f_k$  splits since $\cat_i$ is semisimple.
		So, by \autoref{splitCriterion}, we have a sequence $g_i:B_i\to A_i$ satisfying $f_i\circ g_i\circ f_i = f_i$. 
		It follows by {\Los} that $g:=\ulim{i} g_i$ satisfies $f\circ g\circ f=f$, so $f$ splits again by \autoref{splitCriterion}.
		
		Now, observe that $X:=\ulim{i} X_i$ is not simple if and only if there exists two linearly independent morphisms $f = \ulim{i} f_i$ and  $g = \ulim{i} g_i$ in $\End(X)$. 
		But, by \Los, this happens if and only if there are two sequences $f_i$ and $g_i$ of morphisms in $\End(X_i)$ such that $\forall \lambda \in\KK_i, \, f_i \neq \lambda g_i $ for $\mathscr U$-almost all $i$, i.e. $X_i$ is not simple $\mathscr U$-almost always.
	\end{proof}

	\subsection*{Ultraproducts of Tensor Functors}
	
	Similar to the language of tensor categories, one can define a language of tensor functors $(F, J):\cat\to \mathcal D$. In this language, we have the following relation symbols:
	$$\underline{\hspace{0.1in}}\in\text{Ob}(\cat), \quad \underline{\hspace{0.1in}}\in\text{Mor}(\cat), \quad \underline{\hspace{0.1in}}\in\text{Ob}(\mathcal D), \quad \underline{\hspace{0.1in}}\in\text{Mor}(\mathcal D), \quad \underline{\hspace{0.1in}}:\underline{\hspace{0.1in}}\to \underline{\hspace{0.1in}}, \quad \underline{\hspace{0.1in}}\circ \underline{\hspace{0.1in}} = \underline{\hspace{0.1in}}, \quad \underline{\hspace{0.1in}} + \underline{\hspace{0.1in}} = \underline{\hspace{0.1in}},$$
	$$\underline{\hspace{0.1in}} \otimes  \underline{\hspace{0.1in}} = \underline{\hspace{0.1in}}, \quad 
	\text{Assoc}\underline{\hspace{0.1in}}, \underline{\hspace{0.1in}},\underline{\hspace{0.1in}}  = \underline{\hspace{0.1in}}, \quad \text{Comm}\underline{\hspace{0.1in}} ,\underline{\hspace{0.1in}}  = \underline{\hspace{0.1in}}, \quad \text{Unit}\underline{\hspace{0.1in}} = \underline{\hspace{0.1in}}, \quad  F(\underline{\hspace{0.1in}} ) = \underline{\hspace{0.1in}}, \quad J\underline{\hspace{0.1in}} , \underline{\hspace{0.1in}} = \underline{\hspace{0.1in}}.$$
	They have their usual interpretations as \underline{\hspace{0.1in}} is an object or a morphism of $\cat$ or $\mathcal D$, a morphism \underline{\hspace{0.1in}} is from some object to another, a morphism is the composition or sum or tensor product of two others, a morphism is the associator or commutator (if the categories are braided) or unitor of some objects, an object or a morphism is the image of $F$ of another, and finally that a morphism in $\mathcal D$ is the monoidal structure isomorphism of $F$ for two given objects.
	
	The theory $\mathcal T_{\otimes\text{-Fun}}$ will have the same axioms as before, with some extra axioms guaranteeing that morphisms in $\cat$ have sources and targets in $\cat$ and compose, add, or tensor (whenever meaningful) to give other morphisms in $\cat$, the associators, commutators, unitors, between objects in $\cat$ are morphisms in $\cat$, and similarly for $\mathcal D$. 
	Finally, we should have axioms for the symbols $F$ and $J$ guaranteeing that $F$ is functorial and additive, and $J$ is natural and satisfies the hexagon axiom of structure isomorphisms for monoidal functors. 
	We can also require $F$ to be braided if $\cat$ and $\mathcal D$ have braidings. All of these conditions are given by equations and therefore can easily be expressed by first-order sentences. 
	
	It is easy to see that a model $\mathcal M \models \mathcal T_{\otimes\text{-Fun}}$ corresponds to to a pair of \emph{small} (braided) tensor categories and a (braided) tensor functor between them. As a consequence, {\Los} immediately implies the following:
	\begin{proposition}
		Let $\mathscr U$ be an ultrafilter and $\{F_i: \mathcal C_i \to \mathcal D_i\}_{i\in\NN}$ be a sequence of functors.
		The map $F$ given on objects by $F([X_i]) = [F_i(X_i)]$ and on morphisms by $F([f_i]) = [F_i(f_i)]$ is a well-defined functor $$\uprod F_i:\uprod \cat \to \uprod \mathcal D.$$
		If $\mathscr U$-almost every $F_i$ is monoidal with tensor structure $J_i: F_i(-\otimes -) \overset{\sim}{\to} F_i(-)\otimes F_i(-)$, then $J:= \ulim{i} J_i$ is a tensor structure for $F$. 
		If $F_i$ are braided $\mathscr U$-almost always, then so is $F$.	
	\end{proposition}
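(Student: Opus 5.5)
The proof is a direct application of \Los{} to the first-order theory $\mathcal T_{\otimes\text{-Fun}}$ just described. We regard each triple $(\cat_i,\mathcal D_i,F_i)$ as a single $\mathcal L$-structure $\mathcal M_i$ for the language of (tensor) functors. Its underlying set is the disjoint union of the objects and morphisms of $\cat_i$ and $\mathcal D_i$, and $F_i$ (resp.\ $J_i$) is encoded by the relation symbol $F(\underline{\hspace{0.1in}})=\underline{\hspace{0.1in}}$ (resp.\ $J\underline{\hspace{0.1in}},\underline{\hspace{0.1in}}=\underline{\hspace{0.1in}}$). The ultraproduct $\mathcal M^*=\uprod\mathcal M_i$ then has underlying set the classes of sequences. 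By \autoref{tensorCatsUltraprod}, the elements satisfying $\in\mathrm{Ob}(\cat)$, $\in\mathrm{Mor}(\cat)$ (resp.\ the same for $\mathcal D$) form exactly the category $\uprod\cat_i$ (resp.\ $\uprod\mathcal D_i$). Here we use that the relation $\in\mathrm{Ob}(\cat)$ holds for $[X_i]$ iff $X_i\in\mathrm{Ob}(\cat_i)$ for $\mathscr U$-almost all $i$, so every such class has a representative lying in $\cat_i$ for all $i$.

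First, well-definedness and the functor axioms. The axiom ``$\forall x\,(x\in\mathrm{Ob}(\cat)\lor x\in\mathrm{Mor}(\cat))\to\exists! y\, F(x)=y$'' holds in every $\mathcal M_i$, so by \Los{} it holds in $\mathcal M^*$. Hence the relation $F$ in $\mathcal M^*$ is the graph of a function. By the definition of relations in an ultraproduct, this function sends $[X_i]$ to $[F_i(X_i)]$ and $[f_i]$ to $[F_i(f_i)]$. In particular, its value does not depend on the representative: if $X_i=X_i'$ almost everywhere, then $F_i(X_i)=F_i(X_i')$ on the same large set. The functoriality axioms (preservation of source and target, composition, identities, and additivity) are first-order sentences true in every $\mathcal M_i$, so they transfer to $\mathcal M^*$. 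Thus $F=\uprod F_i$ is a functor.

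Second, the monoidal and braided structures. Suppose the $F_i$ are monoidal for $i$ in a set $S\in\mathscr U$. Choose any representatives outside $S$; they are irrelevant. The sentences stating that $J_{X,Y}$ exists uniquely, is an isomorphism $F(X\otimes Y)\to F(X)\otimes F(Y)$, is natural in $X$ and $Y$, and satisfies the hexagon and unit axioms against the Assoc and Unit relations are finitely many first-order sentences. They hold in $\mathcal M_i$ for $i\in S$, hence in $\mathcal M^*$. Since the interpretation of $J$ in $\mathcal M^*$ is $\ulim{i}J_i$, this gives the tensor structure. The braided case is handled the same way, adding the sentence $F(\mathrm{Comm}_{X,Y})$ intertwined with $\mathrm{Comm}_{F X,F Y}$ via $J$.

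The main obstacle is bookkeeping rather than mathematics. We must check that each axiom is genuinely first-order: no quantification over subsets, and naturality stated as ``for all morphisms $f:X\to X'$, $g:Y\to Y'$ \dots''. We must also check that the interpretation of the structure symbols in $\mathcal M^*$ coincides with the componentwise formulas in the statement, which is exactly the definition of the ultraproduct structure. Scalars are no issue: linearity over $\uprod\KK_i$ is encoded via $\End(\id)$ as in \autoref{tensorCatsUltraprod}, and preservation of the action of $\End(\id)$ follows from monoidality together with $F(\id)\cong\id$.
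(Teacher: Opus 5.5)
Your proposal is correct and takes the same route as the paper. Both encode each $(\cat_i,\mathcal D_i,F_i,J_i)$ as a model of $\mathcal T_{\otimes\text{-Fun}}$ and use \Los{} to transfer the functoriality, naturality, coherence and braiding axioms to the ultraproduct; the paper treats this as immediate, while you also check well-definedness on representatives and that the symbols are interpreted in the ultraproduct as $\ulim{i}F_i$ and $\ulim{i}J_i$.
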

	
	\subsection*{Ultraproducts as Colimits} Let $\cat$ be a category that has all filtered colimits and arbitrary products. 
	One can give a definition of ultraproducts of objects of $\cat$ as a purely categorical construction as follows. Suppose $\mathscr U$ is an ultrafilter on an index set $I$ and $\{A_i\}_{i\in I}$ is a family of objects in $\cat$. 
	Consider the diagram category $\mathcal D$ whose objects are subsets of $I$ and where there is a unique morphism $J\to K$ whenever $J\supseteq K$. 
	Then, we have a functor $A_\bullet:\mathcal D\to \mathcal C$ sending $J$ to $A_J:= \prod_{i\in J} A_i$ and sending the morphism $J\supseteq K$ to the canonical projection $A_J\to A_K$.
	It is a well-known fact (e.g. see \cite[Section 6.10]{G}) that $\underset{\longrightarrow \mathcal D}{\lim} A_\bullet$ is isomorphic to the ultraproduct $\uprod A_i$ in 1-categories whose objects are models of some first-order theory, e.g. sets, groups, fields, etc. 
	To the author's knowledge, a similar construction in terms of filtered 2-colimits is not known in the 2-categorical setting, e.g. in the 2-category of tensor categories.
	Such a definition could potentially be useful to overcome the restriction to small categories as well as make some arguments more natural.	
	\begin{problem}
		Can we define ultraproducts of ``tensor categories'' as a 2-colimit in an appropriate 2-category so that its universal property is strong enough to replace \Los?
	\end{problem}




\end{document}